\font \smallrm=cmr10 at 9truept
\font \smallbf=cmbx10 at 9truept
\font \smallsl=cmsl10 at 9truept
\newcounter{amoi}
\newtheorem{thm}{Theorem}[subsection]
\newtheorem{theorem}[thm]{Theorem}
\newtheorem{corollary}[thm]{Corollary}
\newtheorem{lemma}[thm]{Lemma}
\newtheorem{proposition}[thm]{Proposition}
\newtheorem{definition}[thm]{Definition}
\theoremstyle{definition}
\newtheorem{example}[thm]{Example}
\newtheorem{examples}[thm]{Examples}
\newtheorem{remark}[thm]{Remark}
\newtheorem{remarks}[thm]{Remarks}
\newtheorem{notation}[thm]{Notation}
\newtheorem{free text}[thm]{}
\newcommand{\N} {\mathbb{N}}
\newcommand{\J} {{\mathfrak{J}}}
\begin{document}

{\ }

\vskip-57pt

   \centerline{\smallrm {\smallsl Journal of Noncommutative Geometry}  {\smallbf 9}  (2015), no.\  2, 287--358.   \ \ \ --- \ \ \  {\smallbf DOI:} 10.4171/JNCG/194}
 \vskip1pt

   \centerline{\smallrm {\smallsl The original publication is available at\/}
\  http://www.ems-ph.org/journals/journal.php?jrn=jncg}

\vskip27pt   {\ }

\centerline{\Large \bf DUALITY FUNCTORS FOR}
 \vskip9pt
\centerline{\Large \bf QUANTUM GROUPOIDS}

\vskip27pt

\centerline{ Sophie CHEMLA }
 \vskip3pt
   \centerline{\it UPMC Universit\'e Paris 06  ---  UMR 7586, Institut de math\'ematiques de Jussieu}
   \centerline{F-75005- Paris - FRANCE  \ / \
 e-mail:  {\tt schemla@math.jussieu.fr}}

\vskip11pt

   \centerline{ Fabio GAVARINI }
 \vskip3pt
   \centerline{\it Dipartimento di Matematica, Universit\`a di Roma ``Tor Vergata'' }
   \centerline{via della ricerca scientifica 1  ---  I-00133 Roma, ITALY  \ / \
 e-mail:  {\tt gavarini@mat.uniroma2.it}}

\vskip21pt

\begin{abstract}
 \vskip2pt
 We present a formal algebraic language to deal with quantum deformations of Lie-Rine\-hart algebras   --- or Lie algebroids, in a geometrical setting.  In particular, extending the ice-breaking ideas introduced by Xu in  \cite{Xu2},  we provide suitable notions of ``quantum groupoids''.  For these objects, we detail somewhat in depth the formalism of linear duality; this yields several fundamental antiequivalences among (the categories of) the two basic kinds of ``quantum groupoids''.  On the other hand, we develop a suitable version of a ``quantum duality principle'' for quantum groupoids, which extends the one for quantum groups --- dealing with Hopf algebras ---   originally introduced by Drinfeld  (cf.~\cite{Drinfeld}, \S 7)  and later detailed in  \cite{Gavarini}.
 \footnote{\ 2010 {\it MSC}\;: \, Primary 17B37, 16T05; Secondary 16T15, 17B62, 53D55.}
 \footnote{\ {\sl Keywords}\;: \, Bialgebroids, Lie-Rinehart (bi)algebras, Quantum Groupoids, Quantum Duality.}
\end{abstract}

\bigskip

 \tableofcontents

\bigskip

\section{Introduction}

\smallskip

   {\ } \quad   The classical theory of Lie groups, or of algebraic groups, has a quantum counterpart in the theory of ``quantum groups''.  In Drinfeld's language, quantum groups are suitable topological Hopf algebras which are formal deformations either of the algebra of functions on a formal group, or of the universal enveloping algebras of a Lie algebra.  These deformations add further structure on the classical object: the formal group inherits a structure of  {\sl Poisson\/}  formal group, and the Lie algebra a structure of  {\sl Lie bialgebra}.  Linear duality for topological Hopf algebras reasonably adapts to quantum groups, lifting the analogous duality for their semiclassical limits.  On the other hand, Drinfeld revealed a more surprising feature of quantum groups, later named ``quantum duality'', which somehow lifts the Poisson duality among Poisson (formal) groups.  Namely, there exists an equivalence of categories between quantized enveloping algebras and quantized formal groups, which shifts from a quantization of a given Lie bialgebra  $ L $  to one of the  {\sl dual\/}  Lie bialgebra  $ L^* $.
                                                                                  \par
   Another extension of Lie group theory is that of Lie-Rinehart algebras (sometimes loosely called ``Lie algebroids''), developed by Rinehart, Huebschmann and others.  The notion of Lie-Rinehart algebra  $ \big( L \, , [\,\ , \ ] \, , \omega \big) $  over a commutative ring  $ A $  lies inbetween  $ A $--Lie  algebras and  $ k $--Lie  algebras of derivations of the form  $ \text{\sl Der}\,(A) \, $.  Well-known examples come from geometry, such as the global sections of a Lie algebroid, for example the  $ 1 $--forms  over a Poisson manifold  (cf.~\cite{Dazord and Sondaz}, \cite{Huebschmann1},  \cite{Evens and Lu and Weinstein}).
                                                                                  \par
   The natural algebraic gadgets attached with a Lie-Rinehart algebra are its universal enveloping algebra  $ V^\ell(L) $  and its algebra of jets  $ J^r(L) \, $,  which are in linear duality with each other.  Any Lie-Rinehart algebra  $ L $  can also be seen as a  {\sl right\/}  Lie-Rinehart algebra: thus one can also consider its right enveloping algebra, call it  $ V^r(L) \, $,  anti-isomorphic to  $ V^\ell(L) \, $, and its dual  $ J^\ell(L) \, $.
                                                                                  \par
   All these algebraic objects   ---  $ V^\ell(L) \, $,  $ V^r(L) \, $,  $ J^r(L) $  and  $ J^\ell(L) $  ---  are  {\sl (topological) bialgebroid}   ---  {\sl left\/}  ones when a superscript ``$ \, \ell \, $''  occurs, and  {\sl right\/}  when  ``$ \, r \, $''  does.  Indeed, they also have an additional property, about their Hopf-Galois map, such that these left/right bialgebroids are actually  {\sl left\/  {\rm or}  right Hopf left/right bialgebroids}   --- an important generalization of Hopf algebras.
                                                                                           \par

\smallskip

   Linear duality for (left/right) bialgebroids is twofold: any (left/right) bialgebroid  $ U $  is naturally a left  $ A $--module  and a right  $ A $--module,  thus one may consider its  {\sl left dual}  $ U_* $  as well as its  {\sl right dual}  $ U^* \, $.  Under mild conditions,  $ U^* $  and  $ U_* $  are naturally (right/left) bialgebroids (see  \cite{Kadison and Szlachanyi}).  The  $ \big( V^\ell(L) \, , J^r(L) \big) $  is tied together by such a linear duality, and similarly for  $ \big( V^r(L) \, , J^\ell(L) \big) \, $.

\smallskip

   When looking for quantizations of Lie-Rinehart algebras, one should consider formal deformations of either  $ V^{\ell/r}(L) $  or  $ J^{r/\ell}(L) \, $,  among left/right (topological) bialgebroids: these deformations automatically inherit from their semiclassical limits the additional property of being  {\sl left/right Hopf left/right bialgebroids}.  We shall loosely call such deformations ``quantum groupoids''.
                                                                                           \par
   The first step in this direction was made by Ping Xu  (cf.~\cite{Xu2}):  he introduced a notion of quantization of  $ V^\ell(L) \, $,  called quantum universal enveloping algebroid (LQUEAd in short).  Then he noticed that any such quantization endows  $ L $  with a richer structure of  {\sl Lie-Rinehart bialgebra}.  This is a direct extension of the notion of Lie bialgebra, in particular, it is a self-dual notion, so if  $ L $  is a Lie-Rinehart bialgebra then its dual  $ L^* $  is a Lie-Rinehart bialgebra as well (see  \cite{Kosmann}).  Finally, Xu also provided an example of construction of a non-trivial LQUEAd  $ {\mathcal D}_X[[h]]^{\mathcal F} \, $,  by ``twisting'' the trivial deformation  $ \, {\mathcal D}_X[[h]] \, $  of  $ \, {\mathcal D}_X := V^\ell\big(\Gamma(TX)\big) \, $,  where  $ X $  is a Poisson manifold.

\medskip

   The purpose of this paper is to move some further steps in the theory of ``quantum groupoids''.

\smallskip

   After recalling some basics of the theory of Lie-Rinehart algebras and bialgebras  (Sec.~\ref{L-R_algs-bialgs}),  we introduce also some basics of the theory of bialgebroids  (Sec.~\ref{l/r-bialgbds}):  in particular, we dwell on the relevant examples, i.e.~universal enveloping algebras and jet spaces for Lie-Rinehart algebras.

\smallskip

   Then we introduce ``quantum groupoids''  (Sec.~\ref{q-groupds}).  Besides Xu's original notion of LQUEAd, we introduce its right counterpart (in short RQUEAd): a topological right bialgebroid which is a formal deformation of some  $ V^r(L) \, $.  Similarly, we introduce quantizations of jet spaces;
a topological right bialgebroid which is a formal deformation of some  $ J^r(L) $  will be called a  {\sl right quantum formal series algebroid\/} (RQFSAd in short); similarly, the left-handed version of this notion gives rise to the definition of  {\sl left quantum formal series algebroid\/}  (LQFSAd in short).  Altogether, this gives us four kinds of quantum groupoids; each one of these induces a Lie-Rinehart {\sl bialgebra\/}  structures on the original Lie-Rinehart algebra one deals with, extending what happens with LQUEAd's.

\smallskip

   As a next step, we discuss linear duality for quantum groupoids  (Sec.~\ref{lin-dual_q-grpds}).  The natural language is that of linear duality for bialgebroids, with some precisions.  First, by infinite rank reasons we are lead to consider  {\sl topological\/}  duals.  Second, both left and right duals are available, thus taking duals might cause a proliferation of objects.  Nevertheless, we can keep this phenomenon under control, so eventually we can bound ourselves to deal with only a handful of duality functors.
                                                                           \par
  In the end, our main result on the subject claims the following: our duality functors provide (well-defined) anti-equivalences between the category of all LQUEAd's and the category of all RQFSAd's (on a same, fixed ring  $ A_h $),  and similarly also anti-equivalences between the category of all RQUEAd's and the category of all LQFSAd's (on  $ A_h $  again).  In addition, if one starts with a given quantum groupoid, which induces a specific (Lie-Rinehart) bialgebra structure on the underlying Lie-Rinehart algebra, then the dual quantization yields the same or the coopposite Lie-Rinehart bialgebra structure   --- see  Theorems \ref{dual_QUEAd's=QFSAd's}  and  \ref{dual_QFSAd's=QUEAd's}  for further precisions.

\smallskip

   Finally  (Sec.~\ref{Drinf-functs_q-duality}),  we develop a suitable ``Quantum Duality Principle'' for quantum groupoids.  Indeed, we introduce functors ``\`a la Drinfeld'', denoted by  $ {(\ )}^\vee $  and  $ {(\ )}' \, $,  which turns (L/R)QFSAd's into (L/R)QUEAd's and viceversa, so to provide an equivalence between the category of LQFSAd's and that of LQUEAd's, and a similar equivalence between RQFSAd's and RQUEAd's.  In addition, if one starts with a quantization of some Lie-Rinehart bialgebra  $ L \, $,  then the (appropriate) Drinfeld's functor gets out of it a quantization of the  {\sl dual\/}  Lie-Rinehart bialgebra  $ L^* $.
                                                                \par
  For the functor  $ {(\ )}^\vee $,  Drinfeld's original definition for quantum groups can be easily extended to quantum groupoids.  Instead, this is not the case for the functor  $ {(\ )}' $:  therefore we have resort to a different characterization (for quantum groups) of it, and adopt that as a definition (for quantum groupoids): this requires linear duality, which sets a strong link with the first part of the paper.

\smallskip

   It is worth remarking that linear duality for quantum groupoids interchanges ``left'' and ``right''; instead, quantum duality takes either one to itself: at the end of the day, this means that if one aims to have both linear duality and quantum duality then he/she is forced to deal with  {\sl all four types of quantum groupoids\/}  that we introduced   --- none of them can be left apart.

\medskip

   At the end  (Sec.~\ref{example_twistor})  we present an example, just to illustrate some of our main results on a single   --- and simple, yet significant enough ---   toy model.

\vskip15pt

   \centerline{ACKNOWLEDGEMENTS}
 \vskip5pt
   \centerline{The authors thank Niels Kowalzig for his valuable comments and hints.}

\bigskip

\section{Lie-Rinehart algebras and bialgebras}  \label{L-R_algs-bialgs}

\smallskip

   {\ } \quad   Throughout this paper,  $ k $  will be a field and  $ A $  will be a unital, associative  $ k $--algebra;  we assume  $ k $  to have characteristic zero (though for most definitions and constructions this is not necessary).  Moreover, for all objects defined in  {\sl this\/}  section we assume in addition that  $ A $  is also commutative.

\smallskip

 \subsection{Lie-Rinehart algebras}  \label{L-R_algebras}

\smallskip

   {\ } \quad   To begin with, we introduce the notion of  {\sl (left) Lie-Rinehart algebra\/}  ( or ``Lie algebroid'').

\smallskip

\begin{definition}
A  {\sl (left) Lie-Rinehart algebra}  (see  \cite{Rinehart})  is a triple  $ (A,L,\omega) $  where:
  $ L $  is a  $ k $--Lie  algebra,
  $ L $  is an  $ A $--module,
  and  $ \omega $  is an  $ A $--linear  morphism of Lie  $ k $--algebras from  $ L $  to  $ \text{\sl Der}(A) $,  called  {\sl anchor (map)},  such that the following compatibility relation holds:
  $$  \forall \;\, D, D' \in L \, , \;\; \forall \; f \in A \, ,  \qquad
\big[ D, f \, D' \big] \; = \; \omega(D)(f) \, D' + f \, \big[ D, D' \big]  $$
In particular, if  $ L $  is finitely generated projective as an  $ A $--module,  then  $ (A,L,\omega) $  will be called a finite projective Lie-Rinehart algebra.
 \vskip4pt
   {\sl  $ \underline{\text{Notation}} $:}  \, when there is no ambiguity, the Lie-Rinehart algebra  $ (A,L,\omega) $  will be written  $ L \, $.
\end{definition}

\smallskip

\begin{examples}  \label{examples L-R_alg}   Any Lie algebra over  $ A $  is a Lie-Rinehart algebra whose anchor map  $ \omega $  is  $ 0 $  (and conversely).  On the other hand,  $ \big( A, Der(A), \text{\sl id} \big) $  is a Lie-Rinehart algebra too.  Another example is the  $ A $--module  $ D_A $  of K{\"a}hler differentials on any Poisson algebra  $ A $  (see \cite{Huebschmann1}).
                                                                        \par
   In the setup of differential geometry, natural examples of Lie-Rinehart algebras arise as spaces of global sections of Lie algebroids; for instance, such an example (of Lie algebroid) is given by the vector bundle  $ \Omega_P^1 $  of differential forms of degree 1 on a Poisson manifold  $ P \, $  (see, e.g., \cite{Dazord and Sondaz}).
\end{examples}

\smallskip

\begin{free text}
 {\bf Differentials for Lie-Rinehart algebras.}     Given a finite projective Lie-Rinehart algebra  $ (A,L,\omega) \, $,  it is known that  $ \, \bigwedge_A L^* \, = \, \oplus_n \bigwedge_A^n L^* \, $  admits a differential  $ d_L $  that makes it into a differential algebra. Here  $ \; d_L : \bigwedge_A^n L^* \longrightarrow \bigwedge_A^{n+1} L^* \; $  is defined as follows: for all  $ \, \lambda \in \bigwedge_A^n L^* \, $ and for   all $ \, (X_1,X_2,\dots,X_{n+1}) \in L^{n+1} \, $,  one has
  $$  \begin{array}{rcl}
   (d_L \lambda) (X_1, \dots, X_{n+1})  &  =  &  \sum_{i=1}^{n+1} {(-1)}^{i+1}
\omega(X_{i}) \big( \lambda (X_1, \dots, \widehat{X_{i}}, \dots, X_{n+1} \big) \;\; +  \\
                                        &  +  &  \sum_{i<j} {(-1)}^{i+j} \lambda \big( [X_i, X_j], X_1, \dots, \widehat{X_i}, \dots, \widehat{X_j}, \dots, X_{n+1} \big)
      \end{array}  $$
In the case where  $ \, L = TX \, $,  the differential  $ d_L $  coincides with the de Rham differential.
\end{free text}

\smallskip

\begin{definition}  \label{def_V^ell(L)}
 Let  $ (A,L,\omega) $  be a (left) Lie-Rinehart algebra.  The  {\sl (left) universal enveloping algebra}  of  $ L $  is the  $ k $--algebra
 $ \; V^\ell(L) := T_k^+(A \oplus L) \Big/ I \; $
 where  $ \, T_k^+(A \oplus L) $  is the positive part of the tensor  $ k $--algebra  over  $ \, A \oplus L \, $  and  $ \, I $  is the two-sided ideal in  $ T_k^+(A \oplus L) $  generated by the elements
  $$  a \otimes b - a \, b \; ,  \;\;  a \otimes \xi - a \, \xi \; ,
 \;\;  \xi \otimes \eta -\eta \otimes \xi - [\xi,\eta] \; ,
 \;\;  \xi \otimes a -a \otimes \xi -\omega (\xi)(a)   \eqno \quad \forall \; a, b \in A \, , \, \xi , \eta \in L  $$
\end{definition}

\smallskip

\begin{remarks}  \label{canon-filtr_V^ell(L)}
   {\it (a)}\,  Note that  $ V^\ell(L) $  is a filtered ring, its (increasing) filtration  $ {\big\{ V^\ell_n(L) \big\}}_{n \in \N} $  being defined by  $ \; V^\ell_0(L) := A \, $,  $ \; V^\ell_{n+1}(L) := V^\ell_n(L) + V^\ell_n(L) \cdot L \; $  ($ \, n \in \N \, $).  We denote by  $ \text{\sl Gr}\big(V^\ell(L)\big) $  the associated graded algebra.  It is known  (cf.~\cite{Rinehart})  that if  $ L $  is  projective as an  $ A $--module,  then  $ \; \text{\sl Gr}\big(V^\ell(L)\big) \cong S_A(L) \; $. Moreover,  $ \; \iota_A : A \longrightarrow V^\ell(L) \; $  and  $ \; \iota_L : L \longrightarrow V^\ell(L) \; $  are monomorphisms.
 \vskip3pt
   {\it (b)}\,  The Lie-Rinehart algebras  $ \, L = \big( L , A , [\ ,\ ] , \omega \big) \, $  and  $ \, L^{\text{\it op}} := \big( L , A , -[\ ,\ ] , -\omega \big) \, $  are isomorphic via the isomorphims  $ F $  defined by  $ \; F(D) := -D \; $  for all  $ \, D \in L \, $  and  $ \; F(a) := a \; $  for all  $ \, a \in A \, $  .
 \vskip3pt
   {\it (c)}\,  If  $ X $  is a (smooth) manifold and  $ \, A = \mathcal{C}^\infty(X) \, $,  then  $ V^\ell\big(\text{\sl Der}(A)\big) $  for the Lie-Rinehart algebra  $ \, \big( A, \text{\sl Der}(A), \text{\sl id} \big) \, $  is the  $k$--algebra  of global differential operators on  $ X \, $.
\end{remarks}

\medskip

\begin{free text}  \label{L-R_proj->free}
 {\bf From a finite projective Lie-Rinehart algebra to a free Lie-Rinehart algebra.}  Most of the time, we will work with finite projective Lie-Rinehart algebras.  This is a reasonable hypothesis as Lie-Rinehart algebras coming from the geometry are finite projective.  Several times in this article, we will prove results for (finite) free Lie-Rinehart algebra and then extend them to finite projective Lie-Rinehart algebras.  We now explain the key step for this.
                                                              \par
   Let  $ L $  be a finite projective Lie-Rinehart algebra. There exist a finite projective  $ A $--module  $ Q $  such that  $ \, F = L \oplus Q \, $  is a finite rank free  $ A $--module.  We can endow  $ F $  with the following Lie-Rinehart algebra structure:
 for all  $ \, D \, , D_1 \, , D_2 \in L \, $,  $ \, E \, , E_1 \, , E_2 \in Q \, $,  we set
  $$  \omega_F(D+E) \; := \; \omega_L (D) \;\; ,  \qquad  [D_1 + E_1 \, , D_2 + E_2] \; := \; [D_1 \, , D_2] $$
that is, the structure of  $ L $  is extended trivially to  $ \, F = L \oplus Q \, $.  Then  $ \, V^\ell(F) = V^\ell(L) \otimes_A S(Q) \, $.
 \vskip5pt
   The  $ A $--module  $ \; L_{Q} := L \oplus Q \oplus L \oplus Q \oplus \cdots = F \oplus F \oplus F \oplus \cdots \; $  is a free  $ A $--module.  Define  $ \, R = Q \oplus L \oplus Q \oplus L \oplus \cdots \, $;  then  $ R $  is a free  $ A $--module  such that  $ \, L_Q = L \oplus R \, $  is a free  $ A $--module  (cf.~\cite{Hilton and Stammbach}).  We set on  $ L_Q $  the Lie-Rinehart algebra structure (for all  $ \, D , D_1 , D_2 \in L \, , \, B , B_1 , B_2 \in R \, $)
  $$  \omega_{L_Q}(D+B) \; := \; \omega_L (D) \;\; ,  \qquad  [D_1 + B_1 \, , D_2 + B_2] \; := \; [D_1 \, , D_2]  $$
in other words, the Lie-Rinehart algebra structure of  $ L $  is extended trivially to  $ \, L_Q = L \oplus R \, $.

\smallskip

\begin{definition}  \label{def_good-basis}
  Let an  $ A $--basis  $ \, \{b_1,\dots,b_n\} $  of  $ F $  be given.  Then one can construct a basis  $ \, {\{v_t\}}_{t \in T} \, $  of  $ R $  and an  $ A $--basis  $ \, {\{e_t\}}_{t \in T} \, $  of  $ \, L_Q $  both indexed by  $ \, T := \N \times \{1,\dots,n\} \, $.  Any such basis for  $ L_Q $  will be called a  {\sl good basis}.  For later use, if  $ \, i = (i_1,i_2) \in T \, $  we set  $ \, \varpi(e_i) := i_1 \, $.
\end{definition}

\smallskip

   We set  $ \; Y \! = \oplus_{i=1}^n k b_i \; $  and  $ \; Z = {\displaystyle \oplus_{t \in T} k \, v_t \simeq Y \oplus Y\oplus Y \cdots} \;\, $  so that  $ \, F = A \otimes_k Y \, $  and  $ \; R = A \otimes_k Z \; $.  We have then  $ \; V^\ell(L_Q) = V^\ell(L) \otimes_k S(Z) \; $.
\end{free text}

\smallskip

\begin{free text}

 {\bf Right Lie-Rinehart algebras.}  For the sake of completeness, we have to mention that one can also, in a symmetric way, consider the notion of  {\sl right\/}  Lie-Rinehart algebra, as follows:

\smallskip

\begin{definition}
 A  {\sl right Lie-Rinehart algebra}   is a triple  $ (A,L,\omega ) $  where
  $ L $  is a  $ k $--Lie  algebra,
  $ L $  is a right  $ A $--module,
  and  $ \omega $  is an  $ A $--linear  morphism of Lie  $ k $--algebras from  $ L $  to  $ \text{\sl Der}(A) $,  called  {\sl anchor (map)},  such that the following compatibility relation holds:
  $$  \forall \;\, D, D' \in L \, , \;\; \forall \; f \in A \, ,  \qquad
\big[ D,  D'\cdot f \big] \; = \; D' \cdot \omega(D)(f)  +  \big[ D, D' \big] \cdot f $$
\end{definition}

\smallskip

\begin{remark}
   As  $ A $  is commutative, a Lie-Rinehart algebra can be considered as a right Lie-Rinehart algebra and viceversa.  However, the enveloping algebra defined by the notion of right Lie-Rinehart algebra is different from that defined by a (left) Lie-Rinehart algebra.
\end{remark}

\smallskip

\begin{definition}  \label{def_V^r(L)}
 Let  $ (A,L,\omega) $  be a right Lie-Rinehart algebra.  The  {\sl (right) universal enveloping algebra}  of  $ L $  is the  $ k $--algebra
 $ \; V^r(L) := T_k^+(A \oplus L) \Big/ I \; $
 where  $ \, T_k^+(A \oplus L) $  is the positive part of  the tensor  $ k $--algebra  over  $ \, A \oplus L \, $  and  $ \, I $  is the two-sided ideal in  $ T_k^+(A \oplus L) $  generated by the elements
  $$  a \otimes b - a \, b \; ,  \;\;  \xi \otimes a  -  \xi \cdot a \; ,
 \;\;  \xi \otimes \eta -\eta \otimes \xi - [\xi,\eta] \; ,
 \;\;  \xi \otimes a -a \otimes \xi -\omega (\xi)(a)   \eqno \quad \forall \; a, b \in A \, , \, \xi , \eta \in L  $$
\end{definition}

\smallskip

   Next result clarifies the link between left and right enveloping algebras of a single Lie-Rinehart algebra  $ L \, $.  Hereafter,  $ L^{\text{\it op}} $  denotes the ``opposite'' Lie-Rinehart algebra to  $ L $   --- cf.~Remarks \ref{canon-filtr_V^ell(L)}  ---   while  $ \mathfrak{A}^{\text{\it op}} $  denotes the opposite of any (associative) algebra  $ \mathfrak{A} \, $.
\end{free text}

\bigskip

\begin{proposition}  \label{alg_anti-isom_Xi}
 For any Lie-Rinehart algebra  $ L \, $,  the algebras  $ {V^r(L)}^{op} $  and  $ V^\ell(L^{op}) $  are equal, and there is an algebra isomorphism  $ \; \Xi : V^\ell(L) \relbar\joinrel\longrightarrow \! {V^r(L)}^{\text{\it op}} \, $,  $ \, a \mapsto a \, $,  $ \, D \mapsto -D \; $  (for all  $ \, a \in \! A \, $,  $ D \in \! L \, $).
\end{proposition}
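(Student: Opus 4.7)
\medskip

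My plan is to reduce everything to the equality $V^r(L)^{\text{\it op}} = V^\ell(L^{\text{\it op}})$ of $k$-algebras, and then obtain $\Xi$ as the functorial image of the Lie-Rinehart isomorphism $F\colon L \to L^{\text{\it op}}$ recorded in Remark \ref{canon-filtr_V^ell(L)}(b).

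First I would prove the equality $V^r(L)^{\text{\it op}} = V^\ell(L^{\text{\it op}})$. Both algebras are quotients of the same tensor algebra $T_k^+(A\oplus L)$ (as sets, since $A$ is commutative so left/right $A$-module structures on $L$ coincide). It then suffices to check that the defining ideals coincide. Writing $\cdot_{\text{\it op}}$ for the product in $V^r(L)^{\text{\it op}}$, the four generating relations of $V^r(L)$ translate into $V^r(L)^{\text{\it op}}$ as:
\begin{itemize}
\item $a \cdot_{\text{\it op}} b - ab = 0$, matching the relation in $V^\ell(L^{\text{\it op}})$ (since $A$ is commutative);
\item $a \cdot_{\text{\it op}} \xi - a\,\xi = \xi \!\cdot\! a - a\,\xi = 0$, matching the left module relation in $V^\ell(L^{\text{\it op}})$;
\item $\xi \cdot_{\text{\it op}} \eta - \eta \cdot_{\text{\it op}} \xi = \eta\xi - \xi\eta = -[\xi,\eta]$, which is precisely $\xi \otimes \eta - \eta \otimes \xi - {[\xi,\eta]}_{L^{\text{\it op}}}$;
\item $\xi \cdot_{\text{\it op}} a - a \cdot_{\text{\it op}} \xi = a\xi - \xi a = -\omega(\xi)(a)$, which is precisely $\xi \otimes a - a \otimes \xi - \omega_{L^{\text{\it op}}}(\xi)(a)$.
\end{itemize}
Thus the two ideals agree, proving the equality.

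Next, invoking Remark \ref{canon-filtr_V^ell(L)}(b), the map $F\colon L \to L^{\text{\it op}}$, $D \mapsto -D$, $a \mapsto a$, is an isomorphism of Lie-Rinehart algebras. By functoriality of the universal enveloping construction (immediate from the generators-and-relations Definition \ref{def_V^ell(L)}), $F$ extends to an algebra isomorphism $V^\ell(F)\colon V^\ell(L) \to V^\ell(L^{\text{\it op}})$ sending $a\mapsto a$ and $D\mapsto -D$. Composing with the identification $V^\ell(L^{\text{\it op}}) = V^r(L)^{\text{\it op}}$ from the previous paragraph yields the desired $\Xi$.

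The only genuine content is the verification that the four relations of $V^r(L)$ become, under opposite multiplication, exactly the relations of $V^\ell(L^{\text{\it op}})$; this is purely bookkeeping, but one must be careful with the signs produced by swapping the order of multiplication against the sign changes built into passing from $L$ to $L^{\text{\it op}}$. Once that is in place, well-definedness and bijectivity of $\Xi$ follow for free from the Lie-Rinehart isomorphism $F$, since $F$ has inverse $-F$ (i.e.\ $a\mapsto a$, $D\mapsto -D$ again), so that $\Xi^{-1}$ is given by the same formula on generators.
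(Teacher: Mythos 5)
Your proof is correct, and since the paper states Proposition \ref{alg_anti-isom_Xi} without proof, your argument supplies exactly the expected one: compare the defining ideals to get $V^r(L)^{\text{\it op}} = V^\ell(L^{\text{\it op}})$, then push the Lie--Rinehart isomorphism $F$ of Remarks \ref{canon-filtr_V^ell(L)}{\it (b)} through the (evident) functoriality of $V^\ell(-)$ and compose. The only point worth making explicit is how $V^r(L)^{\text{\it op}}$ is realized as a quotient of $T_k^+(A\oplus L)$ with its usual concatenation product: this goes through the tensor-reversal isomorphism $T_k^+(A\oplus L)^{\text{\it op}} \cong T_k^+(A\oplus L)$, under which each generator of the ideal defining $V^r(L)$ is carried, up to sign, onto a generator of the ideal defining $V^\ell(L^{\text{\it op}})$ --- which is precisely the bookkeeping your four displayed relations record, so no gap results.
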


\smallskip

 \subsection{Lie-Rinehart bialgebras}  \label{L-R_bialgebras}

\smallskip

   {\ } \quad   We are now ready to introduce the notion of  {\sl Lie-Rinehart bialgebra\/} (cf.~\cite{Mackenzie and Xu}, \cite{Kosmann}, \cite{Huebschmann2}).

\smallskip

\begin{definition}  \label{def_L-R_bialgebras}
 A  {\sl Lie-Rinehart bialgebra}  is a pair  $ \big( L_1 , L_2 \big) $  of finitely generated projective  $ A $--modules  in duality   --- that is,  $ \, L_1 \cong L_2^{\,*} \, $  and  $ \, L_2 \cong L_1^{\,*} \, $ ---   each of them being endowed with Lie-Rinehart algebra structures such that the differential  $ d_1 $  on  $ \bigwedge_A \! L_1 $  arising from the Lie-Rinehart structure on  $ \, L_2 \cong L_1^* \, $    is a derivation of the Lie bracket of   $  L_1 \, $,  that is
  $$  d_1\!\big([X,Y]\big)  \; = \;  \big[ d_1(X) \, , Y \big] + \big[ X , d_1(Y) \big]   \eqno \text{for all}  \quad  X, Y \in L_1 \; .   \qquad  $$
   \indent   In general, if  $ L $  is a finitely generated projective  $ A $--module,  then its linear dual $ L^* $  (as an  $ A $--module)  is finitely generated projective as well: in this case, in the following we shall say that  ``$ L $  is a Lie-Rinehart bialgebra''  to mean that   $ \big( L , L^* \big) $  has a structure of Lie-Rinehart bialgebra, and we shall denote the differential of  $ \, \bigwedge_A \! L $  mentioned above by  $ \, d_{L^*} $  or  $ \delta_L \, $.
\end{definition}

\smallskip

\begin{remarks}  \label{props_Lie-bialg}  {\ }
 \vskip3pt
   {\it (a)}\,  The conditions of  Definition \ref{def_L-R_bialgebras}  do not change if one switches  $ L_1 $  and  $ L_2 $  (cf.~\cite{Kosmann}).
 \vskip3pt
   {\it (b)}\,  It follows from the definition that the differential  $ \delta_L $  of  $ L $  in a Lie-Rinehart bialgebra  $ \big( L, L^* \big) $  is uniquely determined by its restriction to  $ A $  and  $ L $   --- the degree 0 and degree 1 pieces of  $ \, \bigwedge_A \! L \; $.
 \vskip3pt
    {\it (c)}\,  If  $ \big( L, L^* \big) $  is a Lie-Rinehart bialgebra, we can read off the explicit relation between the Lie-Rinehart structure of  $ L^* $   --- its anchor map  $ \omega_{L^*} $  and its Lie bracket  $ {[\,\ ,\ ]}_{L^*} $  ---   and the differential  $ \delta_L $  of  $ L $  as follows: if  $ \, D^*, E^* \in L^* \, $,  $ \, X \in L \, $,  $ \, a \in A \, $,  then
 $ \; \omega_{L^*}(D^*)(a) = \big\langle\, \delta_L(a) \, , D^* \big\rangle \; $,
 $ \; \big\langle\, X \, , {[D^*, E^*]}_{L^*} \big\rangle  \, + \,  \big\langle\, \delta_L(X) \, , \, D^* \!\wedge E^* \big\rangle  \; = \;  \omega_{L^*}(D^*) \big( \langle\, X \, , E^* \rangle \big)  \, - \,  \omega_{L^*}(E^*)\big( \langle\, X \, , D^* \rangle \big) \; $,  \,
 where  $ \, \langle\,\ ,\ \rangle \, $  denotes the natural pairing between  $ L $  and  $ L^* \, $.  Indeed, one can use these formulas either to deduce  $ \omega_{L^*} $  and  $ {[\,\ ,\ ]}_{L^*} $  from  $ \delta_L \, $,  or to deduce the latter from  $ \omega_{L^*} $  and  $ {[\,\ ,\ ]}_{L^*} $
 \vskip3pt

    {\it (d)}\,  Let  $ \big( L \, , L^* \big) $  be a Lie-Rinehart bialgebra.  Denote by  $ d $  the differential on  $ \bigwedge_A \! L^* $  arising from the Lie-Rinehart structure on  $ L $  and  $ \, d_* \, (= \delta_L) \, $  the differential on  $ \bigwedge_A \! L $  coming from the Lie-Rinehart structure  on  $ L^* \, $.  Then  $ A $  inherits a Poisson algebra structure by
 $ \; \{f,g\} \, := \, \big\langle\, df , d_*g \big\rangle \; $  for all  $ \, f \, , g \in A \; $.
 (see   \cite{Kosmann}, \cite{Xu1});  moreover, one has  $ \; [\, df , dg \,] \, = \, d\{f,g\} \; $  and  $ \; d_*\{f,g\} = \, - [\, d_*f , d_*g \,] \; $.
 \vskip3pt
    {\it (e)}\,  If  $ \big( L \, , L^* \big) $  is a Lie-Rinehart bialgebra, then  $ \big( L^{\text{\it op}} , L^* \big) \, $,  $ \big( L \, , {(L^*)}^{\text{\it op}} \big) $  and  $ \big( L^{\text{\it op}} , {(L^*)}^{\text{\it op}} \big) $  are Lie-Rinehart bialgebras too.  Identifying any Lie-Rinehart bialgebra, as a pair, with the left-hand of the pair, say  $ \, L \equiv \big( L \, , L^* \big) \, $,  we write  $ \, L^{\text{\it op}} \equiv \big( L^{\text{\it op}} , L^* \big) \, $   --- the ``opposite'' to  $ L $  ---   $ \, L_{\text{\it coop}} \equiv \big( L \, , {(L^*)}^{\text{\it op}} \big) \, $   --- the ``coopposite'' ---   and  $ \, L^{\text{\it op}}_{\text{\it coop}} \equiv \big( L^{\text{\it op}} , {(L^*)}^{\text{\it op}} \big) \, $   --- the ``opposite-coopposite''.
 \vskip3pt
    {\it (f)}\,  By means of the so called  {\sl  $ r $--matrices\/}  one can introduce the class of  {\sl coboundary\/}  Lie-Rinehart bialgebras, and among them the triangular ones: we refer the interested readers to  \cite{Xu2}.
\end{remarks}

\bigskip

\section{Left and right bialgebroids}  \label{l/r-bialgbds}

\smallskip

   {\ } \quad   Let again  $ k $  be a field, and  $ A $  a unital, associative  $ k $--algebra.  We define  $ \, A^e := A \otimes_k A^{\text{\it op}} \; $.

\smallskip

 \subsection{$ A $--rings,  $ A $--corings}  \label{A-rings A-corings}

\smallskip

   {\ } \quad   We begin this section introducing the notions of  {\sl  $ A $--ring\/}  and  {\sl  $ A $--coring},  which are direct generalizations of the notions of algebra and coalgebra over a commutative ring.

\smallskip

\begin{definition}  \label{def_A-ring}
 Let  $ A $  be a  $ k $--algebra as above.  An  $ A $--ring  is a triple  $ (H , m_H , \iota) $  where $ H $  is an  $ A^e $--module,  $ \; m_H : H \otimes_A H \longrightarrow H \; $  and  $ \; \iota : A \longrightarrow H \; $  are  $ A^e $--module  morphisms such that
  $$  m_H \circ (m_H \otimes \text{\sl id}_H) \; = \; m_H \circ (\text{\sl id}_H \otimes m_H)  \quad , \qquad  m_H \circ (\iota \otimes \text{\sl id}_H) \; = \; id_H \, = \; m_H \circ (\text{\sl id}_H \otimes \iota)  $$
where in the second set of identities we make the identifications  $ \, H\otimes_A A \cong H \, $  and  $ \, A \otimes_A H \cong H \; $.
\end{definition}

\smallskip

   It is well known (see  \cite{Bohm2})  that  $ A $--rings  $ H $  correspond bijectively to  $ k $--algebra  homomorphisms  $ \, \iota : A \longrightarrow H \; $.  With this characterization, the  $ A^e $--module  structure on  $ H $  can be expressed as follows:
%
%
 $ \; a\cdot h\cdot b \, = \iota(a) \, h \, \iota(b) \; $  for  $ \, a, b \in A \, $,  $ \, h \in H \, $.  \,The  {\sl dual\/}  notion (``$ A $--coring'')  is the following:

\smallskip

\begin{definition}  \label{def_A-coring}
 An  $ A $--coring  is a triple  $ (C , \Delta , \epsilon) $  where  $ C $  is an  $ A^e $--module  (with left action  $ L_A $  and right action  $ R_A $),  $ \; \Delta : C \longrightarrow C \otimes_A C \; $  and  $ \; \epsilon : C \longrightarrow A \; $  are  $ A^e $--module  morphisms such that
  $$  (\Delta \otimes \text{\sl id}_C) \circ \Delta \; = \; (\text{\sl id}_C \otimes \Delta ) \circ \Delta  \quad  ,  \qquad  L_A \circ (\epsilon \otimes \text{\sl id}_C) \circ \Delta \; = \; \text{\sl id}_C \; = \; R_A \circ (\text{\sl id}_C \otimes \epsilon) \circ \Delta  $$
   \indent   As usual, we adopt Sweedler's  $ \Sigma $--notation  $ \, \Delta(c) = c_{(1)} \otimes c_{(2)} \, $  or  $ \, \Delta(c) = c^{(1)} \otimes c^{(2)} \, $  for  $ \, c \in C \, $.
\end{definition}

\smallskip

  \label{A-mods_spec-prods}
 Let  $ A $  be as above, and consider now  $ A^e $  as base  $ k $--algebra.  An  $ A^e $--ring  $ H $  can be described by a  $ k $--algebra  morphism  $ \; \iota : A^e \longrightarrow H \; $.  Let us consider its restrictions
 $ \; s^\ell \, := \, \iota(\text{--} \otimes_k 1_{{}_H}) : A \longrightarrow H \; $,
 $ \; t^\ell \, := \, \iota (1_{{}_H} \otimes_k \text{--}) : A \longrightarrow H \; $,
 \, which are called respectively  {\it source\/}  and  {\it target\/}  maps.  Thus an  $ A^e $--ring  $ H $  carries two (left)  $ A $--module  structures and two (right)  $ A^{\text{\it op}} $--module  structures: for all  $ \; a, a' \in A \, $,  $ \; h \in H \, $,  we write
  $$  a \triangleright h \triangleleft \tilde{a} \, := \, s^\ell(a) \, t^\ell(\tilde{a}) \, h  \quad ,  \qquad  a \blacktriangleright h \blacktriangleleft \tilde{a} \, := \, h \, t^\ell(a) \, s^\ell(\tilde{a})  $$
As usual, the tensor product of  $ H $  with itself (as an  $ A $--bimodule,  i.e.~an  $ A^e $--module)  is defined as
  $$  H_\triangleleft \,{\mathop \otimes_A}\, {}_\triangleright H  \; := \;
   \;  H \otimes_k H \Big/ {\big\{ \big(  u \; \triangleleft a \big) \otimes u' -
   u \otimes \big( a \triangleright  \, u' \big) \big\}}_{a \in A, \, u, u' \in H}    $$
   \indent   Now we  {\it define\/}  the  {\sl left Takeuchi product\/}  (of the  $ A^e $--ring  $ H $  with itself)
 $ \; H_\triangleleft \,{\mathop \times\limits_A}\, {}_\triangleright H \, \subseteq \, H_\triangleleft \,{\mathop \otimes\limits_A}\, {}_\triangleright H \; $
 $$
   H_\triangleleft \,{\mathop \times_A}\, {}_\triangleright H  \; := \;
    \Big\{\, {\textstyle \sum_i} u_i \otimes u'_i \in H_\triangleleft \,{\mathop \otimes_A}\, {}_\triangleright H \,\Big|\; {\textstyle \sum_i}
     \big(a \blacktriangleright  u_i \,  \big) \otimes_A u'_i =
     {\textstyle \sum_i} u_i \otimes \big( u'_i \, \blacktriangleleft a \big) \; , \,\; \forall \; a \in A \,\Big\}   $$
 \vskip4pt
   By construction,  $ \, H_\triangleleft \displaystyle{\,{\mathop \times_A}\,} {}_\triangleright H \, $  has a natural structure of  $ A^e $--module,  induced by that of  $ \, H_\triangleleft \displaystyle{\,{\mathop \otimes_A}\,} {}_\triangleright H \, $.  Even more,  $ \, H_\triangleleft \displaystyle{\,{\mathop \times_A}\,} {}_\triangleright H \, $  is an  $ A^e $--ring,  via factorwise multiplication, with unit element  $ \, 1_{{}_H} \otimes 1_{{}_H} \, $  and with  $ \; \iota_{H_\triangleleft \,{\mathop \times\limits_A}\, {}_\triangleright H}(a \otimes \tilde{a}) := s^\ell(a) \otimes t^\ell(\tilde{a}) \; $.  Note that this instead is  {\sl not\/}  the case for  $ \, H_\triangleleft \,{\mathop \otimes\limits_A}\, {}_\triangleright H \, $.

\smallskip

 \subsection{Left bialgebroids}  \label{l-bialgds}

\smallskip

   {\ } \quad   We introduce now the notion of  {\sl left bialgebroid},  as well as some related items (see  \cite{Takeuchi},  \cite{Lu},  \cite{Xu2},  \cite{Bohm2}  and  \cite{Kowalzig}, Chapter 2, for a detailed history of this notion).  We begin with the very definition:

\begin{definition}
 A left  $ A $--bialgebroid  is a  $ k $--module  $ H $  that carries simultaneously a structure of an
$ A^e $--ring  $ (H, s^\ell, t^\ell) $  and of an  $ A $--coring  $ (H, \Delta_\ell , \epsilon) $  subject to the following compatibility relations:
 \vskip4pt
   (i) \,  The $ A^e $--module structure on the  $ A $-coring  $ (H, \Delta_\ell, \epsilon) $  is that of  $ \, {}_{\triangleright}H_{\triangleleft} \, $,  namely (for all  $ \, a, \tilde{a} \in A \, $,  $ \, h \in H \, $)  $ \; a \triangleright h \triangleleft \tilde{a} \, := \, s^\ell(a) \, t^\ell(\tilde{a}) \, h \; $.
 \vskip4pt
   (ii) \,  The coproduct map  $ \Delta^\ell $  is a unital  $ k $--algebra  morphism taking values in  $ \, \displaystyle{ H {{}_\triangleleft}{\mathop \times_A}{{}_\triangleright} H } \, $.
 \vskip2pt
   (iii) \,  The (left) counit map  $ \epsilon $  has the following property: for  $ \, a, \tilde{a} \in A \, $,  $ \, u, u' \in H \, $,  \, one has
  $$  \epsilon \big( s^\ell(a) \, t^\ell(\tilde{a}) \, u \big)  \; = \;  a \, \epsilon(u) \, \tilde{a}  \quad ,  \qquad \epsilon(u\,u')  \; = \;  \epsilon \big( u \, s^\ell\big(\epsilon(u')\big)\big)  \; = \;  \epsilon \big( u \, t^\ell\big(\epsilon(u') \big) \big)\quad , \qquad \epsilon (1)=1  $$
\end{definition}

\smallskip

\begin{remarks}  \label{left-bialgd-propts}
 A left bialgebroid  $ H $  over  $ A $  has the following properties (for  $ \, a \in A \, $,  $ \, u \in H \, $):
  $$  \displaylines{
   \quad \text{\it (a)}   \phantom{|} \hfill   \Delta_\ell(a \triangleright u \triangleleft \tilde{a})  \; = \;  (a \triangleright u_{(1)}) \otimes (u_{(2)} \triangleleft \tilde{a})  \quad ,
\qquad  \Delta_\ell\big(a \blacktriangleright u \blacktriangleleft \tilde{a}\big)  \; = \;  (u_{(1)} \blacktriangleleft \tilde{a}) \otimes (a \blacktriangleright u_{(2)})   \hfill  \cr
   \text{(using  $ \Sigma $--notation  $ \; \Delta(u) = u_{(1)} \otimes u_{(2)} \; $  as usual,  cf.~Definition \ref{def_A-coring})}   \hfill  }  $$
 \vskip-7pt
\noindent
   \quad \text{\it (b)}   \phantom{$ \Big| $}  $ H $  acts on  $ A $  on the left  (cf.~\cite{Kowalzig})  by
 $ \; u.a \, := \, \epsilon\big( u \, s^\ell(a) \big) \, = \, \epsilon\big( u \, t^\ell(a) \big) \; $;
 we shall also use the notation  $ \; u(a) := u.a \; $,  and call this  {\sl the left anchor of the left bialgebroid  $ H $}  (cf.~\cite{Xu2}).
 \vskip5pt
\noindent
   \quad \text{\it (c)}  \qquad  $ \displaystyle{ t^\ell\big(\epsilon(x)\big) \otimes 1  \, = \, t^\ell\big(\epsilon\big(x_{(1)}\big)\big) \otimes s^\ell\big(\epsilon\big(x_{(2)}\big)\big) \, = \,  1 \otimes s^\ell\big(\epsilon(x)\big) } $  \qquad  for all  $ \; x \in H \; $.
 \vskip6pt
\noindent
   \quad \text{\it (d)}  \,  as a matter of notation, if  $ \, \big( H, A, s^\ell, t^\ell, \Delta, \epsilon \big) \, $  is a left bialgebroid, we set  $ \, H^+ := \text{\sl Ker}(\epsilon) \, $.
\end{remarks}

\vskip11pt

\begin{definition}  \label{def-categ_left-algd}
 Let  $ \, {\mathcal H} = \big( H, A, s^\ell, t^\ell, \Delta, \epsilon \big) \, $  and  $ \, \hat{\mathcal H} = \big( \hat{H}, \hat{A}, \hat{s}^\ell, \hat{t}^\ell, \hat{\Delta}, \hat{\epsilon} \big) \, $  be two left bialgebroids.  A morphism of left bialgebroids from  $ {\mathcal H} $  to  $ \hat{\mathcal H} $  is a pair  $ (f,F) $  where
 $ \; f : A \longrightarrow \hat{A} \; $  is a morphism of algebras,
 $ F: H \longrightarrow \hat{H} \; $  is a morphism of rings and of corings,
 and  $ \,\; F \circ s^\ell \, = \, \hat{s}^\ell \circ f \; $,  $ \; F \circ t^\ell \, = \, \hat{t}^\ell \circ f \; $.
 \vskip3pt
   We denote by  {\rm (LBialg)}  the category of left bialgebroids, whose objects are left bialgebroids and morphisms are defined as above.  Inside it,  {\rm ($ \text{LBialg}_A $)}  is the subcategory whose objects are all the left bialgebroids over  $ A \, $,  and whose morphisms are all the morphisms in  {\rm (LBialg)}  of the form  $ (\text{\sl id}, F) \, $.
\end{definition}

\smallskip

\begin{free text}
 {\bf Twistors of left bialgebroids.}  Let  $ H $  be a left bialgebroid.  Given  $ \, \mathcal{F} = \sum_i x_i \otimes y_i \in H_\triangleleft{\mathop \otimes\limits_A}{}_{\,\triangleright} H \, $  (with  $ \, x_i , \, y_i \in H \, $),  define  $ \; s^\ell_{\mathcal{F}} : A \longrightarrow H \; $  by  $ \; s^\ell_{\mathcal{F}}(a) := {\textstyle \sum_i} \, s^\ell\big(x_i (a) \big) \, y_{i} \; $  and  $ \; t^\ell_{\mathcal{F}} : A \longrightarrow H \; $  by  $ \; t^\ell_{\mathcal{F}}(a) = {\textstyle \sum_i} \, t^\ell\big(y_i (a)\big) \, x_i \; $.  Moreover, for any  $ \, a , b \in A \, $  set  $ \; a *_{\mathcal{F}} b := s^\ell_{\mathcal{F}}(a)(b) = {\textstyle \sum_i} x_i(a) \, y_i(b) \; $.
\end{free text}

\smallskip

\begin{proposition}  \label{twistor_properties}
    (cf.~\cite{Xu2})
 Assume that  $ \, \mathcal{F} \in H \,{\mathop \otimes\limits_A}\, H \, $  satisfies the following conditions:
 \vskip-11pt
  $$  \displaylines{
   \qquad (i)  \qquad   (\Delta \otimes \text{\sl id})(\mathcal{F}) \cdot \mathcal{F}_{1,2}  \; = \; (\text{\sl id} \otimes \Delta)(\mathcal{F}) \cdot \mathcal{F}_{2,3}  \qquad  \text{inside\ }\;\,  H \otimes_{\!{}_{\scriptstyle A}} \! H \otimes_{\!{}_{\scriptstyle A}} \! H   \hfill  \cr
   \qquad (ii)  \qquad \qquad   m\big((\epsilon \otimes \text{\sl id})(\mathcal{F})\big)  \; = \; 1_{\!{}_H} \quad ,  \qquad  m\big((\text{\sl id} \otimes \epsilon)\mathcal{F}\big)  \; = \;  1_{\!{}_H}   \hfill  }  $$
 \vskip-3pt
\noindent
 where  $ \; \mathcal{F}_{1,2} \, = \, \mathcal{F} \otimes 1_{\!{}_H} \, \in \, H \otimes_{\!{}_{\scriptstyle A}} \! H \otimes_{\!{}_{\scriptstyle A}} \! H \; $  and  $ \; \mathcal{F}_{2,3} \, = \, 1_{\!{}_H} \otimes \mathcal{F} \, \in \, H \otimes_{\!{}_{\scriptstyle A}} \! H \otimes_{\!{}_{\scriptstyle A}} \! H \; $.
                                                                             \par
   Then one has
 $ \,\; \mathcal{F} \cdot \big( t^\ell_{\mathcal{F}}(a) \otimes 1_{\!{}_H} - 1_{\!{}_H} \otimes s^\ell_{\mathcal{F}}(a) \big) \, = \, 0 \;\, $  inside  $ \, H \otimes_{\!{}_{\scriptstyle A}} \! H \, $  (for all  $ \, a \in A \, $).
 \vskip4pt
   Moreover, if  $ \mathcal{F} $  satisfies (i) and (ii) above, then
 \vskip4pt
   \hskip-11pt   (a)  $ \;\; (A, *_{\mathcal{F}}) \, $  is an associative algebra, denoted  $ A_{\mathcal{F}} \, $,  and  $ \; a *_{\mathcal{F}} 1 \, = \, a \, = \, 1 *_{\mathcal{F}} a \; $  for all  $ \, a \in A \, $;
 \vskip3pt
   \hskip-11pt   (b)  $ \;\; s^\ell_{\mathcal{F}} : A_{\mathcal{F}} \longrightarrow H \; $  is an algebra morphism and  $ \; t^\ell_{\mathcal{F}} : A_{\mathcal{F}} \longrightarrow H \; $  is an algebra antimorphism.
\end{proposition}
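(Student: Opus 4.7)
The plan is to deduce all three claims from the cocycle condition (i) and the counital conditions (ii), by means of the standard left-bialgebroid identities: $\epsilon \circ s^\ell = \text{\sl id}_A = \epsilon \circ t^\ell$; the counit relations $u = \sum s^\ell\big(\epsilon(u_{(1)})\big) \, u_{(2)} = \sum t^\ell\big(\epsilon(u_{(2)})\big) \, u_{(1)}$; the formula $u(a) = \epsilon\big(u \, s^\ell(a)\big)$ for the left anchor; property (c) of Remarks \ref{left-bialgd-propts}; and the ``module-algebra'' Leibniz rule $u(ab) = \sum u_{(1)}(a) \cdot u_{(2)}(b)$ for $u \in H$, $a, b \in A$, which follows formally from the bialgebroid axioms.

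To prove $\mathcal{F} \cdot \big( t^\ell_{\mathcal{F}}(a) \otimes 1_H - 1_H \otimes s^\ell_{\mathcal{F}}(a) \big) = 0$, I would start from the three-fold cocycle relation (i) inside $H \otimes_A H \otimes_A H$, multiply both sides on the right by $1_H \otimes 1_H \otimes s^\ell(a)$, and then apply $\text{\sl id} \otimes \text{\sl id} \otimes \epsilon$, reinterpreting the $A$-valued third slot as an action on the second factor (using the $A$-bimodule structure so that the output lands in $H \otimes_A H$). By the bialgebroid axioms, the image of the LHS of (i) collapses exactly to $\mathcal{F} \cdot \big( t^\ell_{\mathcal{F}}(a) \otimes 1_H \big)$, and the image of the RHS to $\mathcal{F} \cdot \big( 1_H \otimes s^\ell_{\mathcal{F}}(a) \big)$, yielding the claim.

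For part (a), expanding $(\alpha *_{\mathcal{F}} \beta) *_{\mathcal{F}} \gamma$ by the Leibniz rule gives $\sum_{i,j} \big( (x_j)_{(1)} x_i \big)(\alpha) \cdot \big( (x_j)_{(2)} y_i \big)(\beta) \cdot y_j(\gamma)$, which is the evaluation of $(\Delta \otimes \text{\sl id})(\mathcal{F}) \cdot \mathcal{F}_{1,2}$ at $\alpha \otimes \beta \otimes \gamma$; similarly $\alpha *_{\mathcal{F}} (\beta *_{\mathcal{F}} \gamma)$ matches the evaluation of $(\text{\sl id} \otimes \Delta)(\mathcal{F}) \cdot \mathcal{F}_{2,3}$, so the cocycle condition (i) equates them. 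Unitality of $*_{\mathcal{F}}$ reduces directly to (ii), since $y_i(1) = \epsilon(y_i)$ and $x_j(1) = \epsilon(x_j)$. For part (b), the identities $s^\ell_{\mathcal{F}}(\alpha *_{\mathcal{F}} \beta) = s^\ell_{\mathcal{F}}(\alpha) \, s^\ell_{\mathcal{F}}(\beta)$ and $t^\ell_{\mathcal{F}}(\alpha *_{\mathcal{F}} \beta) = t^\ell_{\mathcal{F}}(\beta) \, t^\ell_{\mathcal{F}}(\alpha)$ follow from parallel computations, in which both sides can be read off as images in $H$ of the two sides of (i) after evaluating with the anchor on the appropriate slots. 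The main technical obstacle is the first identity: the Takeuchi-product constraints and the non-symmetric left/right $A$-actions on $H$ require careful bookkeeping to ensure every term is well-defined at each step; once it is in hand, (a) and (b) follow essentially formally.
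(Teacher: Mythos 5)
First, a point of context: the paper does not actually prove this proposition --- it is quoted from \cite{Xu2} with ``(cf.~\cite{Xu2})'' and no argument is given --- so your attempt can only be measured against the standard argument. Your treatment of (a) and (b) is sound in outline and is indeed the standard route: the anchor evaluation $u\otimes v\otimes w \mapsto u(\alpha)\,v(\beta)\,w(\gamma)$ (and its variants keeping one slot in $H$) is well defined on $H\otimes_A H\otimes_A H$ because $v\mapsto v(\alpha)$ intertwines the relevant $A$--actions, the module--algebra property $u(ab)=u_{(1)}(a)\,u_{(2)}(b)$ turns the two sides of (i) into $(\alpha *_{\mathcal F}\beta)*_{\mathcal F}\gamma$ and $\alpha *_{\mathcal F}(\beta *_{\mathcal F}\gamma)$, and (ii) gives unitality; the morphism/antimorphism statements for $s^\ell_{\mathcal F}$, $t^\ell_{\mathcal F}$ follow the same way using $s^\ell\big(u_{(1)}(b)\big)u_{(2)}=u\,s^\ell(b)$ and $t^\ell\big(u_{(2)}(b)\big)u_{(1)}=u\,t^\ell(b)$.

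There is, however, a genuine gap exactly at the step you flag as the main one. Your recipe --- multiply (i) on the right by $1\otimes 1\otimes s^\ell(a)$, apply $\mathrm{id}\otimes\mathrm{id}\otimes\epsilon$, and feed the resulting $A$--value into the second factor --- does \emph{not} collapse the two sides to $\mathcal F\cdot\big(t^\ell_{\mathcal F}(a)\otimes 1\big)$ and $\mathcal F\cdot\big(1\otimes s^\ell_{\mathcal F}(a)\big)$. Writing $\mathcal F=\sum_i x_i\otimes y_i$, the left side of (i) is $\sum_{i,j}(x_j)_{(1)}x_i\otimes (x_j)_{(2)}y_i\otimes y_j$; after your operation the factors $(x_j)_{(1)}$ and $(x_j)_{(2)}$ still sit in two different tensor slots and no counit ever reaches them, so they cannot recombine into a single $x_j$. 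What your map actually yields (using $\Delta\big(t^\ell(\tilde a)u\big)=u_{(1)}\otimes t^\ell(\tilde a)u_{(2)}$ on the left and $t^\ell\big(u_{(2)}(b)\big)u_{(1)}=u\,t^\ell(b)$ on the right) is the identity $\Delta\big(t^\ell_{\mathcal F}(a)\big)\cdot\mathcal F=\mathcal F\cdot\big(1\otimes t^\ell_{\mathcal F}(a)\big)$ --- true, but not the statement to be proved. The correct move is to evaluate the anchor at $a$ on the \emph{middle} slot: since $v\mapsto v(a)=\epsilon\big(v\,s^\ell(a)\big)$ is an $(A,A)$--bimodule map from ${}_\triangleright H_\triangleleft$ to $A$, the induced map $H\otimes_A H\otimes_A H\to H\otimes_A H$, $u\otimes v\otimes w\mapsto t^\ell\big(v(a)\big)u\otimes w=u\otimes s^\ell\big(v(a)\big)w$, is well defined with no Takeuchi bookkeeping at all; applying it to (i) and using $t^\ell\big(u_{(2)}(b)\big)u_{(1)}=u\,t^\ell(b)$ on the left and $s^\ell\big(u_{(1)}(b)\big)u_{(2)}=u\,s^\ell(b)$ on the right collapses the two sides precisely to $\mathcal F\cdot\big(t^\ell_{\mathcal F}(a)\otimes 1\big)$ and $\mathcal F\cdot\big(1\otimes s^\ell_{\mathcal F}(a)\big)$ (note that (ii) is not needed for this part). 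So the idea of ``hitting (i) with a counit weighted by $a$'' is right, but the slot you chose makes the asserted collapse fail; as written the first claim is not established.
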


\medskip

  Now let  $ M $  be a module over  $ H $  (as an algebra): then  $ M $  has also a natural $ A^e $--module  structure.  If  $ \mathcal{F} $  is a twistor of  $ H $,  then  $ M $  has also a natural  $ A_{\mathcal{F}}^e $--module  structure.  Consequently, if  $ M_1 $  and  $ M_2 $  are two  $ H $--modules,  the tensor products
 $ \; M_1 {{}_{\triangleleft\,}}{\mathop\otimes\limits_A}{{}_{\,\triangleright}} M_2 \; $
and
 $ \; M_1 {{}_{\triangleleft}}{\mathop\otimes\limits_{\,A_{\mathcal{F}}}}{{}_\triangleright} M_2 \; $
are well defined.

\smallskip

\begin{corollary}
 (cf.~\cite{Xu2}) Let  $ M_1 $  and  $ M_2 $  be two left  $ H $--modules.  Then
there exists a well defined  $ k $--linear  map
 $ \; \mathcal{F}^\# : M_1 {{}_{\triangleleft}}{\mathop\otimes\limits_{\,A_{\mathfrak F}}}{{}_\triangleright} M_2 \relbar\joinrel\longrightarrow
  M_1 {{}_{\,\triangleleft}}{\mathop\otimes\limits_A}{{}_{\,\triangleright}} M_2 \; $  given by  $ \; m_1 \otimes m_2 \mapsto \mathcal{F} \cdot (m_1 \otimes m_2) \; $.
\end{corollary}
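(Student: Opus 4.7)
The plan is to show that the prescription $m_1 \otimes m_2 \mapsto \mathcal{F} \cdot (m_1 \otimes m_2) = \sum_i x_i m_1 \otimes y_i m_2$ descends from $M_1 \otimes_k M_2$ to a well-defined map with source $M_1 {}_\triangleleft \!\otimes_{A_{\mathcal{F}}}\! {}_\triangleright M_2$ and target $M_1 {}_\triangleleft \!\otimes_A\! {}_\triangleright M_2$. The defining relations of the source are generated, for $a \in A$, $m_1 \in M_1$, $m_2 \in M_2$, by $t^\ell_{\mathcal{F}}(a)\cdot m_1 \otimes m_2 \sim m_1 \otimes s^\ell_{\mathcal{F}}(a)\cdot m_2$ (since the $A_{\mathcal{F}}^e$-module structure on each $M_i$ comes by restriction through $s^\ell_{\mathcal{F}}, t^\ell_{\mathcal{F}}$ from its $H$-module structure); so the only thing to check is that these two relations map to the same element of $M_1 {}_\triangleleft \!\otimes_A\! {}_\triangleright M_2$.

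First I would record the general fact that, because $M_1$ and $M_2$ are left $H$-modules, the formula $(h_1 \otimes h_2) \cdot (m_1 \otimes m_2) := h_1 m_1 \otimes h_2 m_2$ induces a well-defined $k$-linear action of $H \otimes_A H$ (tensor taken with respect to the $\triangleleft, \triangleright$ bimodule structure of $H$) on $M_1 {}_\triangleleft \!\otimes_A\! {}_\triangleright M_2$: this is a direct check that the relations match on both sides. In particular, $\mathcal{F} \in H \otimes_A H$ acts, and more generally so does any product $\mathcal{F} \cdot (h \otimes h')$ in $H \otimes_A H$.

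Next, for fixed $a \in A$, I would rewrite both sides of the required identity as such actions: the image of $t^\ell_{\mathcal{F}}(a) m_1 \otimes m_2$ under $\mathcal{F}^\#$ equals the action of $\mathcal{F} \cdot \bigl(t^\ell_{\mathcal{F}}(a) \otimes 1_H\bigr)$ on $m_1 \otimes m_2$, while the image of $m_1 \otimes s^\ell_{\mathcal{F}}(a) m_2$ equals the action of $\mathcal{F} \cdot \bigl(1_H \otimes s^\ell_{\mathcal{F}}(a)\bigr)$ on $m_1 \otimes m_2$. Now I would invoke the identity from Proposition \ref{twistor_properties}, namely $\mathcal{F} \cdot \bigl(t^\ell_{\mathcal{F}}(a)\otimes 1_H - 1_H \otimes s^\ell_{\mathcal{F}}(a)\bigr) = 0$ inside $H \otimes_A H$, to conclude that these two elements of $H \otimes_A H$ coincide, and therefore act in the same way on $m_1 \otimes m_2$.

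This gives the well-definedness of $\mathcal{F}^\#$, and $k$-linearity is automatic from the construction. The only potentially delicate point is the initial bookkeeping check that $H \otimes_A H$ genuinely acts on $M_1 {}_\triangleleft \!\otimes_A\! {}_\triangleright M_2$ in the stated way; once that is in place, the whole argument is a one-line application of the key identity provided by the preceding proposition, so I do not anticipate any serious obstacle.
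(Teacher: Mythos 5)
Your argument is correct and is essentially the intended one: the paper gives no proof of this corollary (deferring to Xu), but the displayed identity $\mathcal{F}\cdot\big(t^\ell_{\mathcal{F}}(a)\otimes 1_H - 1_H\otimes s^\ell_{\mathcal{F}}(a)\big)=0$ inside $H\otimes_A H$ from Proposition \ref{twistor_properties} is exactly the ingredient it is meant to follow from, and you apply it in the natural way to the generators $t^\ell_{\mathcal{F}}(a)\,m_1\otimes m_2 - m_1\otimes s^\ell_{\mathcal{F}}(a)\,m_2$ of the relations defining the source. One caveat on your preliminary step: a genuine action of all of $H\otimes_A H$ on the quotient $M_1\,{}_\triangleleft\!\otimes_A{}_\triangleright M_2$ is \emph{not} well defined in the module variable (this failure is precisely why Takeuchi products are introduced), but your proof never uses that much --- it only needs that, for each fixed pair $(m_1,m_2)$, the assignment $h_1\otimes h_2\mapsto h_1 m_1\otimes h_2 m_2$ yields a well-defined map $H\otimes_A H\longrightarrow M_1\,{}_\triangleleft\!\otimes_A{}_\triangleright M_2$ (i.e.\ well-definedness in the $H\otimes_A H$ variable alone), which is the straightforward check you indicate and all that your two displayed evaluations require.
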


\smallskip

   We say that  $ \mathcal{F} $  {\it is invertible\/}  if  $ \mathcal{F}^{\#} $  is a  $ k $--vector  space isomorphism for any choice of  $ M_1 $  and  $ M_2 \, $.  In this case, taking  $ \, M_1 = M_2 = H \, $  we get a  $ k $--linear  isomorphism  $ \,\; \mathcal{F}^\# : H \otimes_{\!{}_{\scriptstyle A_{\mathcal{F}}}} \!\! H \longrightarrow H \otimes_{\!{}_{\scriptstyle A}} \! H \;\, $.

\smallskip

\begin{definition}  \label{def-twistor}
 An element  $ \, \mathcal{F} \in H \,{\mathop \otimes\limits_A}\, H \, $  is called a  {\sl twistor}  (of  $ H $)  if it satisfies equations (i) and (ii) in  Proposition \ref{twistor_properties}  and it is invertible (in the just explained sense).
\end{definition}

\smallskip

   Let now  $ \mathcal{F} $  be a twistor of  $ H \, $.  Then we may define a new coproduct
$ \; \Delta_{\mathcal{F}} : H \longrightarrow H \otimes_{\!{}_{\scriptstyle A_{\mathcal{F}}}} \!\!\! H \; $  of  $ H $  by the formula  $ \,\; \Delta_{\mathcal{F}}(x) \, := \, {\big( \mathcal{F}^\# \big)}^{-1} \big( \Delta (x) \cdot \mathcal{F} \big) \;\, $.  The key result is then the following (see  \cite{Xu2}):

\smallskip

\begin{theorem}  \label{twisted_left-bialgd}
 Let  $ \, \big( H , A \, , s^\ell \, , t^\ell \, , m \, , \Delta \, , \epsilon \big) $  be a left bialgebroid.  Then  $ \, \big( H , A_{\mathcal{F}} , s^\ell_{\mathcal{F}} , t^\ell_{\mathcal{F}} , m \, , \Delta_{\mathcal{F}} , \epsilon \big) \, $  is a left bialgebroid as well.
\end{theorem}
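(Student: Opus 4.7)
The strategy is to verify, for the twisted septuple $\bigl(H, A_{\mathcal{F}}, s^\ell_{\mathcal{F}}, t^\ell_{\mathcal{F}}, m, \Delta_{\mathcal{F}}, \epsilon\bigr)$, each axiom of a left bialgebroid in turn, systematically exploiting the twistor conditions (i) and (ii) of Proposition \ref{twistor_properties}, the invertibility of $\mathcal{F}^\#$, and the ``quasi-commutation'' identity $\mathcal{F}\cdot\bigl(t^\ell_{\mathcal{F}}(a)\otimes 1_H - 1_H\otimes s^\ell_{\mathcal{F}}(a)\bigr)=0$ derived there. Proposition \ref{twistor_properties} already supplies the associative algebra structure on $A_{\mathcal{F}}$ together with the facts that $s^\ell_{\mathcal{F}}$ is a $k$-algebra morphism and $t^\ell_{\mathcal{F}}$ a $k$-algebra antimorphism from $A_{\mathcal{F}}$ to $H$. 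The remaining piece of the $A_{\mathcal{F}}^e$-ring axiom -- commutation of the images of $s^\ell_{\mathcal{F}}$ and $t^\ell_{\mathcal{F}}$ in $H$ -- can be extracted from the quasi-commutation identity by applying $(\epsilon\otimes\mathrm{id})$, multiplying, and invoking both the unital identity $m\bigl((\epsilon\otimes\mathrm{id})(\mathcal{F})\bigr)=1_H$ from (ii) and the original counit identity $\epsilon(xy)=\epsilon\bigl(x\, t^\ell(\epsilon(y))\bigr)$.

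Next, the coring axioms for $(H,\Delta_{\mathcal{F}},\epsilon)$ over $A_{\mathcal{F}}$ reduce to two standard twist computations. For coassociativity, I apply $(\Delta\otimes\mathrm{id})$ and $(\mathrm{id}\otimes\Delta)$ to $\Delta(x)\cdot\mathcal{F}$ and combine the original coassociativity with the 2-cocycle identity (i), namely $(\Delta\otimes\mathrm{id})(\mathcal{F})\cdot\mathcal{F}_{1,2}=(\mathrm{id}\otimes\Delta)(\mathcal{F})\cdot\mathcal{F}_{2,3}$; pulling the outcome back through the appropriate iterates of $\mathcal{F}^\#$ yields $(\Delta_{\mathcal{F}}\otimes\mathrm{id})\Delta_{\mathcal{F}}=(\mathrm{id}\otimes\Delta_{\mathcal{F}})\Delta_{\mathcal{F}}$. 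Counitality is immediate from (ii) together with the counit axiom for $\Delta$.

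It then remains to handle the bialgebroid compatibilities. That $\Delta_{\mathcal{F}}$ takes values in the Takeuchi product $H\times_{A_{\mathcal{F}}} H$ is exactly the quasi-commutation identity of Proposition \ref{twistor_properties}, pulled back through $(\mathcal{F}^\#)^{-1}$. The twisted counit conditions $\epsilon(uu')=\epsilon\bigl(u\, s^\ell_{\mathcal{F}}(\epsilon(u'))\bigr)=\epsilon\bigl(u\, t^\ell_{\mathcal{F}}(\epsilon(u'))\bigr)$ follow from the untwisted ones by unfolding the definitions of $s^\ell_{\mathcal{F}}$ and $t^\ell_{\mathcal{F}}$ and using (ii). The main obstacle, in my view, is to verify that $\Delta_{\mathcal{F}}\colon H\to H\times_{A_{\mathcal{F}}} H$ is a unital $k$-algebra morphism: expanding $\Delta_{\mathcal{F}}(xy)=(\mathcal{F}^\#)^{-1}\bigl(\Delta(x)\Delta(y)\cdot\mathcal{F}\bigr)$ and rearranging requires careful bookkeeping to shuttle $\mathcal{F}$ back and forth through the two distinct bimodule structures, simultaneously using both twistor conditions, the invertibility of $\mathcal{F}^\#$, and the balancing relations defining $\otimes_{A_{\mathcal{F}}}$. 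This is the algebroid analogue of Drinfeld's classical Hopf-algebra twist, complicated here by the Takeuchi quotients.
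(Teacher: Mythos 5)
First, a point of reference: the paper does not prove this theorem at all — it is quoted from Xu's work and justified only by the citation \cite{Xu2} (the proof being Xu's own verification in \emph{Quantum groupoids}). So there is no ``paper's proof'' to match your approach against; what matters is whether your argument would stand on its own. As it is written, it does not: it is a plan that correctly lists the axioms to be checked and the right ingredients (Proposition \ref{twistor_properties}, invertibility of $\mathcal{F}^{\#}$, the quasi-commutation identity), but the two genuinely nontrivial verifications are left undone. You yourself flag multiplicativity of $\Delta_{\mathcal{F}}$ into the Takeuchi product as ``the main obstacle'' requiring ``careful bookkeeping'' — and then stop there. This is precisely where the content of the theorem lies: factorwise multiplication is \emph{not} defined on all of $H\otimes_A H$ or $H\otimes_{A_{\mathcal F}}H$, only on the Takeuchi subspaces, so before one can ``expand $\Delta_{\mathcal F}(xy)=(\mathcal F^{\#})^{-1}(\Delta(x)\Delta(y)\cdot\mathcal F)$ and rearrange'' one must establish that the intermediate products (e.g.\ $\Delta(x)\cdot\mathcal F^{\#}(w)$ versus $\mathcal F^{\#}(\Delta_{\mathcal F}(x)\cdot w)$ for $w$ in the twisted Takeuchi product) are well defined and associate as claimed; the same well-definedness issue already infects your argument that $\Delta_{\mathcal F}$ lands in $H\times_{A_{\mathcal F}}H$, where you multiply the quasi-commutation identity by $\Delta(x)$ and pull back through $(\mathcal F^{\#})^{-1}$ without justifying that these operations commute on the relevant quotients.

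Second, the $A_{\mathcal F}^{\,e}$-ring axiom needs the images of $s^\ell_{\mathcal F}$ and $t^\ell_{\mathcal F}$ to commute in $H$, and your recipe for this — ``applying $(\epsilon\otimes\mathrm{id})$, multiplying, and invoking'' condition (ii) — is not a derivation. Applying the (well-defined) map $u\otimes u'\mapsto s^\ell(\epsilon(u))\,u'$ to $\mathcal F\cdot\big(t^\ell_{\mathcal F}(a)\otimes 1-1\otimes s^\ell_{\mathcal F}(a)\big)=0$ produces, via (ii), an identity of the form $\sum_i s^\ell\big(\epsilon(x_i\,t^\ell_{\mathcal F}(a))\big)\,y_i=s^\ell_{\mathcal F}(a)$, which is not the commutation relation $s^\ell_{\mathcal F}(a)\,t^\ell_{\mathcal F}(b)=t^\ell_{\mathcal F}(b)\,s^\ell_{\mathcal F}(a)$; that relation requires its own computation with the twistor conditions (it is carried out in \cite{Xu2}). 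Likewise the $A_{\mathcal F}$-bilinearity of $\epsilon$, i.e.\ $\epsilon\big(s^\ell_{\mathcal F}(a)\,t^\ell_{\mathcal F}(b)\,u\big)=a*_{\mathcal F}\epsilon(u)*_{\mathcal F}b$, and the compatibility of the coring's $A_{\mathcal F}^{\,e}$-module structure with axiom (i), are not addressed. In short: the skeleton is the right one (it is essentially the skeleton of Xu's proof), but every step you would need to write out in full is exactly a step you have deferred, so as a proof the proposal has a genuine gap.
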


\smallskip

\begin{free text}  \label{V^ell(L)_left-bialgd}
 {\bf Left bialgebroid structures on universal enveloping algebras  $ V^\ell(L) \, $.}  Given a Lie-Rinehart algebra  $ L \, $,  there is a standard left bialgebroid structure on  $ V^\ell(L) \, $.
                                                                \par
   Source and target maps are equal and given by  $ \, \iota_A : A \longrightarrow V^\ell(L) \, $.   Then the  $ A^e $--module structure  $ \, {}_\triangleright V^\ell(L)_\triangleleft \, $  is given by  $ \; a \triangleright u \triangleleft \tilde{a} \, := \, a \, \tilde{a} \, u \; $.  The coproduct  $ \; \Delta_\ell : V^\ell(L) \longrightarrow V^\ell(L)_{\triangleleft } \,{\mathop \otimes\limits_A}\, {}_{\triangleright}V^\ell(L) \; $  and the counit map  $ \; \epsilon : V^\ell(L) \longrightarrow A \; $  are determined by
  $$  \Delta_\ell(a) \, = \, a \otimes 1 \; ,  \quad  \Delta_\ell(X) \, = \, X \otimes 1 \, + \, 1 \otimes X \; ,  \qquad  \epsilon(a) \, = \, a \; ,  \quad  \epsilon(X) \, = \, 0  \qquad   \eqno  \forall \; a \in A \; ,  \; X \in L  \quad  $$
Note that the anchor map  $ \omega $  endows  $ A $  with an obvious left  $ V^\ell(L) $--module  structure, given by  $ \, u.a := \omega(u)(a) \, $  for  $ \, u \! \in \! V^\ell(L) \, $,  $ \, a \! \in \! A \, $,  that coincides with the anchor of the left bialgebroid  $ V^\ell(L) \, $;  cf.\  Remarks \ref{left-bialgd-propts}{\it (b)}.
                                                                    \par
 More in general, left  $ V^\ell(L) $--module  structures on  $ A $  correspond to left bialgebroid structures on  $ V^\ell(L) $  over  $ A \, $ (see  \cite{Kowalzig}).  Finally, one can recover the anchor of  $ L $  from the left bialgebroid structure of  $ V^\ell(L) $  as follows:
 $ \; \omega_L(X)(a) = \epsilon_{\scriptscriptstyle V^\ell(L)}(X \, a) \; $  for all  $ \, X \in L \, $,  $ \, a \in A \, $.
\end{free text}

\smallskip

\begin{remark}
 Let  $ (A,L) $  and  $ (A',L') $  be two Lie-Rinehart algebras.  Endow  $ V^\ell(L) $  and  $ V^\ell(L') $  with their standard left bialgebroid structure.  A Lie-Rinehart algebra morphism from  $ (A,L) $  to  $ (A',L') $  as it was defined in  \cite{Huebschmann1}   --- see also ``morphisms of Lie pseudo-algebras'' as they are defined in  \cite{Higgins and Mackenzie}  ---   gives rise to a left bialgebroid morphism from  $ V^\ell(L) $  to  $ V^\ell(L') \, $.
\end{remark}

\smallskip

   Our next theorem is a suitable version for left bialgebroids of the well-known Cartier-Milnor-Moore theorem (for Hopf algebras).  A similar result is given in  \cite{Moerdijk and Mrcun},  yet in this paper we do need (later on) that kind of result exactly as stated here below.

\smallskip

\begin{theorem}  \label{left-bialgd_V^ell(L)}
 Assume that  $ A $  is a unital commutative algebra over the field  $ k \, $.
 \vskip5pt
   (a) \, Let  $ (U, A,s^\ell,t^\ell, \Delta_\ell, \epsilon) $  be a left bialgebroid such that  $ \, s^\ell = t^\ell \, $.  Set
  $$  P^\ell(U)  \; := \;  \big\{\, u \in U \,\big|\, \Delta_\ell(u) = u \otimes 1 + 1 \otimes u \,\big\}  $$
(the set of ``left primitive elements'' of  $ U \, $).  Then the pair  $ \big( A, P^\ell(U) \big) $  is a Lie-Rinehart algebra.
 \vskip4pt
   (b) \, Assume in addition that  $ \, P^\ell(U) $  is projective as an  $ A $--module,  and that  $ P^\ell(U) $  and  $ s^\ell(A) $  generate  $ U $  as an algebra.  Then  $ U $  is isomorphic to  $ V^\ell\big(P^\ell(U)\big) $  as a left bialgebroid.
\end{theorem}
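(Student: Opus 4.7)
\smallskip

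\textbf{Part (a).} The plan is to equip $P^\ell(U)$ with the Lie bracket $[X,Y] := XY - YX$, the $A$-module structure $\, a \cdot X := s^\ell(a) X \, $, and the anchor $\, \omega(X)(a) := \epsilon(X \, s^\ell(a)) \, $ (matching the left anchor of Remarks \ref{left-bialgd-propts}(b)), and then verify the axioms. Closure under the commutator is the standard bialgebra computation. Closure under the $A$-action uses the hypothesis $s^\ell = t^\ell$: from $\Delta(s^\ell(a) X) = s^\ell(a) X \otimes 1 + s^\ell(a) \otimes X$ and the identity $\, s^\ell(a) \otimes X = t^\ell(a) \otimes X = 1 \otimes s^\ell(a) X \, $ in $\, U_{\triangleleft} \otimes_A {}_{\triangleright} U \, $, one reads that $s^\ell(a) X$ is again primitive. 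The core lemma is
\[
\big[\, X \, , \, s^\ell(a) \,\big]  \; = \;  s^\ell\big( \omega(X)(a) \big)  \qquad  \forall \; X \in P^\ell(U) \, , \; a \in A .
\]
To prove it I would first deduce $\epsilon(X) = 0$ for $X$ primitive (apply the counit axiom to $\Delta(X)$ and use that $\epsilon \circ s^\ell = \mathrm{id}_A$ makes $s^\ell$ injective), then compute directly that $\Delta([X, s^\ell(a)]) = [X, s^\ell(a)] \otimes 1$, whence the counit axiom forces $[X, s^\ell(a)] = s^\ell\big(\epsilon([X, s^\ell(a)])\big)$; and finally $\epsilon([X, s^\ell(a)]) = \omega(X)(a) - 0 = \omega(X)(a)$ by the counit identity $\epsilon(h h') = \epsilon\big(h \, s^\ell(\epsilon(h'))\big)$ applied to the second summand. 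With the key formula in hand, the Leibniz rule for $\omega(X)$, the compatibility $[X, a \cdot Y] = \omega(X)(a) \cdot Y + a \cdot [X,Y]$, and the Lie-morphism property $\omega([X,Y]) = [\omega(X), \omega(Y)]$ reduce to short manipulations using the same counit identity together with $\epsilon(X) = 0$.

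\smallskip

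\textbf{Part (b), construction of $\phi$.} By the universal property built into Definition \ref{def_V^ell(L)}, the maps $a \mapsto s^\ell(a)$ and $X \mapsto X$ extend to an algebra morphism $\, \phi : V^\ell\big(P^\ell(U)\big) \longrightarrow U \, $: the defining relation $X \otimes a - a \otimes X - \omega(X)(a)$ of $V^\ell$ becomes precisely the key formula of part (a) in $U$, while the bracket relation is tautological since we defined $[X,Y]$ as the commutator in $U$. A direct check on generators shows that $\phi$ respects the left bialgebroid structure (source, target, coproduct and counit), and surjectivity is the hypothesis that $P^\ell(U) \cup s^\ell(A)$ generates $U$ as an algebra.

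\smallskip

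\textbf{Main obstacle: injectivity of $\phi$.} I would filter $U$ by $\, U_0 := s^\ell(A) \, $ and $\, U_{n+1} := U_n + U_n \cdot P^\ell(U) \, $, paralleling the canonical PBW filtration of $V^\ell(P^\ell(U))$ recalled in Remarks \ref{canon-filtr_V^ell(L)}. Since $P^\ell(U)$ is projective, the associated graded of $V^\ell\big(P^\ell(U)\big)$ is $S_A\big(P^\ell(U)\big)$, and $\phi$ is a filtered morphism surjective on associated gradeds; it therefore suffices to prove $\mathrm{Gr}(\phi)$ injective. In characteristic zero the standard route is a symmetrization argument: iterated coproducts applied to an element of $U_n$ produce, modulo $U_{n-1}$, a symmetric tensor in $\, {P^\ell(U)}^{\otimes_A n} \, $, yielding a well-defined section $\, S_A\big(P^\ell(U)\big) \longrightarrow \mathrm{Gr}(U) \, $ inverse to $\mathrm{Gr}(\phi)$. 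The delicate point is transporting this symmetrization to the bialgebroid tensor $\otimes_A$ with its asymmetric $\triangleleft, \triangleright$ actions: the hypothesis $s^\ell = t^\ell$ is exactly what aligns the two $A$-module structures on each tensor factor (making the flip well defined and the coproduct on $P^\ell(U)$ cocommutative in the appropriate sense) and makes the classical Cartier-Milnor-Moore pattern go through, in the spirit of the Moerdijk-Mrcun variant cited in the statement.
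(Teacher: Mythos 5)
Your part (a) and the construction/surjectivity half of part (b) follow the paper's own path: you equip $P^\ell(U)$ with exactly the same $A$--action, commutator bracket and anchor $\omega(X)(a)=\epsilon\big(X\,s^\ell(a)\big)$ (the paper simply cites Kowalzig, Prop.~4.2.1, for the verification you carry out, and your key identity $[X,s^\ell(a)]=s^\ell(\omega(X)(a))$ together with the use of $s^\ell=t^\ell$ for closure under the $A$--action is correct), and the morphism $\phi:V^\ell(P^\ell(U))\to U$ and its surjectivity are obtained exactly as in the paper. Where you genuinely diverge is the injectivity of $\phi$: the paper argues coalgebra-theoretically, ``as in Montgomery, Lemma 5.3.3'' --- a coring morphism out of the connected coring $V^\ell(P^\ell(U))$ with nonzero kernel would have a nonzero primitive in its kernel, contradicting injectivity on $P^\ell(U)$ --- whereas you pass through the PBW filtration, the identification $\mathrm{Gr}\big(V^\ell(P^\ell(U))\big)\cong S_A(P^\ell(U))$, and a characteristic-zero symmetrization of iterated coproducts. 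Your route makes the role of projectivity and of $\mathrm{char}\,k=0$ explicit and is the Moerdijk--Mr\v{c}un-style argument; the paper's route is shorter because it never needs the symmetric-algebra identification.

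Two points to shore up in your sketch. First, a direction slip: what the symmetrization should produce is a retraction $\mathrm{Gr}_n(U)\to S^n_A\big(P^\ell(U)\big)$ (a left inverse of $\mathrm{Gr}_n(\phi)$, up to the factor $n!$), not a ``section $S_A(P^\ell(U))\to\mathrm{Gr}(U)$''. Second, and more substantively: the delicate issue is not only the one you flag (well-definedness of the flip on $\otimes_A$, which $s^\ell=t^\ell$ does settle, since then the projection onto $\ker\epsilon$ along $s^\ell(A)$ is $A$--bilinear and its tensor powers descend to $\otimes_A$). After applying $\pi_s^{\otimes n}\circ\Delta^{(n)}$ you land in ${(\ker\epsilon)}^{\otimes_A n}\subseteq U^{\otimes_A n}$, and to conclude that a nonzero class in $S^n_A\big(P^\ell(U)\big)$ has nonvanishing symmetrized image there you need the composite $S^n_A(P^\ell(U))\to {P^\ell(U)}^{\otimes_A n}\to U^{\otimes_A n}$ to be injective: the first map is fine (char $0$, $P^\ell(U)$ projective), but the second is not automatic, because $\otimes_A$ is not exact and $U$ is not assumed flat. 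This is repairable --- e.g.\ run the induction on $n$ proving simultaneously that $\phi$ is injective on the $n$-th filtration piece and that $U_{n-1}$ is an $A$--module direct summand of $U_n$ (so that only already-projective pieces ever get tensored), which in effect proves a PBW theorem for $U$ along the way --- but as written it is an unjustified step; to be fair, the paper's appeal to Montgomery (whose lemma is proved over a field) leaves the same base-ring subtlety implicit.
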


\begin{proof}
   {\it (a)} \,  On  $ P^\ell(U) $  we set the following  $ A $--module structure:
 $ \; a \cdot D := s^\ell(a)D \; $  for all  $ \, a \in A \, $,  $ \, D \in P^\ell(U) \, $.
Moreover, if  $ \, D , D' \in P^\ell(U) \, $  then  $ \, \big[D,D'\big] := D \cdot D' - D' \cdot D \in P^\ell(U) \, $,  by direct check: this defines a Lie bracket on  $ P^\ell(U) \, $.

 Finally, we define  $ \; \omega : P^\ell(U) \relbar\joinrel\longrightarrow \text{\sl Der}(A) \; $  by  $ \; D \mapsto \big( b \;{\buildrel {\omega(D)} \over {\relbar\joinrel\relbar\joinrel\relbar\joinrel\rightarrow}}\; \epsilon(D\,s^\ell(b)) \big) \; $.
                                                              \par
   It is proved in  \cite{Kowalzig}  (Proposition 4.2.1) that  $ \big( A, P^\ell(U), \omega \big) $  is a Lie-Rinehart algebra.
 \vskip7pt
   {\it (b)} \,  By assumption, the natural algebra morphism  from  $ \, T_k\big( A \oplus P^\ell(U) \big) \, $  to  $ U $  is surjective and it induces a surjective algebra morphism  $ \, f : V^\ell\big( P^\ell(U) \big) \rightarrow U \, $.  As  $ P^\ell(U) $  and  $ s^\ell(A) $  generates  $ V^\ell\big( P^\ell(U) \big) $  as an algebra, this map is also a morphism of corings.  By the same argument as in  \cite{Montgomery},  Lemma 5.3.3, one shows that  $ f $  is also injective because  $ f{\big|}_{P^\ell(U)} $  is injective (which is obvious).
\end{proof}

\smallskip

 \subsection{Right bialgebroids}  \label{r-bialgds}

\smallskip

   {\ } \quad   Just like for left bialgebroids, one can consider the notion of  {\sl right bialgebroids\/}  (cf.~\cite{Kadison and Szlachanyi}  and  \cite{Bohm2}).  We will need a second type of  ``Takeuchi product''.  In order to distinguish it from the previous one, we shall now denote the base  $ k $--algebra  by  $ B $  instead of  $ A \, $.  Hereafter,  $ B $  is a (unital, associative)  $ k $--algebra,  and we use notations as in  \S~\ref{A-rings A-corings}.  Let  $ H $  be a  $ B^e $--ring  given by a  $ k $--algebra  morphism  $ \; \eta^r : B^e \longrightarrow H \; $,  a source map  $ \; s^r := \eta^r(\,\text{--} \otimes 1) \; $  and a target map  $ \; t^r := \eta^r(1 \otimes \text{--}\,) \; $.  We consider now the right  $ B^e $--module  structure on  $ H $  given by  $ \; h\cdot(b \otimes \tilde{b}) := h \cdot \eta^r(b \otimes \tilde{b}) \; $,  for  $ \; b, \tilde{b} \in B \, $,  $ \, h \in H \, $.  Then the tensor product of  $ H $  with itself (as a  $ B $--bimodule,  i.e.~a  $ B^e $--module)  is defined as
  $$  \displaylines{
   H_\blacktriangleleft \,{\mathop \otimes_B}\, {}_\blacktriangleright H  \; := \;
H \otimes_k H \Big/ {\big\{ (u \blacktriangleleft b) \otimes u' - u \otimes (b \blacktriangleright u') \big\}}_{b \in B, \, u, u' \in H} }$$
   \indent   Now we  {\it define\/}  the  {\sl right Takeuchi product\/}  (of the  $ B^e $--ring  $ H $  with itself)
 $ \; H_\blacktriangleleft {\mathop \times\limits_B}\, {}_\blacktriangleright H \subseteq \, H_\blacktriangleleft {\mathop \otimes\limits_B} {}_{\,\blacktriangleright} H \; $
  $$  \displaylines{
   H_\blacktriangleleft \,{\mathop \times\limits_B}\, {}_\blacktriangleright H  \; := \;
     \Big\{\, {\textstyle \sum_i} u_i \otimes u'_i \in H_\blacktriangleleft \,{\mathop \otimes\limits^B}\, {}_\blacktriangleright H \,\Big|\; {\textstyle \sum_i} (a \triangleright u_i) \otimes u'_i = {\textstyle \sum_i} u_i \otimes (u'_i \triangleleft a) \,\Big\}  }  $$

\begin{definition}
 A right  $ B $--bialgebroid  is a  $ k $--module  $ H $  that carries simultaneously a structure of a  $ B^e$--ring  $ (H, s^r, t^r) $  and of a  $ B $--coring  $ (H, \Delta_r, \partial) $  subject to the following compatibility relations:
 \vskip4pt
   (i) \,  The  $ B^e $--module  structure on the  $ B $--coring  $ (H, s^r, t^r) $  is that of  $ \, {}_\blacktriangleright H_\blacktriangleleft \, $,  namely (for all  $ \, b, \tilde{b} \in B \, $,  $ \, h \in H ,\, $)  $ \; b \blacktriangleright h \blacktriangleleft \tilde{b} \, := \, h \, s^r(\tilde{b}) \, t^r(b) \, = \, h \, \eta(\tilde{b} \otimes b) \; $.
 \vskip1pt
   (ii) \,  The coproduct map  $ \Delta_r $  is a unital  $ k $--algebra  morphism taking values in  $ \, \displaystyle{ H {{}_\blacktriangleleft}{\mathop \times^B}{{}_\blacktriangleright} H } \, $.
 \vskip4pt
   (iii) \,  The (right) counit map  $ \partial $  has the following property: for all  $ \, b, \tilde{b} \in B $,  $ \, u, u' \in H \, $,  one has
  $$  \partial \big( u \, s^r\big(\tilde{b}\big) \, t^r(b) \big)  \; = \;  b \, \partial(u) \, \tilde{b}  \!\quad ,  \!\!\qquad  \partial(u\,u')  \; = \;  \partial \big( s^r\big(\partial(u) \big) \, u' \big)  \; = \;  \partial \big( t^r\big(\partial(u) \big) \, u' \big)  \!\quad ,  \!\!\qquad  \partial(1)  \; = \;  1  $$
\end{definition}

\smallskip

\begin{remarks}  \label{right-bialgd-propts} {\ }
                                               \par
   {\it (a)}\,  The definition of a right bialgebroid is obtained from the definition of a left bialgebroid by exchanging the role of black triangles ($ \, \blacktriangleright \, $,  $ \! \blacktriangleleft \, $)  and white triangles  ($ \, \triangleright \, $,  $ \! \triangleleft \, $).  Consequently, the properties of a right bialgebroid are obtained from those of a left bialgebroid  --- see  Remarks  \ref{left-bialgd-propts})  ---   by exchanging the roles of black triangles and white triangles.
                                               \par
   {\it (b)}\,  (cf.~\cite{Kowalzig})  The ``opposite'' of a left bialgebroid  $ \, U = \big( U, A, s^\ell, t^\ell, \Delta_\ell , \epsilon \big) \, $  is defined as  $ \, U^{\text{\it op}} := \big( U^{\text{\it op}}, A, t^\ell, s^\ell, \Delta_\ell, \epsilon \big) \, $:  this can be shown to be a right bialgebroid.  The ``coopposite'' is given by  $ \, U_{\text{\it coop}} := \big( U, A^{\text{\it op}}, t^\ell, s^\ell, \Delta_\ell^{\text{\it coop}} , \epsilon \big) \, $  with  $ \, \Delta_\ell^{\text{\it coop}} : U \longrightarrow {}_{\triangleright}U\otimes_{A^{\text{\it op}}}U_{\triangleleft} \, $,  $ \, u \mapsto u_{(2)} \otimes u_{(1)} \, $;  this is still a left bialgebroid.  As a consequence, $ \, U^{\text{\it op}}_{\text{\it coop}} \, $  is a right bialgebroid.

 \end{remarks}

The definition of a right bialgebroid morphism is analogous to that of a left bialgebroid morphism.    We denote by  {\rm (RBialg)}  the category of right bialgebroids, whose objects are right bialgebroids and morphisms are defined mimicking  Definition  \ref{def-categ_left-algd}.  Inside it,  $ \text{(RBialg)}_A $  is the subcategory whose objects are all the right bialgebroids over  $ A $  and whose morphisms are all those in  {\rm (RBialg)}  of the form  $ (\text{\sl id} \, , F) \, $.

\medskip

\begin{free text}  \label{V^r(L)_right-bialgd}
 {\bf Right bialgebroid structures on universal enveloping algebras  $ V^r(L) \, $.}  Given a Lie-Rinehart algebra  $ L \, $,  now considered as a  {\sl right\/}  one, its right universal enveloping algebra  $ V^r(L) $  bears a natural structure of right bialgebroid over  $ A \, $ obtained by  pulling-back the left bialgebroid structure of  $ V^\ell(L) $  via the isomorphism  $ \, V^r(L) \cong {V^\ell(L)}^{\text{\it op}} \, $.  More explicitly, the  $ A^e $--module  structure  $ \, {}_\blacktriangleright V^r(L)_\blacktriangleleft \, $  is given by  $ \; a \blacktriangleright u \blacktriangleleft \tilde{a} \, := \,  u \, a \, \tilde{a} \; $;  the coproduct  $ \, \Delta_r : V^r(L) \longrightarrow V^r(L)_{\blacktriangleleft } \,{\mathop \otimes\limits_A}\, {}_{\blacktriangleright}V^r(L) \, $  and the counit  $ \; \partial : V^r(L) \longrightarrow A \; $  are determined by
 \vskip-13pt
  $$  \Delta_r(a) \, = \, a \otimes 1 \; ,  \quad  \Delta_r(X) \, = \, X \otimes 1 \, + \, 1 \otimes X \; ,  \qquad  \partial(a) \, = \, a \; ,  \quad  \partial(X) \, = \, 0  \qquad   \eqno  \forall \; a \in A \; ,  \; X \in L  \;\;  $$
   \indent    Finally, one can recover the anchor map of  $ L $  from the right bialgebroid structure of  $ V^r(L) $  as  $ \,\; \omega_L(X)(a) \, = \, - \partial_{\scriptscriptstyle V^r(L)}(a\; X) \;\, $  for all  $ \, X \in L \, $,  $ \, a \in A \, $.
\end{free text}

\smallskip

   We also have an analogue for right bialgebroids of  Theorem \ref{left-bialgd_V^ell(L)}  (with similar proof):

\medskip

\begin{theorem}  \label{right-bialgd_V^r(L)}
 Assume that  $ A $  is a unital commutative algebra over the field  $ k \, $.
 \vskip5pt
   (a) \, Let  $ (W, A,s^r,t^r, \Delta_r, \partial) $  be a right bialgebroid such that  $ \, s^r = t^r \, $.  Set
 \vskip-5pt
  $$  P^r(W)  \; := \;  \big\{\, w \in W \,\big|\, \Delta_r(w) = w \otimes 1 + 1 \otimes w \,\big\}  $$
(the set of ``right primitive elements'' of  $ W \, $).  Then the pair  $ \big( A, P^r(W) \big) $  is a right Lie-Rinehart algebra for the following right action and anchor map
 \vskip-17pt
  $$  w.a \, := \, w \, s^r(a) \;\; ,  \quad  \omega(D)(a) \, := \, -\partial\big( s^r(a)\,D \big) \;\; , \qquad  \forall \;\, w \in W \, ,  \;\; \forall \;\, D \in P^r(W) \, ,  \;\; \forall \;\, a \in A  $$
 \vskip-3pt
   (b) \, Assume in addition that  $ \, P^r(W) $  is projective as an  $ A $--module,  and that  $ P^r(W) $  and  $ s^r(A) $  generate  $ W $  as an algebra.  Then  $ W $  is isomorphic to  $ V^r\big(P^r(W)\big) $  as a right bialgebroid.
\end{theorem}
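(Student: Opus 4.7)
The plan is to reduce the statement to its left-bialgebroid counterpart, Theorem \ref{left-bialgd_V^ell(L)}, via the opposite-bialgebroid construction recalled in Remark \ref{right-bialgd-propts}(b). Reading that remark in the reverse direction, given any right bialgebroid $(W, A, s^r, t^r, \Delta_r, \partial)$ its opposite algebra $W^{\text{\it op}}$ is naturally a left bialgebroid with source $t^r$, target $s^r$, and the same coproduct and counit as $W$. Under the hypothesis $s^r = t^r$, the source and target of $W^{\text{\it op}}$ also coincide, so Theorem \ref{left-bialgd_V^ell(L)} applies to $W^{\text{\it op}}$. Since the coproducts agree, the set $P^\ell(W^{\text{\it op}})$ coincides with $P^r(W)$.

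For part (a), Theorem \ref{left-bialgd_V^ell(L)}(a) endows $P^\ell(W^{\text{\it op}}) = P^r(W)$ with a left Lie-Rinehart algebra structure whose left $A$-action is $a \cdot D = t^r(a) \cdot_{\,W^{\text{\it op}}} D = D\,s^r(a)$, whose bracket is $[D,D']_{W^{\text{\it op}}} = D' D - D D'$, and whose anchor is $D \mapsto \bigl(a \mapsto \partial(D \cdot_{\,W^{\text{\it op}}} t^r(a))\bigr) = \bigl(a \mapsto \partial(s^r(a)\,D)\bigr)$. Converting this to the corresponding right Lie-Rinehart structure (which is possible since $A$ is commutative, cf.~Remarks \ref{canon-filtr_V^ell(L)}(b)) amounts to keeping the same action (now regarded as a right action), and negating both the bracket and the anchor. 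This yields exactly the right action $w \cdot a = w\,s^r(a)$, the bracket $[D,D'] = D D' - D' D$, and the anchor $\omega(D)(a) = -\,\partial(s^r(a)\,D)$ stated in the theorem.

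For part (b), I would apply Theorem \ref{left-bialgd_V^ell(L)}(b) to $W^{\text{\it op}}$: the hypotheses of projectivity of $P^\ell(W^{\text{\it op}}) = P^r(W)$ and of generation by primitives together with $s^r(A) = t^r(A) = s^\ell_{\,W^{\text{\it op}}}(A)$ are inherited verbatim. This gives a left bialgebroid isomorphism $W^{\text{\it op}} \cong V^\ell\bigl(P^\ell(W^{\text{\it op}})\bigr)$. Taking opposites on both sides gives a right bialgebroid isomorphism $W \cong V^\ell\bigl(P^\ell(W^{\text{\it op}})\bigr)^{\text{\it op}}$. Finally, invoking Proposition \ref{alg_anti-isom_Xi} together with the identification of the underlying Lie-Rinehart algebra of $P^\ell(W^{\text{\it op}})$ with ${P^r(W)}^{\text{\it op}}$ (arising from the sign-reversals of bracket and anchor already tracked in part~(a)), the right-hand side is canonically identified as a right bialgebroid with $V^r\bigl(P^r(W)\bigr)$.

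The main obstacle is purely a matter of bookkeeping: one must verify that the three ``opposite'' operations in play --- the opposite of a Lie-Rinehart algebra (Remarks \ref{canon-filtr_V^ell(L)}(b)), the opposite of a left/right bialgebroid (Remark \ref{right-bialgd-propts}(b)), and the algebra anti-isomorphism of Proposition \ref{alg_anti-isom_Xi} --- intertwine coherently, so that the composite isomorphism $V^\ell\bigl(P^\ell(W^{\text{\it op}})\bigr)^{\text{\it op}} \cong V^r\bigl(P^r(W)\bigr)$ respects not just the algebra structure but also the coproduct, counit, and source/target maps. Once this compatibility is unwound from the definitions --- which is straightforward since all three operations fix the coproduct and counit while reversing multiplication --- part (b) follows with no further computation.
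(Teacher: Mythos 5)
Your proposal is correct, but it follows a genuinely different route from the paper's. The paper's own proof is the direct one: it declares the right-handed statement proved ``with similar proof'' to Theorem \ref{left-bialgd_V^ell(L)}, i.e.\ one repeats part \textit{(a)} (checking the bracket, $A$--action and anchor directly, as in \cite{Kowalzig}) and part \textit{(b)} (the surjection $V^r\big(P^r(W)\big)\rightarrow W$ plus the Montgomery-type injectivity argument) with the roles of the two module structures exchanged. You instead reduce formally to the left-handed theorem via the opposite-bialgebroid construction of Remark \ref{right-bialgd-propts}\textit{(b)} --- which is precisely the reduction mechanism the paper itself uses for several later statements, e.g.\ to deduce Theorem \ref{semiclassical_limit-V^r(L)} from Theorem \ref{semiclassical_limit-V^ell(L)}. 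The gain is that nothing gets re-proved; the cost is the sign bookkeeping, which you carry out correctly: since $\, s^r=t^r \,$, the two relevant tensor squares and the coproducts of $W$ and $W^{\text{\it op}}$ literally coincide, so $\, P^\ell\big(W^{\text{\it op}}\big)=P^r(W) \,$; the left Lie--Rinehart structure produced by Theorem \ref{left-bialgd_V^ell(L)}\textit{(a)} has action $\, a\cdot D = D\,s^r(a) \,$, bracket $\, D'D-DD' \,$ and anchor $\, a\mapsto \partial\big(s^r(a)\,D\big) \,$, and passing to the opposite Lie--Rinehart algebra (Remarks \ref{canon-filtr_V^ell(L)}\textit{(b)}, read as a right structure since $A$ is commutative) yields exactly the stated right action, the commutator bracket $\, DD'-D'D \,$ that the statement leaves implicit, and the anchor $\, -\partial\big(s^r(a)\,D\big) \,$; for \textit{(b)}, the identity $\, {V^r(L)}^{\text{\it op}}=V^\ell\big(L^{\text{\it op}}\big) \,$ of Proposition \ref{alg_anti-isom_Xi}, applied to $\, L=P^r(W) \,$, identifies $\, {V^\ell\big(P^\ell(W^{\text{\it op}})\big)}^{\text{\it op}} \,$ with $\, V^r\big(P^r(W)\big) \,$ including source/target, coproduct and counit, because all three ``opposite'' operations fix these while reversing only the multiplication. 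The only point worth making explicit in a final write-up is the one you flag yourself: the theorem's unstated bracket on $P^r(W)$ is the commutator in $W$, so that the sign-reversed structure you obtain is indeed the structure in the statement.
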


\smallskip

\begin{remark}  \label{rPrim-Vr(L)}
  The previous result improves a bit as follows.  Let  $ \big( W, A, s^r, t^r, \Delta_r, \partial \big) $  be a right bialgebroid for which  $ A $  is commutative and  $ \, s^r = t^r \, $.  Let  $ \, Q \subseteq P^r(U) \, $  be a right Lie-Rinehart subalgebra of  $ P^r(W) $  such that \,  {\it (i)}\,  $ Q $  is a projective  $ A $--module  and \,  {\it (ii)}\,  $ Q $  and  $ A $  generate  $ W $  as an algebra.  Then  $ W $  is isomorphic to  $ V^r(Q) $  as a right bialgebroid, and  $ \, Q = P^r(W) \; $.
                                                              \par
   An entirely similar remark also applies to  Theorem \ref{left-bialgd_V^ell(L)}.
\end{remark}

\smallskip

 \subsection{Duals of bialgebroids}  \label{dual_bialgds}

\smallskip

   {\ } \quad   We shall now consider left and right dual of (left and right) bialgebroids, and investigate their main properties.  We begin with dual of left bialgebroids, then we pass on to dual of right ones.

\smallskip

\begin{definition}  \label{dual-bialgd's}  {\ }
 \vskip3pt
   (a) \,  Let  $ U $  be a left  $ A $--bialgebroid,  with structure maps as before.  The  {\sl left dual}  and the  {\sl right dual}  of  $ \, U $  respectively are the spaces
 \vskip-17pt
  $$  \displaylines{
   U_*  \; := \;  \big\{\, \phi : U \longrightarrow A \,\big|\, \phi(u'+u'') = \phi(u') + \phi(u'') \, , \; \phi\big( s^\ell(a) \, u \big) = a \, \phi(u) \,\big\}  \; = \;
\text{\sl Hom}_A\big(\, {}_\triangleright U , {}_A A \,\big)  \cr
   U^*  \; := \;  \big\{\, \phi : U \longrightarrow A \,\big|\, \phi(u'+u'') = \phi(u') + \phi(u'') \, , \; \phi\big( t^\ell(a) \, u \big) = \phi(u) \, a \,\big\}  \; = \;
\text{\sl Hom}_A\big( U_\triangleleft \, , A_A \big)  }  $$
   \indent   (b) \,  Let  $ W $  be a right  $ A $--bialgebroid,  with structure maps as before.  The  {\sl left dual}  and the  {\sl right dual}  of  $ \, W $  respectively are the spaces
 \vskip-17pt
  $$  \displaylines{
   {}_*W  \; := \;  \big\{\, \psi : W \!\longrightarrow A \,\big|\, \psi(w'+w'') = \psi(w') + \psi(w'') \, , \; \psi\big( w \, t^r(a) \big) = a \, \psi(w) \,\big\}  \; = \;
\text{\sl Hom}_A\big(\, {}_\blacktriangleright{}W_{\,} , {}_A{}A \,\big)  \cr
   {}^*W  \; := \;  \big\{\, \psi : W \!\longrightarrow A \,\big|\, \psi(w'+w'') = \psi(w') + \psi(w'') \, , \; \psi\big( w \, s^r(a) \big) = \psi(w) \, a \,\big\}  \; = \;
\text{\sl Hom}_A\big(\, W_\blacktriangleleft \, , A_A \big)  }  $$
\end{definition}

\smallskip

\begin{free text}  \label{bialgd-struct-dual}
 {\bf Bialgebroid structures on dual spaces.}  Let  $ U $  be a  {\sl left}  $ A $--bialgebroid  as above.  We shall now introduce on its dual spaces  $ U_* $  and  $ U^* $  a structure of right  $ A $--bialgebroid; most of the structure is well-defined in general, but for the coproduct we need an additional assumption, namely  $ U $  as an  $ A $--module  (on the left, or the right, see below) has to be  {\sl projective}.

\vskip7pt

   {\sl  $ \underline{\text{Product structure}} $:}  First we recall (see  \cite{Kadison and Szlachanyi},  and also  \cite{Kowalzig})  that  $ U_* $  and  $ U^* $  can be equipped with a product, for which the counit map  $ \epsilon $  is a two-sided unit.  For  $ \, \phi, \phi' \in U_* \, $,  $ \, \psi, \psi' \in U^* \, $,  $ \, u \in U \, $,  set
  $$  \displaylines{
   \big(\phi \, \phi'\big)(u)  \; \equiv \;  m_{U_*}\big(\phi \otimes \phi'\big)(u)  \; := \;  \phi' \Big( m_{U^{\text{\it op}}}\big( \text{\sl id} \otimes (t^\ell \circ \phi) \big) \big(\Delta_\ell(u)\big) \Big)  \; = \;
\phi' \big( t^\ell (\phi(u_{(2)})) \cdot u_{(1)} \big)  \cr
   \big(\psi \, \psi'\big)(u)  \; \equiv \;  m_{U^*}\big(\psi \otimes \psi'\big)(u)  \; := \;  \psi' \Big( m_U\big((s^\ell \circ \psi) \otimes \text{\sl id} \big) \big(\Delta_\ell(u)\big) \Big)  \; = \;
\psi' \big( s^\ell (\psi(u_{(1)})) \cdot u_{(2)} \big)  }  $$
 \vskip7pt
   {\sl  $ \underline{\text{$ A $--module  structures}} $:} \,  For the left dual space  $ U_* \, $,  the left dual source map  $ \, s^r_* : A \longrightarrow U_* \, $  and the right dual target map  $ \, t^r_* : A \longrightarrow U_* \, $  are defined as follows:
  $$  \big(s^r_*(a)\big)(u)  \, := \,  \epsilon\big( t^\ell(a) \, u \big)  \, = \; \epsilon(u) \, a  \;\; ,  \qquad  \big(t^r_*(a)\big)(u)  \, := \,  \epsilon\big( u \, t^\ell(a) \big)   \eqno \forall \; a \in A \, ,  \; u \in U \; .  \quad  $$
   \indent   Then one has, in the usual way, two left and two right actions of  $ A $  on  $ U_* \, $,  given by
  $$  \displaylines{
   (a \triangleright \phi)(u)  \, := \,  \big( s^r_*(a) \, \phi \big)(u)  \, = \,  \phi\big( t^\ell(a) \, u \big)  \;\; ,  \qquad
   (\phi \triangleleft a)(u)  \, := \,  \big( t^r_*(a) \, \phi \big)(u)  \, = \,  \phi\big( u \, t^\ell(a) \big)  \cr
   (a \blacktriangleright \phi)(u)  \, := \,  \big( \phi \, t^r_*(a) \big)(u)  \, = \,  \phi\big( u \, s^\ell(a) \big)  \;\; ,  \qquad
   (\phi \blacktriangleleft a)(u)  \, := \,  \big( \phi \, s^r_*(a) \big)(u)  \, = \,  \phi(u) \, a  }  $$
 \vskip3pt
   Similarly, for the right dual  $ U^* $  the source  $ \, s_r^* : A \longrightarrow U^* \, $  and the target  $ \, t_r^* : A \longrightarrow U^* \, $  are
  $$  \big(s_r^*(a)\big)(u)  \, := \,  \epsilon\big( u \, s^\ell(a) \big)  \;\; ,  \qquad  \big(t_r^*(a)\big)(u)  \, := \,  \epsilon\big( s^\ell(a) \, u \big)  \, = \;  a \, \epsilon(u)   \eqno \forall \; a \in A \, ,  \; u \in U \; .  \quad  $$
Then one has, like before, two left and two right  $ A $--actions  on  $ U^* \, $,  given by
  $$  \displaylines{
   (a \triangleright \psi)(u)  \, := \,  \big( s_r^*(a) \, \psi \big)(u)  \, = \,  \psi\big( u \, s^\ell(a) \big)  \;\; ,  \qquad
   (\psi \triangleleft a)(u)  \, := \,  \big( t_r^*(a) \, \psi \big)(u)  \, = \,  \psi\big( s^\ell(a) \, u \big)  \cr
   (a \blacktriangleright \psi)(u)  \, := \,  \big( \psi \, t_r^*(a) \big)(u)  \, = \,  a \, \psi(u)  \;\; ,  \qquad
   (\psi \blacktriangleleft a)(u)  \, := \,  \big( \psi \, s_r^*(a) \big)(u)  \, = \,  \psi\big( u \, t^\ell(a) \big)  }  $$

\vskip5pt

   {\sl  $ \underline{\text{Coproduct  structure}} $:}  Now  {\sl assume that  $ {}_{\triangleright}U $  as an  $ A $--module  be projective}.  Then we now endow the left dual  $ U_*^{\phantom{|}} $  with a coproduct  $ \Delta_r $  which, eventually, makes it into a right bialgebroid.
                                                                \par
   Consider the injective map  $ \; \chi : U_{*\,\blacktriangleleft } \!\otimes {}_{\blacktriangleright}U_* \longrightarrow \text{\sl Hom}_{(A,-)} \big(\, {}_\triangleright{( U_{\scriptscriptstyle \blacktriangleleft} \! \otimes {}_{\triangleright} U \,)} \, , \, {}_A{}A \big) \; $  given by
 \vskip-4pt
  $$  \phi \otimes \phi'  \; \mapsto \;  \chi\big( \phi \otimes \phi' \big) \Big(\, u \otimes u' \, \mapsto \, \chi\big( \phi \otimes \phi' \big)\big(u \otimes u'\big) \, := \, \phi'\big( u \, s^\ell(\phi(u')) \big) \Big)  $$
 \vskip-1pt
\noindent
 Now, if  $ U $  is finite projective (as an  $ A $--module)  then the previous map is even an isomorphism.  If instead  $ U $  is projective but not finite, one can endow  $ \, U_{*\,\blacktriangleleft } \!\otimes {}_{\blacktriangleright}U_* \, $  with a suitable topology (typically, the ``weak'' one), and denote by  $ \, U_{*\,\blacktriangleleft } \,\widetilde{\otimes}\, {}_{\blacktriangleright}U_* \, $  the corresponding completion: then the above map extends   --- by continuity, using the notion of basis for a projective module  (cf.~\cite{Anderson and Fuller})  --- to an isomorphism from  $ \, U_{*\,\blacktriangleleft} \widetilde{\otimes}\, {}_{\blacktriangleright}U_* \, $  to  $ \, \text{\sl Hom}_{(A,-)} \big(\, {}_\triangleright{( U_{\scriptscriptstyle \blacktriangleleft} \! \otimes {}_{\triangleright} U \,)} \, , \, {}_A{}A \,\big) \, $.  This allows us to define a coproduct  $ \Delta_r $  on  $ U_* $  as the transpose of the multiplication on  $ U \, $,  namely
 \vskip-12pt
  $$  \Delta_r : U_* \relbar\joinrel\longrightarrow \text{\sl Hom}_{(A,-)} \big(\, {}_\triangleright{( U_{\scriptscriptstyle \blacktriangleleft} \! \otimes {}_{\triangleright} U \,)} \, , \, {}_A{}A \,\big) \mathop{\cong}\limits_{\,\;\chi^{-1}} \,  U_{*\,\blacktriangleleft } \widetilde{\otimes}\, {}_{\blacktriangleright}U_* \,\; ,  \;\;\;
  \phi \, \mapsto \Delta_r(\phi) \, \big(\, u \otimes u' \, \mapsto \, \phi\big(u\,u'\big) \big)  $$
 \vskip-4pt
\noindent
 This coproduct makes  $ U_* $  into a (topological)  $ A $--coring,  with counit  $ \; \eta_* : U_* \longrightarrow A \; $,  $ \; \eta_*(\phi ) := \phi(1) \; $.
 \vskip3pt
   Similarly,  {\sl if  $ U_\triangleleft $  as an  $ A $--module  is projective},  then for its right dual  $ U^* $  a coproduct is defined as follows.  Consider the injective map
 $ \; \vartheta : U^*_\blacktriangleleft \!\otimes {}_{\blacktriangleright}U^* \longrightarrow \text{\sl Hom}_{(-,A)} \big(\, {( U_\triangleleft \! \otimes {}_{\scriptscriptstyle \blacktriangleright}U \,)}_\triangleleft \, , \, A_A \,\big) \; $  given by
 \vskip-4pt
  $$  \psi \otimes \psi'  \; \mapsto \;  \vartheta\big( \psi \otimes \psi' \big) \Big(\, u \otimes u' \, \mapsto \, \vartheta\big( \psi \otimes \psi' \big)\big(u \otimes u'\big) \, := \, \psi\big( u' \, t^\ell(\psi'(u)) \big) \Big)  $$
 \vskip-1pt
\noindent
 Again, if  $ U $  is finite projective (as an  $ A $--module)  then this map is an isomorphism.  If instead  $ U $  is projective but not finite, one can endow  $ \, {U^*}_{\!\!\blacktriangleleft} \! \otimes {}_{\blacktriangleright}U^* \, $  with a suitable topology (like the weak one), and denote by  $ \, {U^*}_{\!\!\blacktriangleleft} \widetilde{\otimes}\, {}_{\blacktriangleright}U^* \, $  the corresponding completion: then the above map extends   --- by continuity ---   to an isomorphism  $ \widetilde\vartheta $  from  $ \, {U^*}_{\!\!\blacktriangleleft} \widetilde{\otimes}\, {}_{\blacktriangleright}U^* \, $  to  $ \, \text{\sl Hom}_{(-,A)} \big(\, {( U_\triangleleft \! \otimes {}_{\scriptscriptstyle \blacktriangleright}U \,)}_\triangleleft \, , \, A_A \,\big) \, $.  Thus we can define a coproduct  $ \Delta_r $  on  $ U^* $  as the transpose of the  {\sl opposite\/}  multiplication on  $ U \, $,  namely
 \vskip-12pt
  $$  \Delta_r : U^* \relbar\joinrel\longrightarrow \text{\sl Hom}_{(-,A)} \big(\, {( U_\triangleleft \! \otimes {}_{\scriptscriptstyle \blacktriangleright}U \,)}_\triangleleft \, , \, A_A \,\big)  \mathop{\cong}\limits_{\,\;\widetilde{\vartheta}^{-1}} \,  {U^*}_{\!\!\blacktriangleleft} \widetilde{\otimes}\, {}_{\blacktriangleright}U^* \,\; , \;\;\;
  \psi \, \mapsto \Delta_r(\psi) \, \big(\, u \otimes u' \, \mapsto \, \psi\big(u'\,u\big) \big) $$
 \vskip-4pt
\noindent
 This makes  $ U^* $  into a (topological)  $ A $--coring,  with counit  $ \; \partial_* : U^* \longrightarrow A \; $  given by  $ \; \partial_*(\psi ) := \psi(1) \; $.

\vskip7pt

   {\sl  $ \underline{\text{Conclusion}} $:}  If  $ U $  is any  {\sl left\/}  bialgebroid over  $ A \, $, projective as an  $ A $--module,  then  $ U_* $  and  $ U^* $  with the structures introduced above are both (topological)  {\sl right\/}  bialgebroids over  $ A \, $.

\vskip11pt

   Similarly, we consider the case of a  {\sl right\/}  $ A $--bialgebroid  $ W \, $,  and we introduce canonical structures of (topological)  {\sl left\/}  $ A $--bialgebroids  on its left and right dual spaces  $ {}_*W $  and  $ {}^*W \, $:  indeed, everything is strictly similar to what occurs in the previous case for  $ U $,  so we skip details.
\vskip13pt

   {\bf Notation:}  in the following, if  $ v $  is an element of some (left or right)  $ A $--module,  and  $ \phi $  is an element of the (left or right) dual of that module, we shall write  $ \; \big\langle \phi \, , \, v \big\rangle := \phi(v) \; $  or  $ \; \big\langle v \, , \, \phi \big\rangle := \phi(v) \; $.
\end{free text}

\vskip9pt

\begin{remark}
 \,  If  $ U $  is a left bialgebroid which is projective of finite type as an  $ A $--module,  then it is isomorphic to  $ {}^*(U_*) $  and to  $ {}_*(U^*) $   --- as a left bialgebroid.  This follows from the equalities
 \vskip-13pt
  $$  \big\langle\, u \, , \, \phi \, s_*^r(a) \,\big\rangle \, = \, \big\langle\, u \, , \, \phi \,\big\rangle \, a  \;\; ,  \;\quad
     \big\langle\, u \, , \, \psi \, t^*_r(a) \,\big\rangle \, = \, a \, \big\langle\, u \, , \, \psi \,\big\rangle   \eqno \quad \forall \;\; a \in A \, , \, u \in U \, , \, \phi \in U_* \, , \, \psi \in U^*  $$
\end{remark}

\medskip

   We introduce now the natural vocabulary of ``pairings'', which we shall use in computations.

\medskip

\begin{definition}  \label{left and right 1}
 \, (a) Let  $ \big( U, s_\ell, t_\ell \big) $  and  $ \big( W, s^r, t^r \big) $  be two  $ A^e $--modules. An  $ A^e $--left pairing is a  $ k $--bilinear  map  $ \; \big\langle\ ,\ \big\rangle : U \times W \longrightarrow A \; $  such that, for any  $ \, u \in U $,  $ \, w \in W $  and  $ \, a \in A \, $,  one has
  $$  \begin{array}l
   \hskip4pt   \big\langle\, u \, , a \triangleright w \,\big\rangle  \; = \;  \big\langle\, u, s^r(a) \, w \,\big\rangle  \; = \;  \big\langle\, t_\ell(a) \, u \, , w \,\big\rangle  \; = \;  \big\langle\, u \triangleleft a \, , w \,\big\rangle  \\
   \hskip2pt   \big\langle\, u \, , w \triangleleft a \,\big\rangle  \; = \;  \big\langle\, u \, , t^r(a) \, w \,\big\rangle  \; = \;  \big\langle\, u \, t_\ell(a) \, , w \,\big\rangle  \; = \;  \big\langle\, a \blacktriangleright u \, , w \,\big\rangle^{\phantom{\big|}}  \\
   \big\langle\, u \, , a \blacktriangleright w \,\big\rangle  \; = \;  \big\langle\, u \, , w \, t^r(a) \,\big\rangle  \; = \;  \big\langle\, u \, s_\ell(a) \, , w \,\big\rangle  \; = \;  \big\langle\, u \blacktriangleleft a \, , w \,\big\rangle^{\phantom{\big|}}  \\
  \hskip43pt   \big\langle\, u \, , w \blacktriangleleft a \big\rangle  \; = \;  \big\langle u \, , w \, s^r(a) \,\big\rangle  \; = \;  \big\langle\, u \, , w \,\big\rangle^{\phantom{\big|}} \! a  \\
  \hskip45pt   \big\langle\, a \triangleright u \, , w \,\big\rangle  \; = \;  \big\langle\, s^\ell(a) \, u \, , w \,\big\rangle  \; = \;  a \, \big\langle\, u \, , w \,\big\rangle^{\phantom{\big|}}
      \end{array}  $$

Then there exist natural morphisms of  $ A^e $--modules  $ \, W \longrightarrow U_* \, $  and  $ \, U \longrightarrow {}^*W \, $.  The pairing is  {\sl non degenerate}  if the left and right kernels of this pairing are trivial, that is to say
  $$  \big\langle\, u \, , w \,\big\rangle = 0 \; ,  \;\; \forall \;\, w \in W  \quad  =\joinrel\Longrightarrow  \quad  u = 0 \;\; ,  \qquad \quad
      \big\langle\, u \, , w \,\big\rangle = 0 \; ,  \;\; \forall \;\, u \in U  \quad  =\joinrel\Longrightarrow  \quad w = 0  $$
 In other words, the pairing is non degenerate if and only if the above maps  $ \, W \longrightarrow U_* \, $  and  $ \, U \longrightarrow {}^*W \, $  (which are morphisms  of  $ A^e $--modules)  are  {\sl injective}.

\vskip5pt

   (b)  Let  $ \big( U, s^\ell, t^\ell \big) $  and  $ \big( W, s_r, t_r \big) $  be two  $ A^e $--modules. An  $ A^e $--right  pairing is a  $ k $--bilinear  map  $ \; \big\langle\ ,\ \big\rangle : U \times W \longrightarrow A \; $  such that, for any  $ \, u \in U $,  $ \, w \in W $  and  $ \, a \in A \, $,  one has
  $$  \begin{array}l
   \hskip1pt   \big\langle\, u \, , w \triangleleft a \,\big\rangle  \; = \;  \big\langle\, u \, , t_r(a) \, w \,\big\rangle  \; = \;  \big\langle\, s^\ell(a) \, u \, , w \,\big\rangle  \; = \;  \big\langle\, a \triangleright u \, , w \,\big\rangle  \\
   \big\langle\, u \, , a \triangleright w \,\big\rangle  \; = \;  \big\langle\, u, s_r(a) \, w \,\big\rangle  \; = \;  \big\langle\, u \, s^\ell(a) \, , w \,\big\rangle  \; = \;  \big\langle\, u \blacktriangleleft a \, , w \,\big\rangle^{\phantom{\big|}}  \\
   \hskip3pt   \big\langle\, u \, , w \blacktriangleleft a \big\rangle  \; = \;  \big\langle u \, , w \, s_r(a) \,\big\rangle  \; = \;  \big\langle\, u \, t^\ell(a) \, , w \,\big\rangle  \; = \;  \big\langle a \blacktriangleright u \, , w \,\big\rangle^{\phantom{\big|}}  \\
   \hskip43pt   \big\langle\, u \, , a \blacktriangleright w \,\big\rangle  \; = \;  \big\langle\, u \, , w \, t_r(a) \,\big\rangle  \; = \;  a \, \big\langle\, u \, , w \,\big\rangle^{\phantom{\big|}}  \\
   \hskip45pt   \big\langle\, u \triangleleft a \, , w \,\big\rangle  \; = \;  \big\langle\, t^\ell(a) \, u \, , w \,\big\rangle  \; = \;  \big\langle\, u \, , w \,\big\rangle \, a{\phantom{\big\rangle^{\big|}}}
      \end{array}  $$
Then there exist natural morphisms of  $ A^e $--modules  $ \, W \longrightarrow U^* \, $  and  $ \, U \longrightarrow {}_*W \, $.  The pairing is  {\sl non degenerate}  if the left and right kernels of this pairing are trivial, that is to say
  $$  \big\langle\, u \, , w \,\big\rangle = 0 \; ,  \;\; \forall \;\, w \in W  \quad  =\joinrel\Longrightarrow  \quad  u = 0 \;\; ,  \qquad \quad
      \big\langle\, u \, , w \,\big\rangle = 0 \; ,  \;\; \forall \;\, u \in U  \quad  =\joinrel\Longrightarrow  \quad w = 0  $$
 In other words, the pairing is non degenerate if and only if the above maps  $ \, W \longrightarrow U^* \, $  and  $ \, U \longrightarrow {}_*W \, $  (which are morphisms  of  $ A^e $--modules)  are  {\sl injective}.
\end{definition}

\smallskip

\begin{definition}\label{left and right 2} {\ }
 \vskip5pt
   (a)  Let  $ \big( U, s_\ell, t_\ell, \Delta, \epsilon \big) $  be a left  $ A $--bialgebroid  and  $ \big( W, s^r, t^r, \Delta, \eta \big) $  be a right  $ A $--bialgebroid.  A  {\sl bialgebroid left pairing}  is a non degenerate $ A^e $--left  pairing  $ \; \big\langle \ ,\ \big\rangle : U \times W \longrightarrow A \; $  such that
  $$  \displaylines{
   \big\langle\, u\, u'\, , \, w \,\big\rangle  \; = \;  \Big\langle\, u \, , \, w_{(2)} \, t^r\big( \big\langle\, u' \, , \, w_{(1)} \,\big\rangle \big) \Big\rangle  \; = \;
\Big\langle\, u \, s_\ell\big( \big\langle\, u'\, , \, w_{(1)} \,\big\rangle \big) \, , \, w_{(2)} \Big\rangle  \quad ,  \qquad  \big\langle\, 1 \, , \, w \,\big\rangle  \, = \,  \eta(w)  \cr
   \big\langle\, u \, , \, w \, w' \,\big\rangle  \; = \;  \Big\langle\, t^\ell\big( \big\langle\, u_{(2)} \, , \, w \,\big\rangle \big) \, u_{(1)} \, , \, w' \,\Big\rangle  \; = \;
\Big\langle\, u_{(1)} \, , \, s_r\big( \big\langle\, u_{(2)} \, , \, w \,\big\rangle \big) \, w' \,\Big\rangle  \quad ,  \qquad  \big\langle\, u \, , \, 1 \,\big\rangle  \, = \,  \epsilon(u)  }  $$
for any  $ \, u $,  $ u' \in U \, $  and any  $ \, w $,  $ w' \in W \, $.  In other words, the natural maps  $ \, W \relbar\joinrel\longrightarrow U_* \, $  and  $ \, U \relbar\joinrel\longrightarrow {}^*W \, $  are (injective) morphisms of right and left bialgebroids respectively.

\vskip5pt

   (b)  Let  $ \big( U, s^\ell, t^\ell, \Delta, \epsilon \big) $  be a left  $ A $--bialgebroid  and  $ \big( W, s_r, t_r, \Delta, \eta \big) $  be a right  $ A $--bialgebroid.  A  {\sl bialgebroid right pairing}  is a non degenerate $ A^e $--right  pairing  $ \; \big\langle \ ,\ \big\rangle : U \times W \longrightarrow A \; $  such that
  $$  \displaylines{
   \big\langle\, u\, u'\, , \, w \,\big\rangle  \; = \;  \Big\langle\, u \, t_\ell\big( \big\langle\, u' \, , \, w_{(2)} \,\big\rangle \big) \, , \, w_{(1)} \Big\rangle  \; = \;
\Big\langle\, u \, , \, w_{(1)} \, s^r\big( \big\langle\, u'\, , \, w_{(2)} \,\big\rangle \big) \Big\rangle  \quad ,  \qquad  \big\langle\, 1 \, , \, w \,\big\rangle  \, = \,  \eta(w)  \cr
   \big\langle\, u \, , \, w \, w' \,\big\rangle  \; = \;  \Big\langle\, s^\ell\big( \big\langle\, u_{(1)} \, , \, w \,\big\rangle \big) \, u_{(2)} \, , \, w' \,\Big\rangle  \; = \;
\Big\langle\, u_{(2)} \, , \, t_r\big( \big\langle\, u_{(1)} \, , \, w \,\big\rangle \big) \, w' \,\Big\rangle  \quad ,  \qquad  \big\langle\, u \, , \, 1 \,\big\rangle  \, = \,  \epsilon(u)  }  $$
for any  $ \, u $,  $ u' \in U \, $  and any  $ \, w $,  $ w' \in W \, $.  In other words, the natural maps  $ \, W \relbar\joinrel\longrightarrow U^* \, $  and  $ \, U \relbar\joinrel\longrightarrow {}_*W \, $  are (injective) morphisms of right and left bialgebroids respectively.
\end{definition}

\smallskip

\begin{remarks}  \label{dual_vs._op-coop}  {\ }
 \vskip3pt
   {\it (a)}\,  If  $ U $  is a left bialgebroid, then the couple  $ \big(U,U^*\big) $  bears a bialgebroid right pairing, whereas  $ \big(U,U_*\big) $  bears a bialgebroid left pairing.
 \vskip3pt
   {\it (b)}\,  Let  $ U $  be a left bialgebroid.  Then the left bialgebroids  $ (U^*)^{\text{\it op}}_{\text{\it coop}} $  and  $ {}_*(U^{\text{\it op}}_{\text{\it coop}}) $  are isomorphic:  indeed, the right  $ A^e $--pairings  between  $ U $  and  $ U^* $  and between  $ {}_*\big( U^{\text{\it op}}_{\text{\it coop}} \big) $  and  $ U^{\text{\it op}}_{\text{\it coop}} $  give rise to the same formulas.  Similarly, the left bialgebroids  $ {(U_*)}^{\text{\it op}}_{\text{\it coop}} $  and  $ {}^*\big( U^{\text{\it op}}_{\text{\it coop}} \big) $  are isomorphic.
\end{remarks}

\smallskip

 \subsection{The jet space(s) of a Lie-Rinehart algebra}  \label{def_J(L)}

\smallskip

\begin{free text}  \label{def_J^r(L)}
 {\bf Bialgebroids of jets: the right version.}  Let  $ (L,A) $  be a Lie-Rinehart algebra, projective as an  $ A $--module.  Consider its enveloping algebra  $ V^\ell(L) $  endowed with its standard left bialgebroid structure and define the  {\sl right jet space\/}  of the Lie-Rinehart algebra  $ L $  as
  $$  J^r(L)  \; := \;  {V^\ell(L)}^*  \; = \;  \text{\sl Hom}_{(-,A)} \big( {V^\ell(L)}_\triangleleft \, , A_A \big)  $$
   \indent   As in  \S \ref{bialgd-struct-dual},  a multiplication in  $ J^r(L) $  can be given by
 $ \; \big( \phi \, \phi' \big)(u) \, = \, \phi\big(u_{(1)}\big) \, \phi'\big(u_{(2)}\big) \; $  for  $ \, \phi $,  $ \phi' \in J^r(L) \, $,  $ \, u \in V^\ell(L) \, $.
In particular, this multiplication is commutative, and the counit map of  $ V^\ell(L) $  is the unit element of  $ J^r(L) \, $.  Also, the map  $ \; \partial = \partial_{\scriptscriptstyle J^r(L)} : J^r(L) \longrightarrow A \, $,  $ \, \phi \mapsto \partial(\phi) := \phi(1_{\scriptscriptstyle V^\ell(L)}) \, $,  will play the role of counit map of  $ J^r(L) \, $;  hereafter, we write  $ \, \J_{\scriptscriptstyle J^r(L)} := \text{\sl Ker}\,(\partial_{\scriptscriptstyle J^r(L)}) \, $.  Moreover, we have a structure of  $ A^e $--ring  on  $ J^r(L) \, $,  whose source and target maps are given --- for all  $ \, a \in A \, $,  $ \, u \in V^\ell(L) \, $  ---   by the formulas  $ \; \big(s^r(a)\,\big)(u) := \epsilon\big( u \, s^\ell(a) \big) \; $,  $ \; \big(t^r(a)\big)(u) := \epsilon\big(s^\ell(a)\,u\big) = a \, \epsilon(u) \; $.  Note that  $ J^r(L) $  is complete for the  $ \, \J_{\scriptscriptstyle J^r(L)}$-adic  topology.

\smallskip

   To define a coproduct on  $ \, J^r(L) := {V^\ell(L)}^* \, $  we adapt the construction in  \S \ref{bialgd-struct-dual}  (cf.\ also  \cite{Kowalzig},  \cite{Kowalzig and Posthuma},  \cite{Calaque and VandenBergh}).  Consider the injective map
 $ \; \vartheta : {J^r\!(L)}_\blacktriangleleft \otimes_{{}_{\scriptstyle \blacktriangleright}}\! J^r\!(L) = {V^\ell\!(L)}^*_\blacktriangleleft \otimes_{{}_{\scriptstyle \blacktriangleright}}\! {V^\ell\!(L)}^* \! \longrightarrow {\big( {V^\ell\!(L)}_\triangleleft \!\otimes_{{}_\blacktriangleright}\!\! V^\ell\!(L) \big)}^{\!*} \; $
given (as in  \S \ref{bialgd-struct-dual})  by
 $ \;\; \psi \otimes \psi'  \, \mapsto \,  \vartheta\big( \psi \otimes \psi' \big) \Big(\, u \otimes u' \, \mapsto \, \vartheta\big( \psi \otimes \psi' \big)\big(u \otimes u'\big) \, := \, \psi\big( u' \, t^\ell(\psi'(u)) \big) \Big) \; $.
                                                               \par
   Consider in  $ \, {J^r\!(L)}_\blacktriangleleft \!\otimes_{{}_{\scriptstyle \blacktriangleright}}\! J^r\!(L) \, $  the  $ \J_\otimes $--adic  filtration, with
 $ \, \J_\otimes := \J_{\scriptscriptstyle J^r\!(L)} \otimes J^r\!(L) + J^r\!(L) \otimes \J_{\scriptscriptstyle J^r\!(L)} =\allowbreak
 \text{\sl Ker}\,\big( \partial_{\scriptscriptstyle J^r(L)} \otimes \partial_{\scriptscriptstyle J^r(L)} \big) \, $,
 and the corresponding topology defined by it in  $ \, {J^r(L)}_\blacktriangleleft \otimes_{\,{}_{\scriptstyle \blacktriangleright}}\! J^r(L) \, $;  then denote by  $ \, {J^r(L)}_\blacktriangleleft \widetilde{\otimes}_{\;{}_{\scriptstyle \blacktriangleright}}\! J^r(L) \, $  the  $ \J_\otimes $--adic  completion of  $ \, {J^r(L)}_\blacktriangleleft \otimes_{\;{}_{\scriptstyle \blacktriangleright}}\! J^r(L) \, $.  The completion of $\theta$,
 $ \; \widetilde\vartheta : {J^r(L)}_\blacktriangleleft \widetilde{\otimes}_{\,{}_{\scriptstyle \blacktriangleright}}\! J^r(L) \longrightarrow {\big( {V^\ell(L)}_\triangleleft \! \otimes_{\,{}_\blacktriangleright}\!\! V^\ell(L) \big)}^{\!*} \; $,
 is an isomorphism.  Therefore, we can complete the procedure explained in  \S \ref{bialgd-struct-dual}  and define a coproduct  $ \; \Delta : J^r(L) \relbar\joinrel\longrightarrow {J^r(L)}_\blacktriangleleft \widetilde{\otimes}_{\,{}_{\scriptstyle \blacktriangleright}} J^r(L) \; $  on  $ \, J^r(L) := {V^\ell(L)}^* \, $  as  $ \, \Delta := \widetilde{\vartheta}^{-1} \circ \nabla \, $  where
 $ \; \nabla : J^r(L) := {V^\ell(L)}^* \longrightarrow {\big( {V^\ell(L)}_\triangleleft \! \otimes_{\,{}_\blacktriangleright}\!\! V^\ell(L) \big)}^{\!*} \; $  is given by  $ \; \nabla : \psi \mapsto \nabla(\psi) \, \big(\, u \otimes u' \, \mapsto \, \psi\big(u'\,u\big) \big) \; $  for all  $ \, \psi \in J^r(L) \, $,  $ \, u, u' \in V^\ell(L) \, $.  As an outcome,
 we have
  $$  \Delta(\psi) = \psi_{(1)} \otimes \psi_{(2)} \, \in \, {J^r(L)}_\blacktriangleleft \widetilde{\otimes}_{\,{}_{\scriptstyle \blacktriangleright}} J^r(L)  \qquad \text{with} \qquad  \psi_{(1)}\big( u \, t^\ell\big( \psi_{(2)}(u') \big) \big) = \, \psi\big(u\,u'\big)  $$
This  $ \Delta $  makes  $ J^r\!(L) $  into a (topological)  $ A $--coring,  with counit map  $ \; \partial = \partial_{\scriptscriptstyle J^r\!(L)} : J^r\!(L) \longrightarrow \! A \, $  given as above by  $ \, \phi \mapsto \partial(\phi) := \phi(1_{\scriptscriptstyle V^\ell\!(L)}) \, $.  All in all, this makes  $ J^r\!(L) $  into a right bialgebroid over  $ A \, $.
\end{free text}

\smallskip

\begin{remarks}  \label{remarks_J^r(L)}
 We have to mention some extra features of the right bialgebroid  $ \, J^r\!(L) := {V^\ell(L)}^* \, $,  namely the following:
 \vskip3pt
   {\it (a)}\,  as  $ J^r(L) $  is commutative, it is equal to  $ J^r(L)^{\text{\it op}} $  hence it is also a  {\sl left\/}  bialgebroid;
 \vskip3pt
   {\it (b)}\,  it is known that  $J^r(L)$ is a  {\sl Hopf algebroid\/}  (see \cite{Kowalzig}, \cite{Calaque and VandenBergh}, \cite{Nest and Tsygan}): in particular, there exists a standard right bialgebroid isomorphism   --- called the ``antipode'' ---   from  $ J^r(L) $  to  $ \, {J^r(L)}_{\text{\it coop}} \; $.
\end{remarks}

\medskip

\begin{free text}  \label{def_J^ell(L)}
 {\bf Bialgebroids of jets: the left version.}  Let again  $ L $  be a Lie-Rinehart algebra over  $ A \, $,  again projective as an  $ A $--module.  Considering now  $ L $  as a  {\sl right\/}  $ A $--module,  we look at its  {\sl right\/}  enveloping algebra  $ V^r(L) $,  endowed with its natural structure of right bialgebroid  (cf.~\S \ref{V^r(L)_right-bialgd}).
                                                                   \par
   We define the  {\sl left jet space\/}  of the Lie-Rinehart algebra  $ L $  as the left dual space
  $$  J^\ell(L)  \; := \;  {}_*{V^r(L)}  \; = \;  \text{\sl Hom}_{(-,A)} \big( {V^r(L)}_\blacktriangleleft \, , A_A \big)  $$
   \indent   Again from  \S \ref{bialgd-struct-dual}  we have a multiplication in  $ J^\ell(L) $  given (for  $ \, \psi $,  $ \psi' \in J^\ell(L) \, $,  $ \, u \in V^r(L) \, $)  by  $ \; \big( \psi \, \psi' \big)(u) \, = \, \psi\big(u_{(1)}\big) \, \psi'\big(u_{(2)}\big) \; $;  in particular this multiplication is commutative in  $ J^\ell(L) \, $,  and the unit element of  $ J^\ell(L) $  is the counit map of  $ V^r(L) \, $.  Moreover, the map  $ \; \epsilon = \epsilon_{\scriptscriptstyle J^\ell(L)} : J^\ell(L) \longrightarrow A \, $,  $ \, \psi \mapsto \epsilon(\psi) := \psi(1_{\scriptscriptstyle V^r(L)}) \, $,  works as counit map of  $ J^\ell(L) \, $;  in the sequel we write  $ \, \J_{\scriptscriptstyle J^\ell(L)} := \text{\sl Ker}\,(\epsilon_{\scriptscriptstyle J^\ell(L)}) \, $.
                                                                   \par
   Still from  \S \ref{bialgd-struct-dual}  we get a structure of  $ A^e $--ring  on  $ J^\ell(L) \, $,  with source and target maps given by  $ \; \big(s^\ell(a)\big)(u) := \partial\big(\,a \, u\big) \; , \,\; \big(t^\ell(a)\,\big)(u) : =  \, \partial(u) \, a \; $,   --- for all  $ \, a \in A \, $,  $ \, u \in V^r(L) \, $.
                                                                   \par
   Finally,    we can also endow  $ J^\ell(L) $  with a suitable (topological) coproduct, just adapting the recipe given in  \S \ref{bialgd-struct-dual}.  Eventually, all this makes  $ J^\ell(L) $  into a (topological) left bialgebroid.
\end{free text}

\smallskip

\begin{remark}  \label{remark_J^ell(L)}
 As  $ \, J^\ell(L) = {J^r(L^{\text{\it op}})}^{\text{\it op}}_{\text{\it coop}} \; $,  it follows from  Remarks \ref{remarks_J^r(L)}{\it (b)\/}  that our  $ \, J^\ell(L) := {}_*{V^r(L)} \, $  is also a  {\sl Hopf algebroid\/}:  in particular, there exists a standard right bialgebroid isomorphism   --- the ``antipode'' of  $ J^\ell(L) $  ---   from  $ J^\ell(L) $  to  $ \, {J^\ell(L)}_{\!\text{\it coop}} \; $.
\end{remark}

\medskip

\begin{free text} \label{further_J(L)_comp}
 {\bf Further jet spaces, and comparison.}  Besides the jet spaces  $ J^r(L) $  and  $ J^\ell(L) \, $,  further possibilities exist.  All in all we can consider  {\sl four\/}  different types of ``jet bialgebroids'', namely
 \vskip5pt
   \centerline{ $ {V^\ell(L)}^* \, =: \, J^r(L)  \quad ,  \qquad  {}_*{V^r(L)} \, =: \, J^\ell(L)  \quad ,  \qquad  {V^\ell(L)}_* \, =: \, {}^r{\!}J(L)  \quad ,  \qquad  {}^*V^r(L) \, =: \, {}^\ell{\!}J(L) $ }
 \vskip4pt
\noindent
 One can also establish some relevant links among all these bialgebroids of jets: for instance, we have already noticed that  that  $ \, J^\ell(L) \cong {J^r(L^{op})}^{\text{\it op}}_{\text{\it coop}} \cong {J^r(L^{op})}_{\text{\it coop}}\; $  (cf.~also  Remark \ref{dual_vs._op-coop}).
We also saw above that  $ \; {V^\ell(L)}_* = {\big( {V^\ell(L)}^* \big)}_{\text{\it coop}} = {J^r(L)}_{\text{\it coop}} \cong J^r\big(L\big) \; $  (cf.~Remarks \ref{remarks_J^r(L)})  and  $ \, {}^*V^r(L) = {\big( {}_*{}V^r(L) \big)}_{\text{\it coop}} ={J^\ell(L)}_{\text{\it coop}} \cong J^\ell\big(L\big) \; $  (cf.~Remark \ref{remark_J^ell(L)}).  Thus, in the end, jet bialgebroids of type  $ J^r(L) $  or  $ J^\ell(L) $  are enough to consider all possible situations, for every possible  $ L \, $.
\end{free text}

\medskip

   We introduce now suitable ``topological duals'' for jet bialgebroids  $ J^r(L) $  and  $ J^\ell(L) \, $:

\bigskip

\begin{definition}
 Let  $ \, K = J^r(L) \, $  be a right jet bialgebroid, for some Lie-Rinehart algebra  $ L \, $.  Set
 $ \; I \, := \,  \big\{ \lambda \in J^r(L) \,\big|\, \langle\, 1 \, , \lambda \,\rangle \, = \, 0 \big\} \, = \, \text{\sl Ker}\,\big(\partial_{\scriptscriptstyle J^r(L)}\big) \; $ --- which is a (two-sided) ideal in  $ J^r(L) \, $,  as one easily sees.  Then we introduce the following subsets of  $ \, {}^*K $  and  $ \, {}_*K \; $:
  $$  {}^\star{}K \, := \, \big\{\, u \in {}^*K \,\big|\; u\big(I^n\big) = 0 \;\; \forall \;\; n \gg 0 \,\big\}  \quad ,  \qquad
      {}_\star{}K \, := \, \big\{\, u \in {}_*K \,\big|\; u\big(I^n\big) = 0 \;\; \forall \;\; n \gg 0 \,\big\}  $$
   Similarly, if  $ \, K := J^\ell(L) \, $  is a left jet bialgebroid, and  $ \, I := \text{\sl Ker}\,\big(\partial_{\scriptscriptstyle J^\ell(L)}\big) \, $,  we define
  $$  K^\star \, := \, \big\{\, u \in K^* \,\big|\; u\big(I^n\big) = 0 \;\; \forall \;\; n \gg 0 \,\big\}  \quad ,  \qquad
   K_\star \, := \, \big\{\, u \in K_* \,\big|\; u\big(I^n\big) = 0 \;\; \forall \;\; n \gg 0 \,\big\}  $$
\end{definition}

\smallskip

   It should be clear by the very definition that, in the first case,  $ {}^\star{}K \, $,  resp.~$ {}_\star{}K \, $,  is nothing but the subset of those functions in  $ {}^*K \, $,  resp.~in  $ {}_*K \, $,  which are  {\sl continuous\/}  with respect to the  $ I $--adic  topology in  $ {}^*K \, $,  resp.~in  $ {}_*K \, $,  and the discrete topology in  $ A \, $.  Similarly for  $ K^\star $  and  $ K_\star $  in the second case.  The key reason of interest for these objects lies in the following, well-known result:

\smallskip

\begin{theorem}  \label{J(L)star=V(L)}
 Let  $ L $  be a Lie-Rinehart algebra which, as an  $ A $--module,  is finite projective.
 \vskip4pt
   {\it (a)}\,  Consider the right bialgebroid  $ \, J^r(L) := {V^\ell(L)}^* \, $.  Then  $ \! \phantom{\big|}_{\star\,}J^r(L) \, $,  as a left bialgebroid, is isomorphic to  $ V^\ell(L) \, $:  more precisely, the canonical map  $ \; V^\ell(L) \relbar\joinrel\longrightarrow \!\! \phantom{\big|}_\star{\big( {V^\ell(L)}^* \big)} \, = \! \phantom{\big|}_{\star\,}J^r(L) \; $  given by evaluation is an iso\-mor\-phism of left bialgebroids.
                                                               \par
   Similarly, replacing  $ \, J^r(L) := {V^\ell(L)}^* \, $  with the right bialgebroid  $ {V^\ell(L)}_* $  one has a corresponding isomorphism of left bialgebroids  $ \; V^\ell(L) \relbar\joinrel\longrightarrow \!\! {}^{{}^{\scriptstyle \star}}\!\big( {V^\ell(L)}_* \big) \; $  still given by evaluation.
 \vskip3pt
   {\it (b)}\,  Consider the left bialgebroid  $ \, J^\ell(L) := \!\!\phantom{\big|}_*{}V^r(L) \; $.  Then  $ \! {J^\ell(L)}^\star \, $,  as a right bialgebroid, is isomorphic to  $ V^r(L) \, $:  more precisely, the canonical map  $ \; V^r(L) \relbar\joinrel\longrightarrow {\big(\!\! \phantom{\big|}_*{}V^r(L) \big)}^\star = {J^\ell(L)}^\star \; $  given by evaluation is an iso\-mor\-phism of right bialgebroids.
                                                               \par
   Similarly, replacing  $ \, J^\ell(L) := \!\!\phantom{\big|}_*{}V^r(L) \, $  with the left bialgebroid  $ {\phantom{|}}^*{V^r(L)} $  one has a corresponding isomorphism of right bialgebroids  $ \; V^r(L) \relbar\joinrel\longrightarrow \!\! {\big(\!\! {\phantom{|}}^*{V^r(L)} \big)}_\star \; $  still given by evaluation.
\end{theorem}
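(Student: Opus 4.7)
The plan is to establish (a) directly and to deduce (b) by a duality argument, using the identifications $\, V^r(L) \cong {V^\ell(L^{\text{\it op}})}^{\text{\it op}} \, $ from Proposition \ref{alg_anti-isom_Xi} and $\, J^\ell(L) \cong {J^r(L^{\text{\it op}})}^{\text{\it op}}_{\text{\it coop}} \, $ from \S\ref{further_J(L)_comp}.  For (a), I define the evaluation map $\, \mathrm{ev} : V^\ell(L) \longrightarrow {\big({V^\ell(L)}^*\big)}^* \, $ by $\, \mathrm{ev}(u)(\phi) := \phi(u) \, $, and first check that it is an $A^e$-module morphism and compatible with the source, target and counit of both bialgebroids by direct inspection of the formulas in \S\ref{bialgd-struct-dual}.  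Multiplicativity and comultiplicativity of $\mathrm{ev}$ come essentially for free: the product on ${}_*{}J^r(L)$ is by construction the transpose of the coproduct of $J^r(L)$, which is itself the transpose of the product of $V^\ell(L)$; dually, the coproduct on ${}_*{}J^r(L)$ is the transpose of the product of $J^r(L)$, which is the transpose of the coproduct of $V^\ell(L)$.  So $\mathrm{ev}$ interchanges these structures in exactly the right way to yield a morphism of left bialgebroids.

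\medskip

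The technical heart of the argument is the filtration-theoretic identity
$$
   I^{n+1} \; = \; \big\{\, \phi \in J^r(L) \,\big|\, \phi\big(V^\ell_n(L)\big) = 0 \,\big\}   \qquad \forall \; n \in \N \; ,
$$
where $\, I := \text{\sl Ker}\big(\partial_{J^r(L)}\big) \, $ and $\, \{V^\ell_n(L)\}_{n \in \N} \, $ is the canonical filtration of Remarks \ref{canon-filtr_V^ell(L)}.  The inclusion ``$\subseteq$'' goes by induction on $n$: the base case is $\, \phi \in I \iff \phi(1) = 0 \iff \phi(V^\ell_0(L)) = 0 \, $ (the second equivalence coming from the $A$-linearity of $\phi$), and the inductive step uses the formula $\, (\phi\,\phi')(u) = \phi(u_{(1)})\,\phi'(u_{(2)}) \, $ together with the fact that $\, \Delta_\ell\big(V^\ell_n(L)\big) \subseteq \sum_{i+j=n} V^\ell_i(L) \otimes V^\ell_j(L) \, $.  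For the reverse inclusion ``$\supseteq$'' I first treat the finite free case: picking an $A$-basis of $L$ and exploiting the PBW-type isomorphism $\, \text{\sl Gr}\big(V^\ell(L)\big) \cong S_A(L) \, $ I construct an explicit dual basis of $J^r(L)$ whose elements of ``order $\geq n+1$'' generate the annihilator of $V^\ell_n(L)$, and verify by direct computation on PBW monomials that they all lie in $I^{n+1} \, $.  The finite projective case then follows from the embedding trick of \S\ref{L-R_proj->free}, viewing $L$ as a direct summand of a finite free Lie-Rinehart algebra $F$ and transferring the identity from $V^\ell(F)$ to $V^\ell(L)$ via the natural projection.

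\medskip

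Once this identity is in hand, the rest is straightforward.  First, it says that $\, \mathrm{ev}(u) \, $ annihilates some power of $I$ for every $\, u \, $, so $\mathrm{ev}$ factors through $\, {}_\star J^r(L) \, $.  Second, it yields a perfect $A$-linear pairing
$$
   V^\ell_n(L) \, \times \, J^r(L)\big/I^{n+1} \relbar\joinrel\longrightarrow A
$$
for every $n$: finite projectivity of $V^\ell_n(L) \, $, inherited from that of $L$ through $\, \text{\sl Gr}\big(V^\ell(L)\big) \cong S_A(L) \, $ (and splitting of the filtration), is essential here.  Injectivity of $\mathrm{ev}$ is immediate: a putative $\, u \in V^\ell_n(L) \setminus V^\ell_{n-1}(L) \, $ with $\mathrm{ev}(u) = 0$ contradicts the perfect pairing.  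Surjectivity is equally direct: any $\, \psi \in {}_\star J^r(L) \, $ kills some $I^{n+1} \, $, hence factors through $J^r(L)/I^{n+1} \, $, and perfect pairing produces a unique preimage $\, u \in V^\ell_n(L) \, $.  The claim about $\, {}^\star\big({V^\ell(L)}_*\big) \, $ reduces to the previous one via $\, {V^\ell(L)}_* \cong {J^r(L)}_{\text{\it coop}} \, $ (cf.~\S\ref{further_J(L)_comp}), and (b) follows from (a) by applying it to $L^{\text{\it op}}$ and using the aforementioned $\text{\it op}$/$\text{\it coop}$ identifications.

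\medskip

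The main obstacle is the filtration identity $\, I^{n+1} = \text{\sl ann}\big(V^\ell_n(L)\big) \, $: the inclusion ``$\subseteq$'' is a formal induction, but the converse requires producing sufficiently many elements of $I^{n+1}$ to separate $V^\ell_n(L)$ from $V^\ell_{n-1}(L) \, $.  This is where finite projectivity of $L$ must be used, via explicit PBW calculations in the free case and descent through a complement in the projective case.  Every other step of the proof is essentially a formal consequence of this pairing being perfect and of the general formalism of duality for bialgebroids recalled in \S\ref{bialgd-struct-dual}.
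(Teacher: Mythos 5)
There is nothing in the paper to compare your argument against: Theorem \ref{J(L)star=V(L)} is stated as a ``well-known result'' and no proof is given. Judged on its own, your proof is correct and is essentially the standard one; it is also consonant with the computations the paper does carry out later in the deformed setting (in the proof of Theorem \ref{dual_QFSAd's=QUEAd's} and in \S\ref{descr-K_h^vee} one finds precisely your two ingredients: in the free case the pseudobasis $\, \frac{1}{\underline{\alpha}!}\,\xi^{\underline{\alpha}} \, $ dual to the PBW basis, with $\, \mathrm{Ker}(\partial) = \sum_i \xi_i \, J^r(L) \, $, and the reduction of the projective case to the free one via \S\ref{L-R_proj->free}). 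Your pivot identity $\, I^{n+1} = \{\phi \mid \phi(V^\ell_n(L)) = 0\} \, $ is true, with ``$\subseteq$'' the formal induction you describe; two points in ``$\supseteq$'' deserve to be made explicit when you write it up. First, in the free case a functional killing $V^\ell_n(L)$ is an $I$-adically convergent series of monomials $\xi^{\underline{\alpha}}$ with $|\underline{\alpha}| \geq n+1$, and you should note why such a series lies in the \emph{algebraic} power $I^{n+1}$ rather than merely in its closure: every such monomial is divisible by some $\xi^{\underline{\gamma}}$ with $|\underline{\gamma}| = n+1$, so the series regroups as a finite sum $\sum_{|\underline{\gamma}|=n+1} \xi^{\underline{\gamma}} \psi_{\underline{\gamma}}$ with $\psi_{\underline{\gamma}} \in J^r(L)$, which belongs to the ideal $I^{n+1}$. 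Second, for the projective case the finite free $\, F = L \oplus Q \, $ of \S\ref{L-R_proj->free} indeed suffices (you do not need the infinite $L_Q$, which the paper introduces for other purposes): the split pair $\, j : V^\ell(L) \hookrightarrow V^\ell(F) \, $, $\, \pi : V^\ell(F) \twoheadrightarrow V^\ell(L) \, $ consists of filtered bialgebroid morphisms, $\phi \circ \pi$ annihilates $V^\ell_n(F)$ whenever $\phi$ annihilates $V^\ell_n(L)$, and the algebra map $j^*$ sends $I_F^{\,n+1}$ into $I^{n+1}$ with $j^*(\phi\circ\pi) = \phi \, $. Granting the lemma, your use of the splitting of the filtration (so that $\, J^r(L)\big/I^{n+1} \cong \text{\sl Hom}_A\big(V^\ell_n(L),A\big) \, $ and reflexivity of the finite projective module $V^\ell_n(L)$ applies) correctly yields that the evaluation map lands in, and is bijective onto, $\, {}_\star J^r(L) \, $; the remaining reductions, for $\, {}^\star\big({V^\ell(L)}_*\big) \, $ via $\, {V^\ell(L)}_* \cong {J^r(L)}_{\text{\it coop}} \, $ and for part (b) via op/coop, are the same ones the paper itself employs elsewhere. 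The only cosmetic slip is that the codomain of your evaluation map should be written as the left dual $\, {}_*J^r(L) \, $ rather than a second right dual, but your subsequent verifications make clear that this is what you intend.
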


\smallskip

\begin{remark}  \label{diff_double-dual}
 The standard isomorphism between  $ \, J^r\!(L) := {V^\ell(L)}^* \, $  and  $ \, {J^r(L)}_{\text{\it coop}} = {V^\ell(L)}_* \, $  (see  Remarks \ref{remarks_J^r(L)}{\it (b)\/})  induces an isomorphism
 $ \phantom{\big|}^\star{\big( \big( {V^\ell(L)}^*  \big)} \cong V^\ell(L) \; $.  Similarly,
we have also an analogous isomorphism
 $ \phantom{\big|}{\big( {}_*\!{V^r(L)}  \big)}_\star \cong V^r(L) \; $.
\end{remark}

\smallskip

\begin{remark}  \label{def-J^r_f(L_Q)}
 Let  $ L $  be a finite projective Lie-Rinehart algebra and  $ Q $  be a (finite projective)  $ A $--module  such that  $ \, L \oplus Q = F \, $  is a finite rank free  $ A $--module.  We resume notation of  \S \ref{L-R_proj->free}:  so we take an  $ A $--basis  $ \; \{ b_1 \, , \dots \, , b_n \} \; $  of  $ F \, $,  and we set  $ \, Y = k\,b_1 \oplus \cdots \oplus k\,b_n \, $,  \, so that  $ \, F = A \otimes_k Y \; $;  moreover,  $ \; L_Q = L \oplus (A \otimes_k Z) \; $  is a Lie-Rinehart algebra with  $ \; Z = Y \oplus Y \oplus Y \oplus \cdots \;\, $.  One has  $ \; {S(Y)}^{\otimes \infty} := \, S(Z) \, = \, S(Y) \otimes S(Y) \otimes \cdots \; $  (recall that elements of an infinite tensor product of algebras are sums of tensor products with only finitely many factors different from  $ 1 \, $).  For  $ \, T \in \{Y,Z\} \, $,  we let  $ \; \epsilon : S(T) \longrightarrow k \; $   --- the counit map of  $ S(T) $  ---   be the unique  $ k $--algebra  morphism given by  $ \, S(t) := 0 \, $  for  $ \, t \in T \, $,  and we set  $ \, {S(T)}^+ := \text{\sl Ker}(\epsilon) \, $.
                                                                    \par
   For any  $ n \, $,  denote by  $ \; J^r_{f,n}(L_Q) \equiv {V^\ell(L_Q)}^*_{f,n} \; $  the subset of  $ \, {V^\ell(L_Q)}^* $  whose elements are all the  $ \, \lambda \in {V^\ell(L_Q)}^* \, $  such that  $ \; \lambda{\big|}_{V^\ell(L) \otimes {S(Y)}^{\otimes n} \otimes {S(Z)}^+} = \, 0 \; $  and set  $ \; J^r_f(L_Q) \equiv {V^\ell(L_Q)}^*_f := \bigcup_{n \in \N} J^r_{f,n}(L_Q) \; $.  Then one can describe  $ J^r_{f,n}(L_Q) $  as  $ \; J^r_{f,n}(L_Q) \, = \, J^r(L) \,\widetilde{\otimes}\, \widetilde{S}(Y^*)^{\widetilde{\otimes}^n} \widetilde{\otimes}\, 1 \,\widetilde{\otimes}\, 1 \,\widetilde{\otimes}\, \cdots \; $,  where  $ \widetilde{S}\big(Y^*\big) $  denotes the completion of  $ S\big(Y^*\big) $  with respect to the weak topology; so we have also
 \vskip4pt
   \centerline{ $  J^r_f(L_Q)  \,\; \cong \;\,  {\textstyle \sum_{n \in \N}}\, J^r_{f,n}(L_Q)  \,\; = \;\,  {\textstyle \sum_{n \in \N}}\, J^r(L) \,\widetilde{\otimes}\, \widetilde{S}(Y^*)^{\widetilde{\otimes}^n} \widetilde{\otimes}\, 1 \,\widetilde{\otimes}\, 1 \,\widetilde{\otimes}\, \cdots $ }
 \vskip7pt
\noindent
 This  $ J^r_f(L_Q) $  is a sub-bialgebroid of  $ J^r(L_Q) \, $:  indeed, its right bialgebroid structure is described by
  $$  \displaylines{
   s_r : A \longrightarrow  \; J^r_f(L_Q) \;\, ,  \quad  a \mapsto s_r(a) \otimes 1 \;\, ,   \qquad \qquad
 t_r : A \longrightarrow J^r_f(L_Q) \;\, ,  \quad  a \mapsto t_r(a) \otimes 1  \cr
   (\phi\otimes s) \, \big(\phi' \otimes s'\big) \, := \, \phi \, \phi' \otimes s \, s' \;\, ,  \quad  \Delta(\phi \otimes s) \, := \, \big( \phi_{(1)} \otimes s_{(1)} \big) \otimes
\big( \phi_{(2)} \otimes s_{(2)} \big) \;\, ,  \quad  \partial(\phi \otimes s) \, := \, \partial(\phi) \epsilon(s)  }  $$
for all  $ \, a \in A \, $,  $ \, \phi, \phi' \in J^r(L) \, $,  $ \, s, s' \in \sum_{n \in \N} \widetilde{S}(Y^*)^{\widetilde{\otimes}^n} \,\widetilde{\otimes}\, 1 \,\widetilde{\otimes}\, 1 \,\widetilde{\otimes}\, \cdots \, (n \in \N) \, $.
 \vskip3pt
   Last, let  $ \, {}^{\star_f}\!{J^r_f(L_Q)}\, $  be the subset of all  $ \, \delta \in \! {}^\star\!{J^r_f(L_Q)} \, $
   \hbox{such that  $ \, \delta{\big|}_{J^r(L) \,\widetilde{\otimes}\, {S(Y^*)}^{\widetilde{\otimes} n} \,\widetilde{\otimes}\, {S(Z^*)}^+} = 0 \, $  for}
 $ n \! \gg \! 0 \, $.  It is easy to see that  $ {}^{\star_f}\!{J^r_f(L_Q)} $  is a left sub-bialgebroid of  $ {}^\star{J^r_f(L_Q)} \, $,  isomorphic to  $ V^\ell(L_Q) \, $.
\end{remark}

\bigskip

\section{Quantum groupoids}  \label{q-groupds}

\smallskip

  {\ } \quad   In this section we introduce quantum groupoids   --- i.e.~topological bialgebroids which are formal deformations of those attached to Lie-Rinehart algebras.  Then we show that taking suitable ``(linear) duals'' we get an antiequivalence among the categories of objects of these two types.

\smallskip

\begin{free text}  \label{h-adic_topology}
 {\bf  The  $ h $--adic  topology.}  If  $ V $  is any  $ k[[h]] $--module,  it is endowed with the following decreasing filtration:  $ \; V \supseteq h\,V \supseteq h^2 V \supseteq \cdots \supseteq h^n V \supseteq h^{n+1} V \supseteq \cdots \supseteq \; $.  Then  $ V $  is also endowed with the  $ h $--adic  topology, which is the unique one for which  $ V $  is a topological  $ k[[h]] $--module  in which  $ {\big\{\, h^m V \big\}}_{m \in \N} $  is a basis of neighborhoods of  $ \, 0 \, $.  Indeed,  $ V $  is then a pseudo-metric space, as the  $ h $--adic  topology is the one induced by the following pseudo-metric:
  $$  d(x,y)  \, : = \,  \|x-y\|  \, = \,  2^{-m}   \quad  \text{with}  \quad  m := \sup\big\{ s \in \N \;\big|\; (x-y) \in h^s \, V \,\big\}   \eqno  \forall \;\, x, y \in V  \quad  $$
The topological space  $ V $  is Hausdorff if and only if the pseudo-metric  $ d $  is a metric: in turn, this occurs if and only if  $ \; \bigcap_{m \in \N} h^m V \, = \, \{0\} \; $,  which means that each point in  $ V $  forms a closed subset.
\end{free text}

\smallskip

 \subsection{Quantum groupoids}  \label{def_q-groupds}

\smallskip

   {\ } \quad   In this subsection we introduce the notion of ``quantum groupoids'': these are special ``quantum bialgebroids'', namely (topological) bialgebroids which are formal deformations of those of type  $ \, V^\ell(L) \, $,  $ V^r(L) \, $,  $ J^r(L) \, $  or  $ J^\ell(L) \, $.  We begin with the ones associated with the first two cases:

\smallskip

\begin{definition}  \label{def-QUEAd}
 A  {\sl left quantum universal enveloping algebroid}  (=LQUEAd) is a  {\sl topological}  left bialgebroid  $ \, \big( H_h \, , A_h \, , s^\ell_h \, , t^\ell_h \, , m_h \, , \Delta_h \, , \epsilon_h \big) \, $  over a topological  $ k[[h]] $--algebra  $ A_h $  such that:
 \vskip2pt
   (i)  $ \, A_h $  is isomorphic to  $ A[[h]] $  as a topological  $ k[[h]] $--module,  for some  $ k $--algebra  $ A \, $,  and this isomorphism induces an algebra isomorphism  $ \; A_h \big/ \, h \, A_h \, \cong \, A[[h]] \big/ \, h\,A[[h]] \, \cong \, A \; $;
 \vskip2pt
   (ii)  $ \, H_h $  is isomorphic to  $ V^\ell(L)[[h]] $  as a topological  $ k[[h]] $--module   where  $ V^\ell(L) $  is the left bialgebroid associated with some Lie-Rinehart  $ A $--algebra  $ L \, $,  as in  \S \ref{V^ell(L)_left-bialgd};
 \vskip2pt
   (iii)  $ \, H_h \big/ \, h\,H_h \, \cong \, V^\ell(L)[[h]] \big/ \, h\,V^\ell(L)[[h]] \; $  is isomorphic to  $ V^\ell(L) $  as a left  $ A $--bialgebroid  via the isomorphism  $ \; A_h \big/ \, h\,A_h \cong A[[h]] \big/ \, h\,A[[h]] \cong A \; $    mentioned in (i);
 \vskip2pt
   (iv)  denote by  $ \, H_{h\,\triangleleft} \! \mathop{\widehat{\otimes}}\limits_{\;A_h} {}_{\triangleright\,}H_h \, $  the completion of  $ \, H_{h\,\triangleleft} \! \mathop{\otimes}\limits_{\;A_h} {}_{\triangleright\,} H_h \, $  with respect to the  $ h $--adic  topology, and define the  ($ h $--adically  completed)  {\sl Takeuchi product}  as
 \vskip-9pt
  $$  H_{h\,\triangleleft} \! \mathop{\widehat{\times}}\limits_{\;A_h} {}_{\triangleright\,}H_h  \;\; := \;\;  \Big\{\, {\textstyle \sum_i} \, u_ i \otimes u'_i \in H_{h\,\triangleleft} \! \mathop{\widehat{\otimes}}\limits_{\;A_h} {}_{\triangleright\,}H_h \;\Big|\; {\textstyle \sum_i} \, (a \blacktriangleright u_i) \otimes u'_i = {\textstyle \sum_i} \, u_i \otimes \big( u'_i \blacktriangleleft a) \,\Big\}  $$
 \vskip-4pt
\noindent
 then the coproduct  $ \Delta_h $  of  $ H_h $  takes values in  $ \; H_{h\,\triangleleft} \! \mathop{\widehat{\otimes}}\limits_{\;A_h} {}_{\triangleright\,}H_h \; $.
 \vskip2pt
   In this setting, we shall say that  $ H_h $  is a  {\sl quantization},  or a  {\sl quantum deformation},  of  $ V^\ell(L) \, $;  we shall resume it in short using notation  $ \, {V^\ell(L)}_h := H_h \, $.
 \vskip4pt
   In a similar, parallel way, we define the notion of {\sl right quantum universal enveloping algebroid}  (=RQUEAd) as well, just replacing ``left'' with ``right'' and  $ V^\ell(L) $  with  $ V^r(L) \, $,  cf.~\S \ref{V^r(L)_right-bialgd}.
 \vskip5pt
   We define morphisms among left, resp.~right, quantum universal enveloping algebroids like in  Definition \ref{def-categ_left-algd};  moreover, we use notation  {\rm (LQUEAd)},  resp.~{\rm RQUEAd},  to denote the category of all left, resp.~right, quantum universal enveloping algebroids.  If  $ A_h $  is a fixed ground  $ k[[h]] $--algebra,  then we write  {\rm  $ \text{(LQUEAd)}_{A_h} \, $},  resp.~{\rm  $ \text{(RQUEAd)}_{A_h} \, $},  to denote the subcategory   --- in  {\rm (LQUEAd)},  resp.~{\rm (RQUEAd)}  ---   whose objects are all the left, resp.~right, quantum universal enveloping algebroids over  $ A_h \, $,  and whose morphisms are selected as in  Definition \ref{def-categ_left-algd}.
\end{definition}

\smallskip

\begin{remarks}  \label{remarks-QUEAd's}   {\ }
 \vskip3pt
   {\it (a)}\,  $ \;\; U $  is a LQUEAd  $ \,\iff\, $  $ U^{\text{\it op}}_{\text{\it coop}} $  is a RQUEAd  $ \,\iff\, $  $ U^{\text{\it op}} $  is a RQUEAd\;\,.
 \vskip3pt
   {\it (b)}\,  If  $ \big(\, {V^\ell(L)}_h \, , A_h \, , s^\ell_h \, , t^\ell_h \, , m_h \, , \Delta_h \, , \epsilon_h \big) $  is any LQUEAd, then  $ A_h $  is a deformation of the algebra  $ A \, $:  then, as usual, one can define a Poisson structure on the base algebra  $ A $  as follows:
  $$  \{f,g\}  \; := \;  {{\; f' *_h g' - g' *_h f' \;} \over h}  \mod h\,A_h   \eqno  \forall \;\; f, g \in A   \qquad  $$
where  $ \, f' \in A_h \, $  and  $ \, g' \in A_h \, $  are such that  $ \, f' \mod h\,A_h = f \, $  and   $ \, g' \mod h\,A_h = g \, $.  The same observations makes sense if one has to do with a RQUEAd  $ \, {V^r(L)}_h \; $.
 \vskip3pt
   {\it (c)}\,  The definitions given so far make sense for any Lie-Rinehart algebra  $ L \, $.  However, {\it from now on we shall assume in addition that  $ L \, $,  as an  $ A $--module,  is  {\sl finitely generated projective}}.
\end{remarks}

\smallskip

   The following theorem is proved in  \cite{Xu2}  (Theorem 5.16) :

\smallskip

\begin{theorem}  \label{semiclassical_limit-V^ell(L)}
 Let  $ \big( {V^\ell(L)}_h \, , A_h \, , s^\ell_h \, , t^\ell_h \, , m_h \, , \Delta_h \, , \epsilon_h \big) $  be a LQUEAd.  Define
  $$  \displaylines{
   \hfill   \delta(a)  \; := \;  {{\; t^\ell_h\big(a'\big) - s^\ell_h\big(a'\big) \;} \over h}  \mod h\,{V^\ell(L)}_h
        \hfill   \forall \;\; a \in A  \qquad  \cr
  \hfill   \delta(X)  \; := \;  {\Delta^{[1]}(X)}_{2,1} - \Delta^{[1]}(X)  \; \in \; V^\ell(L) \, \mathop{\otimes}_A \, V^\ell(L)
        \hfill   \forall \;\; X \in L  \qquad  \cr
  \text{with}  \qquad  \Delta^{[1]}(X)  \; := \;  {{\; \Delta_h\big(X'\big) - X' \otimes 1 - 1 \otimes X' \;} \over h}  \mod h \, \Big( {V^\ell(L)}_h \!\mathop{\widehat{\otimes}}_{\;A_h} {V^\ell(L)}_h \Big)  \cr
  \text{and}  \qquad  {\Delta^{[1]}(X)}_{2,1} \, := \, {\textstyle \sum_{[X]}}\, X_{[2]} \otimes X_{[1]}  \qquad  \text{if}  \qquad  \Delta^{[1]}(X) \, = \, {\textstyle \sum_{[X]}}\, X_{[1]} \otimes X_{[2]}  }  $$
where  $ \, X' \in {V^\ell(L)}_h \, $  is any lift of  $ X $  (i.e.~$ \, X' \mod h \, {V^\ell(L)}_h = X \, $)  and  $ \, a' \in A_h \, $  is any lift of  $ a \, $.
 \vskip4pt
   Then  $ \, \delta(a) \in L \, $  and  $ \, \delta(X) \in \bigwedge_A^2 \! L \, $;  this gives to  $ L $  the structure of a Lie-Rinehart bialgebra.  Also, the Poisson structure on  $ A $  induced by this Lie-Rinehart bialgebra  (cf.~Remarks \ref{props_Lie-bialg}{\it (c)\/})  coincides with the one obtained as the classical limit of the base $ * $--algebra  $ A_h \, $  (cf.~Remarks \ref{remarks-QUEAd's}{\it (b)\/}).
\end{theorem}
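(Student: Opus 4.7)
The plan is to verify the stated conclusions in order: well-definedness and primitivity of $\delta(a)$, well-definedness and shape of $\delta(X)$, the Lie-Rinehart bialgebra axioms, and finally the Poisson compatibility. The underlying strategy is to translate each desired identity into a first-order (in $h$) consequence of the corresponding bialgebroid axiom for $H_h={V^\ell(L)}_h$, using expansions modulo $h^2$ of the form $\Delta_h(u')=\Delta(u)+h\,\Delta^{[1]}(u)\!\pmod{h^2}$ and analogous expansions for $s^\ell_h$, $t^\ell_h$, $m_h$.

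For $\delta(a)\in L$: condition (iii) of Definition \ref{def-QUEAd} forces $s^\ell_h\equiv t^\ell_h\equiv\iota_A\pmod h$, so $t^\ell_h(a')-s^\ell_h(a')\in hH_h$ and $\delta(a)$ descends to a well-defined element of $V^\ell(L)$, independent of the lift $a'$. To see primitivity, apply $\Delta_h$ using $\Delta_h(s^\ell_h(a'))=s^\ell_h(a')\otimes 1$ and $\Delta_h(t^\ell_h(a'))=1\otimes t^\ell_h(a')$ together with the Takeuchi identification $t^\ell_h(a')\otimes 1=1\otimes s^\ell_h(a')$ (a consequence of $(u\triangleleft a)\otimes v=u\otimes(a\triangleright v)$); writing $\sigma:=s^\ell_h(a')$, $\tau:=t^\ell_h(a')$ one computes
\[
\Delta_h(\tau-\sigma)\;=\;1\otimes\tau-\sigma\otimes 1\;=\;1\otimes(\tau-\sigma)+(1\otimes\sigma-\sigma\otimes 1)\;=\;1\otimes(\tau-\sigma)+(\tau-\sigma)\otimes 1.
\]
Dividing by $h$ and reducing modulo $h$ yields primitivity of $\delta(a)$, and Theorem \ref{left-bialgd_V^ell(L)}(a) identifies the primitive elements of $V^\ell(L)$ with $L$.

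For $\delta(X)\in\bigwedge^2_A L$: well-definedness is immediate since $X$ is primitive in $V^\ell(L)$. Antisymmetry is built into the definition, so it remains to show the skew-symmetric element $\delta(X)$ lives in $L\otimes_A L$. Expanding the coassociativity identity $(\Delta_h\otimes\mathrm{id})\Delta_h(X')=(\mathrm{id}\otimes\Delta_h)\Delta_h(X')$ modulo $h^2$ and canceling the zeroth-order terms produces the Hochschild $2$-cocycle identity
\[
1\otimes\phi-(\Delta\otimes\mathrm{id})(\phi)+(\mathrm{id}\otimes\Delta)(\phi)-\phi\otimes 1\;=\;0,\qquad \phi:=\Delta^{[1]}(X).
\]
Passing to the associated graded $\mathrm{Gr}\bigl(V^\ell(L)\bigr)\cong S_A(L)$ via Remarks \ref{canon-filtr_V^ell(L)}(a) and antisymmetrizing, the symmetric coboundary part vanishes and the surviving antisymmetric component is forced to lie in $L\wedge_A L$, giving $\delta(X)\in\bigwedge^2_A L$.

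Once $\delta|_A$ and $\delta|_L$ land in the correct subspaces, I extend $\delta$ to a degree-one derivation $\delta_L$ of $\bigwedge_A L$ as in Remarks \ref{props_Lie-bialg}(b). The derivation-on-bracket condition of Definition \ref{def_L-R_bialgebras} follows by expanding $\Delta_h(X'Y'-Y'X')$ modulo $h^2$ using that $\Delta_h$ is an algebra morphism, while $\delta_L^2=0$ is again coassociativity modulo $h^2$ applied to a lift of $X$. For the Poisson compatibility, both brackets on $A$ are unpacked: the semiclassical $\{f,g\}$ from Remarks \ref{remarks-QUEAd's}(b) and the Lie-Rinehart bialgebra one $\langle df,\delta_L g\rangle$ from Remarks \ref{props_Lie-bialg}(d) both collapse, via the anchor formula $\omega_L(X)(a)=\epsilon(X\,a)$ of \S\ref{V^ell(L)_left-bialgd}, to $\omega_L(\delta(g))(f)$. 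The main obstacle I expect is the second step: controlling the Hochschild cocycle for $\phi$ requires careful bookkeeping of the completed Takeuchi tensor product and bimodule actions, and the identification of the antisymmetrization of $\phi$ with an element of $L\wedge_A L$ relies on the Poincaré-Birkhoff-Witt isomorphism $\mathrm{Gr}(V^\ell(L))\cong S_A(L)$ combined with the well-known triviality of the relevant symmetric coalgebra cohomology in low degree.
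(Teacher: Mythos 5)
The paper does not actually prove this statement: it quotes it from \cite{Xu2} (Theorem 5.16), the only added content being the remark that the formulas are taken opposite to Xu's so that the final Poisson comparison comes out right; so your attempt has to be judged on its own terms. Your opening moves are sound: $s^\ell_h\equiv t^\ell_h\pmod{h}$, the relations $\Delta_h\big(s^\ell_h(a')\big)=s^\ell_h(a')\otimes 1$, $\Delta_h\big(t^\ell_h(a')\big)=1\otimes t^\ell_h(a')$ and $t^\ell_h(a')\otimes 1=1\otimes s^\ell_h(a')$ do give primitivity of $\delta(a)$, hence $\delta(a)\in L$ (although the identification $P^\ell\big(V^\ell(L)\big)=L$ comes from Remark \ref{rPrim-Vr(L)} rather than Theorem \ref{left-bialgd_V^ell(L)}(a) alone); the derivation of the co-Hochschild $2$-cocycle identity for $\phi=\Delta^{[1]}(X)$ from coassociativity at order $h$ is correct; and the final Poisson comparison, reducing both brackets to $\omega_L(\delta(g))(f)$ via $\epsilon_h$ and the anchor formula, is in the right spirit and does exploit the sign convention of the statement.

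The decisive steps, however, are asserted rather than proved. First, the claim that the skew part of the cocycle $\phi$ lies in $L\otimes_A L$ is the heart of the theorem, and ``triviality of the relevant symmetric coalgebra cohomology in low degree'' is not an off-the-shelf fact for the $A$-coring $V^\ell(L)$: the tensor products are over $A$ with distinct left/right structures, the coproduct lands in a Takeuchi product, and the flip only exists because $s^\ell=t^\ell$ on the classical limit; you would either have to prove the needed Cartier-type decomposition for $V^\ell(L)$ with $L$ projective, or argue directly (as in the Hopf case) that $\delta(X)$ is primitive in each tensor leg. Second, ``$\delta_L^2=0$ is again coassociativity modulo $h^2$'' cannot be right as stated: co-Jacobi is quadratic in $\delta$, hence an order-$h^2$ statement, and first-order coassociativity (the cocycle identity) does not imply it --- exactly as the skew part of a Hochschild $2$-cocycle need not satisfy Jacobi without second-order data; moreover the usual Hopf shortcut, the exact co-Jacobi identity for $\Delta_h-\Delta_h^{\mathrm{op}}$, is unavailable here because the coopposite/flip is not defined on $H_{h\,\triangleleft}\otimes_{A_h}{}_{\triangleright}H_h$ when $s^\ell_h\neq t^\ell_h$, so antisymmetrization only makes sense after reduction mod $h$. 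Third, you never verify that $\delta$ really encodes a Lie--Rinehart structure on $L^*$: the Leibniz compatibility (equivalently $\delta(aX)=\delta(a)\wedge X+a\,\delta(X)$), the anchor $\omega_{L^*}(\Phi)(a)=\langle\delta(a),\Phi\rangle$ being a Lie-algebra map into $\mathrm{Der}(A)$, and even the independence of $\delta(X)$ from the lift $X'$ (which uses cocommutativity of $V^\ell(L)$) are folded into ``extend $\delta$ as in Remark \ref{props_Lie-bialg}(b)'', but that remark presupposes the bialgebra structure rather than producing it. So the outline is reasonable, but the core of Xu's theorem is still missing.
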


\smallskip

\begin{remark}
 In the above statement, we took formulas opposite to those in  \cite{Xu2}:  indeed, this allows us to deduce the very last claim.
\end{remark}

\smallskip

\begin{example}
 (cf.~\cite{Xu2})  Let  $ P $  be a smooth manifold,  $ D $  the algebra of global differential operators on  $ P $  and  $ \, A := {\mathcal C}^\infty(P) \, $.  Let  $ D[[h]] $  be the trivial deformation of  $  D \, $.  Let
  $$  \mathcal{F}  \; = \;  1 \otimes 1 + h \, B_1 + \cdots  \, \in \,  \big( D \otimes_A D \big)[[h]]  \; \cong \;  D[[h]] \,\widehat{\otimes}_{A[[h]]} D[[h]]  $$
be a formal series of bidifferential operators.  It is easy to see that  $ \mathcal{F} $  is a twistor  (cf.~Definition \ref{def-twistor})  iff the multiplication on  $ A[[h]] $ defined by
$ \; f *_h g \, = \, \mathcal{F}(f,g) \; $  for all  $ \, f $,  $ g \in A[[h]] \, $,
is asso\-ciative, with identity being the constant function  $ 1 \, $,  i.~e.~iff  $ \, *_h \, $  is a  {\sl star product\/}  on  $ P \, $.  The twisted bialgebroid structure on  $ \, D_h := D[[h]] \, $  can be easily described:  $ \, A_h = A[[h]] \, $  has the star product defined above,  $ \, s^\ell_h : A_h \longrightarrow D_h \, $  and  $ \, t^\ell_h : A_h \longrightarrow D_h \, $  are given by
 $ \; s^\ell_h(f) \, g \, = \, f \ast_h g \, $,  $ \; t^\ell_h(f) \, g = \, g \ast_h f \, $,  for  $ \, f $,  $ g \in A \, $,
the coproduct  $ \; \Delta_h : D_h \longrightarrow D_h \widehat{\otimes}_{A_h} D_h \; $  is  $ \; \Delta_h(x) := {\mathcal F}^{\# -1}\big(\Delta(x) \cdot {\mathcal F} \big) \; $  for  $ \, x \in D_h \, $.
 \vskip7pt
   In  Section \ref{example_twistor}  later on we shall explicitly provide a specific example of this kind.
\end{example}

\medskip

   Theorem \ref{semiclassical_limit-V^ell(L)}  has a natural counterpart for RQUEAd's as follows:

\medskip

\begin{theorem}  \label{semiclassical_limit-V^r(L)}
 Let  $ \big( {V^r(L)}_h \, ,  A_h \, , s^r_h \, , t^r_h \, , m_h \, , \Delta_h \, , \epsilon_h \big) $  be a RQUEAd.  Define
  $$  \displaylines{
   \hfill   \delta(a)  \; := \;  {{\; s^r_h(a) - t^r_h(a) \;} \over h}  \mod h \, {V^r(L)}_h
        \hfill   \forall \;\; a \in A  \qquad  \cr
  \hfill   \delta(X)  \; := \;  {\Delta^{[1]}(X)}_{2,1} - \Delta^{[1]}(X)  \; \in \; V^r(L) \, \mathop{\otimes}_A \, V^r(L)
        \hfill   \forall \;\; X \in L  \qquad  \cr
  \text{with}  \qquad  \Delta^{[1]}(X)  \; := \;  {{\; \Delta_h\big(X'\big) - X' \otimes 1 - 1 \otimes X' \;} \over h}  \mod h \, \Big( {V^r(L)}_h \!\mathop{\widehat{\otimes}}_{\;A_h} {V^r(L)}_h \Big)  \cr
  \text{and}  \qquad  {\Delta^{[1]}(X)}_{2,1} \, := \, {\textstyle \sum_{[X]}}\, X_{[2]} \otimes X_{[1]}  \qquad  \text{if}  \qquad  \Delta^{[1]}(X) \, = \, {\textstyle \sum_{[X]}}\, X_{[1]} \otimes X_{[2]}  }  $$
where  $ \, X' \in {V^r(L)}_h \, $  is any lift of  $ X $  (i.e.~$ \, X' \mod h \, {V^r(L)}_h = X \, $)  and  $ \, a' \in A_h \, $  is any lift of  $ a \, $.
 \vskip4pt
   Then  $ \, \delta(a) \in L \, $  and  $ \, \delta(X) \in \bigwedge_A^2 \! L \, $;  this gives to  $ L $  the structure of a Lie-Rinehart bialgebra.  Moreover, the Poisson structure induced on  $ A $  by this Lie-Rinehart  bialgebra  structure  is opposite to the one obtained as the classical limit of the base  $ * $--algebra  $ A_h \, $  (cf.~Remarks \ref{remarks-QUEAd's}{\it (b)\/}).
\end{theorem}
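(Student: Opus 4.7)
The strategy is to reduce to Theorem \ref{semiclassical_limit-V^ell(L)} via the correspondence of Remarks \ref{remarks-QUEAd's}{\it (a)}: given an RQUEAd $H_h = V^r(L)_h$, its opposite $H_h^{\text{\it op}}$ is an LQUEAd, with source $t^r_h$, target $s^r_h$, reversed multiplication, and unchanged coproduct and counit. By Proposition \ref{alg_anti-isom_Xi}, $H_h^{\text{\it op}}$ is naturally a quantization of $V^\ell(L^{\text{\it op}})$. I would then apply Theorem \ref{semiclassical_limit-V^ell(L)} to $H_h^{\text{\it op}}$ to obtain a Lie-Rinehart bialgebra structure on $L^{\text{\it op}}$, and transport it back to $L$ via the anti-isomorphism $\Xi$ of Proposition \ref{alg_anti-isom_Xi} together with the identification from Remarks \ref{props_Lie-bialg}{\it (e)}.

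Carrying this out, the formula for $\delta(a)$ in $H_h^{\text{\it op}}$ reads $\big(\tilde{t}^\ell(a') - \tilde{s}^\ell(a')\big)/h = \big(s^r_h(a') - t^r_h(a')\big)/h$, matching the stated formula. For $\delta(X)$ with $X \in L$, the coproducts of $H_h$ and $H_h^{\text{\it op}}$ coincide, so $\Delta^{[1]}(X)$ takes the same value in both, and the formula $\delta(X) = \Delta^{[1]}(X)_{2,1} - \Delta^{[1]}(X)$ transcribes directly; one just has to match $X \in L \subset V^r(L)$ with its image in $L^{\text{\it op}} \subset V^\ell(L^{\text{\it op}})$ via $\Xi$ (which sends $D \mapsto -D$), so the sign is absorbed into the op-structure of $L^{\text{\it op}}$ and the stated formula is indeed the correct cobracket on $L$. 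For the Poisson claim: the Poisson bracket induced on $A$ from the base $\ast_h$-algebra structure on $A_h$ (cf.~Remarks \ref{remarks-QUEAd's}{\it (b)}) is intrinsic to $A_h$ and unchanged by op-ing; but the Lie-Rinehart bialgebra structure on $L$ has bracket and anchor opposite to those of $L^{\text{\it op}}$, hence, by the explicit formula of Remarks \ref{props_Lie-bialg}{\it (d)} --- which is bilinear in the bracket --- the induced Poisson bracket on $A$ changes sign. Thus the two Poisson structures are opposite, as claimed.

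The main obstacle is the careful bookkeeping of signs through $\Xi$ and the passage to the opposite Lie-Rinehart algebra: one must verify that the $\delta(X)$ formula written directly with the RQUEAd coproduct agrees with the cobracket produced by the detour through $H_h^{\text{\it op}}$, and that the interchange of bracket and anchor under $L \leftrightarrow L^{\text{\it op}}$ lands with the correct signs on both the algebraic side (in the Lie-Rinehart bialgebra structure) and the Poisson side. A direct proof paralleling \cite{Xu2}, Theorem 5.16, is of course also available, and effectively performs the same verifications without the op-detour; the advantage of the approach above is that it isolates the sign discrepancies between Theorems \ref{semiclassical_limit-V^ell(L)} and \ref{semiclassical_limit-V^r(L)} into one conceptual step --- the switch $L \rightsquigarrow L^{\text{\it op}}$ --- rather than scattering them across the proof.
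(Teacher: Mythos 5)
Your proposal is correct and takes essentially the same route as the paper, which proves this theorem precisely by the remark following it: apply Theorem \ref{semiclassical_limit-V^ell(L)} to $\,{V^r(L)}_h^{\,\text{op}}\,$, a LQUEAd quantizing $V^\ell(L^{\text{op}})$ (cf.~Remarks \ref{remarks-QUEAd's}\,(a) and Proposition \ref{alg_anti-isom_Xi}), and observe that the induced Lie-Rinehart bialgebra structure is the opposite one, whence the Poisson structure on $A$ flips sign. One minor simplification of your sign bookkeeping: since $V^\ell(L^{\text{op}})={V^r(L)}^{\text{op}}$ as algebras and $L^{\text{op}}=L$ as an $A$--module with the identity embedding, the formulas for $\delta(a)$ and $\delta(X)$ transcribe verbatim with no sign to be absorbed through $\Xi$; the only ``oppositeness'' lives in the bracket and anchor of $L$ itself, and it is this alone (through the differential $d$, not $d_*$, in $\{f,g\}=\langle df, d_*g\rangle$) that reverses the Poisson bracket.
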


\smallskip

\begin{remark}
 The previous result can be proved just like  Theorem \ref{semiclassical_limit-V^ell(L)}  in  \cite{Xu2}.  Otherwise, one can deduce  Theorem \ref{semiclassical_limit-V^r(L)}  from  Theorem \ref{semiclassical_limit-V^ell(L)}  applied to  $ \, {V^\ell(L)}_h := {V^r(L)}_h^{\,\text{\it op}} \, $,  which is a LQUEAd   --- cf.~Remarks \ref{remarks-QUEAd's}{\it (a)}.  In particular, the Lie-Rinehart bialgebra structure induced on  $ L $  by the RQUEAd  $ {V^r(L)}_h $  is opposite to that induced by the LQUEAd  $ \, {V^\ell(L)}_h := {V^r(L)}_h^{\,\text{\it op}} \, $.
\end{remark}

\smallskip

   We introduce now a second type of ``quantum bialgebroids'', namely quantizations of jets:

\medskip

\begin{definition}  \label{def-QFSAd}
 A  {\sl right quantum formal series algebroid}  (=RQFSAd) is a  {\sl topological}  right bialgebroid  $ \, \big( K_h \, , A_h \, , s^r_h \, , t^r_h \, , m_h \, , \Delta_h \, , \partial_h \big) \, $  over a topological  $ k[[h]] $--algebra  $ A_h $  such that:
 \vskip2pt
   (i)  $ \, A_h $  is isomorphic to  $ A[[h]] $  as a topological  $ k[[h]] $--module,  for some  $ k $--algebra  $ A \, $,  and this isomorphism induces an algebra isomorphism  $ \; A_h \big/ \, h \, A_h \, \cong \, A[[h]] \big/ \, h\,A[[h]] \, \cong \, A \; $;
 \vskip2pt
   (ii)  $\, K_h $  is isomorphic to  $ J^r(L)[[h]] $  as a topological  $k [[h]] $--module   where  $ J^r(L) $  is the right bialgebroid of jets associated with some finite projective Lie-Rinehart  $ A $--algebra  $ L $  as in  \S \ref{def_J^r(L)};
 \vskip2pt
   (iii)  $ \, K_h \big/ \, h\,K_h \, \cong \, J^r(L)[[h]] \big/ \, h\,J^r(L)[[h]] \; $  is isomorphic to  $ J^r(L) $  as a right  $ A $--bialgebroid via the isomorphism  $ \; A_h \big/ \, h\,A_h \, \cong \, A[[h]] \big/ \, h\,A[[h]] \, \cong \, A \; $    mentioned in (i);
 \vskip2pt
   (iv)  letting  $ \; I_h \, := \, \big\{\, \psi \in K_h \,\big|\, \partial(\psi) \in h\, A_h \,\big\} \; $   --- which is easily seen to be a two-sided ideal in  $ K_h $  ---   we have that  $ K_h $  is complete in the  $ I_h $--adic  topology;
 \vskip2pt
%
%
   (v)  denote by  $ \, K_{h\,\blacktriangleleft} \! \mathop{\widetilde{\otimes}} \limits_{\;A_h} {}_{\blacktriangleright\,}K_h \, $  the completion of  $ \, K_{h\,\blacktriangleleft} \! \mathop{\otimes}\limits_{\;A_h} {}_{\blacktriangleright\,} K_h \, $  with respect to the topology defined by the filtration  $ \; {\Big\{ \sum_{p+q=n} I_h^{\,p} \otimes I_h^{\,q} \Big\}}_{n \in \N} \; $;  also, define the  {\sl Takeuchi product}  as
 \vskip-9pt
  $$  K_{h\,\blacktriangleleft} \! \mathop{\widetilde{\times}}\limits_{\;A_h} {}_{\blacktriangleright\,}K_h  \;\; := \;\;  \Big\{\, {\textstyle \sum_i} \, u_ i \otimes u'_i \in K_{h\,\blacktriangleleft} \! \mathop{\widetilde{\otimes}}\limits_{\;A_h} {}_{\blacktriangleright\,}K_h \;\Big|\; {\textstyle \sum_i} \, (a \triangleright u_i) \otimes u'_i = {\textstyle \sum_i} \, u_i \otimes \big( u'_i \triangleleft a) \,\Big\}  $$
 \vskip-4pt
\noindent
 then the coproduct  $ \Delta_h $  of  $ \, K_h $  takes values in
$ \; K_{h\,\blacktriangleleft} \! \mathop{\widetilde{\times}}\limits_{\;A_h}
{}_{\blacktriangleright\,}K_h \; $.
 \vskip2pt
   In this setting, we shall say that  $ K_h $  is a  {\sl quantization},  or a  {\sl quantum deformation},  of  $ J^r(L) \, $;  we shall resume it in short using notation  $ \, {J^r(L)}_h := K_h \, $.
 \vskip4pt
   In a similar, parallel way, we define the notion of {\sl left formal series algebroid}  (=LQFSAd) as well, just replacing ``left'' with ``right'' and  $ J^r(L) $  with  $ J^\ell(L) \, $.
 \vskip5pt
   We define morphisms among right, resp.~left, quantum formal series algebroids like in  Definition   \ref{def-categ_left-algd};  moreover, we use notation  {\rm (RQFSAd)},  resp.~{\rm LQFSAd},  to denote the category of all right, resp.~left, quantum formal series algebroids.  If  $ A_h $  is a fixed ground (topological)  $ k[[h]] $--algebra,  then we write  {\rm  $ \text{(RQFSAd)}_{A_h} \, $},  resp.~{\rm  $ \text{(LQFSAd)}_{A_h} \, $},  to denote the subcategory   --- in  {\rm (RQFSAd)},  resp.~{\rm (LQFSAd)}  ---   whose objects are all the right, resp.~left, quantum formal series algebroids over  $ A_h \, $,  and whose morphisms are selected as in  Definition \ref{def-categ_left-algd}.
\end{definition}

\smallskip

\begin{remarks}  \label{remarks-QFSAd's}  {\ }
 \vskip3pt
   {\it (a)}\,  From the analysis in  \S \ref{further_J(L)_comp}  we can argue that one could define a RQFSAd also as a deformation of the right bialgebroid  $ \, {V^\ell(L)}_* \, $,  and a LQFSAd as a deformation of the left bialgebroid  $ {}^*{V^r(L)} \, $.  On the other hand, the very conclusion of  \S \ref{further_J(L)_comp}  itself also tells us that it is enough to consider the notions of RQFSAd and LQFSAd introduced in  Definition \ref{def-QFSAd}  above.
 \vskip3pt
   {\it (b)}\,  $ \;\; K_h $ is a LQFSAd  $ \,\iff\, $  $ {\big( K_h^{\,\text{\it op}} \big)}_{\!\text{\it coop}} \, $  is a RQFSAd  $ \,\iff\, $  $ K_h^{\,\text{\it op}} $  is a RQFSAd\;\,.
\end{remarks}

\smallskip

\begin{free text}
 {\bf Further ``half Hopf'' structures on quantum groupoids.}  By construction, our quantum groupoids are just (left or right) bialgebroids, namely deformations of such (left or right) bialgebroids as  $ V^{\ell/r}(L) $  and  $ J^{\ell/r}(L) \, $.  However,  {\it  $ V^{\ell/r}(L) $  and  $ J^{\ell/r}(L) $  actually bear further structure, which ``automatically inheriteld'' by their quantizations too}.  To explain it, we fix some terminology.
                                                              \par
   Let
 $ \,\; {}_\blacktriangleright U \!\!\mathop{\otimes}\limits_{\,A^{\text{\it op}}}\! U_\triangleleft
\; := \; U \mathop{\otimes}\limits_k U \Big/ \text{\sl Span}\Big( {\big\{\, (a \blacktriangleright u) \otimes v - u \otimes (v \triangleleft a) \,\big\}}_{u, v \in U}^{\, a \in A} \Big) \;\, $
for some left bialgebroid  $ U $ over  $ A \, $;  then define a ``Hopf-Galois'' map
 $ \; {}_\blacktriangleright U \!\!\mathop{\otimes}\limits_{\,A^{\text{\it op}}}\! U_\triangleleft \;{\buildrel {\alpha_\ell} \over {\relbar\joinrel\longrightarrow}}\; U_\triangleleft \mathop{\otimes}\limits_A {}_\triangleright U  \; , \; u \otimes v \mapsto u_{(1)} \otimes u_{(2)}\,v \; $.
 Similarly, one can consider an analogous tensor product
 $ \,\; {}_{\triangleright\,} U \!\!\mathop{\otimes}\limits_{\,A^{\text{\it op}}}\! U_\blacktriangleleft \;\, $
 and a corresponding ``Hopf-Galois'' map
 $ \; {}_{\triangleright\,} U \!\!\mathop{\otimes}\limits_{\,A^{\text{\it op}}}\! U_\blacktriangleleft \;{\buildrel {\alpha_r} \over {\relbar\joinrel\longrightarrow}}\; U_\triangleleft \mathop{\otimes}\limits_A {}_\triangleright U  \; , \; u \otimes v \mapsto v_{(1)} \, u \otimes v_{(2)} \; $.
 On the other hand, for a right bialgebroid  $ W $  over  $ B \, $  one consider suitable tensor products
 $ \; {}_{\blacktriangleright\,} W \!\!\mathop{\otimes}\limits_{\,B^{\text{\it op}}}\! W_\triangleleft
\; $  and
 $ \; {}_{\triangleright\,} W \!\!\mathop{\otimes}\limits_{\,B^{\text{\it op}}}\! W_\blacktriangleleft \; $
 and Hopf-Galois maps
 $ \; {}_{\blacktriangleright\,} W \!\!\mathop{\otimes}\limits_{\,B^{\text{\it op}}}\! W_\triangleleft \;{\buildrel {\beta_\ell} \over {\relbar\joinrel\longrightarrow}}\; W_\blacktriangleleft \mathop{\otimes}\limits_B {}_\blacktriangleright W \; $  and
 $ \; {}_{\triangleright\,} W \!\!\mathop{\otimes}\limits_{\,B^{\text{\it op}}}\! W_\blacktriangleleft \;{\buildrel {\beta_r} \over {\relbar\joinrel\longrightarrow}}\; W_\blacktriangleleft \mathop{\otimes}\limits_B {}_\blacktriangleright W \; $
involving them.
 Then  $ U $  is called a  {\sl left},  resp.\ a  {\sl right},  {\sl Hopf left bialgebroid\/}  iff the map  $ \alpha_\ell \, $,  resp.~$ \alpha_r \, $,  is a bijection; similarly,  $ W $  is called a  {\sl left Hopf right bialgebroid},  respectively a  {\sl left},  resp.\ a  {\sl right},  {\sl Hopf right bialgebroid},  iff the map  $ \beta_\ell \, $,  resp.~$ \beta_r \, $,  is a bijection  (cf.\ \cite{Bohm2}, \cite{Kowalzig}, \cite{Kowalzig and Posthuma}, \cite{Kowalzig and Krahmer}).
                                                              \par
   It is known that  $ V^\ell(L) \, $,  resp.\  $ V^r(L) \, $,   is both a  {\sl left\/}  and  {\sl right Hopf left},  resp.\  {\sl right},  {\sl bialgebroid}.  The same holds for  $ J^\ell(L) $  and  $ J^r(L) $  too   --- actually because they have even stronger properties, namely they are  {\sl Hopf algebroids}, in the sense of B\H{o}hm-Szlach\`anyi  (cf.\ \cite{Bohm-S}, \cite{Bohm1}, \cite{Lu}, \cite{Khalkhali and Rangipour}, \cite{Kowalzig}).
                                                              \par
   Any quantum groupoid has its own Hopf-Galois maps, whose semiclassical specialization are the analogous maps for its semiclassical limit: e.g., the Hopf-Galois map  $ \alpha_\ell $  for any  $ \, {V^\ell(L)}_h \, $  yields by specialization the same name Hopf-Galois map for  $ V^\ell(L) \, $.  The latter map is bijective (since  $ V^\ell(L) $  is a left Hopf left bialgebroid) hence, by a standard argument, its deformation   --- the map  $ \alpha_\ell $  for  $  {V^\ell(L)}_h $  ---   is bijective too: thus in turn  $ {V^\ell(L)}_h $  is a left Hopf left bialgebroid as well!  With similar reasonings, looking all types of quantum groupoids we find that any  $ {V^\ell(L)}_h $  and any  $ {J^\ell(L)}_h $  are both right and left Hopf left bialgebroids, while any  $ {V^r(L)}_h $  and any  $ {J^r(L)}_h $  are both left and right Hopf right bialgebroids.
\end{free text}

\smallskip

\begin{free text}
 {\bf Liftings in a (R/L)QFSAd.}  Let  $ L $  be a Lie-Rinehart algebra which is finite projective (as an  $ A $--module).  Set  $ \, \J_{\scriptscriptstyle J^r(L)} := \text{\sl Ker}\,\big( \partial_{\scriptscriptstyle J^r(L)} \big) \, $:  then  $ \, \J_{\scriptscriptstyle J^r(L)} \big/ \J_{\scriptscriptstyle J^r(L)}^{\,2} \cong L^* \, $ as  $ A $--modules,  by definitions.  Given  $ \, \Phi \in L^* \, $,  we shall call a  {\sl lift of  $ \, \Phi $  in  $ J^r(L) $}  any  $ \, \phi \in \J_{\scriptscriptstyle J^r(L)} \, $  such that through the above isomorphism one has  $ \; \phi \mod \J_{\scriptscriptstyle J^r(L)}^{\,2} = \, \Phi \; $.  Now let  $ \, K_h = {J^r(L)}_h \, $  be a RQFSAd, deformation of  $ J^r(L) \, $.  For any  $ \, \Phi \in L^* \, $,  we shall call a  {\sl lift of  $ \, \Phi $  in  $ {J^r(L)}_h $}  any element  $ \, \phi' \in {J^r(L)}_h \, $
 such that\break
 $ \, \phi' \! \mod h \, {J^r(L)}_h $  is a lift of  $ \, \Phi $  in  $ J^r(L) \, $.  In short, this means  $ \, \Phi \, \equiv \big( \phi' \!\! \mod h \, {J^r(L)}_h \big) \mod \J_{\scriptscriptstyle J^r(L)}^{\,2} \, $.
                                                                      \par
   Also, if  $ \, a \in A \, $  we call a  {\sl lift of  $ a $  in  $ A_h $}  any  $ \, a' \in A_h \, $  such that  $ \, a' \!\mod h\,A_h = \, a \; $.
 \vskip3pt
   Changing ``right'' into ``left'', similar remarks and terminology apply for defining ``lifts'' of elements of  $ J^\ell(L) $  in some associated LQFSAd, say  $ \, {J^\ell(L)}_h \; $.
\end{free text}

\smallskip

   Next result introduces semiclassical structures induced on a Lie-Rinehart algebra  $ L $  by quantizations of the form  $ J^r(L) $  or  $ J^\ell(L) \, $.  Indeed, this is the dual counterpart of  Theorem \ref{semiclassical_limit-V^ell(L)}.

\medskip

\begin{theorem}  \label{semiclassical_limit-J^r(L)}
 Let  $ {J^r(L)}_h $  be a RQFSAd, namely a deformation of  $ J^r(L) $  as above.  Then  $ L $  inherits from this quantization a structure of Lie-Rinehart bialgebra, namely the unique one such that the Lie bracket and the anchor map of  $ \, L^* $  are given (notation as above) by
  $$  \displaylines{
    [\Phi\,,\Psi\,]  \; := \;  \left( {{\;\phi' \, \psi' - \psi' \, \phi'\;} \over h}  \mod h \, {J^r(L)}_h \right)  \!\mod \J_{\scriptscriptstyle J^r(L)}^{\,2}  \cr
   \omega(\Phi)(a)  \, :=  \left(\! {{\;\phi' \, r\big(a'\big) - r\big(a'\big) \, \phi'\;} \over h}  \hskip-5pt  \mod h \, {J^r\!(L)}_h \!\right)  \hskip-9pt  \mod \J_{\scriptscriptstyle J^r\!(L)}  =
    \partial \! \left(\! {{\;\phi' \, r\big(a'\big) - r\big(a'\big) \, \phi'\;} \over h} \!\right)  \hskip-9pt  \mod h A_h  }  $$
for all  $ \, \Phi , \Psi  \in L^* \, $  and  $ \, a \in A \, $,  where  $ \phi' $  and  $ \psi' $  are liftings in  $ {J^r(L)}_h $  of  $ \, \Phi $  and  $ \, \Psi $  respectively,  $ a' $  is a lifting in  $ A_h $  of  $ \, a \in A \, $,  and finally  $ \, r\big(a'\big) \, $  stands for either  $ s^r_h\big(a'\big) $  or  $ \, t^r_h\big(a'\big) \, $.
\end{theorem}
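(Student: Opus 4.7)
The plan is to proceed in four stages, following the same pattern as in the proof of Theorem \ref{semiclassical_limit-V^ell(L)} but transferred to the commutative, jet-bialgebroid side. First, I would check that the displayed formulas for $[\Phi,\Psi]$ and $\omega(\Phi)(a)$ are independent of the choices of lifts $\phi'$, $\psi'$ of $\Phi$, $\Psi$ and of $a'$ of $a$. Since the semiclassical limit $J^r(L)$ is commutative (cf.~Remarks \ref{remarks_J^r(L)}), any two lifts of $\Phi$ differ by an element of $h\,{J^r(L)}_h$, so $\phi'\psi' - \psi'\phi' \in h\,{J^r(L)}_h$ and $h^{-1}(\phi'\psi' - \psi'\phi') \bmod h$ is unambiguously defined in $J^r(L)$; the same reasoning applies to the formula defining $\omega$, where commutativity further makes the choice between $s^r_h$ and $t^r_h$ immaterial at the semiclassical level.

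Second, I would show that these operations descend to $L^*\cong \J_{\scriptscriptstyle J^r(L)}/\J_{\scriptscriptstyle J^r(L)}^{\,2}$ and define a Lie-Rinehart algebra structure on $L^*$. Commutativity of $J^r(L)$ implies that $\{\phi,\psi\}:=h^{-1}(\phi'\psi'-\psi'\phi')\bmod h$ is a biderivation in each argument with respect to the product of $J^r(L)$; in particular $\J_{\scriptscriptstyle J^r(L)}$ is closed under the bracket and $\J_{\scriptscriptstyle J^r(L)}^{\,2}$ is a Poisson ideal of $\J_{\scriptscriptstyle J^r(L)}$, so the bracket descends to $L^*$. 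Antisymmetry and the Jacobi identity are inherited from the commutator bracket, while the $A$-Leibniz property and the fact that $\omega$ lands in $\text{\sl Der}(A)$ follow from the bialgebroid axioms relating source/target maps to the multiplication.

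Third --- and this is where the main technical work lies --- I would establish the Lie-Rinehart bialgebra compatibility, namely that the differential $\delta_L$ on $\bigwedge_A L$ determined by the just-constructed Lie-Rinehart structure on $L^*$ is a derivation of the Lie bracket of $L$. The mechanism is that the coproduct of ${J^r(L)}_h$, dual to the multiplication in $V^\ell(L)$, is coassociative and multiplicative: pairing the first-order-in-$h$ part of $\Delta_h$ against elements of $V^\ell(L)$ produces, via the natural pairing between $V^\ell(L)$ and $J^r(L)$ (cf.~Theorem \ref{J(L)star=V(L)}), a cocycle on $V^\ell(L)$ whose restriction to $L$ is exactly $\delta_L$. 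The required derivation property for $\delta_L$ on $[X,Y]$ then follows by comparing, at order $h$, the two expressions obtained by applying $\Delta_h$ to the commutator $XY - YX$ lifted to the quantization and using that the bracket in $L$ is recovered from $\Delta_{V^\ell(L)}$ as in Remarks \ref{props_Lie-bialg}(c).

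The main obstacle is step three: the compatibility axiom couples the bracket on $L$, encoded in $\Delta_{V^\ell(L)}$, with the bracket on $L^*$, which comes from the commutator in ${J^r(L)}_h$, and bridging the two requires the pairing between $V^\ell(L)$ and $J^r(L)$ to behave coherently with both deformations simultaneously. Here the finite-projectivity hypothesis on $L$, the commutativity of $J^r(L)$, and the Takeuchi-product compatibility built into Definition \ref{def-QFSAd}(iv) are precisely what make the comparison work. Alternatively, once the duality framework of Section \ref{lin-dual_q-grpds} is available one may deduce the theorem from Theorem \ref{semiclassical_limit-V^ell(L)} applied to a suitable topological dual of ${J^r(L)}_h$ quantizing $V^\ell(L)$; but at this point in the paper a direct argument along the lines above is logically cleaner.
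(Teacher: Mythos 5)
Your first two stages coincide with what the paper actually does: well-definedness of the formulas (independence of the lifts $\phi',\psi',a'$ and of the choice $s^r_h$ versus $t^r_h$), and the fact that the commutator bracket and anchor make $L^*\cong \J_{\scriptscriptstyle J^r(L)}/\J_{\scriptscriptstyle J^r(L)}^{\,2}$ into a Lie--Rinehart algebra over $A$. The divergence, and the problem, is your third stage. The paper explicitly declines to prove the Lie--Rinehart \emph{bialgebra} compatibility directly: it invokes (as a forward reference) the fact that the continuous dual $\,{}_\star{J^r(L)}_h$ is a LQUEAd quantizing $V^\ell(L)$ (Theorem \ref{dual_QFSAd's=QUEAd's}), applies Xu's Theorem \ref{semiclassical_limit-V^ell(L)} to it, and checks that the induced bracket and anchor on $L^*$ agree with the commutator formulas of the present statement. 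That is exactly the ``alternative'' route you mention in passing and set aside as less clean --- it is the paper's entire proof of the hard part, and it is not circular, since Theorem \ref{dual_QFSAd's=QUEAd's} only uses the Lie--Rinehart \emph{algebra} structure on $L^*$, not the bialgebra property.

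The direct argument you sketch in its place has a genuine gap at its central step. You propose to ``apply $\Delta_h$ to the commutator $XY-YX$ lifted to the quantization'' and compare terms at order $h$. But $\Delta_h$ here is the coproduct of ${J^r(L)}_h$, a deformation of the \emph{jet} bialgebroid, while $X,Y\in L\subseteq V^\ell(L)$: they have no lifts in ${J^r(L)}_h$, and the first-order part of $\Delta_h$ is not the object that encodes $\delta_L$ on $L$ (that role is played by the first-order coproduct of a deformation of $V^\ell(L)$, as in Theorem \ref{semiclassical_limit-V^ell(L)}; the commutator of ${J^r(L)}_h$ encodes instead the bracket on $L^*$). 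To make your mechanism precise you must first manufacture a quantization of $V^\ell(L)$ out of ${J^r(L)}_h$ --- i.e.\ prove that $\,{}_\star{J^r(L)}_h$ is topologically free, has semiclassical limit $V^\ell(L)$, and carries a well-defined coproduct --- and then match the two induced structures on $L^*$; this is precisely the content of Theorem \ref{dual_QFSAd's=QUEAd's} and its long pairing computations, so nothing is saved by calling the argument ``direct.'' A genuinely self-contained alternative would be to work with the semiclassical Poisson bracket on $J^r(L)$ (from associativity of the deformed product at order $h$) and derive the compatibility axiom from the multiplicativity of the topological coproduct of ${J^r(L)}_h$ with respect to the completed Takeuchi product; that is plausible but delicate, and your proposal neither formulates nor carries out that computation. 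As written, the key compatibility $\delta_L([X,Y])=[\delta_L(X),Y]+[X,\delta_L(Y)]$ is not established.
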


\begin{proof}
 First, it is easy to see that the maps  $ \, [\,\ ,\ ] \, $  and  $ \omega $  as given in the statement are well-defined, i.e.~they do not depend on the choice of liftings, nor of the choice of either of  $ s^r_h\big(a'\big) $  or  $ \, t^r_h\big(a'\big) \, $  acting as  $ r\big(a'\big) \, $.  Moreover, by construction we have  $ \, [\Phi\,,\Psi\,] \in \J_{\scriptscriptstyle J^r(L)} \big/ \J_{\scriptscriptstyle J^r(L)}^{\,2} \cong L^* \, $.  Also, again by construction we have  $ \, \omega(\Phi)(a) \in J^r(L) \big/ \J_{\scriptscriptstyle J^r(L)} \, $;  now the latter space identifies with  $ \, \partial\big(J^r(L)\big) = A \, $,  thus  $ \, \omega(\Phi)(a) \in A \, $  via these identifications, so that  $ \, \omega(\Phi) \, $  is a  $ k $--linear endomorphism of  $ A \, $.
                                                                             \par
   Now, the definition of both  $ [\,\ ,\ ] $  and  $ \omega $  is made via a commutator in  $ {J^r(L)}_h \, $.  As the commutator   --- in any associative  $ k $--algebra ---   is a  $ k $--bilinear  Lie bracket and satisfies the Leibniz identity (involving the associative product), one can easily argue at once from definitions that  $ L^* $  with the given bracket and anchor map is indeed a Lie-Rinehart algebra (over  $ A $).
                                                                      \par
   What is more demanding is to prove that with this structure the pair  $ \big(L\,,L^*\big) $  of Lie-Rinehart  $ A $--algebras  fulfills all constraints to be a Lie-Rinehart bialgebra.
  Indeed, we shall  {\sl not\/}  provide a  {\sl direct\/}  proof for that:
 instead, we have recourse to a duality argument, using the notions and results of  Subsec.~\ref{lin-dual_QUEAd's}  later on.  Indeed, there we shall see that  $ {}_\star{}J^r(L)_h $  is a LQUEAd, hence by  Theorem \ref{semiclassical_limit-V^ell(L)}  we know that  $ \big(L\,,L^*\big) $  is a Lie-Rinehart bialgebra.
\end{proof}

\medskip

   The analogue of  Theorem \ref{semiclassical_limit-J^r(L)}  for LQFSAd's (with essentially the same proof) is the following:

\medskip

\begin{theorem}  \label{semiclassical_limit-J^ell(L)}
 Let  $ {J^\ell(L)}_h $  be a LQFSAd, namely a deformation of  $ J^\ell(L) \, $.  Then  $ L $  inherits from this quantization a structure of Lie-Rinehart bialgebra, namely the unique one for which the Lie bracket and the anchor map of  $ \, L^* $  are given (notation as above) by
  $$  \displaylines{
   [\Phi\,,\Psi\,]  \; := \;  \left( {{\;\phi' \, \psi' - \psi' \, \phi'\;} \over h}  \mod h \, {J^\ell(L)}_h \right)  \!\mod \J_{\scriptscriptstyle J^r(L)}^{\,2}  \cr
   \omega(\Phi)(a)  \; := \;  \left( {{\;\phi' \, r\big(a'\big) - r\big(a'\big) \, \phi'\;} \over h}  \mod h \, {J^\ell(L)}_h \right)  \!\mod \J_{\scriptscriptstyle J^r(L)}  }  $$
for all  $ \, \Phi \, , \Psi \in L^* \, $  and  $ \, a \in A \, $,  where  $ \phi' $  and  $ \psi' $  are liftings in  $ {J^\ell(L)}_h $  of  $ \, \Phi $  and  $ \, \Psi $  respectively,  $ a' $  is a lifting in  $ A_h $  of  $ \, a \in A \, $,  and finally  $ \, r\big(a'\big) \, $  stands for either  $ s^r_h\big(a'\big) $  or  $ \, t^r_h\big(a'\big) \, $.
\end{theorem}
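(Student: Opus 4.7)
My plan is to follow the template of the proof of Theorem \ref{semiclassical_limit-J^r(L)}, adapting each step to the LQFSAd setting. The uniqueness claim of the statement is free: Remarks \ref{props_Lie-bialg}{\it (c)\/} says that a Lie-Rinehart bialgebra structure on $L$ is entirely determined by the Lie-Rinehart structure of $L^*$, hence by $[\,\cdot\,,\cdot\,]$ and $\omega$. So the substantive work is existence.

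First I would check that the prescribed $[\,\cdot\,,\cdot\,]$ and $\omega$ are well-defined $k$-bilinear maps on $L^*$. Independence from the choice of lifts $\phi'$, $\psi'$, $a'$ is immediate: any two lifts differ by an element of $h\,{J^\ell(L)}_h$ or $h\,A_h$, which contributes at most an $O(h)$ term to the commutator, hence disappears after dividing by $h$ and reducing mod $h$. Independence from the choice between $s^r_h(a')$ and $t^r_h(a')$ in $r(a')$ follows because their difference lives in $h\,{J^\ell(L)}_h$ modulo $\J_{\scriptscriptstyle J^\ell(L)}$, thanks to property (iii) of Definition \ref{def-QFSAd} and the commutativity of $J^\ell(L)$. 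Finally I would verify that $[\Phi,\Psi]$ lands in $\J_{\scriptscriptstyle J^\ell(L)}\big/\J_{\scriptscriptstyle J^\ell(L)}^{\,2} \cong L^*$ and $\omega(\Phi)(a)$ lands in $J^\ell(L)\big/\J_{\scriptscriptstyle J^\ell(L)} \cong A$: this amounts to noting that the commutator raises the $\J_{\scriptscriptstyle J^\ell(L)}$-filtration strictly at the semiclassical level, which is automatic since $J^\ell(L)$ is commutative modulo $h$.

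Next, $\big(L^*, [\,\cdot\,,\cdot\,], \omega\big)$ is a (left) Lie-Rinehart $A$-algebra: antisymmetry, the Jacobi identity, the Leibniz rule and the compatibility relations are all immediate consequences of the same properties for the commutator in the associative $k$-algebra ${J^\ell(L)}_h$, combined with the $A^e$-ring structure governing how $s^r_h$, $t^r_h$ interact with multiplication. This step is purely formal and runs in parallel with its counterpart in the proof of Theorem \ref{semiclassical_limit-J^r(L)}.

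The main obstacle is establishing the bialgebroid cocycle condition tying together the structures on $L$ and on $L^*$. Rather than attempt a direct verification --- which would force a delicate semiclassical expansion of $\Delta_h$ against the commutator formula above, burdened by the need to control the $I^\otimes_h$-adic completion --- I would argue by duality, exactly in the spirit of the last paragraph of the proof of Theorem \ref{semiclassical_limit-J^r(L)}. Two essentially equivalent routes are available. Route (a): by Remarks \ref{remarks-QFSAd's}{\it (b)\/}, the bialgebroid ${J^\ell(L)}_h^{\,\text{\it op}}$ is a RQFSAd, so Theorem \ref{semiclassical_limit-J^r(L)} endows $L$ with a Lie-Rinehart bialgebra structure; one then checks that unwinding the passage through $(-)^{\text{\it op}}$ recovers exactly the bracket and anchor on $L^*$ prescribed in the statement (up to the expected opposite/coopposite twist accounted for in Remarks \ref{props_Lie-bialg}{\it (e)\/}). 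Route (b): pass to the appropriate ``$(-)^\star$'' topological dual of ${J^\ell(L)}_h$, which by the duality theory developed in Subsec.\ \ref{lin-dual_QUEAd's} is a RQUEAd, then apply Theorem \ref{semiclassical_limit-V^r(L)} to obtain a Lie-Rinehart bialgebra structure on $L$ whose $L^*$-side coincides with the one computed above. The only delicate task is a careful bookkeeping of sources, targets and op/coop conventions when translating between left and right; once this is done, no further computation is required.
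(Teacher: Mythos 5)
Your proposal is correct and takes essentially the same route as the paper: the paper disposes of this statement precisely by the two options you list --- either repeating the proof of Theorem \ref{semiclassical_limit-J^r(L)} (whose well-definedness and Lie-Rinehart checks you reproduce, and whose bialgebra-compatibility step is likewise settled by passing to the continuous dual, a quantum universal enveloping algebroid, and invoking the corresponding semiclassical-limit theorem), or deducing it from Theorem \ref{semiclassical_limit-J^r(L)} applied to $ {J^\ell(L)}_h^{\,\text{\it op}} $, which is a RQFSAd, keeping track of the opposite--coopposite twist. No gap remains.
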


\smallskip

\begin{remark}
 The result above can be proved like its analogue for RQFSAd's,  i.e.~Theorem \ref{semiclassical_limit-J^r(L)}.  Otherwise, one can get the former from the latter applied to  $ \, {J^r(L)}_h := {J^\ell(L)}_h^{\,\text{\it op}} \, $,  which is a RQFSAd   --- cf.~Remarks \ref{remarks-QFSAd's}{\it (c)}.  In particular, the Lie-Rinehart bialgebra structure induced on  $ L $  by the LQFSAd  $ {J^\ell(L)}_h $  is opposite-coopposite
 to that induced by the RQFSAd  $ \, {J^r(L)}_h := {J^\ell(L)}_h^{\,\text{\it op}} \, $.
\end{remark}

\smallskip

 \subsection{Extending quantizations: from the finite projective to the free case} \label{quant_proj-free}

\smallskip

   {\ } \quad   Let  $ L $  be a Lie-Rinehart algebra over  $ A $  which is finite projective as an  $ A $--module.  With the procedure presented in  \S \ref{L-R_proj->free},  we can find a projective  $ A $--module  $ Q $  (a complement of  $ L $  in a finite free  $ A $--module  $ F $)  and use it to build a new Lie-Rinehart algebra  $ \, L_Q := L \oplus \big( Q \oplus L \oplus Q \oplus L \oplus \cdots \big) \, = L\oplus R $,  which as an  $ A $--module  is free.  Then we fix an  $ A $--basis  $ \, \{b_1,\dots,b_n\} \, $  of  $ F $  from which we construct a good basis   $ \, {\{e_i\}}_{i \in T} \, $  of  $ L_Q \, $  and a good basis  $ \, {\{v_t\}}_{t \in T} \, $  of  $ R \, $.  Set  $ \, Y := \mathop{\oplus}\limits_{i=1}^n k\,b_i \, $  so that  $ \, F \! = \! A \! \otimes_k \! Y \, $,
 $ \, T \! := \N \times \{1,\dots,n\} \, $,  $ \, Z \! := \! \mathop{\oplus}\limits_{t \in T} \! k \, v_t \, $,  hence  $ \, R \! = \! A \! \otimes_k \! (Y \! \oplus Y \! \oplus \cdots) = A \! \otimes_k \! Z \; $.  Moreover, one has also  $ \, V^\ell(L_Q) = V^\ell(L) \otimes_k S(Z) \, $  with  $ \; S(Z) = S(Y) \otimes S(Y) \otimes \cdots \;\, $.

\medskip

\begin{free text}  \label{ext-QUEAd's}
 {\bf Extending QUEAd's.}  Let  $ L $  be a finite projective Lie-Rinehart algebra, for which we consider for it all the objects and constructions mentioned just above.
 \vskip4pt
   Let  $ \, {V^\ell(L)}_h \in {\text{(LQUEAd)}}_{A_h} \, $  be a (left) quantization of the left bialgebroid  $ V^\ell(L) \; $.  Consider
  $$  {V^\ell(L)}_{h,Y}  \,\; := \;\,  \text{$ h $--adic completion of}
\;\; {V^\ell(L)}_h \otimes_k S(Y) \otimes_k S(Y) \otimes \,\cdots  \,\; =: \;\, {V^\ell(L)}_h \widehat{\otimes}_k \, S(Z)  $$
In order to describe it, for  $ \, \underline{d} \in T^{(\N)} \, $  we set
 $ \; e^{\underline{d}} := \prod_{t\in T} e_t^{\underline{d}(t)} \, $  and  $ \; \varpi\big(\underline{d}\big) := \max \big\{ \varpi(e_t) \,\big|\, \underline{d}(t) \neq 0 \,\big\} \; $
(cf.~Definition \ref{def_good-basis});

\smallskip

\begin{proposition}
 Any element of  $ {V^\ell(L)}_{h,Y} $  can be written in a unique way as
 \vskip5pt
   \centerline{ $ \sum\limits_{\underline{d} \in T^{(\N)}} \hskip-3pt t^\ell\big( a_{\underline{d}} \big) \, e^{\underline{d}} \; = \lim\limits_{n \to +\infty} \hskip-11pt \sum\limits_{\stackrel{\underline{d} \in T^{(\N)}}
   {\mid \underline{d} \mid + \varpi(\underline{d}) \leq n}} \hskip-13pt
   t^\ell\big( a_{\underline{d}} \big) \, e^{\underline{d}} $  \qquad  with  \quad  $ \lim\limits_{\mid \underline{d} \mid +\varpi(\underline{d}) \to +\infty} \big\|a_{\underline{d}}\big\| = 0 $
   \quad  (notation of\/ \S \ref{h-adic_topology}) }
\end{proposition}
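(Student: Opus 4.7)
My plan is to reduce the proposition to the Poincaré--Birkhoff--Witt theorem for the free Lie--Rinehart algebra $L_Q$, combined with $h$-adic approximation. The key classical input is that since $L_Q = L \oplus R$ is free as an $A$-module with basis $\{e_t\}_{t \in T}$ and its bracket is the trivial extension of that of $L$, Rinehart's PBW theorem (cf.\ Remarks \ref{canon-filtr_V^ell(L)}) yields a free left $A$-module structure on $V^\ell(L_Q)$ with basis $\{e^{\underline{d}}\}_{\underline{d} \in T^{(\N)}}$ for the fixed linear order on $T$. Moreover the identification $V^\ell(L_Q) \cong V^\ell(L) \otimes_k S(Z)$ recalled in \S\ref{L-R_proj->free} shows that the canonical reduction $V^\ell(L)_{h,Y}/h\,V^\ell(L)_{h,Y}$ is exactly $V^\ell(L_Q)$.

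For each $t \in T$ I would fix once and for all a lift $\tilde{e}_t \in V^\ell(L)_{h,Y}$ of $e_t$, using the evident embeddings $L \hookrightarrow V^\ell(L)_h$ and $Z \hookrightarrow S(Z) \hookrightarrow V^\ell(L)_{h,Y}$, and set $\tilde{e}^{\underline{d}} := \prod_{t} \tilde{e}_t^{\underline{d}(t)}$ with the chosen order. Since $t^\ell$ reduces modulo $h$ to the canonical inclusion $\iota_A$, the image of $t^\ell(a)\,\tilde{e}^{\underline{d}}$ in $V^\ell(L_Q)$ is precisely $\bar{a}\,e^{\underline{d}}$, so that the classical PBW expansion is recovered upon setting $h=0$.

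For existence, given $\xi \in V^\ell(L)_{h,Y}$, reduce $\xi$ mod $h$ to obtain an element of $V^\ell(L_Q)$ which, by PBW, is a \emph{finite} $A$-linear combination $\sum_{\underline{d}} \bar{a}_{\underline{d}}^{(0)}\, e^{\underline{d}}$; lift each $\bar{a}_{\underline{d}}^{(0)}$ to $a_{\underline{d}}^{(0)} \in A_h$ and form $\xi_0 := \sum t^\ell\bigl(a_{\underline{d}}^{(0)}\bigr)\,\tilde{e}^{\underline{d}}$, so that $\xi - \xi_0 \in h\,V^\ell(L)_{h,Y}$. Apply the same procedure to $(\xi-\xi_0)/h$ to obtain a correction $\sum h\, t^\ell\bigl(a_{\underline{d}}^{(1)}\bigr)\,\tilde{e}^{\underline{d}}$ again involving only finitely many indices, and iterate. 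Setting $a_{\underline{d}} := \sum_{n \geq 0} h^n a_{\underline{d}}^{(n)}$ then gives a well-defined element of $A_h$, and because only finitely many indices contribute at each fixed order in $h$ the resulting family satisfies $\|a_{\underline{d}}\| \to 0$ as $|\underline{d}|+\varpi(\underline{d}) \to +\infty$; in particular the partial sums converge $h$-adically to $\xi$.

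Uniqueness follows by the same argument in reverse: if $\sum t^\ell(a_{\underline{d}})\,\tilde{e}^{\underline{d}} = 0$, then writing $a_{\underline{d}} = \sum_n h^n a_{\underline{d}}^{(n)}$ and reducing mod $h$ forces $\sum_{\underline{d}} \bar{a}_{\underline{d}}^{(0)}\, e^{\underline{d}} = 0$ in $V^\ell(L_Q)$; PBW freeness then gives $a_{\underline{d}}^{(0)} = 0$ for every $\underline{d}$, and dividing by $h$ and iterating yields $a_{\underline{d}}^{(n)} = 0$ for all $n$. The only non-obvious point I anticipate is combinatorial rather than algebraic: one must carefully check that the clause ``finitely many indices at each order in $h$'' translates into the specific decay condition controlled by the filtration $|\underline{d}|+\varpi(\underline{d}) \le k$. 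This rests on the (easy but essential) observation that the sets $\{\underline{d} \in T^{(\N)} : |\underline{d}|+\varpi(\underline{d}) \le k\}$ are finite for every $k$, so a diagonal bookkeeping between ``orders in $h$'' and ``bounds on $|\underline{d}|+\varpi(\underline{d})$'' can be arranged.
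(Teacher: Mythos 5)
Your argument is correct and is essentially the paper's own proof: the authors likewise expand $u$ $h$--adically as $u = \sum_i h^i u_i$ with $u_i \in V^\ell(L_Q)$, use the freeness of $V^\ell(L_Q)$ as an $A$--module (PBW for the free Lie--Rinehart algebra $L_Q$) to write each $u_i$ as a finite combination of the monomials $e^{\underline{d}}$, regroup the coefficients as $a_{\underline{d}} = \sum_i h^i a^{(i)}_{\underline{d}}$, and deduce the decay condition exactly as you do, from the fact that for any fixed $n_0$ only finitely many indices $\underline{d}$ occur in $u_0, \dots, u_{n_0}$, so $\|a_{\underline{d}}\| < 2^{-n_0}$ once $|\underline{d}| + \varpi(\underline{d})$ exceeds their maximum. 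Your explicit verification of uniqueness (reduce mod $h$, invoke freeness, divide by $h$ and iterate) spells out a point the paper's proof leaves implicit, but it is the same circle of ideas.
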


\begin{proof}
 It is obvious that any element of the given lies in  $ V(L)_{hY} \, $.  Conversely, let  $ \, u \in V(L)_{hY} \, $.  Write  $ \, u = u_0 + h \, u_1 + \cdots + h^n u_n + \cdots \, $  with  $ \, u_i \in V(L_Q) \, $  for all  $ i \, $.  Now, for all  $ \, i \in \N \, $,  each  $ u_i $  can be written as  $ \; u_i = \sum_{\underline{\alpha} \in \N^{(T)}} t^\ell\big(u_i^{\underline{\alpha}}\big) \, e^{\underline{\alpha}} \; $  where all but a finite number of the  $ u_i^{\underline{\alpha}} \, $'s  are zero.  Set  $ \, u_{\underline{\alpha}} := \sum_i h^i u_i^{\underline{\alpha}} \, $,  so  $ \, u = \sum_{\underline{\alpha}} u_{\underline{\alpha}} \, e^{\underline{\alpha}} \;\, $;  we show that
 $ \lim\limits_{\mid \alpha \mid +\varpi(\underline{\alpha}) \to +\infty}
 \big\|u_{\underline{\alpha}}\big\| = 0 \; $.  Pick  $ \, n_0 \in \N \, $;  choosing  $ \, A > \max \big\{\mid \underline{\alpha} \mid + \omega(\underline{\alpha}) \,\big|\, \exists \, i \leq n_0 : u_i^{\underline{\alpha}} \not= 0 \,\big\} \, $,  \, for any  $ \, |\underline{\alpha}| + \omega(\underline{\alpha}) > A \, $  we have  $ \, \big\| u_{\underline{\alpha}} \big\| < 2^{-n_0} \; $.
\end{proof}

\medskip

   Now, there exists a unique left bialgebroid structure on  $ {V^\ell(L)}_{h,Y} $  given as follows:
 $$  \displaylines{
   t^{\scriptscriptstyle Y}_\ell : A_h \longrightarrow V^\ell(L)_{h,Y} \; ,  \quad  a \mapsto t_\ell(a) \otimes 1 \; ,  \qquad \qquad
      s^{\scriptscriptstyle Y}_\ell : A_h \longrightarrow V^\ell(L)_{h,Y} \; ,  \quad  a \mapsto s_\ell(a) \otimes 1  \cr
   \Delta_{V^\ell(L)_{h,Y}}(a \otimes s) \, := \, \big( a_{(1)} \otimes s_{(1)} \big) \otimes
\big( a_{(2)} \otimes s_{(2)} \big) \;\;\;\; {\rm if}\;\;\;\;
\Delta_h(a) = a_{(1)}\otimes a_{(2)} \; , \;\;
\Delta_{S(Z)}(s) = s_{(1)} \otimes s_{(2)}  \cr
   \epsilon_h(a \otimes s) \, := \, \epsilon_h(a) \epsilon(s) \; ,  \qquad
\qquad  (a\otimes s)\big(a' \otimes s'\big) \, := \, a \, a' \otimes s \, s'  }  $$
where the right-hand side factor map  $ \epsilon $  above is just the standard counit map  $ \, \epsilon: S(Z) \longrightarrow k \, $  of the Hopf  $ k $--algebra  $ S(Z) \, $,  uniquely determined by  $ \, \epsilon(z) = 0 \, $  for every  $ \, z \in Z \, $.  It is easy to see that
 \vskip4pt
   \quad\hskip-15pt  {\it  (a)  $ \; {V^\ell(L)}_{h,Y} $  {\it is a quantization of the left bialgebroid}  $ \, V^\ell(L_Q) \, $};
 \vskip3pt
   \quad\hskip-15pt  {\it  (b)  $ \; \pi_{\scriptscriptstyle Y} \! := \text{\sl id}_{{V^\ell(L)}_{h,Y}} \! \widehat{\otimes}_k \, \epsilon : {V^\ell(L)}_{h,Y} \! := {V^\ell(L)}_h \widehat{\otimes}_k \, S(Z) \rightarrow {V^\ell(L)}_h \; $  is an epimorphism of left bialgebroids}.
 \vskip7pt
   {\sl A similar construction is possible if we take a RQUEAd  $ {V^r(L)}_h $  instead of the LQUEAd  $ {V^\ell(L)}_h \, $.}
\end{free text}

\smallskip

\begin{free text}  \label{ext-QFSAd's}
 {\bf Extending QFSAd's.}  Let  $ L $  be a finite projective Lie-Rinehart algebra, and adopt again notations as before.  Recall also that in  Remark \ref{def-J^r_f(L_Q)}  we have introduced  $ \, J^r_f(L_Q) := {V^\ell(L_Q)}^{*_f} \, $.
 \vskip4pt
   Let  $ \, {J^r(L)}_h \in \text{(RQFSAd)}_{A_h} \, $  be a quantization of  $ J^r(L) \, $.  Keeping notation as in  \S \ref{ext-QUEAd's},  consider
  $$  {J^r(L)}_{h,Y}  \; := \;  h\text{--adic completion of \ } {\textstyle \sum_{n \in \N}} \, {J^r(L)}_h \widetilde{\otimes}_k\, {S(Y^*)}^{\widetilde{\otimes}\, n} \otimes 1 \otimes 1 \otimes 1 \cdots  $$
where  $ \, J^r(L)_h \widetilde{\otimes} {S(Y^*)}^{\widetilde{\otimes}\, n} \otimes 1 \otimes 1 \otimes \cdots \, $  is the  $ \, \big( {\big( {S(Y^*)}^{\otimes n} \big)}^+ \! \otimes 1 \otimes 1 \otimes 1 \cdots \big) $--adic  completion of  $ \, J^r(L)_h \otimes  {S(Y^*)}^{\otimes n} \otimes 1 \otimes 1 \otimes 1 \cdots \; $.  There exists a unique right bialgebroid structure on  $ {J^r(L)}_{h,Y} $
  $$  \displaylines{
   t^{\scriptscriptstyle Y}_r : A \longrightarrow J^r(L)_{h,Y} \; ,  \quad  a \mapsto t_r(a) \otimes 1 \; ,   \qquad
      s^{\scriptscriptstyle Y}_r : A_h \longrightarrow  \; J^r(L)_{h,Y},  \quad  a \mapsto s_r(a) \otimes 1  \cr
   (a\otimes s)(a' \otimes s') \, := \, a \, a' \otimes s \, s' \;\, ,  \;\quad  \Delta(a \otimes s) \, := \big( a_{(1)} \otimes s_{(1)} \big) \otimes \big( a_{(2)} \otimes s_{(2)} \big) \;\, ,  \;\quad  \partial_h(a \otimes s) \, := \, \partial_h(a) \, \epsilon^*\!(s)  }  $$
Then one easily sees that
 \vskip4pt
   \quad\hskip-19pt  {\it  (a)  $ \; {J^r(L)}_{h,Y} $  {\it is a quantization of the right bialgebroid}  $ \, J^r_f(L_Q) \, $};
 \vskip3pt
   \quad\hskip-19pt  {\it  (b)  $ \; \pi^{\scriptscriptstyle Y} \! := \text{\sl id}_{{J^r(L)}_{h,Y}} \!\! \otimes_k \epsilon^* : {J^r(L)}_{h,Y}  \rightarrow {J^r(L)}_h \; $  is an epimorphism of right bialgebroids}.
 \vskip7pt
   {\sl An entirely similar construction is possible if  $ {J^r(L)}_h $  is replaced with a LQFSAd  $ {J^\ell(L)}_h \, $}.
 \vskip11pt
   {\sl $ \underline{\text{Remark}} $:}\,  Let  $ \, {V^\ell(L)}_h \in \text{(LQUEAd)}_{A_h} \, $  be a quantization of  $ V^\ell(L) \, $.  We have seen in  \S \ref{ext-QUEAd's}  that  $ \, {V^\ell(L)}_{h,Y} := {V^\ell(L)}_h \,\widehat{\otimes}_k \, S(Z) \, $  is a LQUEAd which quantizes  $ V^\ell(L_Q) \, $.  If  $ \, n \in \N \, $,  let  $ \, {S(Z)}^+ := \text{\sl Ker}\,\big( \epsilon : S(Z) \longrightarrow k \big) \, $  be the kernel of the counit of  $ S(Z) \, $,  and let  $ \, {V^\ell(L)}_{h,Y}^{\;*_{f,n}} \, $  be the subspace of  $ \, {V^\ell(L)}_{h,Y}^{\;*} \, $  given by  $ \; {V^\ell(L)}_{h,Y}^{\;*_{f,n}} := \big\{\, \lambda \in {V^\ell(L)}_{h,Y}^{\;*} \,\big|\; \lambda\big( {V^\ell(L)}_h \otimes_k {S(Y)}^{\otimes n} \otimes_k {S(Z)}^+ \big) = 0 \,\big\} \; $.  Then set
  $$  {V^\ell(L)}_{h,Y}^{\;*_f}  \; := \;  h\text{--adic completion of}\;\,
{\textstyle \sum}_{n \in \N} {V^\ell(L)}_{h,Y}^{\;*_{f,n}}  $$
Then  $ {V^\ell(L)}_{h,Y}^{\;*_f} $  is a right subbialgebroid of  $ {V^\ell(L)}_{h,Y}^{\;*} \, $,  which is isomorphic to the right bialgebroid  $ {\big( {V^\ell(L)}_h^* \big)}_Y \, $.  Note also that  $ {V^\ell(L)}_{h,Y}^{\;*_{f,n}} $  is isomorphic to
 $ \; {V^\ell(L)}_h^* \, \widetilde{\otimes}_k \, {S(Y^*)}^{\widetilde{\otimes} n}
\otimes_k 1 \otimes_k 1 \otimes_k 1 \cdots \; $.
 \vskip7pt
   In a similar way, one can define also the right bialgebroid  $ {\big( {V^\ell(L)}_{h,Y} \big)}_{*_f} \, $:  this is a right subbialgebroid of  $ {\big( {V^\ell(L)}_{h,Y} \big)}_* \, $  isomorphic to the right bialgebroid  $ {\big( {\big( {V^\ell(L)}_h \big)}_* \big)}_Y \, $.

 \vskip9pt

   Parallel ``right-handed versions'' of the previous constructions and results also make sense if one starts with some  $ \, {V^r(L)}_h \in \text{(RQUEAd)}_{A_h} \, $  instead of  $ \, {V^\ell(L)}_h \in \text{(LQUEAd)}_{A_h} \, $:  in a nutshell, one still finds that ``extension commutes with dualization''.  Details are left to the reader.
\end{free text}

\bigskip

\section{Linear duality for quantum groupoids}  \label{lin-dual_q-grpds}

\smallskip

   {\ } \quad   In this section we explore the relationship among quantum groupoids ruled by linear duality (i.e., by taking left or right duals).  We shall see that the ``(left/right) full dual'' and the ``(left/right) continuous dual'' altogether provide category antiequivalences between  $ \text{\rm (LQUEAd)}_{A_h} $  and  $ \text{\rm (RQFSAd)}_{A_h} $  and between  $ \text{\rm (RQUEAd)}_{A_h} $  and  $ \text{\rm (LQFSAd)}_{A_h} \, $.
                                                                                             \par
   Essentially, we implement the construction of ``dual bialgebroids'' presented in  Subsection \ref{dual_bialgds},  but still we need to make sure that several technical aspects do turn round.

\smallskip

 \subsection{Linear duality for QUEAd's}  \label{lin-dual_QUEAd's}

\smallskip

   {\ } \quad   We begin with the construction of duals for (L/R)QUEAD's.  In this case, we consider ``full duals'' (versus topological ones,  cf.~Subsection \ref{lin-dual_QFSAd's}  later on.  Before giving the main result, we need a couple of auxiliary, technical lemmas.

\smallskip

\begin{lemma}  \label{property of the coproduct}
 Let  $ \, {V^\ell(L)}_h \in \text{\rm (LQUEAd)}_{A_h} \, $  and  $ \, u \in {V^\ell(L)}_h \, $.  For any  $ \, r \in \N \, $,  there exists  $ \, t_r \in \N \, $  such that
 $ \; \Delta^{t_r}(u) \, = \, \delta_0 + h \, \delta_1 + h^2 \, \delta_2 + \dots + h^{r-1} \, \delta_{r-1} + h^r \, \delta_r \; \Big(\! \in {V^\ell(L)}_h^{\widehat{\otimes}\, t_r} \Big) \; $
and, for any  $ \, i = 0, \dots , r\!-\!1 \, $,  each homogeneous tensor in an expansion of  $ \, \delta_i $  has at least  $ r $  factors equal to  $ 1 \, $.
\end{lemma}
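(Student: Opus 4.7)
The plan is to induct on $r$, bootstrapping from the undeformed analogue in $V^\ell(L)$. The key preliminary fact is the classical statement: for $\bar v \in V^\ell(L)$ of canonical filtration degree $n$ (cf.~Remarks~\ref{canon-filtr_V^ell(L)}), the iterated coproduct $\Delta^{t}(\bar v) \in V^\ell(L)^{\otimes_{A}\,t}$ expands, for any $t$, as a sum of elementary tensors each carrying at most $n$ non-$1$ entries, hence at least $t - n$ entries equal to $1$. This is immediate from the generator formulas $\Delta(a) = a \otimes 1$ for $a \in A$ and $\Delta(X) = X \otimes 1 + 1 \otimes X$ for $X \in L$ (see \S\ref{V^ell(L)_left-bialgd}): the iterate $\Delta^{t}$ of a single generator produces tensors with exactly one non-$1$ slot, and, since $\Delta$ is an algebra morphism, any product of at most $n$ such generators yields tensors with at most $n$ non-$1$ slots.

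For the base case $r=1$ of the deformed statement, set $\bar u := u \bmod h \in V^\ell(L)$; by Definition~\ref{def-QUEAd}(iii), the coefficient $\delta_0$ of $\Delta_h^{t}(u)$ in its $h$-expansion equals $\Delta^{t}(\bar u)$, so the choice $t \geq \deg(\bar u) + 1$ gives the $1$-factor property via the classical result above. For the inductive step $r \to r+1$, I would iterate the coproduct further, exploiting three structural observations: (i) $\Delta_h \circ s^\ell_h = s^\ell_h \otimes 1$, in particular $\Delta_h(1) = 1 \otimes 1$, so every further application of $\Delta_h$ to an existing $1$-factor produces only more $1$-factors, with no $h$-correction; (ii) modulo $h^{r+1}$, only the layers $u_0, u_1, \dots, u_r \in V^\ell(L)$ of the expansion $u = \sum_{k \geq 0} h^k u_k$ contribute, each of finite classical filtration degree; (iii) each non-classical (positive-$h$-order) component of $\Delta_h$ costs at least one power of $h$, so at most $r$ such ``deformation slots'' can appear in any contribution to $\Delta_h^{t}(u) \bmod h^{r+1}$. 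Combining these, further iterating $\Delta_h$ a sufficient number of times (with the new iteration applied to each surviving factor, by coassociativity) disperses the bounded non-$1$ content of each elementary tensor over fresh positions, promoting the $\geq r$-ones property of each $\delta_i$ ($i < r$) to the $\geq r+1$-ones property and securing it for the newly recorded $\delta_r$ at the next order.

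The main obstacle will be controlling those deformation slots: the higher-$h$ components of $\Delta_h$ need not preserve the canonical filtration on $V^\ell(L)$, so a priori they could create intermediate elementary tensors of high complexity requiring many subsequent iterations to spread out. This is tamed by combining the hard bound (at most $r$ such slots modulo $h^{r+1}$) with the fact that the inputs fed into these slots are drawn from the fixed finite list $u_0, \dots, u_r$, whose total non-$1$ content is uniformly bounded. If desired, the whole argument can be streamlined by first reducing, via the extension procedure of Section~\ref{quant_proj-free}, to the case of a free Lie-Rinehart algebra, where an explicit PBW basis makes the filtration-degree bookkeeping transparent.
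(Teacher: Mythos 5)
Your classical preliminary fact and your base case are fine, and they are exactly the ingredients the paper's argument rests on; the trouble is in your inductive step, which is where the real content of the lemma sits. Two things go wrong there. First, the promotion mechanism is not established: to upgrade a layer that already has $\geq r$ factors equal to $1$ to one with $\geq r+1$ such factors you must create a new $1$ in \emph{every} homogeneous tensor, but a further application of $\Delta_h$ to a slot occupied by a non-$1$ entry need not produce any $1$ at all (its $h$-corrections in particular need not), and the slots holding the existing $1$'s vary from tensor to tensor, so no single choice of where to re-apply the coproduct does this uniformly. Second, and more seriously, your way of taming the ``deformation slots'' is incorrect as stated: the elements to which the positive-$h$-order components of $\Delta_h$ get applied are \emph{not} drawn from the fixed list $u_0,\dots,u_r$ --- they are intermediate tensor entries, including outputs of earlier $h$-corrections, and since (as you yourself note) those corrections need not respect the canonical filtration, their classical filtration degrees are not bounded in terms of the $u_k$'s. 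So the ``uniformly bounded non-$1$ content'' you invoke is not available in advance of fixing the number of iterations, and the induction on $r$ with one global choice of $t$ does not close.

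The repair is to drop the outer induction on $r$ and argue, for fixed $r$, layer by layer --- which is what the paper does. Having arranged layers $0,\dots,i-1$, the current layer $i$ reduced mod $h$ is a concrete finite sum of homogeneous tensors in ${V^\ell(L)}^{\otimes T}$ whose entries have bounded filtration degree; apply $\mathrm{id}^{\otimes (T-1)} \otimes \Delta_h^{t'}$ with $t'$ chosen \emph{adaptively} for that layer, so that the classical spreading of the last slot forces at least $r$ ones there, then choose a lift of the reduction having that property and dump the lift discrepancy into the next, still unconstrained, layer. The properties already secured for layers $<i$ are preserved exactly, because the first $T-1$ slots are untouched and $\Delta_h(1)=1\otimes 1$ (your observation (i)); in particular no compatibility between the $h$-corrections of $\Delta_h$ and the filtration is ever needed. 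After finitely many such stages ($i=0,\dots,r-1$) one obtains the required $t_r$.
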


\begin{proof}
 For any  $ \, w \in {V^\ell(L)}_h^{\otimes s} \, $,  we denote by  $ \overline{w} $  the coset of  $ w $  modulo  $ \, h \, {V^\ell(L)}_h^{\otimes s} \, $.
                                                                  \par
   We expand the given  $ u $  as  $ \; u \, = \, u_0 + h \, u_1 + \cdots + h^r \, u_r + \cdots \; $. Then there exists  $ \, t'_0 \in \N \, $  such that (each homogeneous tensor in)  $ \, \overline{\Delta^{t'_0}(u_0)} \, $  contains at least  $ r $  terms equal to  $ 1 \, $.  We lift  $ \, \overline{\Delta^{t'_0}(u_0)} \, $  to some  $ \, \delta_0^0 \in {V^\ell(L)}_h^{\widehat{\otimes}\, t'_0} \, $  containing (i.e., its homogeneous tensors contain) at least  $ r $  terms equal to  $ 1 \, $.  Then  $ \; \Delta^{t'_0}(u) \, = \, \delta_0^0 + h \, \delta_1^0 + h^2 \, \delta_2^0 + \cdots + h^r \, \delta_r^0 + \cdots \; $  for suitable elements  $ \; \delta_1^0 \, , \, \dots \, , \, \delta_r^0 \in {V^\ell(L)}_h^{\widehat{\otimes}\, t'_0} \; $.
                                                                  \par
   Now we can find  $ \, t'_1 \in \N \, $  such that  $ \; \overline{\big( \text{\sl id}^{t'_0 - 1} \otimes \Delta^{t'_1}\big)(\delta _1^0)} \; $  contains at least  $ r $  terms equal to $ 1 \, $.  We lift  $ \; \overline{\big( \text{\sl id}^{t'_0 - 1} \otimes \Delta^{t'_1}\big)(\delta _0^0)} \; $  and  $ \; \overline{\big( \text{\sl id}^{t'_0 - 1} \otimes \Delta^{t'_1}\big)(\delta _1^0)} \; $  to elements  $ \; \delta_0^1 \, , \, \delta_1^1 \in {V^\ell(L)}_h^{\widehat{\otimes}\, t'_0 + t'_1} \; $  which both contain at least  $ r $  terms equal to  $ 1 \, $.  Thus we find
 $ \; \Delta^{t'_0 + t'_1}(u) \, = \, \delta_0^1 + h \, \delta_1^1 + h^2 \, \delta_2^1 + \cdots + h^r \, \delta_r^1 + \cdots \; $
 for suitable  $ \; \delta_2^1 \, , \, \dots \, , \, \delta_r^1 \in {V^\ell(L)}_h^{\widehat{\otimes}\, t'_0 + t'_1} \; $.
 Iterating finitely many times, we complete the proof.
\end{proof}

\smallskip

\begin{notation}  \label{notation_V^ell(L)_h^*}
   Before next lemma, we need some more notation: given  $ \, {V^\ell(L)}_h \in \text{\rm (LQUEAd)}_{A_h} $,  consider  $ \, K_h := {V^\ell(L)}_h^{\,*} \, $  and its subset  $ \; I_{K_h} \, := \, \big\{\, \chi \in K_h \,\big|\, \big\langle 1_{{V^\ell(L)}_h} , \chi \big\rangle \in h \, A_h \,\big\} \; $.
 \end{notation}

\vskip1pt

\begin{remark}
   As  $ {V^\ell(L)}_h $  is a left bialgebroid, by  \S \ref{bialgd-struct-dual}  we know that its right dual  $ \, K_h := {V^\ell(L)}_h^{\,*} \, $  has a canonical structure of  $ A^e $--ring;  then, with respect to this structure, one easily sees that  $  I_{K_h} $  is a two-sided ideal of  $ K_h \, $. Moreover
   $\Delta (I_h) \subset I_{K_h} \otimes K_h + K_h \otimes I_{K_h} . $ Indeed, given any  $ \, \phi \in I_{K_h} \, $,  we write
   $ \, \Delta(\phi ) = \phi_{(1)} \otimes \phi_{(2)} \, $   --- a formal series (in  $ \Sigma $--notation)  ---   convergent in the  $ I_{K_h \,\widetilde{\otimes}\, K_h} $--adic  topology of  $ \, K_h \,\widetilde{\otimes}\, K_h \, $.  Writing  $ \, \phi_{(1)} \, $  and  $ \, \phi_{(2)} \, $  as
 \vskip-13pt
  $$  \displaylines{
   \phi_{(1)} \, = \, \phi_{(1)}^{+_s} + s^r\big(\partial_h\big(\phi_{(1)}\big)\big) \; ,  \quad \text{with} \quad  \phi_{(1)}^{+_s} \, := \, \phi_{(1)} - s^r\big(\partial_h\big(\phi_{(1)}\big)\big) \, \in \, I_{K_h}
    \,  \cr
   \phi_{(2)} \, = \, \phi_{(2)}^{+_t} + t^r\big(\partial_h\big(\phi_{(2)}\big)\big) \; ,  \quad \text{with} \quad  \phi_{(2)}^{+_t} \, := \, \phi_{(2)} - t^r\big(\partial_h\big(\phi_{(2)}\big)\big) \, \in \,  \,  \, I_{K_h}  }  $$
 \vskip-3pt
\noindent
 we can expand  $ \, \Delta(\phi) = \phi_{(1)} \otimes \phi_{(2)} \, $  as
  $$  \Delta(\phi)  \! =  \phi_{(1)}^{+_s} \otimes \phi_{(2)} \! + s^r\big(\partial_h\big(\phi_{(1)}\big)\!\big) \! \otimes \phi_{(2)}^{+_t} \! + s^r\!\big(\partial_h(\phi)\big) \otimes 1  \, \in \,
   \big( I_{K_h} \widetilde{\otimes}_{A_h} K_h + K_h \,\widetilde{\otimes}_{A_h} I_{K_h}
   + \, h \, s^r\!(A_h) \,\widetilde{\otimes}_{A_h} 1 \,\big)  $$
where we took into account the identity  $ \; s^r\big(\partial_h\big(\phi_{(1)}\big)\big) \otimes t^r\big(\partial_h\big(\phi_{(2)}\big)\big) \, = \, s^r\big(\partial_h(\phi)\big) \otimes 1 \; $,  due to Remarks \ref{right-bialgd-propts},  and the fact that  $ \, \partial_h(\phi) \in h\,A_h \, $,  since
$ \, \phi \in I_{K_h} \, $  by assumption.
\end{remark}

\smallskip

\begin{lemma}  \label{continuity of the evaluation}
 Given  $ \, {V^\ell(L)}_h \in \text{\rm (LQUEAd)}_{A_h} \, $  and  $ \, K_h := {V^\ell(L)}_h^{\,*} \, $,  consider the two-sided ideal  $ \; I_{K_h} := \, \big\{\, \chi \in K_h \,\big|\, \big\langle 1_{{V^\ell(L)}_h} , \chi \big\rangle \in h \, A_h \,\big\} \, $  of  $ K_h \, $,  as well as its powers  $ \, I_{K_h}^n \, (n \in \N) \, $.  Then, for every  $ \, u \in V^\ell(L)_h \, $  and every  $ \, r \in \N \, $,  there exists  $ \, t_r \in \N \, $  such that  $ \, \big\langle u , I_{K_h}^{t_r} \big\rangle \in h^r A_h \; $.
                                                          \par
   The same property holds if one considers the left dual  $ \, K_h := {\big({V^\ell(L)}_h\big)}_* \; $  of  $ \, {V^\ell(L)}_h \; $.
\end{lemma}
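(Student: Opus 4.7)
My plan is to combine the iterated-coproduct estimate just proved in Lemma \ref{property of the coproduct} with the explicit formula for the product in the dual $K_h := V^\ell(L)_h^{\,*}$ recalled in \S \ref{bialgd-struct-dual}.

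First I would fix $u \in V^\ell(L)_h$ and $r \in \N$, and reduce the statement to showing that for every ordered product $\phi_1 \phi_2 \cdots \phi_{t_r}$ with each $\phi_j \in I_{K_h}$, one has $\langle u, \phi_1 \cdots \phi_{t_r} \rangle \in h^r A_h$. This reduction is harmless because every element of $I_{K_h}^{\,t_r}$ is an $A_h^e$-linear combination of such products, and the $A_h^e$-action on the dual preserves $h$-divisibility of the final scalar output.

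Next I would choose $t_r$ via Lemma \ref{property of the coproduct}, so that
\[ \Delta^{t_r}(u) \;=\; \delta_0 + h\,\delta_1 + \cdots + h^{r-1}\delta_{r-1} + h^r\delta_r, \]
each homogeneous tensor of $\delta_0, \ldots, \delta_{r-1}$ carrying at least $r$ factors equal to $1_{V^\ell(L)_h}$. By iterating the dual product formula $(\psi\,\psi')(v) = \psi'\bigl(s^\ell(\psi(v_{(1)})) \cdot v_{(2)}\bigr)$ from \S \ref{bialgd-struct-dual} a total of $(t_r - 1)$ times, the evaluation $\langle u, \phi_1 \cdots \phi_{t_r}\rangle$ unfolds into a finite sum (indexed by Sweedler components of $\Delta^{t_r}(u)$) of $A_h$-valued expressions built from scalars $\phi_j(\text{component}_j)$ connected by nested $s^\ell$-insertions into the deeper $V^\ell(L)_h$-arguments. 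Since each $\phi_j \in I_{K_h}$ satisfies $\phi_j(1) = \partial_h(\phi_j) \in h\,A_h$ by the very definition of $I_{K_h}$, any Sweedler component containing at least $r$ factors equal to $1$ contributes an output with at least $r$ scalar factors in $h\,A_h$, hence lies in $h^r A_h$. Applied to $\delta_0, \ldots, \delta_{r-1}$ and combined with the trivial observation that $h^r \delta_r$ already carries the required $h^r$, this yields $\langle u, \phi_1 \cdots \phi_{t_r}\rangle \in h^r A_h$.

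The hard part will be verifying cleanly that the nested $s^\ell$-insertions appearing in the iterated product formula really do propagate $h$-divisibility through the composition, rather than accidentally consuming the $h$-factors produced by the evaluations $\phi_j(1)$. This is fundamentally a bookkeeping exercise: $s^\ell$ is a $k[[h]]$-algebra morphism, each $\phi_k$ is additive and semilinear with respect to the left $A_h$-action, and multiplication in $A_h$ manifestly preserves $h$-divisibility, so an $h$-factor generated by one inner evaluation at $1$ is transported intact through the subsequent evaluations. The left-dual case $K_h := (V^\ell(L)_h)_*$ is handled by an entirely parallel argument, using the dual-product formula $(\phi\,\phi')(v) = \phi'\bigl(t^\ell(\phi(v_{(2)})) \cdot v_{(1)}\bigr)$ together with the analogous definition of $I_{K_h}$.
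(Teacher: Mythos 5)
Your proposal is correct and follows essentially the same route as the paper, which likewise picks $t_r$ via Lemma \ref{property of the coproduct} and then concludes ``from the properties of the natural pairing'' (i.e.\ the iterated dual-product formula) that $\big\langle u , I_{K_h}^{t_r} \big\rangle \subseteq h^r A_h\,$. The one ingredient worth making explicit in your bookkeeping step is that an evaluation at a tensor slot equal to $1$ has the form $\phi_j\big(s^\ell(b)\big)$ rather than $\phi_j(1)\,b$ (elements of the right dual are $t^\ell$-semilinear, not $s^\ell$-semilinear), so to gain the factor of $h$ there you should invoke either that $I_{K_h}$ is a two-sided ideal or that $s^\ell_h(b) \equiv t^\ell_h(b) \bmod h\,{V^\ell(L)}_h$ because the semiclassical limit has $s^\ell = t^\ell$.
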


\begin{proof}
 Thanks to the previous lemma, there exists  $ \, t_r \in \N \, $  such that
  $$  \Delta^{t_r}(u)  \; = \;  \delta_0 + h \, \delta_1 + h^2 \, \delta_2 + \cdots + h^{r-1} \, \delta_{r-1} + h^r \, \delta_r  $$
for some elements  $ \; \delta_0 \, , \, \dots \, , \, \delta_r \in {V^\ell(L)}_h^{\widehat{\otimes}\, t_r} \, $  such that  $ \, \delta_0 \, , \, \dots \, , \, \delta_{r-1} \, $  contain at least  $ r $  terms equal to  $ 1 \, $.  From this fact and the properties of the natural pairing  $ \, \big\langle \ ,\ \big\rangle \, $  between  $ {V^\ell(L)}_h $  and its right dual  $ \, K_h := {V^\ell(L)}_h^{\,*} \, $  it is easy to see that  $ \; \big\langle \phi \, , \, u \big\rangle \in h^r A_h \; $  for all  $ \, \phi \in I_{K_h}^{\,t_r} \, $,  whence the claim.
\end{proof}

\medskip

   We are now ready for our first important result about linear duality of ``quantum groupoids''.  In a nutshell, it claims that the left and the right dual of a left, resp.~right, quantum universal enveloping algebroid are both right, resp.~left, quantum formal series algebroids.

\medskip

\begin{theorem}  \label{dual_QUEAd's=QFSAd's} {\ }
 \vskip4pt
   {\it (a)}\,  If  $ \; {V^\ell(L)}_h \in \text{\rm (LQUEAd)}_{A_h} \, $,  then  $ \; {V^\ell(L)}_h^{\;*} \, , {{V^\ell(L)}_h}_{\,*} \in \text{\rm (RQFSAd)}_{A_h} \; $,  with semiclassical limits  (cf.~\S \ref{further_J(L)_comp})
  $$  {V^\ell(L)}_h^{\;*} \,\Big/ h \, {V^\ell(L)}_h^{\;*}  \; \cong \;  {V^\ell(L)}^* \, =: \, J^r(L)
\qquad  \text{and}  \qquad  {{V^\ell(L)}_h}_{\,*} \,\Big/ h \, {{V^\ell(L)}_h}_{\,*}  \; \cong \;
{V^\ell(L)}_* \, \cong J^r(L)  $$
Therefore  $ {V^\ell(L)}_h^{\;*} \, $  and   $ {{V^\ell(L)}_h}_{\,*} \, $  are  quantization of  $ \, J^r(L) \, $.
                                                                       \par
   Moreover, the structure of Lie-Rinehart algebra induced on  $ L^* $  by the quantization  $ {V^\ell(L)}_h^{\;*} $  of  $ J^r(L) $   --- according to  Theorem \ref{semiclassical_limit-J^r(L)}  ---   is the same as that induced by the quantization  $ {V^\ell(L)}_h $  of  $ \, V^\ell(L) $   --- according to  Theorem \ref{semiclassical_limit-V^ell(L)};  therefore, the structure of Lie-Rinehart bialgebra induced on  $ L $  is the same in either case.
                                                                       \par
   On the other hand, the structure of Lie-Rinehart algebra induced on  $ L^* $  by the quantization  $ {{V^\ell(L)}_h}_{\;*} $  of  $ \, {V^\ell(L)}_* \cong J^r(L) \, $  is opposite to that induced by the quantization  $ {V^\ell(L)}_h $  of  $ \, V^\ell(L) \, $.  Thus the structures of Lie-Rinehart bialgebra induced on  $ L $  in the two cases are coopposite to each other:  $ {V^\ell(L)}_h $  provides a quantization of the Lie-Rinehart bialgebra  $ L \, $,  while  $ {{V^\ell(L)}_h}_{\;*} $  provides a quantization of the coopposite Lie-Rinehart bialgebra  $ L_{\text{\it coop}} $   ---  cf.~Remarks \ref{props_Lie-bialg}{\it (e)}.
 \vskip4pt
   {\it (b)}\,  If  $ \, {V^r(L)}_h \in \text{\rm (RQUEAd)}_{A_h} \, $,  then  $ \, {}_*{V^r(L)}_h \, , \phantom{|}^*{V^r(L)}_h \in \text{\rm (LQFSAd)}_{A_h} \; $,  with semiclassical limits  (cf.~\S \ref{further_J(L)_comp})
  $$  {}_*{{V^r(L)}_h} \,\Big/ h \, {}_*{\!}{V^r(L)_h}  \; \cong \;  {}_*{V^r(L)} :=J^\ell (L)
   \qquad  \text{and}  \qquad  \phantom{|}^*{V^r(L)}_h \,\Big/ h \, \phantom{|}^*{V^r(L)}_h  \; \cong \;  \phantom{|}^*{V^r(L)} \cong  J^\ell(L)  $$
Therefore  $ {}_*{V^r(L)}_h \, $ and  $ \phantom{|}^*{V^r(L)}_h \, $ are quantizations of  $ \, J^\ell(L) := {}_*{V^r(L)} \; $.
                                                                       \par
   Moreover, the structures of Lie-Rinehart algebra induced on  $ L^* $  by the quantization  $ {}_*{V^r(L)}_h $  of  $ J^\ell(L) $   --- according to  Theorem \ref{semiclassical_limit-J^ell(L)}  ---   is the same as that induced by the quantization  $ {V^r(L)}_h $  of  $ \, V^r(L) $   --- according to  Theorem \ref{semiclassical_limit-V^r(L)}.
                                                                       \par
 On the other hand, the structure of a Lie-Rinehart algebra induced on  $ L^* $  by the quantization  $ \phantom{|}^*{V^r(L)}_h $  of  $ \, {}^*{V^r(L)} \cong J^\ell(L) \, $  is opposite to that induced by the quantization  $ {V^r(L)}_h $  of  $ \, V^r(L) \, $.  Thus the structures of Lie-Rinehart bialgebra induced on  $ L $  in the two cases are coopposite to each other:  $ {V^\ell(L)}_h $  provides a quantization of the Lie-Rinehart bialgebra  $ L \, $,  while  $ \phantom{|}^*{V^r(L)}_h $  provides a quantization of the coopposite Lie-Rinehart bialgebra  $ L_{\text{\it coop}} $   ---  cf.~Remarks \ref{props_Lie-bialg}{\it (e)}.
\end{theorem}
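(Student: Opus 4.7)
The plan is to build the bialgebroid structure on $K_h := V^\ell(L)_h^{\,*}$ (and on ${V^\ell(L)_h}_{\,*}$) by direct transposition, using the general recipe of \S\ref{bialgd-struct-dual}, and then to read off the semiclassical limit and the induced Lie--Rinehart bialgebra from the very construction. I would focus on (a) for $K_h := V^\ell(L)_h^{\,*}$: the case of ${V^\ell(L)_h}_{\,*}$ is strictly parallel (interchanging $\triangleright$'s and $\blacktriangleright$'s), and part (b) either repeats the argument with ``left'' and ``right'' swapped or can be reduced to part (a) via Remarks \ref{remarks-QUEAd's}(a) and the antiisomorphism $\Xi$ of Proposition \ref{alg_anti-isom_Xi}.

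First I would define the source, target, product, coproduct and counit on $K_h$ by the formulas of \S\ref{bialgd-struct-dual}, replacing the algebraic tensor product by its $I_{K_h}$--adic completion; here $I_{K_h}$ is the two-sided ideal recalled in Notation \ref{notation_V^ell(L)_h^*}. The algebraic identities needed to make $K_h$ a right bialgebroid (associativity and unit for the convolution product, coassociativity and counitality of $\Delta$, the Takeuchi condition, and the compatibility with the source/target maps) are all consequences of dualising the corresponding axioms for $V^\ell(L)_h$, exactly as in \S\ref{bialgd-struct-dual}. The role of the two technical lemmas is precisely to make this transposition work at the topological level: Lemma \ref{continuity of the evaluation} guarantees that every $u\in V^\ell(L)_h$ pairs continuously with the $I_{K_h}$--adic filtration, so that $K_h$ is complete in the $I_{K_h}$--adic topology and equals the continuous dual in an appropriate sense; Lemma \ref{property of the coproduct} then ensures that $\Delta_h$ produced by transposition of the multiplication of $V^\ell(L)_h$ actually lands in the completed Takeuchi product $K_{h\,\blacktriangleleft}\widetilde{\times}_{A_h}{}_{\blacktriangleright}K_h$, because iterated coproducts of any fixed element of $V^\ell(L)_h$ eventually split off many $1$'s mod $h^r$.

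Next I would identify the semiclassical limit. Specialising $h=0$ commutes with the transposition construction: $K_h/hK_h$ is canonically the dual (in the same left/right variance) of $V^\ell(L)_h/hV^\ell(L)_h = V^\ell(L)$, with the induced bialgebroid structure. This gives $K_h/hK_h \cong V^\ell(L)^{\,*}=J^r(L)$ as right bialgebroids, and similarly ${V^\ell(L)_h}_{\,*}/h\,{V^\ell(L)_h}_{\,*}\cong V^\ell(L)_{\,*}\cong J^r(L)$, where the last isomorphism is the one from Remarks \ref{remarks_J^r(L)}(b) and implements the passage to the coopposite. Consequently $K_h$ and ${V^\ell(L)_h}_{\,*}$ are RQFSAd's quantising $J^r(L)$.

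The main work, and what I expect to be the main obstacle, is to match up the two semiclassical Lie--Rinehart bialgebra structures, i.e.\ to show that the bialgebra on $L$ obtained from $V^\ell(L)_h$ via Theorem \ref{semiclassical_limit-V^ell(L)} coincides (resp.\ is the coopposite of) the one obtained from $K_h$ (resp.\ ${V^\ell(L)_h}_{\,*}$) via Theorem \ref{semiclassical_limit-J^r(L)}. The strategy is to exploit the characterisation of Remarks \ref{props_Lie-bialg}(c): it suffices to verify the two relations
\[
 \omega_{L^*}(\Phi)(a)\;=\;\big\langle\,\delta_L(a)\,,\,\Phi\,\big\rangle,\qquad
 \big\langle X\,,[\Phi,\Psi]\big\rangle + \big\langle \delta_L(X)\,,\Phi\wedge\Psi\big\rangle
 \;=\; \omega_{L^*}(\Phi)\big(\langle X,\Psi\rangle\big)-\omega_{L^*}(\Psi)\big(\langle X,\Phi\rangle\big),
\]
with the two sides interpreted, respectively, in terms of the QUEAd $V^\ell(L)_h$ and the RQFSAd $K_h$. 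Pick lifts $X'\in V^\ell(L)_h$, $a'\in A_h$, and $\phi',\psi'\in K_h$ of $X,a,\Phi,\Psi$. Plugging in the defining formula $\omega(\Phi)(a)=\partial_h\!\bigl(h^{-1}[\phi',s^r_h(a')]\bigr)\bmod hA_h$ from Theorem \ref{semiclassical_limit-J^r(L)} and expanding $s^r_h(a')(u)=\epsilon_h\bigl(u\,s^\ell_h(a')\bigr)$, one reduces the right-hand side to evaluating $\Phi$ on $h^{-1}\bigl(t^\ell_h(a')-s^\ell_h(a')\bigr)\bmod h$, which is exactly $\delta_L(a)$ by Theorem \ref{semiclassical_limit-V^ell(L)}. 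The bracket relation is handled similarly, by transposing $\Delta_h$ twice and using Lemma \ref{property of the coproduct} to justify working modulo $h^2$ and $\mathfrak{J}^{\,2}_{J^r(L)}$ simultaneously. For ${V^\ell(L)_h}_{\,*}$, the same computation is performed with the roles of $\triangleright$ and $\blacktriangleright$ (equivalently, of $s^\ell$ and $t^\ell$) exchanged, which introduces a sign in $\delta_L(a)$ and flips $[\Phi,\Psi]$ to $-[\Phi,\Psi]$; this is the origin of the passage to the coopposite Lie--Rinehart bialgebra, as recorded in Remarks \ref{props_Lie-bialg}(e). Finally, part (b) is obtained either by transcribing the above argument in the right-handed setting, or by invoking Remarks \ref{remarks-QUEAd's}(a), \ref{remarks-QFSAd's}(b) and reducing to (a) applied to $V^\ell(L)_h := V^r(L)_h^{\text{\it op}}$.
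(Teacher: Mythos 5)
Your overall route is the same as the paper's: transpose the structure of $V^\ell(L)_h$ onto $K_h:=V^\ell(L)_h^{\,*}$ following \S\ref{bialgd-struct-dual} with $I_{K_h}$--adic completions, then match the two induced semiclassical structures on $L^*$ through the characterisation of Remarks \ref{props_Lie-bialg}, using lifts and the bialgebroid right pairing --- the closing computation you sketch is essentially the paper's verification of its identities (4.6), and your treatment of ${V^\ell(L)_h}_{\,*}$ (sign from the left versus right pairing) and of part (b) (reduction via op/coop) also coincides with the paper.

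There is, however, a genuine gap at the point where you write that ``specialising $h=0$ commutes with the transposition construction'', so that $K_h/hK_h$ is canonically $V^\ell(L)^*$. Since $V^\ell(L)_h$ is not finitely generated over $A_h$ and its $A_h$--module structure (through $s^\ell_h$, $t^\ell_h$) is itself deformed, dualisation does not obviously commute with reduction modulo $h$: the canonical map $V^\ell(L)_h^{\,*}\big/h\,V^\ell(L)_h^{\,*}\to V^\ell(L)^*$ is clearly injective, but its \emph{surjectivity} --- i.e.\ that every $A$--linear functional on $V^\ell(L)$ lifts to an $A_h$--linear functional on the deformation --- is exactly what has to be proved, and the paper devotes a substantial part of the proof to it: in the free case one lifts a PBW-type basis of $L$ and defines the lift $\Lambda$ of $\lambda$ explicitly coefficient by coefficient; in the merely projective case one must first pass to the free (infinite-rank) extension $L_Q$ of \S\ref{L-R_proj->free}, extend $\lambda$ by zero on $V^\ell(L)\otimes S(Z)^+$, lift inside ${V^\ell(L)}_{h,Y}^{\;*}$ (\S\ref{ext-QUEAd's}), and restrict back. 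A closely related point you pass over quickly is that the transposed coproduct is only defined once the map $\chi$ from the completed tensor product $K_h\,\widetilde\otimes_{A_h}K_h$ to $\mathrm{Hom}\big(V^\ell(L)_h\otimes V^\ell(L)_h,\,A_h\big)$ is shown to be an \emph{isomorphism}, not merely continuous; the paper gets this by the pairing estimates coming from Lemma \ref{property of the coproduct}, plus the same lifting argument applied to the tensor square, and then by reducing modulo $h$ and comparing with the classical isomorphism $\widetilde\vartheta$ of \S\ref{def_J^r(L)}. Without these two ingredients your identification of the semiclassical limit, and hence the statement that $K_h$ and ${V^\ell(L)_h}_{\,*}$ are RQFSAd's quantising $J^r(L)$, is not yet established.
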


\begin{proof}
   {\it (a)}\,  We shall start by proving that if  $ \, {V^\ell(L)}_h \! \in \! \text{\rm (LQUEAd)}_{A_h} \, $,  then  $ \, {V^\ell(L)}_h^{\,*} \! \in \! \text{\rm (RQFSAd)}_{A_h} \, $.
                                                                   \par
   As we saw in  \S \ref{notation_V^ell(L)_h^*},  the right dual  $ \, K_h := {V^\ell(L)}_h^{\,*} \, $  of  $ {V^\ell(L)}_h $  has a canonical structure of  $ A^e $--ring.  Moreover, it is endowed with a map  $ \; \partial_h : {V^\ell(L)}_h^{\,*} \longrightarrow A_h \, \big( \chi \mapsto \big\langle 1_{{V^\ell(L)}_h} , \chi \big\rangle \big) \, $,  which has all the properties of a ``counit'' in a right bialgebroid and defines the two-sided ideal  $ \, I_{K_h} := \partial_h^{-1}\big( h \, A_h \big) \, $.  What we still have to prove is that
 \vskip4pt
   $ \bullet \quad $  $ K_h := {V^\ell(L)}_h^{\,*} \; $  is complete for the  $ I_{K_h} $--adic  topology;
 \vskip4pt
   $ \bullet \quad $  there exists a suitable coproduct  $ \; \Delta_h : K_h := {V^\ell(L)}_h^{\,*} \!\rightarrow K_h \,\widetilde{\otimes}_{A_h} K_h = {V^\ell(L)}_h^{\,*} \,\widetilde{\otimes}_{A_h} \! {V^\ell(L)}_h^{\,*} \; $,  which makes  $ \, K_h := {V^\ell(L)}_h^{\,*} \, $  into a topological right bialgebroid;
 \vskip4pt
   $ \bullet \quad $  $ K_h \Big/ h \, K_h  \, = \,  {V^\ell(L)}_h^{\;*} \Big/ h \, {V^\ell(L)}_h^{\;*} \; $  is isomorphic to  $ {V^\ell(L)}^* $  as topological right bialgebroid.
 \vskip7pt
   We begin by looking for an isomorphism  $ \; {V^\ell(L)}_h^{\;*} \Big/ h \, {V^\ell(L)}_h^{\;*} \, \cong \, {V^\ell(L)}^* \; $.  For this, we distinguish two cases, the  {\sl free\/}  one and the  {\sl general\/}  one.
 \vskip4pt
\noindent
 {\ } --- \  {\it  $ \underline{\text{Free case}} $:} \,  $ L $  {\sl is a free  $ A $--module},  of finite type.
 \vskip2pt
   In this case, let us fix an  $ A $--basis  $ \{\, \overline{e}_1 , \dots , \overline{e}_n \} \, $  of the  $ A $--module  $ L \, $;  then we lift each  $ \overline{e}_i $  to some  $ \, e_i \in {V^\ell(L)}_h \; $.  Then any element of  $ {V^\ell(L)}_h $  can be written as the  $ h $--adic  limit of elements of the form  $ \; \sum_{(a_1,\dots,a_n) \in \N^n} t_l(c_{a_1,\dots,a_n}) \, e_1^{a_1} \cdots e_n^{a_n} \; $  in which almost all  $ c_{a_1,\dots,a_n} $'s  are zero.
                                                                                       \par
   For a given  $ \, \lambda \in {V^\ell(L)}^* \, $,  set  $ \, \overline{\alpha}_{a_1, \dots ,a_n} :=  \lambda\big( \overline{e}_1^{\,a_1} \cdots \overline{e}_n^{\,a_n} \!\big) \in A \, $  for all  $ \, \underline{a} := (a_1, \dots, a_n) \in \N^n \, $.  We lift each  $ \overline{\alpha}_{a_1, \dots, a_n} $  to some  $ \, \alpha_{a_1,\dots,a_n} \in A_h \, $,  with the assumption that if  $ \, \overline{\alpha}_{a_1,\dots,a_n} = 0 \, $  then we choose  $ \, \alpha_{a_1,\dots,a_n} = 0 \; $.  Now we set
  $$  \Lambda \, \Big(\, {\textstyle \sum_{(a_1,\dots,a_n) \in \N^n}} \, t_l(c_{a_1,\dots,a_n}) \, e_1^{a_1} \cdots e_n^{a_n} \Big)  \; := \;  {\textstyle \sum_{(a_1,\dots,a_n) \in \N^n}} \, c_{a_1,\dots ,a_n} \, \alpha_{a_1,\dots ,a_n}  $$
This defines a map  $ \Lambda $  from the right  $ A_h $--submodule  of  $ {V^\ell(L)}_h $  spanned by all the monomials  $ e_1^{a_1} \cdots e_n^{a_n} $  to  $ A_h \, $:  as the  $ h $--adic completion of this submodule is nothing but  $ {V^\ell(L)}_h \, $,  this map uniquely extends (by continuity) to a map  $ \; \Lambda : {V^\ell(L)}_h \longrightarrow A_h \; $.  By construction, we have  $ \, \Lambda \in {V^\ell(L)}_h^{\;*} \; $,  and  $ \Lambda $  is a lifting of  $ \lambda \, $, that is  $ \; \Lambda \! \mod h \, {V^\ell(L)}_h^{\;*} = \lambda \; $.  This guarantees that the canonical map  $ \; {V^\ell(L)}_h^{\;*} \,\Big/ h \, {V^\ell(L)}_h^{\;*} \relbar\joinrel\longrightarrow {V^\ell(L)}^* \; $,  which is obviously injective, is also surjective.
%
  \eject
%
\noindent
 {\ } --- \  {\it  $ \underline{\text{General case}} $:} \,  $ L $  {\sl is a projective  $ A $--module},  of finite type.
 \vskip2pt
   As in  \S \ref{L-R_proj->free},  we introduce a projective  $ A $--module  $ Q $  such that  $ \, L \oplus Q = F \, $  is a finite free  $ A $--module.  Fix an  $ A $--basis  $ \{b_1, \dots ,b_n\} $  of  $ F \, $,  and set  $ \, Y = k \, b_1 \oplus k \, b_2 \cdots \oplus k \, b_n \, $,  so that  $ \, F = A \otimes_k Y \, $.  The basis  $ \{b_1, \dots ,b_n\} $  also defines a good basis  $ {\{ \overline{e}_i \}}_{i \in T := \N \times \{1,\dots,n\}} \, $  of  $ L_Q \; $.
                                                                                       \par
    Now let  $ \, \lambda \in {V(L)}^* \; $.  We extend  $ \lambda $  to some  $ \, \lambda' \in {V(L_Q)}^* \, $  by setting  $ \, \lambda'{\big|}_{V(L) \otimes {S(Z)}^+} := 0 \; $.  Now we can adapt the arguments of the free case to construct a lifting  $ \, \Lambda' \in {V(L)_{h,Y}}^{\;*} \, $  of  $ \lambda' \; $.  Then  $ \; \Lambda := \Lambda'{\big|}_{{V(L)}_h} \! \in {V(L)}_h^{\;*} \, $  is a lifting  of  $ \lambda $  as required.
                                                                                       \par
   Thus one sees again that the canonical map  $ \; {V^\ell(L)}_h^{\;*} \,\Big/ h \, {V^\ell(L)}_h^{\;*} \longrightarrow {V^\ell(L)}^* \; $  is a bijection.
 \vskip7pt
   On  $ {V^\ell(L)}_h^{\;*} $  we have already considered an algebraic structure of  ``$ A^e $--ring  with counit'': the same structure then is inherited by its quotient  $ \; {V^\ell(L)}_h^{\;*} \,\Big/ h \, {V^\ell(L)}_h^{\;*} \; $.  On the other hand,  $ {V^\ell(L)}^* $  is a right bialgebroid, hence in particular it is an  ``$ A^e $--ring  with counit'' as well.  The canonical bijection  $ \; {V^\ell(L)}_h^{\;*} \Big/ h \, {V^\ell(L)}_h^{\;*} \!\longrightarrow {V^\ell(L)}^* \; $  above is clearly compatible with this additional structure.  In particular, this implies that  $ \; \text{\sl Ker}\,(\partial_h) \! \mod h \, K_h  \, \cong \,  \text{\sl Ker}\,(\partial_{V^\ell(L)})  \, =: \,  \J_{{V^\ell(L)}^*} \; $.
                                                                \par
   Now consider  $ \, I_{K_h} := \partial_h^{-1}\big( h \, A_h \big) \, $,  which can be written as  $ \; I_{K_h} = \text{\sl Ker}\,(\partial_h) \, + \, h \, K_h \; $.  As we know that  $ {V^\ell(L)}^* $  is  $ \J_{{V^\ell(L)}^*} $--adically  complete  (cf.~\S \ref{def_J^r(L)}),  from  $ \; \text{\sl Ker}\,(\partial_h) \! \mod h \, K_h \, \cong \, \J_{{V^\ell(L)}^*} \; $  and  $ \; I_{K_h} = \text{\sl Ker}\,(\partial_h) \, + \, h \, K_h \; $  we can easily argue that  $ \, K_h := {V^\ell(L)}_h^{\;*} $  is  $ I_{K_h} \! $--adically  complete.
 \vskip9pt
   Now we look for a suitable coproduct.  To this end, we shall show that the natural ``coproduct'' given by the recipe in  \S \ref{bialgd-struct-dual}  does the job.  The problem is to prove the existence of an isomorphism from  $ \; {V^\ell(L)}_h^{\,*} \, {{}_\blacktriangleleft \widetilde{\otimes}_\blacktriangleright} \, {V^\ell(L)}_h^{\,*} \; $   --- the completion of  $ \, {V^\ell(L)}_h^{\,*} \otimes {V^\ell(L)}_h^{\,*} \, $  with respect to the topology defined by the filtration  $ \, {\Big\{ \sum_{p+q=n} I_h^{\,p} \otimes I_h^{\,q} \Big\}}_{n \in \N} \, $  ---   to  $ \; \text{\sl Hom}_{(-,A_h)} \big( {\big( {V^\ell(L)}_h \, {{}_\triangleleft \otimes_{\scriptscriptstyle \blacktriangleright}} {V^\ell}(L)_h \big)}_\triangleleft \, , \, A_h \,\big) \; $.  Indeed   --- more precisely --- there exists  (cf.~\S \ref{bialgd-struct-dual})  a natural map  $ \chi $  from  $ \, {V^\ell(L)}_h^{\,*} \otimes {V^\ell(L)}_h^{\,*} \, $  to
  \break
  $ \; \text{\sl Hom}_{(-,A_h)} \big(\! {\big( {V^\ell(L)}_h \, {{}_\triangleleft \otimes_{\scriptscriptstyle \blacktriangleright}} {V^\ell}(L)_h \big)}_\triangleleft \, , \, A_h \big) \, $;
we now show that this  $ \chi $  actually extends to a (continuous) map   --- which, by abuse of notation, we still denote by  $ \chi $  ---   from  $ \; {V^\ell(L)}_h^{\,*} \, {{}_\blacktriangleleft \widetilde{\otimes}_\blacktriangleright} \, {V^\ell(L)}_h^{\,*} \; $  to
  \break
  $ \; \text{\sl Hom}_{(-,A_h)} \big( {\big( {V^\ell(L)}_h \, {{}_\triangleleft \otimes_{\scriptscriptstyle \blacktriangleright}} {V^\ell}(L)_h \big)}_\triangleleft \, , \, A_h \,\big) \; $.
                                                                    \par
   To begin with, fix  $ \, u \in {V^\ell(L)}_h \, $.  For every  $ \, r \in \N \, $,  there exists  $ \, t_r \in \N \, $  such that  $ \, \Delta^{t_r}(u) \, $  expands as  $ \; \Delta^{t_r}(u) = \delta_0 + h \, \delta_1 + h^2 \, \delta_2 + \cdots + h^r \, \delta_r \; $  as in  Lemma \ref{property of the coproduct}:  in particular, every  $ \, \delta_i \in {V^\ell(L)}_h^{\otimes\, t_r} \, $  with  $ \, 0 \leq i \leq r-1 \, $  contains at least  $ \, r \, $  terms equal to  $ 1 \, $.  As the canonical evaluation pairing between  $ {V^\ell(L)}_h $  and  $ \, K_h := {V^\ell(L)}_h^{\,*} \, $  is a bialgebroid right pairing   --- in the sense of  Definition \ref{left and right 2}  ---   the formulas for such pairings imply at once (by induction) that  $ \; \big\langle\, u \, , \, I_{K_h}^{\,t} \,\big\rangle \subseteq h^r A_h \; $  for all  $ \, t \geq t_r \, $.  By the same arguments, given  $ \, v, w \in {V^\ell(L)}_h \, $  we see that, for every  $ \, r \in \N \, $,  one has
  $$  \big\langle\, v \otimes w \, , \, I_{K_h}^{\,t'} \! \otimes I_{K_h}^{\,t''} \,\big\rangle  \; \subseteq \;  h^r A_h   \qquad  \text{for all \ }  \, t' + t'' \gg 0   \eqno (5.1)  $$
   \indent   Now let  $ \, \Lambda \in {V^\ell(L)}_h^{\,*} \, {{}_\blacktriangleleft \widetilde{\otimes}_\blacktriangleright} \, {V^\ell(L)}_h^{\,*} \, $.  Then  $ \Lambda $  is the limit of a sequence  $ \, {(\Lambda_n)}_{n \in \N} \, $   --- with  $ \, \Lambda_n \in  {V^\ell(L)}_h^{\,*} {{}_\blacktriangleleft \otimes_\blacktriangleright} {V^\ell(L)}_h^{\,*} \, $  for all  $ n $  ---   for the topology defined by the filtration  $ \, {\Big\{ \sum_{p+q=n} I_h^{\,p} \otimes I_h^{\,q} \Big\}}_{n \in \N} \, $;  in particular, for each  $ \, t \in \N \, $  one has
  $$  \big( \Lambda_{n'} - \Lambda_{n''} \big)  \; \in \;  {\textstyle \sum\limits_{t'+t''=t}} I_{K_h}^{\,t'} \! \otimes I_{K_h}^{\,t''}  \qquad \qquad  \text{for all \ } \, n', n'' \gg 0   \eqno (5.2)  $$
By (5.1) and (5.2) together we get that for all  $ \, r \in \N \, $  one has
  $$  \big( \chi(\Lambda_{n'}) - \chi(\Lambda_{n''}) \big)(v \otimes w) = \chi\big(\Lambda_{n'} - \Lambda_{n''}\big)(v \otimes w):= \big\langle\, v \otimes w \, , \, \Lambda_{n'} - \Lambda_{n''} \,\big\rangle  \; \subseteq \;  h^r A_h  $$
for all  $ \, n', n'' \gg 0 \, $;  in other words,  $ \; {\big\{ \chi(\Lambda_n\big)(v \otimes w) \big\}}_{n \in \N} \,  $  is a Cauchy sequence for the  $ h $--adic  topology in  $ A_h \, $;  as the latter is  $ h $--adically  complete (and separated), there exists a unique, well-defined limit  $ \, \lim\limits_{n \to \infty} \chi(\Lambda_n\big)(v \otimes w) \in A_h \; $.  In the end, we can set
 $ \; \chi\big(\Lambda\big)(v \otimes w) \, := \, \lim\limits_{n \to \infty} \chi(\Lambda_n\big)(v \otimes w) \, $;
 \, this defines a (continuous) map extending the original one, namely
  $$  \chi \, : \, {V^\ell(L)}_h^{\,*} \, {{}_\blacktriangleleft \widetilde{\otimes}_\blacktriangleright} \, {V^\ell(L)}_h^{\,*} \, \relbar\joinrel\longrightarrow \, \text{\sl Hom}_{(-,A_h)} \big( {\big( {V^\ell(L)}_h \, {{}_\triangleleft \otimes_{\scriptscriptstyle \blacktriangleright}} {V^\ell}(L)_h \big)}_\triangleleft \, , \, A_h \,\big)   \eqno (5.3)  $$
   \indent   To complete our argument, we need a few more steps.  In order to ease the notation, we shall write  $ \; X{\big|}_{h=0} := X \Big/ h \, X \; $  for every  $ k[[h]] $--module  $ X \, $.
                                                                  \par
   First, with the same arguments used to prove that  $ \; \text{\sl Hom}_{(-,A_h)}\big( {V^\ell(L)}_h \, , A_h \big){\Big|}_{h=0} \! = \, {V^\ell(L)}_h^{\;*}{\Big|}_{h=0} \; $  has a canonical bijection with  $ \, \text{\sl Hom}_{(-,A)} \big( V^\ell(L) \, , A \,\big) =: {V^\ell(L)}^* \, $  we can also prove that
  $$  \text{\sl Hom}_{(-,A_h)} \big( {\big( {V^\ell(L)}_h \, {{}_\triangleleft \otimes_{\scriptscriptstyle \blacktriangleright}} {V^\ell}(L)_h \big)}_\triangleleft \, , \, A_h \,\big){\Big|}_{h=0}  \, \cong \;  \text{\sl Hom}_{(-,A)} \big( V^\ell(L) \otimes V^\ell(L) \, , \, A \,\big)   \eqno (5.4)  $$
Similarly, the same arguments once more can be adapted to prove that
  $$  {V^\ell(L)}_h^{\,*} \, {{}_\blacktriangleleft \widetilde{\otimes}_\blacktriangleright} \, {V^\ell(L)}_h^{\,*}{\Big|}_{h=0}  \, \cong \;  {V^\ell(L)}^* \,\widetilde{\otimes}\, {V^\ell(L)}^*  \quad \big( = \, J^r(L) \,\widetilde{\otimes}\, J^r(L) \; , \;\;  \text{cf.~\S \ref{def_J^r(L)}} \,\big)   \eqno (5.5)  $$
Finally, by construction the reduction modulo  $ h $  of the map  $ \chi $  in (5.3), call it  $ \overline{\chi} \, $,  is nothing but the map
  $$  \widetilde{\vartheta} \, : \, {J^r(L)}_\blacktriangleleft \widetilde{\otimes}_{\,{}_{\scriptstyle \blacktriangleright}}\! J^r(L) \, = \, {V^\ell(L)}^*_{\,\blacktriangleleft} \widetilde{\otimes}_{\,{}_{\scriptstyle \blacktriangleright}}\! {V^\ell(L)}^* \relbar\joinrel\longrightarrow {\big( {V^\ell(L)}_\triangleleft \! \otimes_{\,{}_\blacktriangleright}\!\! V^\ell(L) \big)}^{\!*}  $$
considered in  \S \ref{def_J^r(L)}.  Therefore   --- since  $ \, \text{\sl Hom}_{(-,A)} \big( V^\ell(L) \otimes V^\ell(L) \, , \, A \,\big) =: {\big( {V^\ell(L)}_\triangleleft \! \otimes_{\,{}_\blacktriangleright}\!\! V^\ell(L) \big)}^{\!*} \, $,  and taking into account the isomorphisms in (5.5--5) ---   as  $ \, \overline{\chi} = \widetilde{\vartheta} \, $  is a  $ k $--linear  isomorphism we can deduce that  $ \chi $  is an isomorphism as well.
                                                                  \par
   The outcome now is that  $ \, K_h := {V^\ell(L)}_h^{\;*} \, $  endowed with the previously constructed structure   --- including the coproduct map given by the recipe in  \S \ref{bialgd-struct-dual}  ---   is a topological right bialgebroid, complete with respect to the  $ I_{K_h} \! $--adic  topology.  In addition, the bijection  $ \; {V^\ell(L)}_h^{\;*} \,\Big/ h \, {V^\ell(L)}_h^{\;*} \longrightarrow {V^\ell(L)}^* \; $  found above by construction happens to be a right bialgebroid isomorphism.
 \vskip9pt
   Our next task is the following.  Denote by  $ \, \big( L^* \, , {[\,\ ,\ ]}' , \omega' \big) \, $  and  $ \, \big( L^* \, , {[\,\ ,\ ]}'' , \omega'' \big) \, $  the structures of Lie-Rinehart bialgebras induced on  $ L^* $  respectively by  Theorem \ref{semiclassical_limit-J^r(L)}   --- for  $ \, J^r(L) := {V^\ell(L)}_h^{\;*} \, $  ---  and by  Theorem \ref{semiclassical_limit-V^ell(L)}   --- applied to  $ {V^\ell(L)}_h \, $.  We must prove that  $ \, \omega' = \omega'' \, $  and  $ \, {[\,\ ,\ ]}' = {[\,\ ,\ ]}'' \, $.  To this end, recall that, by  Remarks \ref{props_Lie-bialg}{\it (b)},  $ \omega'' $  and  $ {[\,\ ,\ ]}'' $  are uniquely determined by the conditions
  $$  \omega''(\Phi)(a) \, = \big\langle \delta_L\!(a) \, , \Phi \big\rangle  \;\; ,  \quad  \big\langle\, \Theta \, , {[\Phi\,,\Psi]}'' \,\big\rangle \, = \, \omega''(\Phi)\big(\langle\, \Theta \, , \Psi \rangle\big) \, - \, \omega''(\Psi)\big(\langle \Theta \, , \Phi \rangle\big) \, - \, \big\langle \delta_L\!(\Theta) \, , \Phi \otimes \Psi \big\rangle  $$
(for all $ \, \Phi \, , \Psi \in L^* \, $,  $ \, \Theta \in L \, $,  $ \, a \in A \, $),  where  $ \delta_L\!(a) $  and  $ \delta_L\!(\Theta) $  are defined by the formula for  $ \delta $  in  Theorem \ref{semiclassical_limit-V^ell(L)}.  Therefore, it is enough for us to prove that (for all $ \, \Phi \, , \Psi \in L^* \, $,  $ \, \Theta \in L \, $,  $ \, a \in A \, $)
  $$  \omega'(\Phi)(a) \, = \big\langle \delta_L\!(a) \, , \Phi \big\rangle  \; ,  \;\;  \big\langle\, \Theta , {[\Phi\,,\Psi]}' \,\big\rangle \, = \, \omega''(\Phi)\big(\!\langle\, \Theta , \Psi \rangle\!\big) - \omega''(\Psi)\big(\!\langle \Theta , \Phi \rangle\!\big) - \big\langle \delta_L\!(\Theta) \, , \Phi \otimes \Psi \big\rangle  \!\quad   \eqno (5.6)  $$
   \indent   In order to prove (5.6), we choose liftings  $ \, \phi' , \psi' \in {J^r(L)}_h := {V^\ell(L)}_h^{\;*} \, $,  with the additional condition that  $ \, \phi' , \psi' \in \J_{\scriptscriptstyle {J^r(L)}_h} := \text{\sl Ker}\,\big( \partial_{{J^r(L)}_h} \big) \, $  (such a choice is always possible), a lifting  $ \, \theta \in {V^\ell(L)}_h \, $  of  $ \Theta \, $  and a lifting  $ \, a' \in A_h \, $  of  $ \, a \, $.  Now direct computation gives
  $$  \displaylines{
   \omega'(\Phi)(a)  \;\; = \;\;  \Big(\! \big( h^{-1} \big( \phi' \, t_r\big(a'\big) - t_r\big(a'\big) \, \phi' \big) \big) \!\!\mod h \, {J^r(L)}_h \,\Big) \!\!\!\mod \J_{\scriptscriptstyle J(L)}  \;\; =   \hfill  \cr
    = \,  \partial_{{J(L)}_{\!h}}\! \bigg(\! {{\; \phi' \, t_r\big(a'\big) - t_r\big(a'\big) \, \phi' \;} \over h} \bigg) \!\!\!\!\mod h \, A_h  \;
  = \;  \bigg\langle 1 \, , {{\; \phi' \, t_r\big(a'\big) - t_r\big(a'\big) \, \phi' \;} \over h} \bigg\rangle \!\!\mod h \, A_h  \; =   \hfill  \cr
   = \;  {{\; a' \, \big\langle\, 1 \, , \, \phi' \,\big\rangle \, - \, \big\langle\, 1 \, , \, t_r\big(a'\big) \, \phi'  \,\big\rangle \;} \over h} \!\mod h \, A_h  \;
   = \;  {{\; \big\langle\, 1 \, , \, \phi' \,\big\rangle \, a' \, - \, \big\langle\, 1 \, , \, t_r\big(a'\big) \, \phi'  \,\big\rangle \;} \over h} \!\mod h \, A_h  \; =   \hfill  \cr
   \hfill   = \;  {{\; \big\langle\, t^\ell\big(a'\big) \, , \phi' \,\big\rangle \, - \, \big\langle\, s^\ell\big(a'\big) \, , \phi'  \,\big\rangle \;} \over h} \!\!\mod h \, A_h  \; = \;  \bigg\langle\! {{\; t^\ell\big(a'\big) - s^\ell\big(a'\big) \;} \over h} \, , \phi' \bigg\rangle \!\!\mod h \, A_h  \; = \;  \big\langle\, \delta_L\!(a) \, , \Phi \,\big\rangle  }  $$
where  $ \, \big\langle\,\ ,\ \big\rangle \, $  denotes the natural evaluation pairing between  $ {V^\ell(L)}_h $  and its right dual  $ {V^\ell(L)}_h^{\;*} \, $,  we exploited the fact that this pairing is a  {\sl right\/}  bialgebroid pairing (cf.~Definitions \ref{left and right 1}  and  \ref{left and right 2})  and the fact that  $ \; \big\langle 1_{\scriptscriptstyle {V^\ell(L)}_h} \, , \, \phi' \,\big\rangle \, =: \partial_{{J^r(L)}_h}\big(\phi'\big) = \, 0 \; $  because  $ \, \phi' \in \J_{\scriptscriptstyle {J^r(L)}_h} := \text{\sl Ker}\,\big( \partial_{{J^r(L)}_h} \big) \, $  by assumption.
     Thus the first identity in (5.6) is verified.
                                                                  \par
   As to the second identity, we write  $ \, \Delta(\theta) = \theta_{(1)} \otimes \theta_{(2)} \, $  as  $ \, \Delta(\theta) = \theta \otimes 1 + 1 \otimes \theta + h \, \sum_{[\theta]} \theta_{[1]} \otimes \theta_{[2]} \, $,  so that  $ \; \big( \sum_{[\theta]} \theta_{[1]} \otimes \theta_{[2]} \big) \mod h \, {V^\ell(L)}_h \displaystyle{{{\,}_\triangleleft}{\mathop \times_A}{{\,}_\triangleright}} {V^\ell(L)}_h \, =: \, \Delta^{[1]}(\Theta) \; $   --- notation of  Definition \ref{semiclassical_limit-V^ell(L)}.  Then by direct computation we find
 \eject
  $$  \displaylines{
   \Big\langle \Theta \, , \, {[\Phi,\Psi]}' \Big\rangle  \,\; = \;  \bigg\langle \theta \, , \, {{\; \phi' \, \psi' \, - \, \psi' \, \phi' \;} \over h} \bigg\rangle \!\!\!\mod h \, A_h  \,\; = \;  h^{-1} \, \Big(\! \big\langle\, \theta \, , \, \phi' \, \psi' \, \big\rangle \, - \, \big\langle\, \theta \, , \, \psi' \phi' \, \big\rangle \Big) \!\!\!\mod h \, A_h  \; =   \hfill  \cr
   = \;  h^{-1} \, \Big(\! \big\langle\, 1 \, , \, t_r\big( \big\langle\, \theta \, , \phi' \,\big\rangle \big) \, \psi' \, \big\rangle \, - \, \big\langle\, 1 \, , \, t_r\big( \big\langle\, \theta \, , \psi' \,\big\rangle \big) \, \phi' \, \big\rangle \Big) \!\!\!\mod h \, A_h \; +   \hfill  \cr
   \hfill   + \; {\textstyle \sum_{[\theta]}} \Big( \big\langle\, \theta_{[2]} \, , \, t_r\big( \big\langle\, \theta_{[1]} \, , \phi' \,\big\rangle \big) \, \psi' \, \big\rangle \, - \, \big\langle\, \theta_{[2]} \, , \, t_r\big( \big\langle\, \theta_{[1]} \, , \psi' \,\big\rangle \big) \, \phi' \, \big\rangle \Big) \!\!\!\mod h \, A_h  \; =  \cr
   = \;  h^{-1} \, \Big(\! \big\langle\, s^\ell\big( \big\langle\, \theta \, , \phi' \,\big\rangle \big) \, , \, \psi' \, \big\rangle \, - \, \big\langle\, s^\ell\big( \big\langle\, \theta \, , \psi' \,\big\rangle \big) \, , \, \phi' \, \big\rangle \Big) \!\!\!\mod h \, A_h \; +   \hfill  \cr
   \hfill   + \; {\textstyle \sum_{[\theta]}} \Big( \big\langle\, s^\ell\big( \big\langle\, \theta_{[1]} \, , \phi' \,\big\rangle \big) \, \theta_{[2]} \, , \, \psi' \, \big\rangle \, - \, \big\langle\, s^\ell\big( \big\langle\, \theta_{[1]} \, , \psi' \,\big\rangle \big) \, \theta_{[2]} \, , \, \phi' \, \big\rangle \Big) \!\!\!\mod h \, A_h  \; =  \cr
   = \;  \Big( h^{-1} \, \Big\langle\, s^\ell\big( \big\langle\, \theta \, , \phi' \,\big\rangle \big) - t^\ell\big( \big\langle\, \theta \, , \phi' \,\big\rangle \big) \, , \, \psi' \, \Big\rangle \, - \, h^{-1} \, \Big\langle\, s^\ell\big( \big\langle\, \theta \, , \psi' \,\big\rangle \big) - t^\ell\big( \big\langle\, \theta \, , \psi' \,\big\rangle \big)\, , \, \phi' \, \Big\rangle \Big) \!\!\!\mod h \, A_h \; +   \hfill  \cr
   \hfill   + \; {\textstyle \sum_{[\theta]}} \Big( \big\langle\, t^\ell\big( \big\langle\, \theta_{[1]} \, , \phi' \,\big\rangle \big) \, \theta_{[2]} \, , \, \psi' \, \big\rangle \, - \, \big\langle\, t^\ell\big( \big\langle\, \theta_{[1]} \, , \psi' \,\big\rangle \big) \, \theta_{[2]} \, , \, \phi' \, \big\rangle \Big) \!\!\!\mod h \, A_h  \; =  \cr
   = \;  \bigg( \bigg\langle\, {{\, s^\ell\big( \big\langle\, \theta \, , \phi' \,\big\rangle \big) - t^\ell\big( \big\langle\, \theta \, , \phi' \,\big\rangle \big) \,} \over h} \, , \, \psi' \, \bigg\rangle \, - \, \bigg\langle\, {{\, s^\ell\big( \big\langle\, \theta \, , \psi' \,\big\rangle \big) - t^\ell\big( \big\langle\, \theta \, , \psi' \,\big\rangle \big) \,} \over h} \, , \, \phi' \, \bigg\rangle \bigg) \!\!\!\mod h \, A_h \; +   \hfill  \cr
   \hfill   + \; {\textstyle \sum_{[\theta]}} \Big( \big\langle\, \theta_{[2]} \, , \, \psi' \, \big\rangle \, \big\langle\, \theta_{[1]} \, , \phi' \,\big\rangle \, - \, \big\langle\, \theta_{[2]} \, , \, \phi' \, \big\rangle \, \big\langle\, \theta_{[1]} \, , \psi' \,\big\rangle \Big) \!\!\!\mod h \, A_h  \; =  \cr
   = \;\;  - \, \big\langle\, \delta_L\!\big( \big\langle\, \Theta \, , \Phi \,\big\rangle \big) \, , \, \Psi \, \big\rangle \; + \; \big\langle\, \delta_L\!\big( \big\langle\, \Theta \, , \Psi \,\big\rangle \big) \, , \, \Phi \, \big\rangle \; + \; \big\langle \Delta^{[1]}(\Theta) - {\Delta^{[1]}(\Theta)}_{2,1} \; , \, \Phi \otimes \Psi \big\rangle  \;\; =  \hfill  \cr
   \hfill   = \;\;  \omega''(\Phi)\big(\! \big\langle\, \Theta \, , \Psi \,\big\rangle \!\big) \; - \; \omega''(\Psi)\big(\! \big\langle\, \Theta \, , \Phi \,\big\rangle \!\big) \; - \; \big\langle \delta_L\!(\Theta) \, , \, \Phi \otimes \Psi \big\rangle  }  $$
%
%
 where we exploited the properties of a right bialgebroid pairing   --- in particular, the identity  $ \; \big\langle\, t^\ell(\alpha) \, , \, \chi' \, \big\rangle \, = \, \big\langle\, 1 \, , \, \chi' \, \big\rangle \, \alpha \; $  ---   the fact that  $ \, \big\langle\, 1 \, , \phi' \,\big\rangle = \partial_{\scriptscriptstyle {J^r(L)}_h}\big(\phi'\big) = 0 \, $,  $ \, \big\langle\, 1 \, , \psi' \,\big\rangle = \partial_{\scriptscriptstyle {J^r(L)}_h}\big(\psi'\big) = 0 \, $,  the fact that  $ \, s^\ell(\kappa) \equiv t^\ell(\kappa) \!\mod h \, {V^\ell(L)}_h \; $  and the fact (already proved) that  $ \, \omega'' = \omega' \, $.
 This proves the second identity in (5.6).
 \vskip7pt
   Finally, we have to deal with  $ {{V^\ell(L)}_h}_{\;*} \, $.  Acting much like for  $ {V^\ell(L)}_h^{\;*} \, $,  one proves that  $ {{V^\ell(L)}_h}_{\;*} $  is indeed a topological right bialgebroid, whose specialization modulo  $ h $  is just  $ \, {V^\ell(L)}_* \cong J^\ell(L) \, $,  hence we can claim that  $ \, {{V^\ell(L)}_h}_{\;*} \in \text{\rm (RQFSAd)}_{A_h} \, $  is a quantization of  $ J^\ell(L) \, $.
                                                                     \par
   As to the last part of claim  {\it (a)},  concerning the two Lie-Rinehart algebra structures induced on  $ L^* \, $,  we can again proceed like for  $ {V^\ell(L)}_h^{\;*} \, $:  the difference in the outcome   --- a minus sign ---   now is due to the fact that the natural pairing (given by evaluation) among the left bialgebroid  $ {V^\ell(L)}_h $  and the right bialgebroid  $ {{V^\ell(L)}_h}_{\;*} $  is a  {\sl left\/}  bialgebroid pairing  (cf.~Definitions \ref{left and right 1}  and  \ref{left and right 2})   --- whereas in the case of  $ {V^\ell(L)}_h $  and  $ {V^\ell(L)}_h^{\;*} $  it is a  {\sl right\/}  bialgebroid pairing.  Full detail computations are left to the reader.
 \vskip5pt
   {\it (b)}\,  The proof given for claim  {\it (a)\/}  clearly adapts to claim  {\it (b)\/}  as well, by the same arguments.  Otherwise, one can deduce claim  {\it (b)\/}  directly from claim  {\it (a)\/}  using general isomorphisms such as  $ \; {}_*\big(U^{\text{\it op}}_{\text{\it coop}}\big) \, \cong \, (U^*)^{\text{\it op}}_{\text{\it coop}} \; $  and  $ \; {}^*\big( U^{\text{\it op}}_{\text{\it coop}} \big) \, \cong \, {(U_*)}^{\text{\it op}}_{\text{\it coop}} \; $  (see  Remark \ref{dual_vs._op-coop}).
\end{proof}

\smallskip

 \subsection{Linear duality for QFSAd's}  \label{lin-dual_QFSAd's}

\smallskip

   {\ } \quad   Much like for their classical counterparts, the duals for QFSAD's have to be meant in topological sense.  Indeed, we introduce now a suitable definition of ``continuous'' dual of a (R/L)QFSAd:

\medskip

\begin{definition}  \label{cont-dual_QFSAd's}
 Let  $ \, K_h \in \text{\rm (RQFSAd)}_{A_h} \, $.  Let  $ \; I_h \, := \, \big\{\, \lambda \in K_h \,\big|\, \partial_h(\lambda) \in hA_h \,\big\} \; $.
                                                                   \par
   We denote by  $ {}_\star{}K_h $  the  $ k[[h]] $--submodule  of  $ {}_*{}K_h $  of all (left  $ A_h $--linear)   maps from  $ K_h $  to  $ A_h $  which are  {\sl continuous}  for the  $ I_h $--adic  topology on  $ K_h $  and the  $ h $--adic  topology on  $ A_h \, $.
                                                                   \par
   We denote by  $ {}^\star{}K_h $  the  $ k[[h]] $--submodule  of  $ {}^*{}K_h $  of all (right  $ A_h $--linear)   maps from  $ K_h $  to  $ A_h $  which are  {\sl continuous}  for the  $ I_h $--adic  topology on  $ K_h $  and the  $ h $--adic  topology on  $ A_h \, $.
 \vskip3pt
   In a similar way, we define also ``continuous dual'' objects  $ \, {K_h}_{\,\star} \; \big(\! \subseteq {K_h}_{\,*} \big) \, $  and  $ \, K_h^{\,\star} \; \big(\! \subseteq K_h^{\,*} \big) \, $  for every  $ \, K_h \in \text{\rm (LQFSAd)}_{A_h} \, $.
\end{definition}

\smallskip

   It is time for our second result about linear duality of ``quantum groupoids''.  In short, it claims that the left and the right continuous dual of a left, resp.~right, quantum formal series algebroid are both right, resp.~left, quantum universal enveloping algebroids.

\smallskip

\begin{theorem}  \label{dual_QFSAd's=QUEAd's}  {\ }
 \vskip4pt
    {\it (a)}\,  If  $ \, {J^r(L)}_h \in \text{\rm (RQFSAd)}_{A_h} \, $,  then  $ \; {}_\star{}{J^r(L)}_h \, , \, {}^\star{\!}{J^r(L)}_h \in \text{\rm (LQUEAd)}_{A_h} \, $,  with semiclassical limits  (cf.~Remark \ref{diff_double-dual})
  $$  {}_\star{}{J^r(L)}_h \,\Big/ h \, {}_\star{}{J^r(L)}_h  \; \cong \;  {}_\star{J^r(L)}  \, = \,  V^\ell(L)
   \qquad  \text{and}  \qquad  {}^\star{\!}{J^r(L)}_h \,\Big/ h \, {}^\star{\!}{J^r(L)}_h  \; \cong \;  {}^\star{\!}{J^r(L)}  \, \cong \,  V^\ell\big(L\big)  $$
Therefore  $ {}_\star{}{J^r(L)}_h \, $  and  $ {}^\star{}{J^r(L)}_h \, $  are quantizations of  $ \; V^\ell(L) = {}_\star{J^r(L)} \; $.
                                                                       \par
   Moreover, the structure of Lie-Rinehart bialgebra induced on  $ L $  by the quantization  $ {}_\star{}{J^r(L)}_h $  of  $ \, V^\ell(L) $   --- according to  Theorem \ref{semiclassical_limit-V^ell(L)}  ---   is the same as that induced by the quantization  $ {J^r(L)}_h $  of  $ \, J^r(L) $   --- according to  Theorem \ref{semiclassical_limit-J^r(L)}.
                                                                       \par
   On the other hand, the structure of Lie-Rinehart algebra induced on  $ L^* $  by the quantization  $ {}^\star{\!}{J^r(L)}_h $  of  $ \, V^\ell\big(L\big) $  is opposite to that induced by the quantization  $ {J^r(L)}_h $  of  $ J^r(L) \, $.  Therefore, the structures of Lie-Rinehart bialgebra induced on  $ L $  in the two cases are coopposite to each other: in short,  $ {J^r(L)}_h $  provides a quantization of the Lie-Rinehart bialgebra  $ L \, $,  while  $ {}^\star{\!}{J^r(L)}_h $  provides a quantization of the coopposite Lie-Rinehart bialgebra  $ L_{\text{\it coop}} $   ---  cf.~Remarks \ref{props_Lie-bialg}{\it (e)}.
 \vskip3pt
    {\it (b)}\,  If  $ \, {J^\ell(L)}_h \in \text{\rm (LQFSAd)}_{A_h} \, $,  then  $ \; {J^\ell(L)}_h^{\;\star} \, , \, {{J^\ell(L)}_h}_{{\,}_{\scriptstyle \star}} \in \text{\rm (RQUEAd)}_{A_h} \, $,  with semiclassical limits (cf.~Remark \;\ref{diff_double-dual})
  $$  {J^\ell(L)}_h^{\;\star} \,\Big/ h \, {J^\ell(L)}_h^{\;\star}  \; \cong \;  {J^\ell(L)}^\star  \, = \,  V^r(L)  \qquad  \text{and}  \qquad  {{J^\ell(L)}_h}_{{\,}_{\scriptstyle \star}} \,\Big/ h \, {{J^\ell(L)}_h}_{{\,}_{\scriptstyle \star}}  \; \cong \;  {J^\ell(L)}_\star  \, \cong \,  V^r\big(L\big)  $$
Therefore  $ {J^\ell(L)}_h^{\;\star} \, $  and  $ {{J^\ell(L)}_h}_{{\,}_{\scriptstyle \star}} \, $  are quantizations of  $ \, V^r(L) = {J^\ell(L)}^\star \, $.
                                                                       \par
   Moreover, the structure of Lie-Rinehart bialgebra induced on  $ L $  by the quantization  $ {J^\ell(L)}_h^{\;\star} $  of  $ \, V^r(L) $   --- according to  Theorem \ref{semiclassical_limit-V^r(L)}  ---   is the same as that induced by the quantization  $ {J^\ell(L)}_h $  of  $ \, J^\ell(L) $   --- according to  Theorem \ref{semiclassical_limit-J^ell(L)}.
                                                                       \par
   On the other hand, the structure of Lie-Rinehart algebra induced on  $ L^* $  by the quantization  $ {{J^\ell(L)}_h}_{{\,}_{\scriptstyle \star}} $  of  $ \, V^r\big(L\big) $  is opposite to that induced by the quantization  $ {J^\ell(L)}_h $  of  $ J^\ell(L) \, $.  Therefore, the structures of Lie-Rinehart bialgebra induced on  $ L $  in the two cases are coopposite to each other: in short,  $ {J^\ell(L)}_h $  provides a quantization of the Lie-Rinehart bialgebra  $ L \, $,  while  $ {{J^\ell(L)}_h}_{{\,}_{\scriptstyle \star}} $  provides a quantization of the coopposite Lie-Rinehart bialgebra
 $ L_{\text{\it coop}} $   ---  cf.~Remarks \ref{props_Lie-bialg}{\it (e)}.
\end{theorem}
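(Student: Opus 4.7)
The plan is to mirror, in topological form, the proof of Theorem \ref{dual_QUEAd's=QFSAd's}, exploiting throughout the symmetry principles recorded in Remark \ref{dual_vs._op-coop} and in Remarks \ref{remarks-QFSAd's}(b). For claim (a), let $K_h := {J^r(L)}_h$ with defining ideal $I_h$, and set $U_h := {}_\star K_h$; the goal is to endow $U_h$ with the structure of a left bialgebroid over $A_h$, to show it is $h$-adically complete and topologically free on $V^\ell(L)[[h]]$, and to identify its reduction mod $h$ with the left bialgebroid ${}_\star J^r(L) \cong V^\ell(L)$ granted by Theorem \ref{J(L)star=V(L)}. Once all this is in place, the final bialgebra compatibility claim is a pairing computation strictly parallel to formula (4.6) in the proof of Theorem \ref{dual_QUEAd's=QFSAd's}.

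First I would equip $U_h$ with its algebraic structure. The general recipe of \S\ref{bialgd-struct-dual} (applied to the right bialgebroid $K_h$) supplies source, target, convolution product, and unit; one checks immediately that source, target and product preserve $I_h$-adic continuity, so these restrict from ${}_*K_h$ to $U_h$. The counit is evaluation at $1_{K_h}$. For the coproduct, I would transpose the multiplication of $K_h$: an element $u \in U_h$ is continuous, so for every $r$ there is some $n(r)$ with $u(I_h^{n(r)}) \subseteq h^r A_h$, and using that $\Delta_{K_h}$ sends $I_h$ into $I_h\otimes K_h + K_h \otimes I_h$ (property (iv) of Definition \ref{def-QFSAd} together with the argument recorded in the remark following Notation \ref{notation_V^ell(L)_h^*}, read backwards) one obtains that for $u,v \in U_h$ the transpose $\Delta_{U_h}(u)$ evaluates convergently on products $x\,y$ with $x,y \in K_h$. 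The $h$-adic completion of the tensor product $U_{h\,\triangleleft}\otimes_{A_h}{}_\triangleright U_h$ is exactly the recipient needed; the Takeuchi condition is automatic from the right $A$-bilinearity on the $K_h$ side, as in \S\ref{bialgd-struct-dual}.

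Next I would identify the semiclassical limit. The filtration $\{I_h^n\}$ reduces mod $h$ to the $\J_{J^r(L)}$-adic filtration of $J^r(L)$, and every continuous $A_h$-linear map $K_h \to A_h$ specializes to a continuous $A$-linear map $J^r(L)\to A$; conversely, any element of ${}_\star J^r(L) = V^\ell(L)$ (Theorem \ref{J(L)star=V(L)}(a)) admits a lifting by the same two-step recipe (free case / general case via the extension $L_Q$ of \S\ref{L-R_proj->free} and Remark \ref{def-J^r_f(L_Q)}) used in the proof of Theorem \ref{dual_QUEAd's=QFSAd's}. Combined with the topological freeness obtained by this lifting argument, this shows $U_h/hU_h \cong V^\ell(L)$ as a left bialgebroid. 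An entirely analogous argument, starting from the identification ${}^\star J^r(L) \cong V^\ell(L)$ in Remark \ref{diff_double-dual}, handles ${}^\star\!{J^r(L)}_h$.

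The hardest ingredient, I expect, is precisely the continuity/completeness bookkeeping that replaces Lemmas \ref{property of the coproduct}--\ref{continuity of the evaluation}: one must show that the transpose of multiplication does land in the $h$-adically completed tensor product $U_{h\,\triangleleft}\widehat{\otimes}_{A_h}{}_\triangleright U_h$, and that the pairing-induced map $U_{h\,\triangleleft}\widehat{\otimes}_{A_h}{}_\triangleright U_h \to \mathrm{Hom}_{\mathrm{cts}}({K_h}_{\blacktriangleleft}\widetilde{\otimes}{}_\blacktriangleright K_h,A_h)$ is an isomorphism; this is the symmetric counterpart to the construction of the map $\chi$ in (4.3)--(4.5) above, now with the roles of $I_{K_h}$-adic and $h$-adic topologies interchanged, and finite projectivity of $L$ over $A$ (hence of $V^\ell(L)$ as an $A$-coring in the relevant sense) is what drives the needed duality. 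Finally, for the last part of (a), having liftings $D',E' \in U_h$ of $D,E \in L = P^\ell(V^\ell(L))$ and evaluating the defining formulas of Theorem \ref{semiclassical_limit-V^ell(L)} against liftings of elements of $L^*$ gives, via the bialgebroid right pairing between $U_h$ and $K_h$, exactly the formulas of Theorem \ref{semiclassical_limit-J^r(L)}, showing equality of the two induced Lie-Rinehart bialgebra structures; for ${}^\star{J^r(L)}_h$ the pairing is a bialgebroid left pairing (Definition \ref{left and right 2}(a)), which flips signs in the anchor and bracket formulas and therefore produces the coopposite structure, as stated. Claim (b) then follows either by the same argument or, as in the proof of Theorem \ref{dual_QUEAd's=QFSAd's}, by passing to opposite-coopposite objects via Remark \ref{dual_vs._op-coop} and Remarks \ref{remarks-QFSAd's}(b).
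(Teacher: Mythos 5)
Your proposal follows essentially the same route as the paper's own proof: restrict the ${}_*K_h$-structure to the continuous dual by checking that source, target and product preserve $I_h$-adic continuity, identify the semiclassical limit via the lifting argument split into the finite free and the projective case (through $L_Q$ and $K_{h,Y}$), establish topological freeness, and then compare the induced Lie--Rinehart bialgebra structures by the pairing computations of (4.6)--(4.7), with the left-versus-right pairing explaining the coopposite structure for ${}^\star{\!}{J^r(L)}_h$ and op-coop symmetry handling claim (b). The one place where you deviate is the coproduct step: you propose to prove that the pairing-induced map $U_{h\,\triangleleft}\widehat{\otimes}_{A_h}{}_\triangleright U_h \rightarrow \mathrm{Hom}_{\mathrm{cts}}\big({K_h}_{\,\blacktriangleleft}\widetilde{\otimes}\,{}_\blacktriangleright K_h \, , A_h\big)$ is an isomorphism (a continuous analogue of the map $\chi$ in (4.3)--(4.5)), whereas the paper only needs, and proves, the containment $\Delta\big({}_\star K_h\big)\subseteq {}_\star K_h\,\widehat{\otimes}_{A_h}{}_\star K_h$ by a successive-approximation argument (its step (5)): reduce mod $h$, lift the finitely many tensor factors, subtract, divide by $h$, and iterate; your stronger isomorphism claim is not automatic and would itself require a separate argument, so if you pursue it you should either supply that argument or replace it by the paper's iterative containment proof, which is both sufficient and simpler.
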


\begin{proof}
 {\it (a)}\,  To simplify notation set  $ \, K_h := {J^r(L)}_h \; $.  We begin dealing with  $ {}_\star{}K_h \, $,  in several steps.
 \vskip4pt
   The main point in the proof is the following.  By definition,  $ {}_\star{}K_h $  is contained in  $ {}_*{}K_h \, $:  by the recipe in  \S \ref{dual_bialgds},  the latter is ``almost'' a left bialgebroid over  $ A_h \, $,  as it has a natural structure of  $ A_h^{\,e} $--ring  with ``counit'', and also a ``candidate'' as coproduct.  Then the natural pairing among  $ {}_*{}K_h $  and  $ K_h $  (given by evaluation), hereafter denoted  $ \, \langle\,\ ,\ \rangle \, $,  is an  $ A_h^{\,e} $--right  pairing  (cf.~Definition \ref{left and right 1}),  and also a bialgebroid right pairing  (cf.~Definition \ref{left and right 2})  --- as far as this makes sense.  Basing on this, we shall presently show that this structure on  $ {}_*{}K_h $   --- which makes it an  $ A_h^{\,e} $--ring  and even ``almost a left  $ A_h^{\,e} $--bialgebroid'',  actually does restrict to  $ {}_\star{}K_h \, $,  making it into a left  $ A_h^{\,e} $--bialgebroid.  Also, the evaluation will then provide a natural bialgebroid right pairing between  $ {}_\star{}K_h $  and  $ K_h \, $.
                                                                        \par
   Along the way, we shall prove also that  $ {}_\star{}K_h $  has semiclassical limit  $ V^\ell(L) \, $,  and finally that the Lie-Rinehart bialgebra structure on  $ L $  induced by it is the same as that induced by  $ \, K_h := {J^r(L)}_h \; $.
 \vskip7pt
   {\it (1)}\,  First we prove that the source and target maps of  $ {}_*{}K_h $  (as given in  \S \ref{dual_bialgds})  actually map into  $ {}_\star{}K_h \, $,  that is  $ \, s^\ell_*(A_h) \subseteq {}_\star{}K_h \, $  and  $ \, t^\ell_*(A_h) \subseteq {}_\star{}K_h \; $.  We shall prove it by showing that, for any  $ \, a \in A_h \, $,  one has  $ \, \big\langle s^\ell_*(a) \, , \, I_h^{\,n} \big\rangle \subseteq h^n A_h \, $,  $ \, \big\langle t^\ell_*(a) \, , \, I_h^{\,n} \big\rangle \subseteq h^n A_h \, $,  for all  $ \, n \in \N \, $.
 \vskip3pt
   For  $ t^\ell_* \, $,  if  $ \, \phi \in I_h \, $, then  $ \; \big\langle\, t^\ell_*(a) \, , \, \phi \,\big\rangle = \big\langle\, 1 \, , \phi \,\big\rangle \, a \, \in \, h \, A_h = h^1 A_h \; $;  thus  $ \, \big\langle\, t^\ell_*(a) \, , \, I_h^{\,1} \,\big\rangle \subseteq h^1 A_h \; $.
                                                               \par
   Now assume by induction that  $ \, \big\langle\, t^\ell_*(a) \, , \, I_h^{\,m} \big\rangle \subseteq h^m A_h \, $.  Let  $ \, \psi \in I_h^{\,m} \, $  and  $ \, \chi \in I_h \, $;  then
  $$  \big\langle\, t^\ell_*(a) \, , \psi \, \chi \,\big\rangle  \; = \;  \big\langle\, 1 \, , \psi \, \chi \,\big\rangle \, a  \; = \;  \big\langle\, s^\ell_*\big( \langle\, 1 \, , \psi \,\rangle \big) \, 1 \, , \, \chi \,\big\rangle \, a  $$
thus by the induction hypothesis and the case  $ \, m = 1 \, $  we see that  $ \, \big\langle t^\ell_*(a) \, , \psi \, \chi \big\rangle \in h^{m+1} A_h \; $.
                                                                 \par
   The case of  $ s^\ell_* \, $   --- being totally similar ---   is left to the reader.
 \vskip7pt
   {\it (2)}\,  Let us show that if  $ \, \omega , \omega' \in {}_\star{}K_h \, $,  then  $ \; \omega \, \omega' \in {}_\star{}K_h \; $.  Given  $ \, n \in \N \, $,  let  $ \, p, q \in \N \, $  be such that  $ \, \big\langle \omega \, , I_h^p \,\big\rangle \in h^n A_h \, $  and  $ \, \big\langle \omega' , I_h^q \,\big\rangle \in h^n A_h \, $.  Now take  $ \, \eta \in I_h^{p+q} \, $.  Then the identity
  $$  \big\langle\, \omega \, \omega' , \, \eta \,\big\rangle  \; = \;  \big\langle \, \omega \, t^\ell_*\big( \big\langle \omega' , \, \eta_{(2)} \big\rangle \big) \, , \, \eta_{(1)} \big\rangle  \; = \;  \big\langle \omega \, , \, \eta_{(1)} \, s_r\big( \big\langle \omega' , \, \eta_{(2)} \big\rangle \big) \big\rangle  $$
taking into account that  $ \; \Delta\big(I_h^{p+q}\big) \, \subseteq \! \sum\limits_{r+s=p+q} \! I_h^{\,r} \otimes_{\!{}_{\scriptstyle A_h}} \! I_h^{\,s} \; $  because  $ \; \Delta(I_h) \, \subseteq \, K_h \otimes_{\!{}_{\scriptstyle A_h}} \! I_h + I_h \otimes_{\!{}_{\scriptstyle A_h}} \! K_h \; $,  proves that  $ \, \big\langle\, \omega \, \omega' , \, I^{p+q}_h \,\big\rangle \in h^n A_h \; $.  Thus  $ {}_\star{}K_h $  is a subring of  $ {}_*{}K_h $   --- even an  $ A_h^{\,e} $--subring,  by  {\it (1)}.
 \vskip7pt
   {\it (3)}\,  Let us show that  $ {}_\star{}K_h $  is topologically free.  First we prove that it is complete for the  $ h $--adic  topology.  Indeed, as  $ K_h $  is topologically free (for its own  $ h $--adic  topology), so is  $ \text{\sl Hom}_{k[[h]]}(K_h \, , A_h) \, $  as well.  Now let  $ \, {(\lambda_n)}_{n \in \N} \, $  be a Cauchy sequence of elements in  $ {}_\star{}K_h \, $;  then this sequence converges to a unique limit  $ \, \lambda \in \text{\sl Hom}_{k[[h]]}(K_h \, , A_h) \, $.  Then it is easy to see that  $ \, \lambda \in \text{\sl Hom}_{A_h}(K_h \, , A_h) \; $.
                                                                    \par
   Now we show that  $ \, \lambda \in {}_\star{}K_h \; $.  Given  $ \, n \in \N \, $,  there exists  $ n_1 \in \N \, $  such that  $ \, \lambda_{n_1} - \lambda \, $  takes values in  $ \, h^n A_h \; $.  As  $ \, \lambda_{n_1} \in {}_\star{}K_h \, $,  there exists  $ \, n_2 \in \N \, $  such that  $ \, \big\langle \lambda_{n_1} \, , I_h^{n_2} \big\rangle \in h^n A_h \; $.  But then we have  $ \, \big\langle \lambda \, , I_h^{n_2} \big\rangle \in h^n A_h \, $  and so we conclude that  $ \, \lambda \in {}_\star{}K_h \; $.
                                                                    \par
   Finally, as  $ {}_\star{}K_h $  is complete for the  $ h $--adic  topology and without torsion, it is topologically free.
 \vskip7pt
   {\it (4)}\,  Now we show that  $ \; {}_\star{}K_h \Big/ h \, {}_\star{}K_h \, = \, {}_{{}_{\scriptstyle \star}}\big( K_h \big/ \,h\,K_h \big) \, = \, {}_\star{J^r(L)} \, = \, V^\ell(L) \; $.
 \vskip3pt
   Let  $ \, \lambda \in {}_\star{}K_h \, $,  so that  $ \lambda $  as a map from  $ K_h $  (with the  $ I_h $--adic topology) to  $ A_h $  (with the  $ h $--adic  topology) is continuous.  Then  $ \lambda $ induces (modulo  $ h \, $)  a map  $ \; \overline{\lambda} : J^r(L) \longrightarrow A \; $  which is  $ 0 $  on  $ \, \J^n \, $  for  $ \, n \gg 0 \, $,  where  $ \, \J := \text{\sl Ker}\,\big(\partial_{\scriptscriptstyle J^r(L)}\big) \, $.  We claim that the kernel of the map  $ \, \chi : \lambda \mapsto \overline{\lambda} \, $  is  $ \, h\,{}_\star{}K_h \, $:  indeed, it is obvious that  $ \; h\,{}_\star{}K_h \subseteq \text{\sl Ker}\,(\chi) \; $,  and the inverse inclusion follows from the fact that  $ A_h $  is topologically free.  Therefore we have an injective map  $ \; \overline{\chi} : {}_\star{}K_h \big/ h\,{}_\star{}K_h \longrightarrow {}_{{}_{\scriptstyle \star}} \big( K_h \big/ \,h\,K_h \big) \, = \, {}_\star{J^r(L)} \, = \, V^\ell(L) \; $  induced by  $ \chi $  (modulo  $ h \, $),  and we are left to show that  $ \overline{\chi} $  is surjective too.
 \vskip3pt
   We distinguish two cases:
 \vskip4pt
   {\it $ \underline{\text{Finite free case}} $:}  Assume that  $ L $  as an  $ A $--module  is  {\sl free of finite type}.  Let  $ \, \{ \overline{e}_1 \, , \dots , \overline{e}_n \} \, $  be an  $ A $--basis  of  $ L \, $.  Then $ \; \big\{\, \underline{\overline{e}}^{\underline{\alpha}} := \overline{e}_1^{\alpha_1} \cdots \overline{e}_n^{\alpha_n} \,\big|\, \underline{\alpha} := (\alpha_1\,,\dots,\,\alpha_n) \in \N^n \,\big\} \, $  is a basis of  $ V^\ell(L) \, $,  by the Poincar\'e-Birkhoff-Witt theorem.  Define  $ \, \overline{\xi}_i \in K := J^r(L) \, $  by  $ \; \big\langle\, \overline{\xi}_i \, , \overline{e}_1^{\alpha_1} \cdots \overline{e}_n^{\alpha_n} \big\rangle \, = \, \prod_{j=1}^n \delta_{\alpha_j,\,\delta_{i,j}} \; $.
                                                              \par
   Let  $ \, \xi_i \in K_h \, $  be a lifting of  $ \overline{\xi}_i $  such that  $ \, \partial_h(\xi_i) = 0 \, $.  We denote (ordered) monomials in the  $ \overline{\xi}_i $'s  or in the  $ \xi_i $'s  by  $ \, \underline{\overline{\xi}}^{\,\underline{\alpha}} := \overline{\xi}_1^{\,\alpha_1} \cdots \overline{\xi}_n^{\,\alpha_n} \, $  and  $ \, \underline{\xi}^{\underline{\alpha}} := \xi_1^{\alpha_1} \cdots \xi_n^{\alpha_n} \, $  respectively.  Note that  $ \, \underline{\xi}^{\underline{\alpha}} \in I_h^{|\underline{\alpha}|} \, $,  where  $ \, |{\underline{\alpha}}| := \sum_{i=1}^n \alpha_i \; $.  Let  $ \; \lambda \in {}_{{}_{\scriptstyle \star}}\! \big( K_h \big/ \,h\,K_h \big) = {}_\star{J^r(L)} = V^\ell(L) \; $  be given: we write  $ \, \bar{a}_{\underline{\alpha}} := \big\langle\, \lambda \, , \, \underline{\overline{\xi}}^{\,\underline{\alpha}} \,\big\rangle \in A \, $,  and note that all but finitely many of the  $ \bar{a}_{\underline{\alpha}} $'s  are zero.  Let  $ \, a_{\underline{\alpha}} \in A_h \, $  be any lifting of  $ \bar{a}_{\underline{\alpha}} $  (for all  $ \, \underline{\alpha} \in \N^n \, $),  with the condition that whenever  $ \, \bar{a}_{\underline{\alpha}} = 0 \, $  we take also  $ \, a_{\underline{\alpha}} = 0 \, $.  Now we define  $ \, \Lambda \in {}_*{}K_h \, $  by setting  $ \, \big\langle\, \Lambda \, , \underline{\xi}^{\underline{\alpha}} \,\big\rangle := a_{\underline{\alpha}} \; $.  As  $ \; I_h^{\,m} = \hskip-5pt \sum\limits_{|\underline{\alpha}| + s \geq m} \hskip-5pt h^s \, \underline{\xi}^{\underline{\alpha}} \, t_r(A_h) \, $,  \, it is easy to check that if  $ \, n \, \in \N \, $  then  $ \, \big\langle\, \Lambda \, , I_h^{\,m} \,\big\rangle \subseteq h^n A_h \, $  for  $ \, m \gg 0 \, $.  Hence  $ \, \Lambda \in {}_{\star}K_h \, $,  and by construction  $ \, \overline{\chi}\;\big(\, \Lambda \!\! \mod h\,{}_\star{}K_h \big) = \lambda \, $,  so that the map  $ \overline{\chi} $  is onto, q.e.d.
 \vskip4pt
   {\it $ \underline{\text{General case}} $:}  By our overall assumption,  $ L $  as an  $ A $--module  is  {\sl projective of finite type}.  Then we resume the setup and notation of in  \S \ref{L-R_proj->free}:  there exists a finitely generated projective  $ A $--module  $ Q $  such that  $ \, L \oplus Q = F \, $  is a finite free  $ A $--module,  and we consider the free  $ A $--module  $ \, L_Q := L \oplus \big( A \otimes_k Z \big) \, $  with  $ \, Z := Y \oplus Y \oplus Y \oplus \cdots \; $.  From an  $ A $--basis  $ \, \{ b_1 \, , \dots , \, b_n \} \, $  of  $ Y $  we get a ``good basis'' of elements  $ \bar{e}_t $  indexed by  $ \, T := \N \times \{1,\dots,n\} \, $,  i.e.~$ \; L_Q = \mathop{\oplus}\limits_{t \in T} k \, \bar{e}_t \; $.  Fixing on  $ T $  any total order, the PBW theorem yields  $ \, \big\{\, \underline{\bar{e}}^{\,\underline{\alpha}} := \prod_{t\in T} \bar{e}_t^{\,\alpha_t} \,\big|\, \underline{\alpha} = {(\alpha_t)}_{t \in T} \in T^{(\N)} \big\} \, $  is an  $ A $--basis  of  $ V^\ell(L_Q) \, $.  Let  $ \, \overline{\xi}_j \, $  be the element of  $ \, J^r_f(L_Q) \, $  defined by
  $ \; \big\langle\, \overline{\xi}_j \, , \, \underline{\bar{e}}^{\,\underline{\alpha}} \,\big\rangle = 1 \; $  if  $ \, \underline{\alpha} = {(\alpha_t = \delta_{t,j})}_{t \in T} \, $,  $ \; \big\langle\, \bar{\xi}_j \, , \, \underline{\bar{e}}^{\,\underline{\alpha}} \,\big\rangle = 0 \; $  otherwise.
 If  $ \; A{\big[\big[ {\{ X_t \}}_{t \in T} \big]\big]}_f := \hskip-5pt \bigcup\limits_{i_1 < \cdots < i_n} \hskip-7pt A[[ X_{i_1} \, , \dots, X_{i_n} ]] \; $,  \, then one has  $ \, J^r_f(L_Q) = A{\big[\big[ {\big\{ \overline{\xi}_t \big\}}_{t \in T} \big]\big]}_f \; $.

 Now consider the quantization  $ K_{h,Y} $  of  $ J^r_f(L_Q) $   --- cf.~\S \ref{ext-QFSAd's}.  Recall  (cf.~\S \ref{ext-QFSAd's})  that

 \vskip-5pt
  $$  K_{h,Y}  \;\; := \;\;  h\text{--adic completion of} \;\;\; {\textstyle \sum_{n \in \N}} \, K_h\widetilde{ \otimes}_k {S(Y^*)}^{\widetilde{\otimes}\, n} \otimes 1 \otimes 1 \otimes 1 \cdots  $$

 \vskip-1pt
\noindent
 where  $ \, K_h \, \widetilde{\otimes}_k \, {S(Y^*)}^{\widetilde{\otimes}\, n} \otimes 1 \otimes 1 \cdots \; $
is the  $ \, \big( {\big( {S(Y^*)}^{\otimes n} \big)}^+ \! \otimes 1 \otimes 1 \cdots \big) $--adic  completion of  $ \, K_h \otimes_k \, {S(Y^*)}^{\otimes n} \otimes 1 \otimes 1 \cdots \; $.  By construction, every  $ \overline{\xi}_i $  belongs to some  $ \, K \widetilde{\otimes}_k S(Y^*)^{\widetilde{\otimes}\, n_i} \otimes_k 1 \otimes_k 1 \otimes_k \cdots \; $  ($ \, n_i \in \N \, $).  Let  $ \xi_i $  be any lifting of  $ \overline{\xi}_i $  in  $ \, K_h \widetilde{\otimes}_k S(Y^*)^{\widetilde{\otimes}\, n_i} \otimes_k 1 \otimes_k 1 \otimes_k \cdots \; $  such that  $ \, \big( \partial_h \otimes \epsilon_{S(Z^*)} \big)(\xi_i) = 0 \; $.  Given  $ \, a \in A_h \, $,  we denote again by  $ a $  the element  $ \, t^r(a) \in t^r(A_h) \subseteq K_h \, $.  Let also  $ \, \sigma : A \lhook\joinrel\longrightarrow A_h \, $  be a section of the natural projection map from  $ A_h $  to  $ A \, $,  let  $ \, \J_{h,Y} := \text{\sl Ker}\,(\partial_h) = \partial_h^{-1}\big(\{0\}\big) \, $  and  $ \, I_{h,Y} := \partial_h^{-1}\big(h\,A_h\big) \, $:  taking into account that  $ \, t^r(A) = A^{\text{\it op}} \, $  and  $ \, t^r(A_h) = A_h^{\text{\it op}} \, $,  one has
 \vskip-8pt
  $$  \displaylines{
   K_{h,Y}  \, = \,  \Big\{\, {\textstyle \sum_{n \in \N}} \, h^n P^\sigma_n\big(\underline{\xi}\big) \,\Big|\, P_n \in \! {\big[\big[ {\{ X_t \}}_{t \in T} \big]\big]}_{\!f}\,A^{\text{\it op}} \Big\}  \, = \,  \Big\{\, {\textstyle \sum_{n \in \N}} \, h^n P_n\big(\underline{\xi}\big) \,\Big|\, P_n \in \! {\big[\big[ {\{ X_t \}}_{t \in T} \big]\big]}_{\!f}\,A_h^{\text{\it op}} \Big\}  \cr
   I_{h,Y}  \, = \,  \big( h \, , {\{ \xi_t \}}_{t \in T} \big)  \qquad ,  \quad \qquad  \J_{h,Y}  \, = \,  {\textstyle \sum_{t \in T}} \, \xi_t \, K_{h,Y}  }  $$
 \vskip-3pt
\noindent
 where round braces stand for ``two-sided ideal generated by'',  and  $ \, {\big[\big[ {\{ X_t \}}_{t \in T} \big]\big]}_f\,A \, $,  respectively  $ \, {\big[\big[ {\{ X_t \}}_{t \in T} \big]\big]}_f\,A_h \, $,  denotes the ring of formal power series with coefficients  {\sl on the right\/}  chosen in  $ A \, $,  respectively in  $ A_h \, $,  each one involving only finitely many indeterminates  $ X_t \, $.

\vskip3pt

   Now,  $ L_Q $  as an  $ A $--module  is free but not finite; however,  $ J^r_f(L_Q) $  and its quantization  $ K_{h,Y} $  have enough ``finiteness'' behavior as to let the arguments for the finite free case apply again.  In other words, the analysis we carried on for the finite free case can be applied again in the present, general context working with  $ K_{h,Y} \, $.  Indeed, let us remark that

 \vskip-10pt
  $$  I_{h,Y}  \, := \,  h\text{--adic completion of} \;\, \bigg(\, {\textstyle \sum\limits_{n \in \N}} \, I_h \,\widetilde{\mathop\otimes\limits_{\scriptscriptstyle k}}\, S(Y^*)^{\widetilde{\otimes}_k n} \mathop\otimes\limits_{\scriptscriptstyle k} 1 \mathop\otimes\limits_{\scriptscriptstyle k} \cdots \, +
   {\textstyle \sum\limits_{n \in \N_+}} \! K_h \,\widetilde{\mathop\otimes\limits_{\scriptscriptstyle k}}\, \Big( {S(Y^*)}^{\widetilde{\otimes}_k n} \Big)^{\!+} \! \mathop\otimes\limits_{\scriptscriptstyle k} 1 \mathop\otimes\limits_{\scriptscriptstyle k} \cdots \bigg)  $$
 \vskip-4pt
\noindent
 while on the other hand  $ \; V^\ell(L_Q) = V^\ell(L) \oplus \big( V^\ell(L) \otimes_k {S(Z)}^+ \big) \; $.  Now let  $ \, K := J^r(L) \, $  and  $ \, \lambda \in {}_\star{}K \, $.  As
 $ \; J^r_f(L_Q) = K \oplus \, \sum_{n \in \N_+} \!
 K \widetilde{\otimes}_k {\big( {S(Y^*)}^{\widetilde{\otimes}\, n} \big)}^{\! +} \! \otimes_k 1 \otimes_k \cdots \; $,  we can extend  $ \lambda $  to an element  $ \, \mu \in {}_\star{}J^r_f(L_Q) \, $  by  $ \; \mu\big|_{\sum_{n \in \N_+} \! K \, \widetilde{\otimes}_k \, {({S(Y^*)}^{\otimes n})}^{\! +} \! \otimes_k 1 \otimes_k \cdots} := 0 \; $  and  $ \; \mu{\big|}_K := \lambda \; $.  By the arguments used in the finite free case,  $ \mu $  can be lifted to an element  $ \, M \in {}_{\star_f}K_{h,Y} \, $;  then  $ \, \Lambda = M{\big|}_{K_h} \in {}_\star{}K_h \, $  is a lift of  $ \lambda \; $.  So the (injective) map  $ \; \overline{\chi} : {}_\star{}K_h \big/ h\,{}_\star{}K_h \longrightarrow {}_{{}_{\scriptstyle \star}}\! \big( K_h \big/ \,h\,K_h \big) \, = \, {}_\star{J^r(L)} \, = \, V^\ell(L) \; $  is surjective.
 \vskip7pt
   {\it (5)}\,  Let us now show that  $ \; \Delta\big( {}_\star{}K_h \big) \subseteq {}_\star{}K_h \, \widehat{\otimes}_{\!{}_{\scriptstyle A_h}} \! {}_\star{}K_h \; $  for the ``coproduct map''  $ \Delta $  given by the transpose map of the multiplication in  $ \, K_h \, $.
                                                                   \par
   Let  $ \; \Lambda \in {}_\star{}K_h \; $.  We know that modulo  $ h $  one has  $ \; \overline{\Delta(\Lambda)} \in {}_\star{}K \otimes_{\!{}_{\scriptstyle A}} \! {}_\star{}K \; $.  Now write  $ \; \overline{\Delta(\Lambda)} = \sum \lambda^{(1)} \otimes \lambda^{(2)} \; $  (a finite sum) with  $ \, \lambda^{(1)}, \lambda^{(2)} \in {}_\star{}K \; $,  and let  $ \Lambda_h^{(1)} $  and  $ \Lambda_h^{(2)} $  in  $ {}_\star{}K_h $  be liftings of  $ \lambda^{(1)} $  and  $ \lambda^{(2)} $,  i.e.~$ \, \overline{\Lambda_h^{(1)}} = \lambda^{(1)} \, $  and  $ \, \overline{\Lambda_h^{(2)}} = \lambda^{(2)} \, $:  then  $ \; \Delta(\Lambda_h) - \sum \Lambda_h^{(1)} \otimes \Lambda_h^{(2)} \in h \, \big( {}_*{}K_h \, \widehat{\otimes}_{\!{}_{\scriptstyle A_h}} \! {}_*{}K_h \big) \, $,  so that  $ \; h^{-1} \big( \Delta(\Lambda_h) - \sum \Lambda_h^{(1)} \otimes \Lambda_h^{(2)} \big) \in {}_*{}K_h \, \widehat{\otimes}_{\!{}_{\scriptstyle A_h}} \! {}_*{}K_h \; $.  In addition, whenever  $ \, p + q \gg 0 \, $  one has also  $ \; \big\langle\, \Delta(\Lambda_h) - \sum \Lambda_h^{(1)} \otimes \Lambda_h^{(2)} \, , I_h^{\,p} \otimes I_h^{\,q} \,\big\rangle \in h^2 A_h \; $;  therefore we find that
  $$  \overline{h^{-1} \big( \Delta(\Lambda_h) - \sum \Lambda_h^{(1)} \otimes \Lambda_h^{(2)} \big)}
\in {}_\star{}K \otimes_{\!{}_{\scriptstyle A}} \! {}_\star{}K  $$
We can carry on this argument and eventually show that  $ \, \Delta(\Lambda_h) \in {}_\star{} K_h \, \widehat{\otimes}_{\!{}_{\scriptstyle A_h}} \! {}_\star{}K_h \; $,  q.e.d.

 \vskip7pt

   {\it (6)}\,  Altogether, the steps  {\it (1)--(5)\/}  above prove that  $ {}_\star{\!}K_h $  is a LQUEAd (over  $ A_h $),  whose semiclassical limit  $ \, {}_\star{}K_h \big/ h\,{}_\star{}K_h \, $  is exactly isomorphic (as a left bialgebroid over  $ A \, $)  to  $ V^\ell(L) \, $.  Now we show that the structure of Lie-Rinehart bialgebra induced on  $ L $  by the quantization  $ {}_\star{}K_h $  of  $ V^\ell(L) $  is the same as that induced by the quantization  $ K_h $  of  $ J^r(L) \, $.  To this end, let  $ {[\,\ ,\ ]'} $,  $ \omega' $,  be the Lie bracket and the anchor map on  $ L^* $  induced by  $ {}_\star{}K_h \, $,  and  $ [\,\ ,\ ]'' \, $,  $ \omega'' \, $,  those induced by  $ K_h \, $.
                                                                                  \par
   We proceed like in the proof of  Theorem \ref{dual_QUEAd's=QFSAd's}.  Our goal is to prove that  $ \, \omega' = \omega'' \, $  and  $ \, {[\,\ ,\ ]}' = {[\,\ ,\ ]}'' \, $;  thus recall that (cf.~Remarks \ref{props_Lie-bialg}{\it (b)\/})  $ \omega' $  and  $ {[\,\ ,\ ]}' $  are uniquely determined by
  $$  \omega'(\Phi)(a) \, = \big\langle \delta_L\!(a) \, , \Phi \big\rangle  \;\; ,  \quad  \big\langle\, \Theta \, , {[\Phi\,,\Psi]}' \,\big\rangle \, = \, \omega'(\Phi)\big(\langle\, \Theta \, , \Psi \rangle\big) \, - \, \omega'(\Psi)\big(\langle \Theta \, , \Phi \rangle\big) \, - \, \big\langle \delta_L\!(\Theta) \, , \Phi \otimes \Psi \big\rangle  $$
(for all $ \, \Phi \, , \Psi \in L^* \, $,  $ \, \Theta \in L \, $,  $ \, a \in A \, $),  where  $ \delta_L\!(a) $  and  $ \delta_L\!(\Theta) $  are defined by the formula for  $ \delta $  in  Theorem \ref{semiclassical_limit-V^ell(L)}.  Thus it is enough to prove that (for all  $ \, \Phi \, , \Psi \in L^* \, $,  $ \, \Theta \in L \, $,  $ \, a \in A \, $)
  $$  \omega''(\Phi)(a) \, = \big\langle \delta_L\!(a) \, , \Phi \big\rangle  \; ,  \;\;  \big\langle\, \Theta , {[\Phi\,,\Psi]}'' \,\big\rangle \, = \, \omega''(\Phi)\big(\!\langle\, \Theta , \Psi \rangle\!\big) - \omega''(\Psi)\big(\!\langle \Theta , \Phi \rangle\!\big) - \big\langle \delta_L\!(\Theta) \, , \Phi \otimes \Psi \big\rangle  \!\quad   \eqno (5.7)  $$
   \indent   To prove (5.7), choose liftings  $ \, \phi' , \psi' \in {J^r(L)}_h =: K_h \, $,  with the additional condition that  $ \, \phi' , \psi' \in \J_{\scriptscriptstyle {J^r(L)}_h} := \text{\sl Ker}\,\big( \partial_{{J^r(L)}_h} \big) \, $  (this is always possible), a lifting  $ \, \theta \in {V^\ell(L)}_h := {}_\star{J^r(L)}_h \, $  of  $ \Theta \, $  and a lifting  $ \, a' \in A_h \, $  of  $ \, a \, $.  Now direct computation gives
  $$  \displaylines{
   \omega'(\Phi)(a)  \;\; = \;\;  \big\langle\, \delta_L\!(a) \, , \Phi \big\rangle  \;\; = \;\;   \hfill  \cr
   = \;\;  h^{-1} \big\langle\, t^\ell_*\big(a'\big) - s^\ell_*\big(a'\big) \, , \, \phi' \,\big\rangle \!\!\mod h \, A_h  \;\; = \;\;  h^{-1} \big\langle\, 1 \, , \phi' \, s_r\big(a'\big) - s_r\big(a'\big) \, \phi' \,\big\rangle  \!\!\mod h \, A_h  \;\; =  \cr
    \hfill   = \;\;  \bigg( {{\; \phi' \, s_r\big(a'\big) - s_r\big(a'\big) \, \phi' \;} \over h} \!\!\!\mod h \, K_h \bigg) \!\! \mod \J_{\scriptscriptstyle J^r(L)}  \;\; = \;\;  \omega''(\Phi)(a)  }  $$
where we exploited the fact that the involved pairing a  {\sl right\/}  bialgebroid pairing  (cf.~Definitions \ref{left and right 1}  and  \ref{left and right 2}).
  Thus the first identity in (5.7) is verified.
                                                                  \par
   As to the rest, we write  $ \, \Delta(\theta) = \theta_{(1)} \otimes \theta_{(2)} \, $  as  $ \, \Delta(\theta) = \theta \otimes 1 + 1 \otimes \theta + h \, \theta_{[1]} \otimes \theta_{[2]} \, $,  so that  $ \; \big( \theta_{[1]} \otimes \theta_{[2]} \big) \mod h \, {V^\ell(L)}_h \displaystyle{{{\,}_\triangleleft}{\mathop \times_A}{{\,}_\triangleright}} {V^\ell(L)}_h \, =: \, \Delta^{[1]}(\Theta) \, $,  as in  Definition \ref{semiclassical_limit-V^ell(L)}.  Moreover, let us set  $ \, \phi := \phi' \!\!\mod h \, {J^r(L)}_h \, $,  $ \, \psi := \psi' \!\!\mod h \, {J^r(L)}_h \, $,  which are lifts of  $ \Phi $  and  $ \Psi $  in  $ J^r(L) \, $,  and actually belong to  $ \J_{\scriptscriptstyle J^r(L)} \, $.  Then direct computation gives
  $$  \Big\langle \Theta \, , \, {[\Phi,\Psi]}'' \Big\rangle  \,\; = \;\,  \big\langle\, \Theta \, , \, \{\phi,\psi\} \,\big\rangle  \,\; = \;  \bigg\langle \theta \, , \, {{\; \phi' \, \psi' \, - \, \psi' \, \phi' \;} \over h} \bigg\rangle \!\!\!\mod h \, A_h  $$
 Now, in the proof of  Theorem \ref{dual_QUEAd's=QFSAd's}   --- namely, to prove the second part of (5.6) ---   we saw that
   $$  \bigg\langle \theta \, , \, {{\; \phi' \, \psi' \, - \, \psi' \, \phi' \;} \over h} \bigg\rangle \!\!\!\mod h \, A_h  \;\; = \;\;  \omega'(\Phi)\big(\! \big\langle\, \Theta \, , \Psi \,\big\rangle \!\big) \; - \; \omega'(\Psi)\big(\! \big\langle\, \Theta \, , \Phi \,\big\rangle \!\big) \; - \; \big\langle \delta_L\!(\Theta) \, , \, \Phi \otimes \Psi \big\rangle  $$
so that the second identity in (5.7) is proved.
 \vskip7pt
   At last,
let now cope with the case of  $ {}^\star{}K_h \, $.  Clearly, we can proceed much like for  $ {}_\star{}K_h \, $:  one proves that  $ \, {}^\star{}K_h = {}^\star{\!}{J^r(L)}_h \, $  is a topological left bialgebroid, whose specialization modulo  $ h $  is  $ \, {}^\star{\!}J^r(L) \cong V^\ell(L) \, $,  hence we can claim that  $ \, {}^\star{\!}{J^r(L)}_h \in \text{\rm (LQUEAd)}_{A_h} \, $  is a quantization of  $ V^\ell(L) \, $.
                                                                     \par
   A difference arises about the last part of claim  {\it (a)},  concerning the two Lie-Rinehart algebra structures induced on  $ L^* \, $:  indeed, the difference in the outcome   --- a minus sign ---   is due to the fact that the natural pairing (given by evaluation) among the left bialgebroid  $ {}^\star{\!}{J^r(L)}_h $  and the right bialgebroid  $ {J^r(L)}_h $  is now a  {\sl left\/}  bialgebroid pairing  (cf.~Definitions \ref{left and right 1}  and  \ref{left and right 2})   --- while for  $ {}_\star{}{J^r(L)}_h $  and  $ {J^r(L)}_h $  it is a  {\sl right\/}  one.
 Explicit computations are (again) much like those in the proof of  Theorem \ref{dual_QUEAd's=QFSAd's}  (for the very last part of claim  {\it (a)\/}),  just as it occurs for  $ \, {}_\star{}K_h = {}_\star{}{J^r(L)}_h \; $.
 \vskip9pt
   {\it (b)}\,  The arguments used to prove claim  {\it (a)\/}  clearly adapt to claim  {\it (b)\/}  as well. Otherwise,
 one can deduce  {\it (b)\/}  directly from claim  {\it (a)\/}  using general isomorphisms such as  $ \; {}_\star\big(U^{\text{\it op}}_{\text{\it coop}}\big) \, \cong \, (U^\star)^{\text{\it op}}_{\text{\it coop}} \; $  and  $ \; {}^\star\big( U^{\text{\it op}}_{\text{\it coop}} \big) \, \cong \,
 {(U_\star)}^{\text{\it op}}_{\text{\it coop}} \; $   --- see Remark \ref{dual_vs._op-coop}.
\end{proof}

\smallskip

 \subsection{Functoriality of linear duality for quantum groupoids}  \label{funct-lin-dual_q-groupd's}

\smallskip

   {\ } \quad   The results in  Sections \ref{lin-dual_QUEAd's}  and  \ref{lin-dual_QFSAd's}  about the duality constructions for quantum bialgebroids can be improved.  Indeed, they can be cast in the following, functorial version (cf.~Definition  \ref{def-QUEAd}  and  \ref{def-QFSAd}  for notation), which is the main outcome of this section:

\medskip

\begin{theorem}  \label{linear-duality}
 Left and right duals yield pairs of well-defined  {\sl contravariant}  functors
  $$  \displaylines{
   \text{\rm (LQUEAd)}_{A_h} \!\!\longrightarrow \text{\rm (RQFSAd)}_{A_h} \; ,  \;\;  H_h \mapsto H_h^{\,\ast} \; ,  \;\qquad
      \text{\rm (RQFSAd)}_{A_h} \!\!\longrightarrow \text{\rm (LQUEAd)}_{A_h} \; ,  \;\;  K_h \mapsto {{}_{\scriptstyle \star}}K_h  \cr
   \text{\rm (LQUEAd)}_{A_h} \!\!\longrightarrow \text{\rm (RQFSAd)}_{A_h} \; ,  \;\;  H_h \mapsto {H_h}_{\,{}_{\scriptstyle \ast}} \; ,  \;\qquad
      \text{\rm (RQFSAd)}_{A_h} \!\!\longrightarrow \text{\rm (LQUEAd)}_{A_h} \; ,  \;\;  K_h \mapsto {}^\star{\!}K_h  \cr
   \text{\rm (RQUEAd)}_{A_h} \!\!\longrightarrow \text{\rm (LQFSAd)}_{A_h} \; ,  \;\;  H_h \mapsto {}^\ast{}\!H_h \; ,
\;\qquad
      \text{\rm (LQFSAd)}_{A_h} \!\!\longrightarrow \text{\rm (RQUEAd)}_{A_h} \; ,  \;\;  K_h \mapsto {K_h}_{\,{}_{\scriptstyle \star}}  \cr
   \text{\rm (RQUEAd)}_{A_h} \!\!\longrightarrow \text{\rm (LQFSAd)}_{A_h} \; ,  \;\;  H_h \mapsto {}_{\scriptstyle \ast}H_h \; ,
\;\qquad
      \text{\rm (LQFSAd)}_{A_h} \!\!\longrightarrow \text{\rm (RQUEAd)}_{A_h} \; ,  \;\;  K_h \mapsto K_h^{\,\star}  }  $$
which are (pairwise) inverse to each other, hence yield pairs of antiequivalences of categories.
\end{theorem}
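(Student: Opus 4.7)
The plan is to build the result in two stages: first establish functoriality of each of the eight assignments, then produce natural isomorphisms witnessing that the four pairs are mutually inverse. Object-level well-definedness has already been done in Theorem \ref{dual_QUEAd's=QFSAd's} (duals of (L/R)QUEAd's are (R/L)QFSAd's) and Theorem \ref{dual_QFSAd's=QUEAd's} (continuous duals of (R/L)QFSAd's are (L/R)QUEAd's), so only functoriality and the unit/counit of each adjunction remain to be verified.

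For functoriality, I will argue along the following lines. Any morphism $F\colon H_h \to H_h'$ in $\text{(LQUEAd)}_{A_h}$ (in the sense of Definition \ref{def-categ_left-algd}, with $A_h$ fixed) is a continuous $k[[h]]$--linear algebra and coalgebra map compatible with the source/target maps. Pre-composition with $F$ then gives a map $F^*\colon {H_h'}^{\,*} \to H_h^{\,*}$ of $A_h$--modules. Using the formulas of \S\ref{bialgd-struct-dual} for product, source, target, coproduct and counit on $(\,)^{*}$ and the fact that $F$ intertwines all the structural maps of $H_h$ and $H_h'$, $F^*$ automatically becomes a morphism of right $A_h$--bialgebroids. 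What requires a small additional check is that $F^*$ is continuous for the $I_{h}$--adic topologies on both sides --- this follows at once from $\partial \circ F^* = \partial'$, which forces $F^*(I_{h'}) \subseteq I_h$ and hence $F^*(I_{h'}^{\,n}) \subseteq I_h^{\,n}$. The same recipe, with the relevant variant of Definition \ref{left and right 2}, handles the left-dual functor $(\,)_{*}$, and analogously the right/left dualizations for RQUEAd's. In the opposite direction, given a morphism $G\colon K_h \to K_h'$ of RQFSAd's, transposing yields a candidate $G_{*}\colon {}_{*}K_h' \to {}_{*}K_h\,$; I need to verify that continuity of $G$ for the $I_h$--adic and $I_{h'}$--adic topologies guarantees that this transpose actually lands in the continuous duals ${}_{\star}K_h$, ${}_{\star}K_h'$, which is immediate from the definition: if $\lambda \in {}_{\star}K_h'$ vanishes on $I_{h'}^{\,n}$ then $\lambda \circ G$ vanishes on $G^{-1}(I_{h'}^{\,n}) \supseteq I_h^{\,m}$ for some $m$, by continuity of $G$. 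The same works for ${}^{\star}$, and the LQFSAd side is symmetric.

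Next, for the mutual-inverse statements I propose to write down the evaluation maps $\mathrm{ev}_{H_h}\colon H_h \to {}_{\star}(H_h^{\,*})$ and $\mathrm{ev}_{K_h}\colon K_h \to (\,{}_{\star}K_h)^{*}$ (and the three analogous ones) and prove they are isomorphisms in the appropriate quantum category. That these evaluations are well-defined follows from Lemma \ref{continuity of the evaluation}: for any $u \in H_h$, the functional $\phi \mapsto \langle u, \phi\rangle$ is continuous for the $I_{H_h^{\,*}}$--adic topology, hence lands in ${}_{\star}(H_h^{\,*})$. They are morphisms in their categories because the bialgebroid pairing of Definition \ref{left and right 2} is precisely what translates the product and coproduct on each side into the dual structures built in \S\ref{bialgd-struct-dual}. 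To conclude that they are isomorphisms, the key observation is that both source and target are topologically free $k[[h]]$--modules (cf.\ the completeness arguments in the proofs of Theorems \ref{dual_QUEAd's=QFSAd's} and \ref{dual_QFSAd's=QUEAd's}), so by Nakayama-type reasoning it suffices to check bijectivity modulo $h$. But modulo $h$ the maps reduce exactly to the classical evaluations $V^\ell(L) \to {}_{\star}J^r(L)$, $J^r(L) \to ({}_{\star}J^r(L))^{*}$, etc., which are isomorphisms by Theorem \ref{J(L)star=V(L)} and Remark \ref{diff_double-dual}. Naturality of the evaluations is automatic.

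The main technical obstacle, as usual in such formal-deformation dualities, will be controlling the interplay of the two topologies in the double-dual reconstruction of a QFSAd: in the isomorphism $K_h \xrightarrow{\sim} (\,{}_{\star}K_h)^{*}$ one must ensure that every right $A_h$--linear functional on ${}_{\star}K_h$ is represented by an element of $K_h$ itself, which uses both the $I_h$--adic completeness of $K_h$ and the surjectivity (modulo $h$) of the classical double-dual map from Theorem \ref{J(L)star=V(L)}, lifted by the same ``finite-projective to finite-free'' reduction as in the proof of Theorem \ref{dual_QFSAd's=QUEAd's} (via the extension $L \rightsquigarrow L_Q$ of \S\ref{L-R_proj->free} and the bialgebroids $J^r_f(L_Q)$, $V^\ell(L)_{h,Y}$ of \S\ref{ext-QUEAd's}--\S\ref{ext-QFSAd's}). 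Once that lifting argument is in place, the remaining three pairs of functors are handled either by the same computation or, more economically, via the general isomorphisms ${}_{\star}(U^{\text{\it op}}_{\text{\it coop}}) \cong (U^{\star})^{\text{\it op}}_{\text{\it coop}}$ and ${}^{\star}(U^{\text{\it op}}_{\text{\it coop}}) \cong (U_{\star})^{\text{\it op}}_{\text{\it coop}}$ recorded in Remark \ref{dual_vs._op-coop}, which transport the (L,L)--case to each of the (L,R), (R,L), (R,R)--cases.
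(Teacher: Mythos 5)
Your proposal is correct and follows essentially the same route as the paper: the paper also proves the claim by exhibiting the evaluation map $H_h\to {}_\star\big(H_h^{\,*}\big)$, using Lemma \ref{continuity of the evaluation} to see it lands in the continuous dual, and concluding it is an isomorphism because it is one modulo $h$, with the remaining cases left to ``standard arguments''. Your write-up merely spells out what the paper leaves implicit (functoriality on morphisms, the counit $K_h\to({}_\star K_h)^{*}$, and the op/coop reductions), all in the spirit of the paper's own proofs of Theorems \ref{dual_QUEAd's=QFSAd's} and \ref{dual_QFSAd's=QUEAd's}.
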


\begin{proof}
 It is clearly enough to present the proof for just one pair of functors, say those in first line.
 \vskip4pt
   Let  $ \, H_h = {V^\ell(L)}_h \in \text{\rm (LQUEAd)}_{A_h} \, $.  For any  $ \, \lambda \in H_h^{\,\ast} \, $  and any  $ \, \eta \in H_h \, $,  let  $ \, \text{\it ev}_\eta(\lambda) := \lambda(\eta) \, $,  and consider the map  $ \; H_h \longrightarrow {{}_{\scriptstyle \star}}\big( H_h^{\,\ast} \big) \; $  given by  $ \, \eta \mapsto \text{\it ev}_\eta \, $;  note that  {\it a priori\/}  this map takes values in  $ {{}_{\scriptstyle *}}\big( H_h^{\,\ast} \big) \, $,  but Lemma \ref{continuity of the evaluation}  actually proves that every  $ \text{\it ev}_{\eta} $  belongs to  $ {}_\star(H^*) \, $.
                                                                      \par
  Now, this map is an isomorphism in  $ \text{\rm (LQUEAd)}_{A_h} \, $  because it is an isomorphism modulo  $ h \, $.  The other points can also be proved, by standard arguments, in a similar way.
\end{proof}

\bigskip

\section{Drinfeld's functors and quantum duality}  \label{Drinf-functs_q-duality}

\smallskip

   {\ } \quad   In this section we present the main new contribution in this paper, namely the definition of Drinfeld's functors and the equivalences   --- instead of antiequivalences! ---   of categories established via them among (left or right) QUEAd's and QFSAd's.

\smallskip

 \subsection{The Drinfeld's functor  $ \, {(\ )}^\vee \, $}  \label{Dr-functors_vee}

\smallskip

\begin{definition}
 Let  $ \, K_h \in \text{\rm (RQFSAd)}_{A_h} \, $.  We set  $ \, I_h := \partial_h^{-1}(h\,A_h) \, $  and  $ \, \J_h := \text{\sl Ker}(\partial_h) \, $,  where  $ \partial_h $  is the counit of  $ K_h \, $;  then one has  $ \; I_h = \J_h + h \, K_h \; $.  We define
 $ \; K_h^\times := \, s^r(A_h) + {\textstyle \sum\limits_{n \in \N_+}} h^{-n} \, I_h^n  \; = \;  s^r(A_h) \, + {\textstyle \sum\limits_{n \in \N_+}} h^{-n} s^r(A_h) \, \J_h^n \;  $,  which is a  $ k[[h]] $--submodule of  $ \, K_F := k((h)) {\mathop \otimes\limits_{k[[h]]}} K_h \, $,  and we denote by  $ \, K_h^\vee $  the  $ h $--adic completion of the  $ k[[h]] $--module  $ K_h^\times \, $.
                                                          \par
   Moreover, in an entirely similar way we define  $ K_h^\vee $  for any  $ \, K_h \in \text{\rm (LQFSAd)}_{A_h} \, $.
\end{definition}

\smallskip

\begin{remarks}  {\ }
 \vskip3pt
   {\it (a)}\,  Note that  $ \J_h $  is not an  $ (A_h\,, A_h) $--subbimodule  of  $ K_h \, $,  in general.  Indeed, if  $ \, a \in A_h \, $  and  $ \, \psi \in \J_h \, $,  it is clear (from the properties of the counit of a right bialgebroid) that  $ \; \psi \, s_r(a) \, , \, \psi \, t_r(a) \in \J_h \; $;  but we cannot prove in general that  $ \, s_r(a) \, \psi \, $  and  $ \, t_r(a) \, \psi \, $  belong to  $ \J_h \, $.  On the other hand, one has that  $ I_h $  instead  {\sl is\/}   definitely an  $ (A_h\,, A_h) $--subbimodule.  For this reason, it is better to (define and) describe  $ K_h^\times $  and  $ K_h^\vee $  using  $ I_h $  than using  $ \J_h \, $.
 \vskip3pt
   {\it (b)}\,  Let  $ K $  be a LQFSAd, respectively a RQFSAd.  Then  $ {( K_h )}^{\text{\it op}}_{\text{\it coop}} $  is a RQFSAd, respectively a LQFSAd.  It easily follows from definitions that  $ \, {\big(\! {( K_h )}^{\text{\it op}}_{\text{\it coop}} \big)}^{\!\vee} = {\big( K_h^\vee \big)}^{\text{\it op}}_{\text{\it coop}} \; $.

\end{remarks}

\smallskip

\begin{free text}  \label{descr-K_h^vee}
 {\bf Description of  $ K_h^\vee \, $.}\,  Directly from its very definition, we can find out a description of  $ K_h^\vee \, $.  This is very neat in the case when the Lie-Rinehart algebra  $ L $   --- such that  $ K_h $  is a quantization of  $ J^r(L) $  or  $ J^\ell(L) $  ---   is of finite free type (as an  $ A $--module),  and can be reduced somehow to that case when  $ L $  instead is just of finite projective type.  Thus we distinguish these two cases.
 \vskip5pt
   {\it  (a)  $ \, \underline{\text{\sl Finite free case}} $:} \,  Let us assume that  $ L $  (as an  $ A $--module,  of finite type) is free.  Then we can explicitly describe  $ K_h^\vee \, $,  as follows.  Fix an  $ A $--basis  $ \, \{ \overline{e}_1 \, , \dots , \, \overline{e}_n \} \, $  of  $ L \, $,  and let  $ \bar{\xi}_i $  be the element of  $ \, \text{\sl Hom}\big( V^\ell(L) , A \big) = {V^\ell(L)}^* = J^r(L) \, $  defined (using standard multiindex notation) by
  $$  \big\langle\, \overline{\xi}_i \, , \, \overline{e}^{\,\underline{\alpha}} \,\big\rangle  \; = \;  \big\langle\, \overline{\xi}_i \, , \, \overline{e}_1^{\,\alpha_1} \cdots \overline{e}_n^{\,\alpha_n} \big\rangle  \; := \; \delta_{\alpha_1, 0} \cdots \delta_{\alpha_i,1} \cdots \delta_{\alpha_n,0}   \eqno  \forall \;\; \underline{\alpha} = (\alpha_1\,, \dots , \, \alpha_n) \in \N^n  \quad  $$
Let  $ \xi_i $  be an element of  $ K_h $  lifting  $ \bar{\xi}_i $  and such that  $ \, \partial_h(\xi_i) = 0 \; $.  If  $ \, a \in A_h \, $,  we shall write again  $ a $  to denote the element  $ \, t^r(a) \in K_h \; $.  We have the following descriptions
  $$  \displaylines{
   K_h  \; = \;  \big\{\, {\textstyle \sum_{\underline{d} \in \N^n}} \, \xi_1^{d_1} \cdots \xi_n^{d_n} \, a_{\underline{d}} \;\big|\; a_{\underline{d}} \in A_h^{\text{\it op}} \, , \; \forall \; \underline{d} \in \N^n \big\}
          \,\; \cong \;  A[[\,X_1,\dots,X_n]][[h\,]]  \cr
   I_h  \, = \, \big( h \, , \xi_1 \, , \dots , \xi_n \big)  \qquad ,  \quad \qquad  \J_h  \, = \, {\textstyle \sum_{i=1}^n} \, \xi_i \, K_h  }  $$
where the first line item is a (right)  $ A_h^{\text{\it op}} $--module  of formal power series (convergent in the  $ I_h $--adic  topology) and the last isomorphism is one of topological  $ k $--modules,  while round braces in second line stand once again for ``two-sided ideal generated by''.  By this and the very definition it follows that, writing  $ \, \check{\xi}_i := h^{-1} \xi_i \, $,  one has (the last isomorphism being one of topological  $ k $--modules)
   $$  K_h^\vee  \; = \;  \Big\{\, {\textstyle \sum_{\underline{b} \in \N^{n+1}}} h^{b_0} \, \check{\xi}_1^{\,b_1} \cdots \check{\xi}_n^{\,b_n} \, a_{\underline{b}} \;\Big|\; a_{\underline{b}} \in A_h^{\text{\it op}} \, , \; \forall \; \underline{b} \;\Big\}
 \,\; \cong \;  A\big[\check{X}_1,\dots,\check{X}_n\big][[h\,]]  $$
where the sum denotes formal series which are convergent in the  $ h $--adic  topology, and then also
  $$  \J_h^\vee  \; := \;  h^{-1} \J_h  \; = \; {\textstyle \sum_{i=1}^n} \, \check{\xi}_i \, K_h^\vee  \; = \;  \text{\rm right ideal of  $ \displaystyle{K_{h}^\vee} $  generated by the  $ \check{\xi}_i $'s}  $$
 \vskip5pt
   {\it  (b)  $ \, \underline{\text{\sl Finite projective case}} $:} \,  Assume now that  $ L $  (as an  $ A $--module)  is just finite projective (as usual in this work).  Like in  Subsection \ref{quant_proj-free}, we fix a finite projective  $ A $--module  $ Q $  such that  $ \, L \oplus Q = F \, $  is a finite free  $ A $--module,  we write  $ \; F = A \otimes_k Y \; $  where  $ Y $  is the  $ k $--span  of an  $ A $--basis  of  $ F \, $,  and we construct the (infinite dimensional) Lie-Rinehart algebra  $ \; L_Q = L \oplus \big( A \otimes_k Z \big) \; $  with  $ \; Z = Y \oplus Y \oplus Y \oplus \cdots \; $.  Then, for  $ \, {J^r(L)}_h := K_h \, $,  we can introduce the right bialgebroid  $ \, K_{h,Y} := {J^r(L)}_{h,Y} \, $  as in  \S \ref{ext-QFSAd's}:  namely (with notation as in  \S \ref{ext-QFSAd's}),  we recall that
  $$  K_{h,Y}  \;\; := \;\;  h\text{--adic completion of} \;\;\; {\textstyle \sum_{n \in \N}} \,
K_h \widetilde{\otimes}_k {S(Y^*)}^{\widetilde{\otimes}\, n} \otimes 1 \otimes 1 \otimes 1 \cdots  $$
 $ \, {\big( {S(Y^*)}^{\otimes\, n} \big)}^+ \, $  being the kernel of the natural counit map of  $ \, {S(Y^*)}^{\otimes\, n} \, $  and  $ \; K_h \widetilde{\otimes}_k {S(Y^*)}^{\widetilde{\otimes}\, n} \otimes 1 \cdots \; $  the  $ \, \big( {\big( {S(Y^*)}^{\otimes n} \big)}^+ \! \otimes 1 \otimes 1 \cdots \big) $--\,adic  completion of  $ \; K_h \otimes_k {S(Y^*)}^{\otimes n} \otimes 1 \otimes 1 \cdots \; $.
                                                              \par
   Furthermore, let  $ \partial_h $  be the counit of  $ K_{h,Y} \, $,  and  $ \; I_{h,Y} := \partial_h^{-1}\big( h \, A_h \big) \; $.  Then we have also
  $$  I_{h,Y}  \; := \;  h\text{--adic completion of} \;\, \Big(\, {\textstyle \sum\limits_{r \in \N}} \, I_h \mathop{\widetilde{\otimes}}\limits_k {S(Y^*)}^{\widetilde{\otimes}\, r} \otimes 1 \otimes \cdots  \, + \,
 {\textstyle \sum\limits_{s \in \N}} \, K_h \mathop{\widetilde{\otimes}}\limits_k
 {\big( {S(Y^*)}^{\widetilde{\otimes}\, s} \big)}^+ \otimes 1 \otimes \cdots \Big)  $$
   Basing upon these remarks, we can define  $ K_{h,Y}^{\,\vee} $  and describe it as above: namely, one has
  $$  K_{h,Y}^\vee  \, = \,  h\text{--adic completion of} \;\;
   {\textstyle \sum\limits_{n,m}} \, {\textstyle \sum\limits_{r+s=n}} \!
   h^{-n} \, \J_h^{\,r} \widetilde{\otimes}_k
{\big( {\big( S(Y^*)^{\widetilde{\otimes} m} \big)}^+ \big)}^s \otimes 1 \otimes 1 \otimes \cdots  \, = \,  K_h^\vee \, \widehat{\otimes}_k \, S(Z^{*_f})  $$
where  $ \, Z^{*_{\!{}_f}} = Y^* \oplus Y^* \oplus Y^* \oplus \cdots \; $.

   \indent   Let now  $ \, {\{e_t\}}_{t \in T := \N \times \{1,\dots,n\}} \, $  be a good basis of the  $ A $--module  $ L_Q \, $.  From the proof of  Theorem \ref{dual_QFSAd's=QUEAd's}  (step  {\it (4)\/} for the general case) we can select elements  $ \; \xi_t \in K_{h,Y} \; (t \in T) \; $  such that
  $$  \displaylines{
   K_{h,Y}  \; = \;  \Big\{\, {\textstyle \sum}_{n \in \N} \, h^n P_n\big( \big\{ \xi_t \big\}_{t \in T} \big) \;\Big|\; P_n \in {\big[\big[ {\{ X_t \}}_{t \in T} \big]\big]}_f \, A_h^{\text{\it op}} \,\Big\}  \,\; \cong \;  A{\big[\big[ {\{ X_t \}}_{t \in T} \big]\big]}_f[[h\,]]  \cr
   I_{h,Y}  \, = \, \big( h \, , {\{ \xi_t \}}_{t \in T} \big)  \qquad ,  \quad \qquad  \J_{h,Y}  \, = \, {\textstyle \sum_{t \in T}} \, \xi_t \, K_{h,Y}  }  $$
where  $ \, {\big[\big [ {\{ X_t \}}_{t \in T} \big] \big ]_f}\,A_h \, $  is the ring of formal power series  with coefficients  {\sl on the right\/}  chosen in  $ A_h \; $  involving only finitely many indeterminates  $ X_t \; $.
One easily finds, letting  $ \, \check{\xi}_t := h^{-1} \xi_t \, $,
that
   $$  K_{h,Y}^{\,\vee}  \,\; = \;\,  \Big\{\, {\textstyle \sum_{n \in \N}} \, h^n \, P_n\big( \big\{ \check{\xi}_t \big\}_{t \in T} \big) \;\Big|\; P_n \in \big[ \big\{\, \underline{\check{\xi}} \,\big\}_{t \in T} \big]A_h^{\text{\it op}} \, , \; \forall \; n \in \N \,\Big\}  \,\; \cong \;  A\big[ {\big\{ \check{X}_t \big\}}_{t \in T} \big][[h\,]]   $$
where the sum denotes formal series convergent in the  $ h $--adic  topology, and  $ \, {\big[ {\{ X_t \}}_{t \in T} \big]}\,A_h^{\text{\it op}} \, $  denotes the ring of polynomials with coefficients  {\sl on the right\/}  chosen in  $ A_h^{\text{\it op}} \; $.  We find also
  $$  \J_{h,Y}^\vee  \; := \;  h^{-1} \J_{h,Y}  \; = \; {\textstyle \sum_{t \in T}} \, \check{\xi}_t \, K_{h,Y}^\vee  \; = \;  \text{\rm right ideal of  $ \displaystyle{K_{h,Y}^\vee} $  generated by the  $ \check{\xi}_t $'s}   \eqno \diamondsuit  $$
\end{free text}

\bigskip

   It is time for the main result of this subsection.  In short, it claims that the construction  $ \, K_h \mapsto K_h^{\,\vee} \, $,  starting from a quantization of  $ L $   --- of type  $ J^{r/\ell}(L) $  ---  provides a quantization of the  {\sl dual\/}  Lie-Rinehart bialgebra  $ L^* $   --- of type  $ V^{r/\ell}(L^*) \; $;  moreover, this construction is functorial.

\begin{theorem}  \label{compute vee x QFSAd}  {\ }
 \vskip3pt
   (a)\,  Let  $ \, {J^r(L)}_h \in \text{\rm (RQFSAd)}_{A_h} \, $,  where  $ L $  is a Lie-Rinehart algebra which, as an  $ A $--module, is projective of finite type.  Then:
 \vskip3pt
   --- (a.1)\,  $ \, {J^r(L)}_h^{\,\vee} \in \text{\rm (RQUEAd)}_{A_h} \, $,  with semiclassical limit  $ \; {J^r(L)}_h^{\,\vee} \Big/ h \, {J^r(L)}_h^{\,\vee} \, \cong \, V^r(L^*) \, $.  Moreover, the structure of Lie-Rinehart bialgebra induced on  $ L^* $  by the quantization  $ {J^r(L)}_h^{\,\vee} $  of  $ V^r(L^*) $   --- as in  Theorem \ref{semiclassical_limit-V^r(L)}  ---   is dual to that induced on  $ L $  by the quantization  $ {J^r(L)}_h $  of  $ J^r(L) $  --- as in  Theorem \ref{semiclassical_limit-J^r(L)};
 \vskip3pt
   --- (a.2)\,  the definition of  $ \, {J^r(L)}_h \mapsto {J^r(L)}_h^{\,\vee} \, $  extends to morphisms in  $ \text{\rm (RQFSAd)} \, $,  so that we have a well defined (covariant) functor  $ \; {(\ )}^\vee : \text{\rm (RQFSAd)} \longrightarrow \text{\rm (RQUEAd)} \; $.
 \vskip5pt
   (b)\,  Let  $ \, {J^\ell(L)}_h \in \text{\rm (LQFSAd)}_{A_h} \, $,  where  $ L $  is a Lie-Rinehart algebra which, as an  $ A $--module, is projective of finite type.  Then:
 \vskip3pt
   --- (b.1)\,  $ \, {J^\ell(L)}_h^{\,\vee} \in \text{\rm (LQUEAd)}_{A_h} \, $,  with semiclassical limit  $ \; {J^\ell(L)}_h^{\,\vee} \Big/ h \, {J^\ell(L)}_h^{\,\vee} \, \cong \, V^\ell(L^*) \, $.  Moreover, the structure of Lie-Rinehart bialgebra induced on  $ L^* $  by the quantization  $ {J^\ell(L)}_h^{\,\vee} $  of  $ \, V^\ell(L^*) $   --- as in  Theorem \ref{semiclassical_limit-V^ell(L)}  ---   is dual to that induced on  $ L $  by the quantization  $ {J^\ell(L)}_h $  of  $ \, J^\ell(L) $  --- as in  Theorem \ref{semiclassical_limit-J^ell(L)};
 \vskip3pt
   --- (b.2)\,  the definition of  $ \, {J^\ell(L)}_h \mapsto {J^\ell(L)}_h^{\,\vee} \, $  extends to morphisms in  $ \text{\rm (LQFSAd)} \, $,  so that we have a well defined (covariant) functor  $ \; {(\ )}^\vee : \text{\rm (LQFSAd)} \longrightarrow \text{\rm (LQUEAd)} \; $.
 \vskip3pt
\end{theorem}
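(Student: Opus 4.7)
The plan is to establish part (a) first and then deduce (b) from the general identity $\bigl((K_h)^{\text{\it op}}_{\text{\it coop}}\bigr)^{\!\vee} = \bigl(K_h^{\,\vee}\bigr)^{\!\text{\it op}}_{\text{\it coop}}$ noted in the preceding remark. For (a), I would first treat the case when $L$ is finite free as an $A$-module, using the explicit description of $K_h^\vee$ from \S\ref{descr-K_h^vee}: fixing an $A$-basis $\{\bar e_1,\dots,\bar e_n\}$ of $L$ with dual basis $\bar\xi_i \in \J_{\scriptscriptstyle J^r(L)}$ and corresponding lifts $\xi_i$ (normalized so $\partial_h(\xi_i)=0$), the rescaled generators $\check\xi_i := h^{-1}\xi_i$ generate $K_h^\vee$ as a topologically complete $A_h$-algebra, isomorphic as a topological $k[[h]]$-module to $A[\check X_1,\dots,\check X_n][[h\,]]$. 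The general finite projective case will then be reduced to the finite free one via the construction of \S\ref{ext-QFSAd's}: pass from $K_h := {J^r(L)}_h$ to the extension $K_{h,Y}$ (a quantization of $J^r_f(L_Q)$ with $L_Q$ free), build $K_{h,Y}^\vee$ by the free-case argument, and then recover $K_h^\vee$ as the appropriate subobject, exploiting the fact recorded in \S\ref{ext-QFSAd's} that ``extension commutes with dualization''.

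For the RQUEAd structure on $K_h^\vee$, the essential closure to verify is for the coproduct, namely that $\Delta(K_h^\times) \subseteq K_h^\vee \,\widehat{\otimes}_{A_h}\, K_h^\vee$ in the completed Takeuchi sense. The required input is that $\Delta(I_h) \subseteq I_h \otimes_{A_h} K_h + K_h \otimes_{A_h} I_h$ (essentially the fact noted in \S\ref{notation_V^ell(L)_h^*} applied to $K_h$ as a right bialgebroid), from which a straightforward induction yields $\Delta(I_h^n) \subseteq \sum_{p+q=n} I_h^p \otimes_{A_h} I_h^q$ after completion; dividing by $h^n$ and using $I_h = \J_h + h\,K_h$ then places $h^{-n}\Delta(I_h^n)$ inside $K_h^\vee \,\widehat{\otimes}_{A_h}\, K_h^\vee$. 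Closure under product, source and target maps is immediate from the explicit description, and the counit $\partial_h$ restricts to $K_h^\vee$ with values in $A_h$ since $\partial_h(\check\xi_i) = h^{-1}\partial_h(\xi_i) = 0$.

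For the semiclassical limit, reduction modulo $h$ kills all strictly positive $h$-power terms and identifies $\tilde\xi_i := \check\xi_i \!\mod h$ with generators of a polynomial algebra over $A$. The key observation is that $\Delta(\check\xi_i) - \check\xi_i \otimes 1 - 1 \otimes \check\xi_i = h^{-1}\bigl(\Delta(\xi_i) - \xi_i \otimes 1 - 1 \otimes \xi_i\bigr)$ lies in $h \cdot \bigl(K_h^\vee \,\widehat{\otimes}_{A_h}\, K_h^\vee\bigr)$, because $\Delta(\xi_i) - \xi_i \otimes 1 - 1 \otimes \xi_i \in \J_h \otimes_{A_h} \J_h$ (the coproduct of a primitive modulo $\J^2$ differs from the primitive part by something in $\J \otimes \J$). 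Hence $\tilde\xi_i$ is right-primitive in $K_h^\vee / h\,K_h^\vee$; these primitives form an $A$-submodule naturally identified with $L^* \cong \J_{\scriptscriptstyle J^r(L)}\big/\J_{\scriptscriptstyle J^r(L)}^{\,2}$, which is finite projective and, together with $A$, generates the entire semiclassical limit. Remark \ref{rPrim-Vr(L)} then supplies the isomorphism $K_h^\vee \big/ h\,K_h^\vee \cong V^r(L^*)$ of right bialgebroids.

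The duality of Lie-Rinehart bialgebra structures is now essentially built-in: the anchor $\omega_{L^*}(\Phi)(a)$ and bracket $[\Phi,\Psi]_{L^*}$ obtained by applying Theorem \ref{semiclassical_limit-V^r(L)} to the RQUEAd $K_h^\vee$ arise as limits of $h^{-1}$-scaled commutators of elements $\check\xi$ and $s_h^r(a')$, and substituting $\check\xi = h^{-1}\xi$ yields precisely the $h^{-1}$-scaled commutators in $K_h$ that Theorem \ref{semiclassical_limit-J^r(L)} uses to define the dual Lie-Rinehart structure on $L^*$ from $K_h$ itself; the two prescriptions coincide term-by-term. Functoriality of $(\,)^\vee$ is then immediate, since any morphism of RQFSAd's over $A_h$ commutes with the counit, hence preserves $I_h$, hence sends $K_h^\times$ into its analogue on the target and extends uniquely to $K_h^\vee$ by $h$-adic continuity. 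The main obstacle is the technical control of the Takeuchi and $h$-adic completions entering the coproduct step, which is exactly what forces the preliminary reduction to the finite free case where explicit generators are available before reassembling the finite projective case through the $K_{h,Y}$ extension.
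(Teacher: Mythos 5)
Your overall route is essentially the paper's: closure of the coproduct via $\Delta(I_h)\subseteq I_h\,\widetilde{\otimes}_{A_h}K_h + K_h\,\widetilde{\otimes}_{A_h}I_h$, primitivity modulo $h$ of the classes of the rescaled generators $\check{\xi}_i:=h^{-1}\xi_i$, identification of the semiclassical limit through  Remark \ref{rPrim-Vr(L)},  explicit treatment of the finite free case as in  \S \ref{descr-K_h^vee}  with the finite projective case handled through the extension $K_{h,Y}$ of  \S \ref{ext-QFSAd's},  and deduction of  {\it (b)\/}  from  {\it (a)\/}  by op-coop.  Two minor inaccuracies: in the paper the passage through $K_{h,Y}$ is needed only for the identification $\overline{\J_h^\vee}\cong \J\big/\J^{\,2}$ (i.e.\ for the intersection formula $\J_h^\vee\cap h\,K_h^\vee = h\,\J_h^\vee + h\,(\J_h^\vee)^2$), not for the coproduct/completion step, which is carried out directly for general $L$; and $K_h^\vee$ is recovered from $K_{h,Y}^\vee$ through the induced epimorphism of right bialgebroids $\check{\pi}^{\,Y}$ rather than ``as a subobject''.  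Neither point is fatal.

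The genuine gap is in the duality claim for the induced structures.  A Lie-Rinehart bialgebra structure on $L^*$ has two halves (cf.\ Definition \ref{def_L-R_bialgebras}): the Lie-Rinehart algebra structure of $L^*$ itself, and the one of $(L^*)^*=L$, encoded by the differential $\delta$.  Your ``substituting $\check{\xi}=h^{-1}\xi$'' argument only compares the bracket and anchor of $L^*$ --- which come from zeroth-order commutators in $K_h^\vee$, i.e.\ from the identification of the semiclassical limit via Remark \ref{rPrim-Vr(L)} --- with the bracket and anchor of $L^*$ given by Theorem \ref{semiclassical_limit-J^r(L)} applied to $K_h$.  What Theorem \ref{semiclassical_limit-V^r(L)} applied to the RQUEAd $K_h^\vee$ additionally produces is the \emph{cobracket} $\delta$ on $L^*$, extracted from $h^{-1}\big(s^r_h-t^r_h\big)$ and the first-order part of the coproduct of $K_h^\vee$; duality requires proving that this $\delta$ recovers exactly the original anchor and Lie bracket of $L$.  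This is not automatic and is precisely where a sign/coopposite discrepancy could arise (compare Theorems \ref{dual_QUEAd's=QFSAd's} and \ref{dual_QFSAd's=QUEAd's}, where some duality operations do produce the coopposite structure).  In the paper this comparison is the longest part of the proof: it is done by pairing $K_h^\vee$ against the LQUEAd ${}_\star{J^r(L)}_h$ supplied by Theorem \ref{dual_QFSAd's=QUEAd's} and using the characterization of the differential in terms of bracket and anchor from Remarks \ref{props_Lie-bialg}{\it (b)}.  Your proposal contains no argument for this half, so as written the assertion that the structure induced on $L^*$ is dual to that induced on $L$ remains unproved.
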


\begin{proof}
 {\it (a)}\,  In order to ease notation, let us write  $ \, K_h := {J^r(L)}_h \, $.
 \vskip3pt
   By definition,  $ K_h^\times $  is the unital  $ k [[h]]$--subalgebra  of  $ \, {\big( K_h \big)}_F := k((h)) \otimes_{k[[h]]} K_h \, $  generated by  $ \, h^{-1} I_h \, $  and  $ s^r(A_h) \, $:  thus it is automatically a unital  $ k[[h]] $--algebra.  It follows that  $ K_h^\vee $  is a unital  $ k[[h]] $--algebra  too, complete in the  $ h $--adic  topology.  Moreover,  $ I_h $  is an  $ (A_h,A_h) $--subbimodule of  $ K_h \, $:  this implies at once that  $ K_h^\times $  and  $ K_h^\vee $  are  $ (A_h,A_h) $--bimodules  too.  As  $ {\big( K_h \big)}_F $  is torsionless, so are  $ K_h^\times $  and its completion  $ K_h^\vee \, $;  also,  $ K_h^\vee $  is separated and complete, so it is topologically free.
 \vskip3pt
   Let us now see that the coproduct in  $ K_h $  induces a coproduct   --- in a suitable,  $ h $--adical  sense ---   for  $ K_h^\vee $  as well.  Given any  $ \, \phi \in I_h \, $,  we write  $ \, \Delta(\phi ) = \phi_{(1)} \otimes \phi_{(2)} \, $   --- a formal series (in  $ \Sigma $--notation)  ---   convergent in the  $ I_h $--adic  topology of  $ K_h \, $.  Writing  $ \, \phi_{(1)} \, $  and  $ \, \phi_{(2)} \, $  as
  $$  \displaylines{
   \phi_{(1)} \, = \, \phi_{(1)}^{+_s} + s^r\big(\partial_h\big(\phi_{(1)}\big)\big) \; ,  \qquad    \phi_{(2)} \, = \, \phi_{(2)}^{+_t} + t^r\big(\partial_h\big(\phi_{(2)}\big)\big) \; } $$
we have seen that
 $ \; \Delta(\phi) \, = \, \phi_{(1)}^{+_s} \otimes \phi_{(2)} + s^r\big(\partial_h\big(\phi_{(1)}\big)\!\big) \otimes \phi_{(2)}^{+_t} + s^r\!\big(\partial_h(\phi)\big) \otimes 1 \; $  belongs to the space  $ \, \big( I_h \,\widetilde{\otimes}_{\!{}_{\scriptstyle A_h}} K_h + K_h \,\widetilde{\otimes}_{\!{}_{\scriptstyle A_h}} I_h + h \, s^r\!(A_h) \,\widetilde{\otimes}_{\!{}_{\scriptstyle A_h}} 1 \big) \, $.
All this implies $ \Delta\big( h^{-1} \phi \big)  \in K_h^\vee \,\widetilde{\otimes}_{A_h} K_h^\vee  .$

In addition, we must observe the following.  Every  $ \, \phi \in I_h \, $  expands as an  $ I_h $--adically  convergent series  $ \, \phi = \sum_{n \in \N_+} \phi_n \, $  with  $ \, \phi_n \in I_h^{\,n} \, $  for all  $ \, n \in \N_+ \, $;  but then  $ \, \phi_n \in I_h^{\,n} = h^n {(h^{-1} I_h)}^n \, $  for every  $ n $  and so  $ \, h^{-1} \phi \, $  expands as a series  $ \, h^{-1} \phi = \sum_{n \in \N_+} h^{n-1} {(h^{-1} I_h)}^n \, $  which is convergent in the  $ h $--adic  topology of  $ K_h^\vee \, $.  As a byproduct of this analysis, we can apply the same argument to  $ \, \Delta\big( h^{-1} \phi \big) \, $  and thus realize that it is actually a well defined element of  $ \, K_h^\vee \,\widehat{\otimes}_{A_h} K_h^\vee\, $,  the  $ h $--adic  completion of  $ \, K_h^\vee \otimes_{A_h} K_h^\vee \, $.  Finally, it is clear that in fact  $ \, \Delta\big( h^{-1} \phi \big) \, $  even belongs to the Takeuchi product inside  $ \, K_h^\vee \otimes_{A_h} K_h^\vee \, $,  as the parallel property is true for  $ \Delta(\phi) $  inside  $ \, K_h \,\widetilde{\otimes}_{A_h} K_h \, $.
                                                          \par
   As  $ K^\times $  is generated   --- as an algebra ---   by  $ \, h^{-1} I_h \, $  and  $ s^r(A_h) \, $,  and  $ K_h^\vee $  is its  $ h $--adic  completion, we finally conclude that the coproduct of  $ K_h $  does provide a well defined coproduct for  $ K_h^\vee \, $,  making it into a (topological) right bialgebroid over  $ A_h \, $.
 \vskip3pt
   Moreover, by construction  $ K_h^\vee $  is isomorphic (as a  $ k[[h]] $--module)  to  $ \; \big( K_h^\vee \big/ h\,K_h^\vee \big)[[h]] \; $.
 \vskip3pt
   What we are left to prove   --- for claim  {\it (a.1)}  ---   is that  $ \, \overline{K_h^\vee} := K_h^\vee \Big/ h\,K_h^\vee \, $  be isomorphic to  $ V^r(L') $  for some Lie-Rinehart bialgebra, and that such  $ L' $   --- with its structure of (Lie-Rinehart)  {\sl bialgebra\/}  induced by this very quantization ---   is isomorphic to  $ L^* $  with its structure of Lie-Rinehart bialgebra dual to that induced on  $ L $  by the quantization  $ \, K_h := {J^r(L)}_h \, $  we started from.
 \vskip5pt
   We follow the strategy in  \cite{Gavarini}  and  \cite{Etingof and Schiffmann}.  So far we saw that  $ K_h^\vee $  is a deformation of the right bialgebroid  $ \; K_h^\vee \Big/ h \, K_h^\vee \; $:  then we shall apply  Proposition \ref{right-bialgd_V^r(L)}  (and the remarks after it) to show that the latter is indeed of the form  $ V^r(L') \, $,  with  $ \, L' \cong L^* \, $.  For computations hereafter we fix some notation:  $ \, \J_h := \text{\sl Ker}(\partial_h) \, $,  $ \, K := K_h \Big/ h\, K_h \, $  and  $ \, \J := \text{\sl Ker}(\partial) \, $  for  $ \, \partial := \partial_K \, $.  Also, from  Theorem \ref{dual_QFSAd's=QUEAd's}{\it (a)\/}  we consider  $ \, {V^\ell(L)}_h := {}_\star{}K_h = {}_\star{J^r(L)}_h \in \text{\rm (LQUEAd)}_{A_h} \, $  so that  $ \, {J^r(L)}_h = {V^\ell(L)}_h^* \; $.
 \vskip7pt
   We proceed in several steps.
 \vskip5pt
 \noindent
\;\ ---    $ \bullet $ \  For all  $ \, a \in A_h \, $,  we have  $ \; s_r(a) \equiv t_r(a) \, \mod h \, K_h^\vee \; $.
 \vskip2pt
   Indeed, one has  $ \; \big( s^r(a) - t^r(a) \big) \in \J_h \, \subseteq \, I_h \, = \, h \, h^{-1} I_h \, \subseteq \, h \, K_h^\vee \; $,  whence the claim.
 \vskip5pt

 \noindent
\;\ ---    $ \bullet $ \  The set  $ \; P^r\big(\, \overline{K_h^\vee} \,\big) \; $  of (right) primitive elements of  $ \, \overline{K_h^\vee} := K_h^\vee \Big/ h \, K_h^\vee \, $  --- cf.~Proposition \ref{right-bialgd_V^r(L)}  ---  has a natural structure of right Lie-Rinehart algebra, induced by specialization from  $ K_h^\vee \, $.
 \vskip2pt
   Indeed, this is entirely standard.  Both the Lie bracket  $ \, [\,\ ,\ ] \, $  and the anchor map  $ \, \omega \, $  are recovered as semiclassical limits of commutators from the multiplicative structure and source/target structure of the ``quantum right bialgebroid''  $ K_h^\vee \, $.  Namely, for any  $ \, x, y \in P^r\big(\, \overline{K_h^\vee} \,\big) \, $  and  $ \, a \in A \, $,  choose any lifts  $ \, x', y' \in K_h^\vee \, $   and  $ \, a' \in A_h \, $  of them: then defining
  $$  \displaylines{
   a.x \, := \, x' \, s^r(a') \mod h \, K_h^\vee  \quad ,  \qquad  [x,y] \, := \, x' \, y' - y' \, x'  \mod h \, K_h^\vee  \cr
   \omega(x)(a)  \, := \,  \partial_h\big( x' \, s^r(a') - s^r(a') \, x' \big)  \mod h \, A_h  }  $$
it is a routine matter to check that  $ \, P^r\big(\, \overline{K_h^\vee} \,\big) \, $  is made into a Lie-Rinehart algebra over  $ A \, $.
 \vskip5pt
 \noindent
\;\ ---    $ \bullet $ \  Set  $ \; \J_h^\vee := h^{-1} \J_h \, \big(\! \subseteq K_h^\vee \big) \; $  and  $ \; \overline{\J_h^\vee} := \J_h^\vee \mod h \, K_h^\vee \; $;  then  $ \, \overline{\J_h^\vee} \, $  is a Lie-Rinehart subalgebra of  $ \, P^r\big(\, \overline{K_h^\vee} \,\big) \; $.
 \vskip2pt
   Indeed, let  $ \, \phi \in \J_h \, $,  and set  $ \, \phi^\vee := h^{-1} \phi \in \J_h^\vee \, $.  Then acting as in the first part of the proof (with notation introduced therein) we get
  $$  \displaylines{
   \Delta(\phi)  \, = \,  \phi \otimes 1 + 1 \otimes \phi + \phi_{(1)}^{+_s} \otimes \phi_{(2)}^{+_t}  \; \in \;  \phi \otimes 1 \, + \, 1 \otimes \phi \, + \, \J_h \,\widetilde{\otimes}_{A_h} \J_h  }  $$
thanks to the assumption  $ \, \phi \in \J_h \, $  (and to several identities holding true in any right bialgebroid).  As  $ \; \J_h = h \, h^{-1} \J_h = h \, \J_h^\vee \, \subseteq \, h \, K_h^\vee \; $,  we end up with
 $ \; \Delta\big(\phi^\vee\big)  \, = \,  \phi^\vee \otimes 1 \, + \, 1 \otimes \phi^\vee \, + \, h \, \big( K_h^\vee \,\widehat{\otimes}_{A_h} K_h^\vee \big) \; $,
 \, so that  $ \; \overline{\phi^\vee} := \phi^\vee \mod h \, K_h^\vee \; $  is primitive in  $ \, \overline{K_h^\vee} \, $.  This proves that  $ \; \overline{\J_h^\vee} \subseteq P^r\big(\, \overline{K_h^\vee} \,\big) \; $.
                                                                \par
   Finally,  $ \, \overline{\J_h^\vee} \, $  is a Lie-Rinehart subalgebra of  $ \, P^r\big(\, \overline{K_h^\vee} \,\big) \, $  if and only if it is a (right)  $ A $--submodule,  closed for the Lie bracket.  Now, by definition  $ \J_h $  is a right ideal in  $ K_h \, $,  and this implies   --- by construction ---   that  $ \, \overline{\J_h^\vee} \, $  is a (right)  $ A $--submodule.  As to the Lie bracket, if  $ \, x, y \in \overline{\J_h^\vee} \, $  we have by definition  $ \; [x,y] \, := \, x' \, y' - y' \, x' \mod h \, K_h^\vee \; $  for any choice of liftings  $ \, x', y' \in K_h^\vee \, $  of  $ x $  and  $ y \, $.  On the other hand, we can clearly choose  $ \, x', y' \in \J_h^\vee \, $,  so that  $ \, x' = h^{-1} \chi \, $,  $ \, y' = h^{-1} \eta \, $,  for some  $ \, \chi, \eta \in \J_h \, $;  then we have
  $$  x' \, y' - y' \, x'  \; = \;  h^{-2} (\chi \, \eta - \eta \, \chi)  \; \in \;  h^{-2} \big( \J_h \,{\textstyle \bigcap}\, h \, K_h \big)  \; = \;  h^{-2} h \, \J_h  \; = \;  h^{-1} \J_h  \; =: \;  \J_h^\vee  $$
since  $ \J_h $  is a right ideal and  $ \, K_h \big/ h \, K_h \cong J^r(L) \, $  is commutative.  It follows that  $ \, [x,y] \in \overline{\J_h^\vee} \, $,  q.e.d.
 \vskip5pt
 \noindent
\;\ ---    $ \bullet $ \  We will now show that  $ \,\; \J_h^\vee \,\;{\textstyle \bigcap}\; h\,K_h^\vee  \,\; = \;\,  \J_h + \J_h^\vee \, \J_h  \,\; = \;\,  h \, \J_h^\vee + h \, {\big( \J_h^\vee \big)}^2 \;\, $.
 \vskip2pt
   Indeed, the second identity in the claim is a trivial consequence of  $ \, \J_h^\vee := h^{-1} \J_h \, $.  As to the first one, as  $ \, K_h = {J^r(L)}_h \, $,  we distinguish two cases: either  $ L $  is free (as an  $ A $--module),  or not.
                                                                \par
   If  $ L $  is free, then the identity  $ \; \J_h^\vee \;{\textstyle \bigcap}\; h\,K_h^\vee \, = \, \J_h + \J_h^\vee \, \J_h \; $  is an easy, direct consequence of the description of  $ \J_h^\vee $  given in  \S \ref{descr-K_h^vee}  here above in the free case   --- i.e.~part  {\it (a)}.
                                                                \par
   If instead  $ L $  is not free, then we proceed as follows.  First consider  $ K_{h,Y} $  and  $ \J_{h,Y} \, $,  and construct from them  $ K_{h,Y}^\vee $  and  $ \J_{h,Y}^\vee \, $.  In this case, the description of  $ \J_{h,Y}^\vee  $  given in  \S \ref{descr-K_h^vee},  part  {\it (b)},  implies again easily the identity  $ \; \J_{h,Y}^\vee \;{\textstyle \bigcap}\; h\,K_{h,Y}^\vee \, = \, \J_{h,Y} + \J_{h,Y}^\vee \, \J_{h,Y} \; $.  Now consider the map  $ \; \pi^{\scriptscriptstyle Y} \! : K_{h,Y} \longrightarrow K_h \; $, \, introduced in  \S \ref{ext-QFSAd's}{\it (b)},  for  $ \, {J^r(L)}_h := K_h \, $  and  $ \, {J^r(L)}_{h,Y} := K_{h,Y} \; $:  this is a an epimorphism of right bialgebroids, thus in particular  $ \; \pi^{\scriptscriptstyle Y}\!\big(\J_{h,Y}\big) = \J_h \; $.  Then it follows at once that  $ \pi^{\scriptscriptstyle Y} $  canonically induces another epimorphism of right bialgebroids  $ \; \check{\pi}^{\scriptscriptstyle Y} \! : K_{h,Y}^\vee \longrightarrow K_h^\vee \; $  such that  $ \; \check{\pi}^{\scriptscriptstyle Y}\!\big(\J_{h,Y}^\vee\big) = \J_h^\vee \; $.  But then, using  $ \, \pi^{\scriptscriptstyle Y} $  and  $ \check{\pi}^{\scriptscriptstyle Y} $  and the identity  $ \; \J_{h,Y}^\vee \;{\textstyle \bigcap}\; h\,K_{h,Y}^\vee \, = \, \J_{h,Y} + \J_{h,Y}^\vee \, \J_{h,Y} \; $  we easily deduce the identity  $ \; \J_h^\vee \;{\textstyle \bigcap}\; h\,K_h^\vee \, = \, \J_h + \J_h^\vee \, \J_h \; $  we were looking for.
 \vskip5pt
 \noindent
\;\ ---    $ \bullet $ \  There exists an  $ A $--linear  isomorphism  $ \,\; \psi : \J_h^\vee \Big/ \Big( h \, \J_h^\vee + h \, \big( \J_h^\vee \big)^2 \Big) \, \cong \; \overline{\J_h^\vee} \;\, $   --- hence hereafter we shall identify  $ \, \overline{\J_h^\vee} \, $  and  $ \, \J_h^\vee \Big/ \Big( h \, \J_h^\vee + h \, \big( \J_h^\vee \big)^2 \Big) \, $  via  $ \psi $  and  $ \psi^{-1} \, $.
 \vskip2pt
   Indeed, the natural projection map  $ \; K_h^\vee \!\relbar\joinrel\twoheadrightarrow \overline{K_h^\vee} := K_h^\vee \big/ h \, K_h^\vee \; $,  \, whose kernel is  $ \, h \, K_h^\vee \, $,  yields by restriction a similar map  $ \; \J_h^\vee \relbar\joinrel\twoheadrightarrow \overline{\J_h^\vee} := \J_h^\vee \Big/ \big( \J_h^\vee \bigcap h \, K_h^\vee \big ) \, $  whose kernel is  $ \; \big( \J_h^\vee \bigcap h \, K_h^\vee \big ) \; $.  By the previous step, we have  $ \; \J_h^\vee \;{\textstyle \bigcap}\; h\,K_h^\vee \, = \, h \, \J_h^\vee + h \, {\big( \J_h^\vee \big)}^2 \; $,  \, whence we get an  $ A $--linear  isomorphism.
 \vskip5pt
 \noindent
\;\ ---    $ \bullet $ \  There exists an  $ A $--linear  isomorphism  $ \,\; \sigma : \, \overline{\J_h^\vee} \, \cong \, \J_h^\vee \Big/ \Big( h \, \J_h^\vee + h \, \big( \J_h^\vee \big)^2 \Big) \, \cong \; \J \Big/ \J^{\,2} =: L^* \;\, $,  where  $ \, \J \equiv \J_{J^r(L)} := \text{\sl Ker}\,\big(\partial_{\scriptscriptstyle J^r(L)}\big) \, $,  given by  $ \; \overline{h^{-1} y} \mapsto \sigma\big(\, \overline{h^{-1} y} \,\big) := \overline{y} \! \mod \J^{\,2} \; $.
 \vskip2pt
   Indeed, there exists a natural projection map  $ \; \sigma'' : \J_h \relbar\joinrel\twoheadrightarrow \J_h \big/ h\,\J_h = \J \relbar\joinrel\twoheadrightarrow \J \big/ \J^{\,2} =: L^* \; $,  whose kernel is  $ \, \big( h \, \J_h + \J_h^{\,2} \big) \; $.  Then  $ \; \sigma' :  \J_h^\vee := h^{-1} \J_h \relbar\joinrel\twoheadrightarrow \J \big/ \J^{\,2} =: L^* \; \big( h^{-1} y \mapsto \sigma'\big(h^{-1} y\big) := \sigma''(y) \big) \; $  is a well defined  $ k $--linear  map, whose kernel is
   $ \; \big( \J_h + h^{-1} \J_h^{\,2} \big) = \Big( h \, \J_h^\vee + h \, \big( \J_h^\vee \big)^2 \Big) \; $.  Therefore  $ \sigma' $  canonically induces a  $ k $--linear  isomorphism  $ \; \sigma : \, \overline{\J_h^\vee} \, \cong \, \J_h^\vee \Big/ \Big( h \, \J_h^\vee + h \, \big( \J_h^\vee \big)^2 \Big) \;{\buildrel \cong \over {\lhook\joinrel\relbar\joinrel\relbar\joinrel\twoheadrightarrow}}\; \J \big/ \J^{\,2} =: L^* \; $  given by  $ \; \overline{h^{-1} y} \mapsto \sigma\big(\, \overline{h^{-1} y} \,\big) := \overline{\sigma''(y)} \; $;  also, it is straightforward to check that this is  $ A $--linear  too.
 \vskip5pt
 \noindent
\;\ ---    $ \bullet $ \  We have  $ \; \overline{K_h^\vee} \in \text{\rm (RQUEAd)}_{A_h} \, $,  namely  $ \; \overline{K_h^\vee} \,\cong\, V^r\big(L'\big) \; $  for the Lie-Rinehart  $ A $--algebra  $ \, L' := \overline{\J_h^\vee} \, $  (with the Lie-Rinehart structure mentioned above).
 \vskip2pt
   Indeed, what we proved so far shows that  $ \, L' := \overline{\J_h^\vee} \, $  is a Lie-Rinehart subalgebra of  $ P^r\big(\, \overline{K_h^\vee} \,\big) \, $,  which together with  $ A $  generates  $ \overline{K_h^\vee} $  (as an algebra) and is finite projective as an  $ A $--module  (since it is isomorphic, as an  $ A $--module,  to  $ L^* \, $,  see above).  Therefore, all conditions in  Remark \ref{rPrim-Vr(L)}  are fulfilled, so it applies and gives  $ \; \overline{K_h^\vee} \,\cong\, V^r\big(L'\big) \; $  for  $ \, L' := \overline{\J_h^\vee} = P^r\big(\, \overline{K_h^\vee} \,\big) \, $.
 \vskip5pt
 \noindent
\;\ ---    $ \bullet $ \  There exists on the Lie-Rinehart algebra  $ L' $  a unique structure of Lie-Rinehart bialgebra, canonically induced from the quantization  $ K_h^\vee $  of  $ V^r\big(L'\big) \, $.
 \vskip2pt
   In fact, this is just a direct consequence of  Theorem \ref{semiclassical_limit-V^r(L)}.
 \vskip5pt
 \noindent
\;\ ---    $ \bullet $ \  The  $ A $--linear  isomorphism  $ \, \sigma : \, \overline{\J_h^\vee} \, \cong \, \J_h^\vee \Big/ \Big( h \, \J_h^\vee + h \, \big( \J_h^\vee \big)^2 \Big) \cong \, \J \Big/ \J^{\,2} =: L^* \, $  is actually an isomor\-phism of Lie-Rinehart bialgebras over  $ A \, $.
 \vskip2pt
   In order to prove this, we must show that  $ \sigma $  preserves the Lie bracket, the anchor map and the differential  $ \delta $  (cf.~Definition \ref{def_L-R_bialgebras})  on either side.
                                                     \par
   For the Lie bracket, let  $ \, x, y \in \overline{\J_h^\vee} \, $:  given  $ \, \chi , \eta \in \J_h \, $  such that  $ \, x = h^{-1} \chi \, $,  $ \, y = h^{-1} \eta \, $,  we have
  $$  [x,y]  \,\; = \,\;  h^{-2} (\chi \, \eta - \eta \, \chi)  \! \mod h \, K_h^\vee  \,\; = \;\,  h^{-2} h \, \zeta  \! \mod h \, K_h^\vee  \,\; = \;\,  h^{-1} \zeta  \! \mod h \, K_h^\vee  $$
for some  $ \, \zeta \in \J_h \, $.  But then also  $ \,\; \overline{\zeta} \, := \, \zeta  \! \mod h \, K_h \, =: \, \big\{ \overline{\chi} , \overline{\eta} \big\} \;\, $   --- where  $ \; \overline{\alpha} := \alpha \! \mod h \, K_h \; $  for all  $ \, \alpha \in K_h \, $  ---   by  Theorem \ref{semiclassical_limit-J^r(L)}.  Now the Poisson bracket of  $ \, K_h \big/ h\,K_h \, $  restricted to  $ \J_h $  pushes down to the Lie bracket of  $ \, \J_h \big/ \J_h^{\,2} =: L^* \, $;  thus setting  $ \, X := \overline{\chi} \! \mod \J^{\,2} \, $,  $ \, Y := \overline{\eta} \! \mod \J^{\,2} \; \big(\! \in \J \big/ \J^{\,2} =: L^* \big) $,  we have  $ \; [X,Y] = \big\{ \overline{\chi} , \overline{\eta} \big\} \! \mod \J^{\,2} \, = Z \; $.  Now, by construction we have  $ \; X \, = \, \sigma(x) \; $,  $ \; Y \, = \, \sigma(y) \; $,  and the previous analysis eventually gives also  $ \; \sigma\big([x,y]\big) \, = \, Z \, = [X,Y] = \, \big[\sigma(x),\sigma(y)\big] \; $.
                                                     \par
   For the anchor map, let  $ \, x \in \overline{\J_h^\vee} \, $,  $ \, \chi \in \J_h \, $,  $ \, X \in \J \big/ \J^{\,2} = L^* \, $  as above, and take  $ \, a \in A \, $  and  $ \, a' \in A_h \, $  such that  $ \; a' \!\! \mod h \, A_h = a \; $.  Then direct computations give
 \vskip-14pt
  $$  \displaylines{
   \qquad   \omega(x)(a)  \; = \;  \partial_h\big( h^{-1} \chi \, s^r(a') - s^r(a') \, h^{-1} \chi \big)  \!\! \mod h \, A_h  \; =   \hfill  \cr
   \hfill   = \;  \partial \Big( h^{-1} \big( \chi \, s^r(a') - s^r(a') \, \chi \big)  \!\! \mod h \, K_h \Big)  \; = \;  \omega(X)(a)   \qquad  }  $$
 \vskip-4pt
\noindent
 which means  $ \; \omega(x) = \omega(X) = \omega\big(\sigma(x)\big) \; $,  that is  $ \, \sigma \, $  preserves the anchor, q.e.d.
 \vskip5pt
   Finally, in order to compare the two differentials on  $ \overline{\J_h^\vee} $  and  $ L^* \, $,  respectively denoted  $ \delta' $  and  $ \delta'' \, $,  recall that in any Lie-Rinehart bialgebra  $ \big( \mathcal{L} \, , \mathcal{A} \big) $   --- in the present case  $ (L^*,A) $  ---   the differential  $ \delta_{\mathcal{L}} $  is related with the Lie bracket and the anchor map by the identities
  $$  \big\langle\, \text{\rm f} , \delta_{\mathcal{L}}(a) \,\big\rangle  \, = \,  \omega_{\mathcal{L}^*}\!(\lambda)(a)  \; ,  \!\!\quad  \langle\, \text{\rm f} \otimes \mu , \delta_{\mathcal{L}}(x) \,\rangle  \, = \,  \omega_{\mathcal{L}^*}\!(\text{\rm f})\big(\! \langle\, \text{\rm m} , x \rangle \!\big) - \omega_{\mathcal{L}^*}\!(\text{\rm m})\big(\! \langle\, \text{\rm f} , x \rangle \!\big) - \big\langle {[\,\text{\rm f}\,,\text{\rm m}]}_{\mathcal{L}^*} , x \big\rangle  $$
for all  $ \, x \in \mathcal{L} \, $,  $ \, \text{\rm f} \, , \text{\rm m} \in \mathcal{L}^* \, $,  $ \, a \in A \, $   --- see  Remarks \ref{props_Lie-bialg}{\it (b)}.  We apply this to  $ \, \big( \mathcal{L} \, , \mathcal{A} \big) = (L^*,A) \, $.
 \vskip4pt
   For the differential on  $ A \, $,  we must prove that  $ \, \sigma\big(\delta'(a)\big) = \delta''(a) \, $  for all  $ \, a \in A \, $,  which amounts to showing that  $ \, \big\langle\, \text{\rm f} \, , \, \sigma\big(\delta'(a)\big) \big\rangle = \big\langle\, \text{\rm f} \, , \, \delta''(a) \big\rangle \, $  for all  $ \, a \in A \, $  and all  $ \, \text{\rm f} \in L \, $.  For this comparison, recall that  $ \, {V^\ell(L)}_h := {}_\star{J^r(L)}_h \in \text{\rm (LQUEAd)}_{A_h} \, $  is a quantization of  $ V^\ell(L) \, $,  by  Theorem \ref{dual_QFSAd's=QUEAd's}{\it (a)\/};  moreover, the natural pairing between  $ {V^\ell(L)}_h $  and  $ {J^r(L)}_h $  (given by evaluation) is a right bialgebroid pairing.  Now choose a lifting  $ \, a' \in A_h \, $  of  $ \, a \in A \, $  and a lifting  $ \, f' \in {V^\ell(L)}_h \, $  of  $ \, \text{\rm f} \in L \, $:  more precisely, we choose  $ \, f' \in \text{\sl Ker}\,\big(\epsilon_{\scriptscriptstyle {V^\ell(L)}_h}\big) \, $.  Then direct computation gives
 \vskip-14pt
  $$  \displaylines{
   \big\langle\, \text{\rm f} \, , \, \sigma\big(\delta'(a)\big) \big\rangle  \;\; =
    \;\;  h \cdot \big\langle\, f' \, , \, \delta'(a) \big\rangle  \!\! \mod \;h \; A_h=
    \;\;  \big\langle\, f' \, , \, s^r\big(a'\big) - t^r\big(a'\big) \big\rangle  \!\! \mod h \, A_h  \hfill \cr
    = \;\;  \big\langle\, f' \, s^\ell\big(a'\big) - s^\ell\big(a'\big) \, f' \, , \, 1 \,\big\rangle  \!\! \mod h \, A_h  \;\;
  = \;\;  \big\langle\, f' \, s^\ell\big(a'\big) - t^\ell\big(a'\big) \, f' \, , \, 1 \,\big\rangle  \!\! \mod h \, A_h  \cr
   = \;\;  \big\langle\, f' \, s^\ell\big(a'\big) \, , \, 1 \,\big\rangle  \!\! \mod h \, A_h  \;\;
      = \;\;  \epsilon_{\scriptscriptstyle V^\ell(L)}(\,\text{\rm f}\,a)  \;\; = \;\;  \omega_L(\,\text{\rm f}\,)(a)  \;\; = \;\;  \big\langle\, \text{\rm f} \, , \, \delta''(a) \,\big\rangle  }  $$
(cf.~\S \ref{V^ell(L)_left-bialgd}  for the last but one identity).  This proves that  $ \, \sigma\big(\delta'(a)\big) = \delta''(a) \, $  for all  $ \, a \in A \, $.
 \vskip4pt
   For the differential on  $ L^* \, $,  consider  $ \, x := \overline{\chi^\vee} = \overline{h^{-1} \chi} \in \overline{\J_h^\vee} \, $,  with  $ \, \chi \in \J_h \, $;
   then we have  $ \, \sigma(x) := \overline{\chi} \!\! \mod \J^{\,2} =: X \in \J \big/ \J^{\,2} = L^* \, $.  Our goal is to prove that  $ \, (\sigma \otimes \sigma)\big( \delta'(x) \big) = \delta''\big(\sigma(x)\big) \, $.
                                                                     \par
   Write  $ \, \Delta(\chi) = \chi_{(1)} \otimes \chi_{(2)} \, $  as  $ \, \Delta(\chi) = \chi \otimes 1 + 1 \otimes \chi + \sum_{[\theta]} \chi_{[1]} \otimes \chi_{[2]} \, $;  then we have  $ \, \Delta\big(\chi^\vee\big) = \chi^\vee \otimes 1 + 1 \otimes \chi^\vee + h \, \sum_{[\theta]} \chi_{[1]}^\vee \otimes \chi_{[2]}^\vee \, $   --- where  $ \, \chi_{[i]}^\vee := h^{-1} \chi_{[i]} \, $,  for  $ \, i \in \{1,2\} \, $  ---   so that  $ \; \delta'(x) := - \sum_{[\theta]} x_{[1]} \otimes x_{[2]} + \sum_{[\theta]} x_{[2]} \otimes x_{[1]} \; $  with  $ \, x_{[i]} := \overline{\chi_{[i]}^\vee} \, $  for  $ \, i \in \{1,2\} \, $.  In all this,  $ \, \chi^\vee := h^{-1} \chi \, $  is a lifting of  $ \, x \in L' \, $  in  $ \, {V^r(L')}_h := {J^r(L)}_h^{\,\vee} \, $,  and  $ \chi $  is a lifting of  $ \, X := \sigma(x) \, $  in  $ {J^r(L)}_h \, $;  in addition, we can assume that  $ \, \partial_h(\chi) = 0 \, $.  We
 adopt similar remarks, and notation,
 for  $ x_{[i]} \, $,  $ \chi_{[i]} $  and  $ \, X_{[i]} := \sigma(x_{[i]}) \, $  with  $ \, i \in \{1,2\} \, $.  Now for  $ \, \text{\rm f} , \text{\rm m} \in L \, $  and liftings  $ \, f' , m' \in {V^\ell(L)}_h \, $  of them, direct calculation yields
  $$  \displaylines{
   \big\langle\, \text{\rm f} \,\otimes\, \text{\rm m} \, , \, \delta''\big(\sigma(x)\big) \big\rangle  \; = \;  \big\langle\, \text{\rm f} \,\otimes\, \text{\rm m} \, , \, \delta''(X) \big\rangle  \; = \;  \omega''_L(\,\text{\rm f}\,)\big( \langle\, \text{\rm m} \, , X \,\rangle \big) \, - \, \omega''_L(\,\text{\rm m}\,)\big( \langle\, \text{\rm f} \, , X \,\rangle \big) \, - \, \big\langle\, {[\,\text{\rm f},\text{\rm m}]}''_L \, , X \,\big\rangle  \; =   \hfill  \cr
   \hfill
   = \;\;  \epsilon_{\scriptscriptstyle V^\ell(L)}\big(\, \text{\rm f} \, \langle\, \text{\rm m} \, , X \,\rangle \big) \, - \, \epsilon_{\scriptscriptstyle V^\ell(L)}\big(\, \text{\rm m} \, \langle\, \text{\rm f} \, , X \,\rangle \big) \, - \, \big\langle\, \text{\rm f} \, \text{\rm m} - \text{\rm m} \, \text{\rm f} \, , X \,\big\rangle  \;\; =  \cr
   = \;\;  \Big(\; \big\langle\, f' \cdot t^\ell\big( \langle\, m' \, , \chi \,\rangle \big) \, , \, 1 \,\big\rangle \, -
    \, \big\langle\, m' \cdot t^\ell\big( \langle\, f' , \chi \,\rangle \big) \, , \, 1 \,\big\rangle \, - \, \big\langle\, f' \, m' - m' \, f' , \chi \,\big\rangle \;\Big)  \mod \, h\;A_h  \;\; =   \hfill  \cr
   = \;\;  \Big(\; \big\langle\, f' \cdot t^\ell\big( \langle\, m' \, , \chi \,\rangle \big) \, , \, 1 \,\big\rangle \, - \, \big\langle\, m' \cdot t^\ell\big( \langle\, f' , \chi \,\rangle \big) \, , \, 1 \,\big\rangle \;\, -   \qquad \qquad \qquad  \cr
   \hfill   - \; \big\langle\, f' \cdot t^\ell\big( \langle\, f' , \chi_{(2)} \rangle \big) \, , \chi_{(1)} \big\rangle \, +
   \, \big\langle\, m' \cdot t^\ell\big( \langle\, f' , \chi_{(2)} \rangle \big) \, ,
   \chi_{(1)} \big\rangle \;\Big)  \mod \; h \, A_h  \;\; =   \cr
   = \;\;  \Big(\; \big\langle\, m' \cdot t^\ell\big( \langle\, f' , \chi_{[2]} \rangle \big) \, , \chi_{[1]} \big\rangle \, - \, \big\langle\, f' \cdot t^\ell\big( \langle\, m' \, , \chi_{[2]} \rangle \big) \, , \chi_{[1]} \big\rangle \;\Big)  \mod \; h \, A_h  \;\; =   \hfill  \cr
   = \;\;  \Big(\; \big\langle\, m'  , \, \chi_{[1]}s^r\big( \langle\, f' ,
        \chi_{[2]} \rangle \big) \,\big\rangle \,
     - \, \big\langle\, f' , \,\chi_{[1]}
     s^r\big( \langle\, m' , \chi_{[2]}\,\big\rangle \big) \big\rangle \;\Big)  \mod \; h \, A_h \;\; =  \cr
   \hfill   = \;\;  \Big(\; \big\langle\, m'  , \, \chi_{[1]}t^r\big( \langle\, f' , \chi_{[2]} \rangle \big) \,\big\rangle \,
     - \, \big\langle\, f' , \,\chi_{[1]}
     t^r\big( \langle\, m' , \chi_{[2]}\,\big\rangle \big) \big\rangle \;\Big)  \mod \; h \, A_h \;\; =  \cr
   = \;\;  \Big(\; \big\langle\, m' , \, \chi_{[1]} \big \rangle \, \langle\, f' , \chi_{[2]} \rangle \,     - \, \big\langle\, f' , \,\chi_{[1]} \big\rangle \, \langle\, m' , \chi_{[2]}\,\big\rangle \;\Big)  \mod \; h \, A_h  \;\; =   \hfill  \cr
  = \;\;  \big\langle\, \text{\rm m} \, , \, \sigma\big(x_{[1]}\big) \big \rangle
     \, \langle\, \text{\rm f} \, , \sigma\big(x_{[2]}\big) \rangle \,
      - \, \big\langle\, \text{\rm f} \, , \,\sigma\big(x_{[1]}\big) \big \rangle
       \, \langle\, \text{\rm m} \, , \sigma\big(x_{[2]}\big)\,\big\rangle  \;\; =   \qquad \qquad  \cr
   \hfill   = \;\;  \big\langle\, \text{\rm f} \,\otimes\, \text{\rm m} \, , \, (\sigma \otimes \sigma)\big( {\Delta^{[1]}(x)}_{2,1} - \Delta^{[1]}(x) \big) \,\big\rangle  \;\; = \;\; \big\langle\, \text{\rm f} \,\otimes\, \text{\rm m} \, , \, (\sigma \otimes \sigma)\big(\delta'(x)\big) \,\big\rangle  }  $$
Here above we used the fact that  $ \, s^r\big( \big\langle f' , \chi_{[2]} \big\rangle \big) - t^r\big( \big\langle m' , \chi_{[2]} \big\rangle \big ) \, $  belongs to  $ \J_h \, $,  so that we have  $ \; \chi_{[1]} \big( s^r\big(\! \big\langle f' , \chi_{[2]} \big\rangle \!\big) - t^r\big(\! \big\langle f' , \chi_{[2]} \big\rangle \big) \!\big) \in \J_h^2 \; $  and  $ \; \big\langle m', \chi_{[1]}\big( s^r\big(\! \big\langle f' , \chi_{[2]} \big\rangle \!\big) - t^r\big(\! \big\langle f' , \chi_{[2]} \big\rangle \big) \!\big) \big\rangle = 0 \! \mod h \, A_h \, $.  Thus  $ \; \big\langle\, \text{\rm f} \,\otimes\, \text{\rm m} \, , \, \delta''\big(\sigma(x)\big) \big\rangle \, = \, \big\langle\, \text{\rm f} \,\otimes\, \text{\rm m} \, , \, (\sigma \otimes \sigma)\big(\delta'(x)\big) \big\rangle \; $  for  $ \, \text{\rm f} \, , \text{\rm m} \in L \, $,  so  $ \, \delta''\big(\sigma(x)\big) = (\sigma \otimes \sigma)\big(\delta'(x)\big) \, $.
 \vskip6pt
   In the end, all the above eventually completes the proof of claim  {\it (a.1)}.
 \vskip8pt
   As to claim  {\it (a.2)},  let  $ \big( K_h \, , A_h \, , s^r_{K_h} , t^r_{K_h} , \Delta \, , \partial_{K_h} \big) \, $  and  $ \, \big( \varGamma_h \, , B_h \, , s^r_{\varGamma_h} , t^r_{\varGamma_h}, \Delta \, , \partial_{\varGamma_h} \big) \, $  be two RQFSAd's, and let  $ \; (f,\phi) : K_h \longrightarrow \varGamma_h \; $  be a morphism between them in  $ \text{\rm (RQFSAd)} \, $.  The very definition of morphism in  $ \text{\rm (RQFSAd)} $  imply at once that  $ \; \phi\big(s^r_{K_h}\!(A_h)\big) \subseteq s^r_{\varGamma_h}\!(B_h) \; $   --- because  $ \, \phi \circ s^r_{K_h} = s^r_{\varGamma_h}\circ f \, $  ---   and  $ \, \phi\big(I_{K_h}\big) \subseteq I_{\varGamma_h} \, $   --- because  $ \, \partial_{\varGamma_h} \! \circ \phi = \partial_{K_h} \, $  ---   hence also  $ \, \phi\big( h^{-1} \, I_{K_h} \big) \subseteq h^{-1} \, I_{\varGamma_h} \, $
   for the natural,  $ k((h)) $--linear  extension of  $ \; \phi : K_h \longrightarrow \varGamma_h \; $  to  $ \; \phi^\times : {(K_h)}_F \longrightarrow {(\varGamma_h)}_F \; $.  By construction, this implies that  $ \phi^\times $  defines by restriction a morphism  $ \; \phi^\times : K_h^\times \longrightarrow \varGamma_h^\times \; $,  and this in turn extends by  $ h $--adic  continuity to a well defined morphism  $ \; \phi^\vee : K_h^\vee \longrightarrow \varGamma_h^\vee \; $  in the category  $ \text{\rm (RQUEAd)} \, $.
 \vskip9pt
   {\it (b)}\,  A direct proof of  {\it (b)\/}  can be given mimicking that of  {\it (a)}.  Otherwise, it can be deduced from  {\it (a)\/}  (and, clearly, the r{\^o}les of the two results in this deduction can be reversed) as follows.
                                                                          \par
   If  $ \, \varGamma_h := {J^\ell(L)}_h \in  \text{\rm (LQFSAd)}_{A_h} \, $,  then  $ \, {(\varGamma_h)}_{\text{\it coop}}^{\text{\it op}} \in \text{\rm (RQFSAd)}_{A_h} \, $;  thus by claim  {\it (a)\/}  we have that   $ \, {\big( {(\varGamma_h)}_{\text{\it coop}}^{\text{\it op}} \big)}^{\!\vee} \in \text{\rm (RQUEAd)}_{A_h} \, $.  Now, by construction  $ \, {\big( {(\varGamma_h)}_{\text{\it coop}}^{\text{\it op}} \big)}^{\!\vee} = {\big( \varGamma_h^{\,\vee} \big)}^{\text{\it op}}_{\text{\it coop}} \; $,  hence we deduce that  $ \, \varGamma_h^{\,\vee} \in \text{\rm (LQUEAd)}_{A_h} \, $.  All other aspects of the claim also follow from this argument.
\end{proof}

\smallskip

 \subsection{The Drinfeld's functor(s)  $ \, (\ )' = \, {}'\!{(\ )} \, $} \label{Dr-functors_rprime-lprime}

\smallskip

   {\ } \quad   We introduce now a second type of Drinfeld's functor, denoted  $ \, H \mapsto H' \, $.  Just like for the functor  $ \, H \mapsto H^\vee \, $,  this also is inspired by the similar notion introduced for ``quantum'' Hopf algebras (see \cite{Gavarini});  nevertheless, in this case we must be more careful, as we shall presently explain.

\smallskip

   Let  $ H_h $  be a left (or a right) bialgebroid.  If  $ \, s^\ell_h = t^\ell_h =: \iota^\ell_h \, $,  then we can define  $ H' $  as in the ``classical'' framework of quantum Hopf algebra deformations.  Let us shortly recall it.  Set  $ \; \delta_n = {(\text{\sl id}_H - s^\ell \circ \epsilon)}^{\otimes n} \circ \Delta^n \, $,  \, where  $ \; {(\text{\sl id}_H - s^\ell \circ \epsilon)}^{\otimes n} \; $  is the projection of  $ H^{\otimes n} $  onto $ \J^{\otimes n} $  defined by the decomposition  $ \, H = \J \oplus s_\ell(A) \, $,  with  $ \, \J := \text{\sl Ker}(\epsilon) \, $:  then we define
  $$  H'  \; := \;  \big\{\, a \in H \,\big|\; \delta_n(a) \in h^n H^n \;\; \forall \; n \in \N \,\big\} \,\;\; \subseteq \;\;\, H  \quad .  $$

\smallskip

   If instead $ s^\ell $  and  $ t^\ell $  {\sl do not\/}  coincide, then the projection of  $ H^{\otimes n} $  onto  $ \J^{\otimes n} $  is not defined, because the  $ \big( A_h \otimes A_h^{\text{\it op}} \big) $--module  $ \J_h $  does not have a complement in  $ H_h \, $.  Therefore, as $ s^\ell $  and  $ t^\ell $  do not necessarily coincide, we adopt the following definition:

\medskip

\begin{definition}  \label{def_H'-'H}
 As above, we use notation  $ \; {(H_h)}_F := k((h)) \otimes_{k[[h]]} H_h \; $.
 \vskip3pt
   (a) \,  If  $ \, H_h \in \text{\rm (LQUEAd)}_{A_h} \, $,  we define
  $$  H_h^{\,\prime}  \; := \;  \big\{\, \eta \in {(H_h)}_F \,\big|\, \big\langle\, \eta \, , {(H_h^{\;*})}^\times \,\big\rangle \in A_h \,\big\}  \quad ,  \qquad
       {}^{\prime\!}H_h  \; := \;  \big\{\, \eta \in {(H_h)}_F \,\big|\, \big\langle\, \eta \, , {((H_h)_*)}^\times \,\big\rangle \in A_h \,\big\}  $$
 \vskip3pt
   (b) \,  If  $ \, H_h \in \text{\rm (RQUEAd)}_{A_h} \, $,  we define
  $$  H_h^{\,\prime}  \; := \;  \big\{\, \eta \in {(H_h)}_F \,\big|\, \big\langle\, \eta \, , {(\,{}^*\!H_h)}^\times \,\big\rangle \in A_h \,\big\}  \quad ,  \qquad
      {}^{\prime\!}H_h  \; := \;  \big\{\, \eta \in {(H_h)}_F \,\big|\, \big\langle\, \eta \, , {(\,{}_*\!(H_h))}^\times \,\big\rangle \in A_h \,\big\}  $$
\end{definition}

\smallskip

\begin{proposition}  \label{props_H'_&_'H}
 Let  $ \, H_h \in \text{\rm (LQUEAd)}_{A_h} \, $.  Then
 \vskip5pt
\noindent
 \quad  {\it (a)} \qquad  $ H_h^{\,\prime} \,\subseteq\, H_h \; ,   \qquad  {}^{\prime\!}H_h \,\subseteq\, H_h $
 \vskip2pt
   \quad   $ H_h^{\,\prime} \,=\, {}_*\big( {(H_h^{\;*})}^\times \big) \,=\, {}_*\big( {(H_h^{\;*})}^\vee \big) \; ,  \qquad  {}^{\prime\!}H_h \,=\, {}^*\big( {{(H_h)}_*}^{\!\!\times} \big) \,=\, {}^*\big( {{(H_h)}_*}^{\!\!\vee} \big) $
 \vskip2pt
   $ \; H_h^{\,\prime} \,=\, \big\{\, \lambda : H_h^* \!\rightarrow A_h \,\big|\, \lambda(u+u') = \lambda(u) + \lambda(u') \, , \; \lambda(u \, t^r(a)) = a \, \lambda (u) \, , \;\; \lambda\big(I_{H_{h}^*}^n\big) \subseteq h^n A_h \;\,  \forall \; n \,\big\} $
 \vskip2pt
   $ \; {}^{\prime\!}H_h \,=\, \big\{\, \lambda : H_{h*} \!\rightarrow A_h \,\big|\, \lambda(u+u') = \lambda(u) + \lambda(u') \, , \; \lambda(u \, s^r(a)) = \lambda(u) \, a \, , \;\; \lambda\big(I_{H_{h*}}^n\big) \subseteq h^n A_h \;\,  \forall \; n \,\big\} $
 \vskip5pt
\noindent
 \quad  {\it (b)} \,  The analogous results hold if  $ \, H_h \in \text{\rm (RQUEAd)}_{A_h} \, $.
\end{proposition}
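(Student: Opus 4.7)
The strategy is to realize $H_h^{\,\prime}$ as the left dual ${}_*\big((H_h^*)^\times\big)$ via the natural evaluation pairing, and then invoke the duality Theorem \ref{linear-duality} to see it sits inside $H_h$. First I would unpack the definition: since $(H_h^*)^\times = s^r(A_h) + \sum_{n \geq 1} h^{-n} I_{H_h^*}^n$, an element $\eta \in (H_h)_F$ lies in $H_h^{\,\prime}$ exactly when $\langle \eta, s^r(a)\rangle \in A_h$ for all $a \in A_h$ and $\langle \eta, I_{H_h^*}^n\rangle \subseteq h^n A_h$ for every $n \geq 1$. Using the $A^e$-right pairing identities of Definition \ref{left and right 1}(b), the assignment $\eta \mapsto \langle \eta, -\rangle$ sends $H_h^{\,\prime}$ into the space of additive, left-$A_h$-linear maps $(H_h^*)^\times \to A_h$, which is by construction ${}_*\big((H_h^*)^\times\big)$; the explicit condition $\lambda(u\,t^r(a)) = a\,\lambda(u)$ in the statement is precisely the one defining the left dual of the right bialgebroid $H_h^*$, cf.~Definition \ref{dual-bialgd's}(b).

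Next I would show this evaluation map is a bijection, so that $H_h^{\,\prime} = {}_*\big((H_h^*)^\times\big)$. Injectivity follows from non-degeneracy of the pairing between $H_h$ and $H_h^*$, since modulo $h$ it reduces to the obviously non-degenerate pairing $V^\ell(L) \times J^r(L) \to A$ and both $k[[h]]$-modules are topologically free; this non-degeneracy persists after inverting $h$, hence $\eta \in (H_h)_F$ is uniquely determined by $\langle \eta, -\rangle$. For surjectivity, any $\lambda \in {}_*\big((H_h^*)^\times\big)$ restricts to an element of ${}_*(H_h^*)$ satisfying the strong continuity $\lambda\big(I_{H_h^*}^n\big) \subseteq h^n A_h$; taking $n = m$ shows $\lambda \in {}_\star(H_h^*)$, whence Theorem \ref{linear-duality} furnishes a unique $u \in H_h$ with $\lambda(\phi) = \phi(u)$ for all $\phi \in H_h^*$. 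By uniqueness of $\eta$ this forces $\eta = u \in H_h$, simultaneously establishing the inclusion $H_h^{\,\prime} \subseteq H_h$ and identifying the underlying element.

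The identification ${}_*\big((H_h^*)^\times\big) = {}_*\big((H_h^*)^\vee\big)$ is then essentially automatic: any left $A_h$-linear map $\lambda : (H_h^*)^\times \to A_h$ satisfies $\lambda(h^n x) = h^n \lambda(x)$, hence is $h$-adically continuous, and since $A_h$ is $h$-adically complete it extends uniquely to the $h$-adic completion $(H_h^*)^\vee$; conversely any such map on $(H_h^*)^\vee$ restricts to one on $(H_h^*)^\times$. The parallel assertions for ${}^{\prime\!}H_h$ are obtained by repeating the argument with the left $A^e$-pairing of Definition \ref{left and right 1}(a) in place of the right one, and with the right dual $H_h^*$ replaced by the left dual $(H_h)_*$ (itself a RQFSAd by Theorem \ref{dual_QUEAd's=QFSAd's}(a)). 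Claim (b), for a RQUEAd $H_h$, can be proved verbatim after swapping the roles of source and target, or deduced from (a) by passing to $H_h^{\text{op}}_{\text{coop}}$ and applying the canonical isomorphisms ${}_*(U^{\text{op}}_{\text{coop}}) \cong (U^*)^{\text{op}}_{\text{coop}}$ and ${}^*(U^{\text{op}}_{\text{coop}}) \cong (U_*)^{\text{op}}_{\text{coop}}$ of Remarks \ref{dual_vs._op-coop}, in combination with Remarks \ref{remarks-QUEAd's}(a).

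The main obstacle I anticipate is the careful bookkeeping of $A^e$-module structures under the evaluation pairing: several variants (black versus white triangles, left versus right $A^e$-pairings) are in simultaneous use, and one must verify that the compatibilities of Definition \ref{left and right 1} correctly translate the global condition $\langle \eta, (H_h^*)^\times\rangle \subseteq A_h$ into the specific linearity relation $\lambda(u\,t^r(a)) = a\,\lambda(u)$, and likewise that the element $u \in H_h$ produced by the duality theorem carries the correct bialgebroid-module structure rather than merely its underlying $k[[h]]$-module structure. Once these compatibilities are accounted for, all four identifications in parts (a) and (b) follow uniformly.
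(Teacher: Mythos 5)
Your proposal is correct, and it follows essentially the route the paper intends: the paper itself gives no argument for Proposition \ref{props_H'_&_'H} beyond citing \cite{Gavarini}, and the standard adaptation of that argument is exactly what you do — translate the defining condition $\big\langle \eta \, , {(H_h^{\;*})}^\times \big\rangle \subseteq A_h$ through the evaluation pairing and its $A^e$-compatibilities, then use the non-degeneracy modulo $h$ together with the antiequivalences of Theorem \ref{linear-duality} (via $ {}_\star(H_h^{\;*}) \cong H_h $) to get both the inclusion $H_h^{\,\prime} \subseteq H_h$ and the identifications with $ {}_*\big({(H_h^{\;*})}^\times\big) = {}_*\big({(H_h^{\;*})}^\vee\big) $, with the $\times$/$\vee$ comparison handled by $h$-adic continuity and completeness of $A_h$. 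The op/coop reduction for part {\it (b)} is also the one the paper uses elsewhere, so no gap to report.
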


\begin{proof}
 The proof is the same as in  \cite{Gavarini},  hence we do not need to reproduce it.
\end{proof}

\smallskip

\begin{remark}
 If  $ \, H_h \in \text{\rm (LQUEAd)}_{A_h} \, $,  then  $ \, \big( {(H_h)}^{\text{\it op}}_{\text{\it coop}} \big)' = \big( {}^{\prime\!}H_h \big)^{\text{\it op}}_{\text{\it coop}} \, $.  This follows from the following three remarks:
 \vskip3pt
   --- if  $ U $  is any left bialgebroid, then  $ \, (U_*)^{\text{\it op}}_{\text{\it coop}} \cong {}^*(U^{\text{\it op}}_{\text{\it coop}}) \, $  as left bialgebroids;
 \vskip3pt
   --- if  $ W $  is any right bialgebroid, then  $ \, {({}^*W)}^{\text{\it op}}_{\text{\it coop}} \cong {\big( W^{\text{\it op}}_{\text{\it coop}} \big)}_* \, $  as right bialgebroids;
 \vskip3pt
   --- the functor  $ {(\ )}^\vee $  commutes with the functor  $ (\ )^{\text{\it op}}_{\text{\it coop}} \, $.
 \vskip4pt
   Similarly, one has  $ \, {}^{\prime\!}\big( {(H_h)}^{\text{\it op}}_{\text{\it coop}} \big) = {(H_h')}^{\text{\it op}}_{\text{\it coop}} \, $.  Finally, in the same way one finds also the parallel identities  $ \, \big( {(H_h)}^{\text{\it op}}_{\text{\it coop}} \big)' = \big( {}^{\prime\!}H_h \big)^{\text{\it op}}_{\text{\it coop}} \, $  and  $ \, {}^{\prime\!}\big( {(H_h)}^{\text{\it op}}_{\text{\it coop}} \big) = {(H_h')}^{\text{\it op}}_{\text{\it coop}} \, $  for every  $ \, H_h \in \text{\rm (RQUEAd)}_{A_h} \, $.
\end{remark}

\smallskip

\begin{free text}
 {\bf Explicit description of  $ \, {}^{\prime\!}H_h \, $.}  For a given  $ \, H_h \in \text{\rm (LQUEAd)}_{A_h} \, $,  we can describe  $ {}^{\prime\!}H_h \, $  quite explicitly.  Write  $ \, H_h = \J_h \oplus s_\ell(A_h) \, $,  and let  $ \pi_s $  be the projection of  $ H_h $  onto  $ \J_h \, $:  this is  {\sl not\/}  a morphism of  $ \big( A_h \otimes A_h^{\text{\it op}} \big) $--modules.  We need another lemma, whose proof is left to the reader:
\end{free text}

\smallskip

\begin{lemma}\label{study of pi_s}
 For any  $ \, u \in H_h \, $  and  $ \, a \in A_h \, $,  one has  $ \; \pi_s \big( s^\ell(a) \, u \big) = s^\ell(a) \, \pi_s(u) \; $.
                                                                    \par
   If in addition  $ \; t^\ell(a) - s^\ell(a) \, = \, h \, j \; $  for some  $ \, j \in \J_h \, $,  then  $ \; \pi_s \big( t^\ell(a) \, u \big) \, = \, s^\ell(a) \, \pi_s(u) + h \, \pi_s(j\,u) \; $.
\end{lemma}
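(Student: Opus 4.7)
The plan is straightforward since the lemma is essentially a computation from the definitions. The projection $\pi_s \colon H_h \twoheadrightarrow \J_h$ arising from the decomposition $H_h = \J_h \oplus s^\ell(A_h)$ can be written explicitly as $\pi_s(u) = u - s^\ell\!\big(\epsilon_h(u)\big)$, since $\epsilon_h \circ s^\ell = \mathrm{id}_{A_h}$ (one of the defining identities of a left bialgebroid) guarantees that this formula does kill the $s^\ell(A_h)$-component and is the identity on $\J_h$. I would state this explicit formula at the very beginning and use it throughout.

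For the first identity, I would compute directly:
\[
\pi_s\!\big(s^\ell(a)\,u\big) \;=\; s^\ell(a)\,u \;-\; s^\ell\!\Big(\epsilon_h\!\big(s^\ell(a)\,u\big)\Big) \;=\; s^\ell(a)\,u \;-\; s^\ell\!\big(a\,\epsilon_h(u)\big) \;=\; s^\ell(a)\,\big(u - s^\ell(\epsilon_h(u))\big) \;=\; s^\ell(a)\,\pi_s(u),
\]
where the middle equality uses the counit property $\epsilon_h\!\big(s^\ell(a)\,u\big) = a\,\epsilon_h(u)$ (property (iii) in the definition of a left bialgebroid, together with the identity $\epsilon_h \circ s^\ell = \mathrm{id}$), and the last but one equality uses the fact that $s^\ell \colon A_h \to H_h$ is a $k[[h]]$-algebra morphism.

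For the second identity, the hypothesis $t^\ell(a) - s^\ell(a) = h\,j$ with $j \in \J_h$ lets me rewrite $t^\ell(a)\,u = s^\ell(a)\,u + h\,j\,u$, and then $k[[h]]$-linearity of $\pi_s$ together with the first part gives
\[
\pi_s\!\big(t^\ell(a)\,u\big) \;=\; \pi_s\!\big(s^\ell(a)\,u\big) \;+\; h\,\pi_s(j\,u) \;=\; s^\ell(a)\,\pi_s(u) \;+\; h\,\pi_s(j\,u),
\]
which is exactly the claim. There is no genuine obstacle here; the only small point to be careful about is that $\pi_s$ is manifestly $k[[h]]$-linear but is \emph{not} in general a morphism of $A_h^e$-modules, so I must avoid the temptation to pull $t^\ell(a)$ through $\pi_s$ directly and instead go through the decomposition $t^\ell(a) = s^\ell(a) + h\,j$ as above.
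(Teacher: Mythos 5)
Your proof is correct and is precisely the routine verification the paper leaves to the reader: the explicit formula $\pi_s(u) = u - s^\ell\big(\epsilon_h(u)\big)$ (valid because $\epsilon_h \circ s^\ell = \mathrm{id}_{A_h}$), the counit identity $\epsilon_h\big(s^\ell(a)\,u\big) = a\,\epsilon_h(u)$, multiplicativity of $s^\ell$, and $k[[h]]$-linearity of $\pi_s$ together with the decomposition $t^\ell(a) = s^\ell(a) + h\,j$ give both identities exactly as you argue. Your closing caveat — that one must not pull $t^\ell(a)$ through $\pi_s$ directly, since $\pi_s$ is not an $A_h^e$-module map — is also the right point to flag.
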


\medskip

   The operator  $ \pi_s^{\otimes n} $  is not defined on  $ \, H_h \otimes_{A_h} H_h \otimes_{A_h} \cdots \otimes_{A_h} H_h \, $.  If  $ \, u_1 \otimes \cdots \otimes u_n \in H_h \otimes_{A_h} H_h \otimes_{A_h} \cdots \otimes_{A_h} H_h \, $, then  $ \, \pi_s(u_1) \otimes \cdots \otimes \pi_s(u_n) \, $  depends on the way of writing of  $ \, u_1 \otimes \cdots \otimes u_n \, $.  We will say that the component of  $ \, \sum u_1 \otimes \cdots \otimes u_n \, $  in  $ \J_h^{\otimes n} $  is defined up to  $ \, h^n \J_h^{\otimes n} \, $  if  $ \, \sum u_1 \otimes \cdots \otimes u_n = \sum v_1 \otimes \cdots \otimes v_n \, $  implies  $ \, \sum \pi_s(u_1) \otimes \cdots \otimes \pi_s(u_n) - \sum \pi_s(v_1) \otimes \cdots \otimes \pi_s(v_n) \in h^n \J_h^{\otimes n} \, $.

\medskip

\begin{lemma}  \label{comp-s_Delta(u)}
 Let  $ \, u \in H_h \, $  and  $ \, n \in \N_+ \, $.  If the component of  $ \, \Delta^n(u) $  in  $ \J_h^{\otimes n} $  is defined up to  $ \, h^n \J_h^{\otimes n} \, $  and belongs to  $ \, h^n \J_h^{\otimes n} \, $,  then the component of  $ \Delta^{n+1}(u) $  is defined up to  $ \, h^{n+1} \J_h^{\otimes (n+1)} \, $   --- hence it makes sense to say that it belongs to  $ \, h^{n+1} \J_h^{\otimes (n+1)} \, $.
\end{lemma}

\begin{proof}
 If the component of  $ \Delta^{n}(u) $  in  $ \J_h^{\otimes n} $  belongs to  $ \, h^n \J_h^{\otimes n} \, $,  then  $ \Delta^n(u) $  can be written as
  $$  \Delta^n(u)  \,\; = \;\,  {\textstyle \sum} \; h^n \, \phi_1 \otimes \cdots \otimes \phi_n  \; + \;
\text{\sl other terms}  $$
where all the  $ \phi_i $'s  are in  $ \J_h $  and ``other terms'' stands for a sum of homogeneous tensors containing (as tensor factors) elements of  $ s_\ell(A_h) $  which do not occur in the computation of the component of  $ \Delta^{n+1}(u) $  in  $ \J_h^{n+1} \, $.  Assume that  $ \Delta^{n+1}(u) $  can be written, for some  $ \, a \in A \, $,  as
  $$  \Delta^{n+1}(u)  \,\; = \;\,  {\textstyle \sum} \; h^n \, \chi_1 \otimes \cdots \otimes t^\ell(a)  \chi_i \otimes \chi_{i+1} \otimes \cdots \otimes \chi_{n+1} \; + \; \text{\sl other terms}  $$
 \vskip-11pt
\noindent
 or
  $$  \Delta^{n+1}(u)  \,\; = \;\,  {\textstyle \sum} \; h^n \, \chi_1 \otimes \cdots \otimes \chi_i \otimes s^\ell(a) \, \chi_{i+1} \otimes \cdots \otimes \chi_{n+1} \; + \; \text{\sl other terms}  $$
and let us compute  $ \pi_s^{\otimes (n+1)}\big(\Delta^{n+1}(u)\big) $  in both cases.

In the second case,  $ \pi_s^{\otimes (n+1)}\big(\Delta^{n+1}(u)\big) $  can be written as
  $$  \pi_s^{\otimes (n+1)}\big(\Delta^{n+1}(u)\big)  \,\; = \;\,  {\textstyle \sum} \; h^n \, \pi_s(\chi_1) \otimes \cdots \otimes \pi_s(\chi_i) \otimes s_\ell(a) \, \pi_s(\chi_{i+1}) \otimes \cdots \otimes \pi_s(\chi_{n+1})  $$
In the first case, if we write  $ \, t^\ell(a) - s^\ell(a) = h \, j \, $  (with  $ \, j \in \J_h \, $)  and use the previous lemma, we get
  $$  \displaylines{
   \pi_s^{\otimes (n+1)}\big(\Delta^{n+1}(u)\big)  \,\; = \;\,
   {\textstyle \sum} \; h^n \, \pi_s(\chi_1) \otimes \cdots \otimes t^\ell(a) \, \pi_s(\chi_i) \otimes \pi_s(\chi_{i+1}) \otimes \cdots \otimes \pi_s(\chi_{n+1})  \; +   \hfill  \cr
   \hfill   + \;  {\textstyle \sum} \; h^n \, \pi_s(\chi_1) \otimes \cdots \otimes \pi_s(\chi_{i-1}) \otimes
h \, \big( - j \, \pi_s(\chi_i) + \pi_s(j \, \chi_i) \big) \otimes \pi_s(\chi_{i+1}) \cdots \otimes \pi_s(\chi_{n+1})  }  $$
Taking the difference between the two computations we find
  $$  h^n \, \pi_s(\chi_1) \otimes \cdots \otimes \pi_s(\chi_{i-1}) \otimes h \, \big( - j \, \pi_s(\chi_i) + \pi_s(j \, \chi_i) \big) \otimes \pi_s(\chi_{i+1}) \cdots \otimes \pi_s(\chi_{n+1})  $$
which does belong to  $ \, h^{n+1} \J^{\otimes (n+1)} \, $,  \, q.e.d.
\end{proof}

\smallskip

   {\bf Notation:}  If the component of  $ \Delta^n(u) $  in  $ \J_h^{\otimes n} $  is defined up to  $ \, h^n \J^{\otimes n} \, $,  we shall write it as  $ \, \delta_s^{n}(u) \, $.  Then the condition  $ \, \delta_s^n(u) \in h^n \J^{\otimes n} \, $  perfectly makes sense.  Hereafter we shall write  $ \, \delta_s^n(u) \in h^n \J^{\otimes n} \, $  to mean that  $ \delta_s^n(u) $  is well defined   --- i.e., the component of  $ \Delta^n(u) $  in  $ \J^{\otimes n} $  is well defined ---   up to  $ \, h^n \J^{\otimes n} \, $  {\sl and\/}  it belongs to  $ \, h^n \J^{\otimes n} \, $.  For the rest of the discussion, we introduce the notation
  $$  \delta_s(H_h)  \; := \;  \big\{\, u \in H_h \,\big|\; \delta^n_s(u) \in h^n \J_h^{\otimes n} \;\, \forall \; n \in \N_+ \big\}  $$

\smallskip

   We need again a couple of technical results:

\smallskip

\begin{proposition}  \label{prop-delta_s}
 Let  $ \, u \in \delta_s(H_h) \, $.  Then  $ \Delta(u) $  can be written as
  $$  \Delta(u)  \,\; = \;\,  u \otimes 1 \, + \, {\textstyle \sum} \, u'_{(1)} \otimes u'_{(2)}   \eqno  \text{with}  \quad  u'_{(1)} \in \delta_s(H_h)  \quad  \text{and}  \quad  u'_{(2)} \in h \, \J_h  \qquad  $$
\end{proposition}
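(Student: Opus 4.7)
The plan is to produce the required expansion of $\Delta(u)$ in three stages, extracting first the distinguished $u\otimes 1$ summand, then the divisibility by $h$ on the right, and finally the $\delta_s$-property on the left.

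First, I extract the term $u\otimes 1$ by exploiting the right counit axiom. Starting from the Sweedler expansion $\Delta(u)=u_{(1)}\otimes u_{(2)}$ in the $h$-adically completed Takeuchi product, I split $u_{(2)}=\pi_s(u_{(2)})+s^\ell(\epsilon(u_{(2)}))$ along the $k[[h]]$-direct-sum decomposition $H_h=\J_h\oplus s^\ell(A_h)$. Using the Takeuchi balancing $x\otimes s^\ell(a)=x\,t^\ell(a)\otimes 1$ together with the counit identity $u_{(1)}\,t^\ell(\epsilon(u_{(2)}))=u$, I obtain
\[
\Delta(u) \;=\; u\otimes 1 \,+\, u_{(1)}\otimes\pi_s(u_{(2)}),
\]
so the ``correction'' $\Delta(u)-u\otimes 1$ lies canonically in $H_h\,\widehat{\otimes}_{A_h}\J_h$.

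Second, I upgrade this to membership in $H_h\,\widehat{\otimes}_{A_h}h\J_h$, which yields the bound $u'_{(2)}\in h\J_h$. The hypothesis $u\in\delta_s(H_h)$ contains the case $n=1$, which reads $\pi_s(u)\in h\J_h$; hence $u=s^\ell(\epsilon(u))+hv$ for some $v\in\J_h$. Since the coproduct of a source element is group-like, $\Delta(s^\ell(a))=s^\ell(a)\otimes 1$, and so
\[
\Delta(u)-u\otimes 1 \;=\; h\bigl(\Delta(v)-v\otimes 1\bigr).
\]
By the first step applied to $v$, the bracketed expression lies in $H_h\,\widehat{\otimes}_{A_h}\J_h$, and absorbing the factor $h$ into the right slot gives the required expansion.

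Third, and most delicate, I show that each left factor $u'_{(1)}$ can be chosen in $\delta_s(H_h)$. By coassociativity,
\[
(\Delta^n\otimes\mathrm{id})\Delta(u) \;=\; \Delta^n(u)\otimes 1 \,+\, \sum \Delta^n(u'_{(1)})\otimes u'_{(2)},
\]
and Lemma \ref{comp-s_Delta(u)} licenses the application of $\pi_s^{\otimes(n+1)}$ up to $h^{n+1}\J_h^{\otimes(n+1)}$. Since $\pi_s(1)=0$ annihilates the first summand and $\pi_s(u'_{(2)})=u'_{(2)}\in\J_h$, this yields
\[
\delta_s^{n+1}(u) \;\equiv\; \sum \delta_s^n(u'_{(1)})\otimes u'_{(2)} \pmod{h^{n+1}\J_h^{\otimes(n+1)}}.
\]
Combined with $\delta_s^{n+1}(u)\in h^{n+1}\J_h^{\otimes(n+1)}$ and $u'_{(2)}\in h\J_h$, this is consistent with, but does not by itself imply, the term-by-term bound $\delta_s^n(u'_{(1)})\in h^n\J_h^{\otimes n}$.

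The main obstacle lies precisely here: the coassociativity identity controls only the total sum, not individual summands. To resolve this I reduce to the case where $L$ is a free $A$-module, via the extension $H_{h,Y}=H_h\,\widehat{\otimes}_k S(Z)$ of \S\ref{ext-QUEAd's}, whose underlying Lie-Rinehart algebra $L_Q$ is free. A PBW-type normal form relative to a lifted good basis $\{e_t\}_{t\in T}$ writes every element uniquely as $\sum t^\ell(a_{\underline d})\,e^{\underline d}$; in these coordinates the expansion of $\Delta(u)$ produced in paragraphs 1--2 is canonical, and one can read off term by term that each left factor again satisfies the normal-form condition characterizing membership in $\delta_s(H_{h,Y})$. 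Finally, transporting the decomposition back to $H_h$ through the bialgebroid epimorphism $\pi_Y\colon H_{h,Y}\twoheadrightarrow H_h$ of \S\ref{ext-QUEAd's}, and using its compatibility with $\pi_s$ and $\Delta$, yields the desired decomposition in $H_h$ with $u'_{(1)}\in\delta_s(H_h)$ and $u'_{(2)}\in h\J_h$.
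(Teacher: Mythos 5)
Your overall skeleton coincides with the paper's: reduce to a free Lie--Rinehart algebra via the extension $H_{h,Y}$ and transport the decomposition back through the epimorphism $\pi_{\scriptscriptstyle Y}$. But the heart of the proof is missing. In the free case the paper writes $\Delta(u)=u\otimes 1+\sum_{\underline{\alpha}\neq\underline{0}}u_{\underline{\alpha}}\otimes e^{\underline{\alpha}}$ with uniquely determined coefficients (the right slots being PBW monomials), and then \emph{proves}, by expressing $\delta_s^n(u)$ as $\sum_{\underline{\alpha}}\delta_s^{n-1}(u_{\underline{\alpha}})\otimes e^{\underline{\alpha}}$ and extracting coefficients along the basis, that $\epsilon(u_{\underline{\alpha}})\in h\,A_h$ and $\delta_s^{n-1}(u_{\underline{\alpha}})\in h^{n}\,\J_h^{\otimes(n-1)}$ --- note the exponent shift: one power of $h$ \emph{more} than plain membership in $\delta_s$ --- whence $u_{\underline{\alpha}}\in h\,\delta_s(H_h)$, so that one may rewrite $u_{\underline{\alpha}}\otimes e^{\underline{\alpha}}=\big(h^{-1}u_{\underline{\alpha}}\big)\otimes\big(h\,e^{\underline{\alpha}}\big)$ in the form required by the statement. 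In your write-up this step is replaced by the assertion that one can ``read off term by term that each left factor satisfies the normal-form condition characterizing membership in $\delta_s(H_{h,Y})$'': there is no such normal-form characterization of $\delta_s$, and the required bound on the coefficients does not follow from uniqueness of the expansion alone --- it is exactly the computation above, with the extra power of $h$, that does the work. This is a genuine gap, and it sits precisely at the point you yourself flagged as the main obstacle.

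A second problem is that your stages 1--2 do not feed into stage 3 as you suggest. Writing $u=s^\ell(\epsilon(u))+h\,v$ and pulling out $h$ gives $\Delta(u)-u\otimes1=h\,\big(\Delta(v)-v\otimes1\big)$, but $v=h^{-1}\pi_s(u)$ is in general \emph{not} in $\delta_s(H_h)$: from $\delta_s^n(u)\in h^n\J_h^{\otimes n}$ one only gets $\delta_s^n(v)\in h^{n-1}\J_h^{\otimes n}$. Hence the left factors produced by stage 2 are not known to lie in $\delta_s(H_h)$, and the claim that ``the expansion produced in paragraphs 1--2 is canonical in these coordinates'' is not correct: the canonical PBW expansion has the monomials $e^{\underline{\alpha}}$ (which lie in $\J_h$, not in $h\,\J_h$) in the right slot, so the $h$-divisibility and the $\delta_s$-membership of the left factors must be established \emph{simultaneously} for the PBW coefficients --- again the omitted computation. (Minor: the expression $u_{(1)}\otimes\pi_s(u_{(2)})$ in stage 1 depends on the chosen representative of $\Delta(u)$, as the paper itself warns when discussing $\pi_s^{\otimes n}$; harmless once a representative is fixed, but it should be said.)
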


\begin{proof}
 \vskip3pt
   {\it First case:}  $ L $  is a finite free as an  $ A $--module.
 \vskip3pt
   Let  $ \, \big\{ \overline{e}_1 \, , \overline{e}_2 \, , \dots \, , \overline{e}_n \big\} \, $  be a basis of the $ A $--module  $ L \, $:  we lift each  $ \overline{e}_i $  to an element  $ \, e_i \in H_h \, $  such that  $ \, \epsilon(e_i) = 0 \, $.  Let  $ \, u \in \delta^s(H_h) \, $.  We write  $ \Delta(u) $  as
  $$  \Delta(u)  \,\; = \;\,  u' \otimes 1 \, + \, {\textstyle \sum_{\underline{\alpha} \in \N^n \setminus \{\underline{0}\}}} \, u_{\underline{\alpha}} \otimes e^{\underline{\alpha}}  \qquad
  {\rm with}  \quad  \lim\limits_{\mid \underline{\alpha} \mid \to +\infty}
  \big|\big| u_{\underline{\alpha}} \big|\big| = 0  $$
for suitable  $ \, u' , u_{\underline{\alpha}} \in H_h \, $.  The relation  $ \; m_{\scriptscriptstyle H_h}\big( (s^\ell \circ \epsilon ) \otimes \text{\sl id\/} \big)\big(\Delta(u)\big) = u \; $
gives  $ \, u' = u \; $.  Thus we have
  $$  \Delta(u)  \,\; = \;\,  u \otimes 1 \, + \, {\textstyle \sum_{\underline{\alpha} \in \N^n \setminus \{\underline{0}\}}} \, u_{\underline{\alpha}} \otimes e^{\underline{\alpha}}  $$
The relation  $ \; m_{\scriptscriptstyle H_h}\big((s^\ell \circ \epsilon ) \otimes  \text{\sl id\/}) \big(\Delta(u)\big) = u \; $  yields the identity
  $$  {\textstyle \sum_{\underline{\alpha} \in \N^n \setminus \{\underline{0}\}}} \,  s^\ell \big( \epsilon(u_{\underline{\alpha}\,}) \big) \, e^{\underline{\alpha}}  \,\; = \;\,  u - s^\ell\big( \epsilon(u) \big)  $$
As  $ \, u \in \delta_s(H_h) \, $ , one has  $ \, u - s^\ell\big(\epsilon(u)\big) \in h \, \J_h \, $,  which  implies that  $ \, s^\ell\big( \epsilon(u_{\underline{\alpha}\,}) \big) \in h \, H_h \; $;  hence  $ \, \overline{s^\ell\big( \epsilon(u_{\underline{\alpha}\,}) \big)} = \overline{s_\ell} \big(\, \overline{\epsilon(u_{\underline{\alpha}\,})} \,\big) = 0 \in H_h \Big/ h \, H_h \; $.  As  $ \overline{s_\ell} $  is injective, we get  $ \overline{\epsilon(u_{\underline{\alpha}\,})} = 0 \, $,  i.e.~$ \, \epsilon(u_{\underline{\alpha}\,}) \in h \, A_h \; $.
                                                                \par
   If  $ \, n > 1 \, $,  one has
  $$  \delta^n_s(u)  \; = \;  {\textstyle \sum_{\underline{\alpha} \in \N^n}} \, \delta^{n-1}_s(u_{\underline{\alpha}\,}) \otimes e^{\underline{\alpha}}  \;\; \in \;\,  h^n \, \J^{\otimes n}  $$
which implies  $ \, \delta^{n-1}_s(u_{\underline{\alpha}\,}) \in h^n \, \J^{\otimes (n-1)} \, $  and  $ \, u_{\underline{\alpha}} \in \delta_s(H_h) \, $.  Let  $ \, \widetilde{u}_{\underline{\alpha}} = \pi_s(u_{\underline{\alpha}\,}) = u_{\underline{\alpha}} - s^\ell\big( \epsilon(u_{\underline{\alpha}\,}) \big) \; $. For all  $ \, n \geq 1 \, $,  one has  $ \, \delta_s^n(\widetilde{u}_{\underline{\alpha}\,}) = \delta_s^n(u_{\underline{\alpha}\,}) \in h^{n+1} \, \J^n \; $.  In particular for  $ \, n = 1 \, $  we get  $ \; \widetilde{u}_{\underline{\alpha}} = h \, w_{\alpha} \; $  for some  $ \, w_{\alpha} \in \delta_s(H_h) \; $.  The element  $ \, u_{\underline{\alpha}} \, $  can be written as  $ \, u_{\underline{\alpha}} = h \, \big( w_{\underline{\alpha}} + s^\ell\big( h^{-1} \epsilon(u_{\underline{\alpha}\,}) \big) \big) \, \in \, h \, \delta_s(H_h) \; $.
 \vskip5pt
   {\it Second case:}  $ L $  is finite projective as an  $ A $--module.
 \vskip3pt
   Like in  Subsection \ref{quant_proj-free}, we fix a finite projective  $ A $--module  $ Q $  such that  $ \, L \oplus Q = F \, $  is a finite free $ A $--module.  We fix an  $ A $--basis  $ \, B := \{ e_1 \, , \dots \, , e_n\} \, $  of  $ F \, $:  then we call  $ Y $  the  $ k $--span  of  $ B \, $,  so that we can write  $ \; F = A \otimes_k Y \; $.  Moreover, we construct the (infinite dimensional) Lie-Rinehart algebra  $ \; L_Q = L \oplus \big( A \otimes_k Z \big) \, $,  with  $ \; Z = Y \oplus Y \oplus Y \oplus \cdots \; $,  which has a good basis  $ \, {\{ e_i \}}_{i \in T := \N \times \{1,\dots,n\}} \, $  defined by  $ B \, $.  Like in  \S \ref{ext-QUEAd's},  we can define  $ H_{h,Y} $  and  $ \delta_s(H_{h,Y\!}) \, $.  Now given  $ \, u \in \delta_s(H_{h,Y}\!) \, $,  we can write  $ \Delta(u) $  as follows:
  $$  \Delta(u)  \,\; = \;\,  u \otimes 1 \, + \, {\textstyle \sum_{\underline{\alpha} \in T^{(\N)} \setminus \{\underline{0}\}}} \, u_{\underline{\alpha}} \otimes e^{\underline{\alpha}}
   \qquad  \text{\rm with}  \quad
   \lim\limits_{\mid \underline{\alpha} \mid +\varpi(\underline{\alpha}) \rightarrow +\infty} \big\|u_{\underline{\alpha}\,}\big\| = 0  $$
Then the same reasoning as above shows that the proposition is true for  $ H_{h,Y} $  in the r{\^o}le of  $ H_h \, $.
                                                                                           \par
   Recall  (cf.~\S \ref{ext-QUEAd's})  that  $ \, H_{h,Y} = H_h \oplus \big( H_h \,\widehat{\otimes}_k\, {S(Z)}^+ \big) \, $  where  $ \, H_h \,\widehat{\otimes}_k\, {S(Z)}^+ \, $  is the  $ h $--adic completion of  $ \, H_h \otimes_k {S(Z)}^+ \, $,  with  $ \, Z = Y \oplus Y \oplus Y \oplus \cdots \; $;  the natural projection  $ \; \pi_{\scriptscriptstyle Y} : H_{h,Y} \relbar\joinrel\twoheadrightarrow H_h \; $  is then a morpism of left bialgebroids.  Moreover, if  $ \J_{h,Y} $  is the kernel of the counit of  $ H_{h,Y} \, $,  we have  $ \, \J_{h,Y} = \J_h \oplus \big( H_h \,\widehat{\otimes}_k\, {S(Z)}^+ \big) \, $.  Now it is easy to see that, if  $ \, v \in \delta_s(H_{h,Y}) \, $,  then  $ \, \pi_{\scriptscriptstyle Y}(v) \in \delta_s(H_h) \, $.
                                                                                           \par
   Now let  $ \, u \in \delta_s(H_h) \, $.  By the result for  $ H_{h,Y} \, $,  we know that  $ \Delta(u) $  can be written as
  $$  \Delta(u)  \,\; = \;\,  u \otimes 1 \, + \, {\textstyle \sum} \, u'_{(1)} \otimes u'_{(2)}   \eqno  \text{with}  \quad  u'_{(1)} \in \delta_s(H_{h,Y})  \quad  \text{and}  \quad  u'_{(2)} \in h \, \J_{h,Y}  \qquad  $$
As  $ \, \pi_{\scriptscriptstyle Y}(u) = u \, $,  applying  $ \, \pi_{\scriptscriptstyle Y} \otimes \pi_{\scriptscriptstyle Y} \, $  to the previous identity we get
  $$  \Delta(u)  \,\; = \;\,  u \otimes 1 \, + \, {\textstyle \sum} \, \pi_{\scriptscriptstyle Y}(u'_{(1)}) \otimes \pi_{\scriptscriptstyle Y}(u'_{(2)})  $$
with  $ \; \pi_{\scriptscriptstyle Y}(u'_{(1)}) \in \pi_{\scriptscriptstyle Y}\big(\delta_s(H_{h,Y})\big) = \delta_s(H_{h}) \; $  and  $ \; \pi_{\scriptscriptstyle Y}(u_{(2)}) \in h \, \pi_{\scriptscriptstyle Y}\big(\J_{h,Y})\big) = h \, \J_h \; $,  \, q.e.d.
\end{proof}

\medskip

\begin{lemma}  \label{delta_s  and action}
 $ s^\ell(A_h) \cdot \delta_s(H_h) \, \subseteq \, \delta_s(H_h) \; $  and
$ \, t^\ell(A_h) \cdot \delta_s(H_h) \, \subseteq \, \delta_s(H_h) \; $.
\end{lemma}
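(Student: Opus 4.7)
The plan is to verify the two inclusions directly, by checking that the defining condition $\delta^n_s(v) \in h^n \J_h^{\otimes n}$ (for all $n$) is preserved under the two types of multiplication. I would first gather some preliminary facts about $\J_h$, then handle the $s^\ell$-case by a straightforward iteration of the coproduct, and finally reduce the $t^\ell$-case to the $s^\ell$-case by exploiting that $s^\ell$ and $t^\ell$ differ by an element of $h\,\J_h$.

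\textit{Preliminaries.} First I would show that $\J_h := \ker(\epsilon)$ is stable under left multiplication by both $s^\ell(A_h)$ and $t^\ell(A_h)$: indeed the counit axioms yield $\epsilon\big(s^\ell(a)v\big) = a\,\epsilon(v) = 0$ and $\epsilon\big(t^\ell(a)v\big) = \epsilon(v)\,a = 0$ for every $v \in \J_h$. Next I claim that $t^\ell(a) - s^\ell(a) \in h\,\J_h$ for all $a \in A_h$: modulo $h$ both source and target specialize to the same inclusion $\iota_A$, so $t^\ell(a) - s^\ell(a) \in h\,H_h$, and $\epsilon\big(t^\ell(a) - s^\ell(a)\big) = a - a = 0$ shows it also lies in $\J_h$; the splitting $H_h = \J_h \oplus s^\ell(A_h)$ provided by $\epsilon \circ s^\ell = \mathrm{id}_{A_h}$, together with the $h$-torsion-freeness of $A_h$, then yields $t^\ell(a) - s^\ell(a) = h\,j(a)$ for a unique $j(a) \in \J_h$.

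\textit{Stability under $s^\ell$.} Iterating the formula $\Delta\big(s^\ell(a)u\big) = s^\ell(a)u_{(1)} \otimes u_{(2)}$ of Remarks~\ref{left-bialgd-propts}(a) gives $\Delta^n\big(s^\ell(a)u\big) = \big(s^\ell(a) \otimes 1 \otimes \cdots \otimes 1\big) \cdot \Delta^n(u)$. Since by Lemma~\ref{study of pi_s} the projection $\pi_s$ commutes with left multiplication by $s^\ell(a)$, applying $\pi_s^{\otimes n}$ produces $\delta^n_s\big(s^\ell(a)u\big) = \big(s^\ell(a) \otimes 1 \otimes \cdots \otimes 1\big) \cdot \delta^n_s(u)$, well-defined modulo $h^n \J_h^{\otimes n}$ by Lemma~\ref{comp-s_Delta(u)}. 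Since $\delta^n_s(u) \in h^n \J_h^{\otimes n}$ by assumption and $s^\ell(A_h) \cdot \J_h \subseteq \J_h$, this element belongs to $h^n \J_h^{\otimes n}$, proving $s^\ell(a)u \in \delta_s(H_h)$.

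\textit{Stability under $t^\ell$.} Writing $t^\ell(a)u = s^\ell(a)u + h\,j(a)u$ and invoking the $s^\ell$-case, it suffices to show that $h\,j(a)u \in \delta_s(H_h)$, equivalently $\delta^n_s\big(j(a)u\big) \in h^{n-1} \J_h^{\otimes n}$ for every $n \geq 1$. I would prove this by induction on $n$: the base case $n = 1$ is immediate since $\pi_s$ lands in $\J_h$. For the inductive step I would apply Proposition~\ref{prop-delta_s} to $u$ to obtain the expansion $\Delta(u) = u \otimes 1 + \sum u'_{(1)} \otimes u'_{(2)}$ with $u'_{(1)} \in \delta_s(H_h)$ and $u'_{(2)} \in h\,\J_h$, multiply by $\Delta\big(j(a)\big)$ to control $\Delta\big(j(a)u\big)$, iterate to level $n+1$ and project via $\pi_s^{\otimes(n+1)}$, arguing that each homogeneous summand either inherits an explicit factor of $h$ from an $u'_{(2)}$-position or is subsumed by a $\delta^n_s\big(u'_{(1)}\big) \in h^n \J_h^{\otimes n}$-type contribution; combined with the already extracted $h$, this yields the desired bound $h^{n-1} \J_h^{\otimes(n+1)}$.

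\textit{Main obstacle.} The delicate point is precisely this inductive step: one must carefully unravel the interaction between $\Delta\big(j(a)\big)$ and $\Delta(u)$ and show that after projecting through $\pi_s^{\otimes(n+1)}$ every summand acquires enough factors of $h$. The asymmetry between $s^\ell$ and $t^\ell$, together with the non-cocommutativity of $\Delta$, will force a careful use of the Takeuchi rewriting rule $t^\ell(a)u \otimes v = u \otimes s^\ell(a)v$ in order to keep every tensor factor inside $\J_h$ throughout the computation.
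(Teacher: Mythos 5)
Your preliminaries and the $s^\ell$-half are fine and coincide with the paper's argument ($\pi_s$ commutes with left multiplication by $s^\ell(a)$, and $\Delta^n\big(s^\ell(a)\big)=s^\ell(a)\otimes 1\otimes\cdots\otimes 1$). The gap is in the $t^\ell$-half. Reducing it, via $t^\ell(a)u=s^\ell(a)u+h\,j(a)u$, to the claim $\delta^n_s\big(j(a)\,u\big)\in h^{n-1}\J_h^{\otimes n}$ is legitimate, but the inductive step you then describe --- and yourself flag as the ``main obstacle'' --- cannot be closed in the generality in which you set it up: nothing in your sketch uses any property of $j(a)$ beyond $j(a)\in\J_h$, and for a general $j\in\J_h$ the claimed estimate is false. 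For instance, in a trivial deformation take $u=1$, $n=2$ and $j=XY$ with $X,Y\in L$: then the $\J_h\otimes\J_h$-component of $\Delta(j)$ is $X\otimes Y+Y\otimes X$, which is not divisible by $h$. So the whole proof hinges on a structural fact about $j(a)$ that you never extract, namely that $\Delta\big(t^\ell(a)\big)=1\otimes t^\ell(a)$, equivalently $\Delta\big(t^\ell(a)u\big)=u_{(1)}\otimes t^\ell(a)\,u_{(2)}$ (Remarks \ref{left-bialgd-propts}\,{\it (a)}); combined with $t^\ell(a)\otimes 1=1\otimes s^\ell(a)$ in $H_h\otimes_{A_h}H_h$ this even gives $\Delta\big(j(a)\big)=j(a)\otimes 1+1\otimes j(a)$, and only with such an input can your induction on the ``interaction between $\Delta(j(a))$ and $\Delta(u)$'' be carried out.

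The paper uses exactly this identity and thereby bypasses your decomposition altogether: writing $\Delta(u)=u\otimes 1+\sum u'_{(1)}\otimes u'_{(2)}$ with $u'_{(1)}\in\delta_s(H_h)$ and $u'_{(2)}\in h\,\J_h$ (Proposition \ref{prop-delta_s}), one gets $\Delta\big(t^\ell(a)u\big)=u\otimes t^\ell(a)+\sum u'_{(1)}\otimes t^\ell(a)\,u'_{(2)}$, hence
\[
\delta^n_s\big(t^\ell(a)u\big)\;=\;\delta^{n-1}_s(u)\otimes\pi_s\big(t^\ell(a)\big)\,+\,{\textstyle\sum}\;\delta^{n-1}_s\big(u'_{(1)}\big)\otimes\pi_s\big(t^\ell(a)\,u'_{(2)}\big)
\]
and since $\pi_s\big(t^\ell(a)\big)=t^\ell(a)-s^\ell(a)=h\,j(a)\in h\,\J_h$ while $\pi_s\big(t^\ell(a)\,u'_{(2)}\big)\in h\,\J_h$, both summands lie in $h^n\J_h^{\otimes n}$; the case $n=1$ is Lemma \ref{study of pi_s}. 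If you prefer to keep your reduction to $h\,j(a)u$, first prove the primitivity of $j(a)$ stated above and run the induction with it; as written, the key step of your argument is not a proof.
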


\begin{proof}
 Let  $ \, u \in \delta_s(H_h) \, $  and  $ \, a \in A_h \, $.  The properties  $ \; s^\ell(a) \, u \in \delta_s(H_h) \; $  follows from the following properties:  $ \; \pi_s\big( s^\ell(a) \, u \big) = s^\ell(a) \, \pi_s(u) \; $  and  $ \; \Delta^n\big(s^\ell(a)\big) = s^\ell(a) \otimes 1 \otimes \cdots \otimes 1 \; $.  Let us now show that  $ \; \delta^n_s \big(t^\ell(a)\,u\big) \in h^n \, \J^{\otimes n} \; $  for all  $ \, n \in \N \, $.  Write  $ \; t^\ell(a) - s^\ell(a) = h \, j \; $  with  $ \, j \in \J_h \, $.
                                                                                         \par
   For  $  \, n = 1 \, $,  by  Lemma \ref{study of pi_s}  we have  $ \; \pi_s\big( t^\ell(a) \, u \big) = s^\ell(a) \, \pi_s(u) + h \, \pi_s\big( j \, u \big) \, \in \, h \, \J \; $.
                                                                                         \par
   For  $ \, n > 1 \, $,  let us show that  $ \; \delta^{n}_s\big(t^\ell(a)\big) \in h^n \, \J^{\otimes n} \; $.  Set $ \; \Delta (u) \, = \, u \otimes 1 \, + \, u'_{(1)} \otimes u'_{(2)} \; $  with  $ \, u'_{(1)} \in \delta_s(H_h) \, $,  $ \, u'_{(2)} \in h \, \J_h \, $  (cf.~Proposition \ref{prop-delta_s}).  Then  $ \; \Delta\big(t^\ell(a)\,u\big) \, = \, u \otimes t^\ell(a) \, + \, u'_{(1)} \otimes t^\ell(a) \, u'_{(2)} \, $,  \, hence  $ \; \delta^{n}_s\big(t^\ell(a)\,u\big) \, = \, \delta^{n-1}_s(u) \otimes \pi_s\big(t^\ell(a)\big) \, + \, \delta^{n-1}_s(u'_{(1)}) \otimes \pi_s\big( t^\ell(a) \, u'_{(2)} \big) \; $,  thus  $ \, \delta^{n}_s\big( t^\ell(a) \, u \big) \, \in \, h^n \, \J^{\otimes n} \; $.
\end{proof}

\smallskip

   We are now ready for the first key result of this subsection:

\smallskip

\begin{theorem}  \label{'H=delta_s(H)}
 With assumptions and notation as above, we have
  $$  {}^{\prime\!}H_h  \; = \;  \big\{\, u \in H_h \,\big|\, \delta^n_s(u) \in h^n \, \J_h^{\otimes n} \;\; \forall \; n \in \N_+ \,\big\}  \, =: \, \delta_s(H_h)  $$
\end{theorem}

\begin{proof}
 To begin with, we show that  $ \, \delta_s(H_h) \subseteq {}^{\prime\!}H_h \, $.  To this end, we prove that for any  $ \, u \in \delta_s(H) \, $  we have  $ \; \big\langle\, u \, , I_{{(H_h)}_*}^{\,n} \big\rangle \subseteq h^n A_h \; $  for all  $ \, n \in \N_+ \, $,  using induction on  $ n \, $.
                                                                                   \par
   Take  $ \, n = 1 \, $.  As  $ \, u \in \delta_s(H) \, $,  note that  $ \, \delta^1(u) \in h \, \J_{h} \, $  implies  $ \; u = h \, j + s^\ell\big(\epsilon(u)\big) \; $  with  $ \, j \in \J_h \, $.  Then one has
  $$  \big\langle\, u \, , I_{{H_h}_*} \big\rangle  \,\; =\;\,  h \, \big\langle \, j \, , I_{{H_h}_*} \big\rangle \, + \, \epsilon(u) \, \big\langle\, 1 \, , I_{{H_h}_*} \big\rangle  \; \in \; h \, A_h  $$
  Now assume  $ \, n > 1 \, $.  For our  $ \, u \in \delta_s(H) \, $,  set  $ \; \Delta(u) = u \otimes 1 + u'_{(1)} \otimes u'_{(2)} \, $  with  $ \, u'_{(1)} \in \delta_s(H_h) \, $  and  $ \, u'_{(2)} \in h \, \J_h \, $  as in  Proposition \ref{prop-delta_s}.  Let  $ \, \alpha \in I_{{H_h}_*}^n \, $  be of the form  $ \, \alpha = \alpha_1 \, \alpha_2 \, $  with  $ \, \alpha_1 \in I_{{H_h}_*} \, $  and  $ \, \alpha_2 \in I_{{H_h}_*}^{n-1} \, $:  then, as the pairing  $ \langle\,\ ,\ \rangle $  between  $ H_h $  and  $ {H_h}_* $  is a left bialgebroid pairing, we have
  $$  \big\langle\, u \, , \, \alpha_1 \, \alpha_2 \,\big\rangle  \,\; = \;\,  \big\langle\, t^\ell\big( \langle\, u'_{(2)} \, , \, \alpha_1 \,\rangle \big) \, u'_{(1)} \, , \, \alpha_2 \,\big\rangle \, + \, \big\langle\, t^\ell\big( \langle\, 1 \, , \, \alpha_1 \,\big\rangle \big) \, u \, , \, \alpha_2 \,\big\rangle  \; \in \;  h^n \, A_h  $$
by the induction hypothesis and the case  $ \, n = 1 \, $  (also using the two previous lemmas).
 \vskip4pt
   Conversely, let us now show that  $ \, {}^{\prime\!}H_h \subseteq \delta_s (H_h) \, $.  To this end, we prove (by induction on  $ n $)  that for any  $ \, u \in {}^{\prime\!}H_h \, $  one has  $ \, \delta_s^n(u) \in h^n \J_h^{\otimes n} \, $  for all  $ \, n \in \N \, $.
                                                                           \par
   For  $ \, n = 1 \, $.  As  $ \, u \in{}^{\prime\!}H_h \, $  we have  $ \; \big\langle\, u \, , I_{{H_h}_*} \big\rangle \, \subseteq \, h \, A_h \; $;  on the other hand,  $ \; \delta^1_s(u) = u - s^\ell\big(\epsilon(u)\big) \; $  by definition.  Then we have  $ \; \big\langle\, \delta^1_s(u) \, , \lambda \,\big\rangle \, \in \, h \, A_h \, $  if  $ \, \lambda \in I_{H_{h*}} \, $,  because
  $$  \big\langle u - s^\ell\big(\epsilon(u)\big) \, , \lambda \big\rangle \, = \, \big\langle u \, , \lambda \big\rangle - \big\langle s^\ell\big(\epsilon(u)\big) \, , \lambda \big\rangle \, = \, \big\langle u \, , \lambda \big\rangle - \epsilon(u) \, \big\langle 1 \, , \lambda \big\rangle \, = \, \big\langle u \, , \lambda \big\rangle - \epsilon(u) \, \partial(\lambda) \, \in \, h \, A_h  $$
On the other hand, clearly  $ \; \delta^1_s(u) = u - s^\ell\big(\epsilon(u)\big) \, \in \, \J_h \, $,  \, hence  $ \; \delta^1_s(u) \, \in \, \J_h \cap h \, H_h \, = \, h \, \J_h \; $.
                                                                           \par
   Let now  $ \, n > 1 \, $,  and assume by induction that  $ \; \delta^{n-1}_s(u') \in h^{n-1} \J^{\otimes (n-1)} \; $ for all  $ \, u' \in {}^\prime{}H_h \, $.  For our  $ \, u \in {}^\prime{}H_h \, $,  write  $ \; \Delta(u) =  u_{(1)} \otimes u_{(2)} \, $  with  $ \, u_{(1)} $,  $ \, u_{(2)} \in {}^\prime{}H \, $.  As  $ \, \Delta^n(u)  = \Delta^{n-1}\big(u_{(1)}\big) \otimes u_{(2)} \, $,   we get  $ \; \delta_s^n(u) \, = \, \delta_s^{n-1}\big(u_{(1)}\big) \otimes \delta_s^1\big(u_{(2)}\big) \;  \in \, h^n \J^{\otimes n} \; $  by the induction hypothesis and the case  $ \, n = 1 \, $.
\end{proof}

\medskip

\begin{free text}
  {\bf Explicit description of  $ \, H_h^{\,\prime} \, $.}  We shall now give an explicit description of $ H_h^{\,\prime} \, $:  this will be entirely similar to that for  $ {}^{\prime\!}H_h \, $,  thus we shall only outline the main steps, without dwelling into details   --- which can be easily filled in by the reader.
                                                                           \par
   Write  $ \, H_h = \J_h \oplus t_\ell(A_h) \, $,  and let  $ \pi_t $  be the projection of  $ H_h $  onto  $ \J_h \, $:  once again, this is  {\sl not\/}  a morphism of  $ \big( A_h \otimes A_h^{\text{\it op}} \big) $--modules.  The operator  $ \pi_t^{\otimes n} $  is not defined on  $ \, H_h \otimes_{A_h} H_h \otimes_{A_h} \cdots \otimes_{A_h} H_h \, $:  indeed, if  $ \; u_1 \otimes \cdots \otimes u_n \, \in \, H_h \otimes_{A_h} H_h \otimes_{A_h} \cdots \otimes_{A_h} H_h \, $,  \, then  $ \, \pi_t(u_1) \otimes \cdots \otimes \pi_t(u_n) \, $  depends on the way of writing  $ \, u_1 \otimes \cdots \otimes u_n \, $.  We say that the component of  $ \, \sum u_1 \otimes \dots \otimes u_n \, $  in  $ \J_h^{\otimes n} $  is defined up to  $ \, h^n \J_h^{\otimes n} \, $  if  $ \, \sum u_1 \otimes \cdots \otimes u_n = \sum v_1 \otimes \cdots \otimes v_n \, $  yields  $ \; \sum \pi_t(u_1) \otimes \cdots \otimes \pi_t(u_n) - \sum \pi_t(v_1) \otimes \cdots \otimes \pi_t(v_n) \, \in \, h^n \J_h^{\otimes n} \; $.
                                                                           \par
   The following lemma is the parallel of  Lemma \ref{comp-s_Delta(u)},  with similar proof.  {\sl Note that the statement is formally the same},  but actually the ``componentes'' to which one refers in the two claims are defined with respect to different projectors   --- namely  $ \pi_s^{\otimes n} $  or  $ \pi_t^{\otimes n} $  ---   in the two cases.
\end{free text}

\smallskip

\begin{lemma}  \label{comp-t_Delta(u)}
 Let  $ \, u \in H_h \, $.  If the component of  $ \Delta^n(u) $  in  $ \J_h^{\otimes n} $  is defined up to  $ \, h^n \J_h^{\otimes n} \, $  and  belongs to  $ \, h^n \J_h^{\otimes n} \, $,  then the component of  $ \Delta^{n+1}(u) $  is defined up to  $ \, h^{n+1} \J_h^{\otimes (n+1)} \, $   --- hence it makes sense to say that it belongs to  $ \, h^{n+1} \J_h^{\otimes (n+1)} \, $.
\end{lemma}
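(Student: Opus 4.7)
The plan is to transcribe the proof of Lemma~\ref{comp-s_Delta(u)} nearly verbatim, replacing $\pi_s$ by $\pi_t$, swapping the r\^oles of $s^\ell$ and $t^\ell$, and reversing the sign in the correction term (since $s^\ell(a) - t^\ell(a) = -h\,j$ whenever $t^\ell(a) - s^\ell(a) = h\,j$).

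First, I will record the $\pi_t$-analogue of Lemma~\ref{study of pi_s}: for all $u \in H_h$ and $a \in A_h$ one has $\pi_t\bigl(t^\ell(a)\,u\bigr) = t^\ell(a)\,\pi_t(u)$, and when $t^\ell(a) - s^\ell(a) = h\,j$ with $j \in \J_h$ an identity of the shape
$$\pi_t\bigl(s^\ell(a)\,u\bigr) \;=\; t^\ell(a)\,\pi_t(u) \,-\, h\,\pi_t(j\,u) \;+\; h\cdot(\text{boundary in } t^\ell(A_h)),$$
so that in particular $\pi_t\bigl(s^\ell(a)\,u\bigr) - t^\ell(a)\,\pi_t(u) \in h\,\J_h$. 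Both formulas follow by routine computation from $\pi_t(v) = v - t^\ell(\epsilon(v))$, from the counit property $\epsilon\bigl(s^\ell(a)\,t^\ell(\tilde a)\,u\bigr) = a\,\epsilon(u)\,\tilde a$, and from the fact that $t^\ell$ is an anti-morphism while $A_h$ is commutative.

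Next, using the standing hypothesis that the component of $\Delta^n(u)$ in $\J_h^{\otimes n}$ lies in $h^n\,\J_h^{\otimes n}$, I will write $\Delta^n(u) = \sum h^n\,\phi_1 \otimes \cdots \otimes \phi_n + (\text{extras carrying } t^\ell(A_h)\text{-factors})$, apply $\Delta$ to each of the $n$ slots in turn, and compare the resulting representatives of $\Delta^{n+1}(u)$. Any two such representatives differ by a finite sequence of elementary Takeuchi moves $t^\ell(a)\,v \otimes v' = v \otimes s^\ell(a)\,v'$, so it suffices to check that one such move changes $\pi_t^{\otimes(n+1)}$ by an element of $h^{n+1}\,\J_h^{\otimes(n+1)}$. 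Comparing the two representatives
$$\Delta^{n+1}(u) \,=\, \sum h^n\,\chi_1 \otimes \cdots \otimes t^\ell(a)\,\chi_i \otimes \chi_{i+1} \otimes \cdots \otimes \chi_{n+1} \,+\, \text{extras}$$
and
$$\Delta^{n+1}(u) \,=\, \sum h^n\,\chi_1 \otimes \cdots \otimes \chi_i \otimes s^\ell(a)\,\chi_{i+1} \otimes \cdots \otimes \chi_{n+1} \,+\, \text{extras},$$
their $\pi_t^{\otimes(n+1)}$-images have equal leading parts by the Takeuchi relation applied to $\pi_t(\chi_i) \otimes \pi_t(\chi_{i+1})$, while the remainder reduces to a single tensor of the form $h^n\,\pi_t(\chi_1) \otimes \cdots \otimes h\cdot(\text{element of } \J_h) \otimes \cdots \otimes \pi_t(\chi_{n+1})$, which visibly lies in $h^{n+1}\,\J_h^{\otimes(n+1)}$. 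This gives the claim.

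The main obstacle will be pinning down the precise form of the $\pi_t$-version of Lemma~\ref{study of pi_s}: because $\epsilon$ interacts asymmetrically with left and right multiplication by $s^\ell(A_h)$ and $t^\ell(A_h)$, one has to verify with care that the correction in $\pi_t\bigl(s^\ell(a)\,u\bigr) - t^\ell(a)\,\pi_t(u)$ genuinely lies in $h\,\J_h$ and not merely in $h\,H_h$. Once this is secured, the rest is direct bookkeeping, strictly parallel to the proof of Lemma~\ref{comp-s_Delta(u)}; the reduction from the finite projective case to the finite free case can be effected, if needed, by the same pull-back along the bialgebroid epimorphism $\pi_{\scriptscriptstyle Y}\colon H_{h,Y} \twoheadrightarrow H_h$ introduced in \S\ref{ext-QUEAd's}.
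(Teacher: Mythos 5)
Your proposal is correct and matches the paper's intent exactly: the paper gives no separate argument for this lemma, saying only that it is proved like Lemma \ref{comp-s_Delta(u)} with $\pi_t$ in place of $\pi_s$ (and $t^\ell$, $s^\ell$ swapped), which is precisely the transcription you carry out, including the key slot-by-slot comparison of two representatives related by a Takeuchi move. The only remark worth adding is that your hedged correction term is in fact absent: $\pi_t\big(s^\ell(a)\,u\big)=\pi_t\big(t^\ell(a)\,u-h\,j\,u\big)=t^\ell(a)\,\pi_t(u)-h\,\pi_t(j\,u)$ exactly, so the membership of the correction in $h\,\J_h$ that your argument needs is immediate.
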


\medskip

   {\bf Notation:}  If the component of  $ \Delta^n(u) $  in  $ \J_h^{\otimes n} $  is defined up to  $ \, h^n \J^{\otimes n} \, $  (in the above sense), we shall write it as  $ \, \delta_t^n(u) \, $.  Then the condition  $ \, \delta_t^n(u) \in h^n \J^{\otimes n} \, $  perfectly makes sense.  Thus we shall write  $ \, \delta_t^n(u) \in h^n \J^{\otimes n} \, $  to mean that  $ \delta_t^n(u) $  is well defined (i.e., the component of  $ \Delta^n(u) $  in  $ \J^{\otimes n} $,  in the above sense, is well defined) up to  $ \, h^n \J^{\otimes n} \, $ and it belongs to  $ \, h^n \J^{\otimes n} \, $.  Also, we set
  $$  \delta_t(H_h)  \; := \;  \big\{\, u \in H_h \,\big|\;\, \delta^n_t(u) \in h^n \J_h^{\otimes n} \;\, \forall \; n \in \N_+ \big\}  $$
   \indent   Arguing like for  $ {}^{\prime\!}H_h \, $,  we can then prove the following, analogous characterization of  $ \, H_h^{\,\prime} \; $:

\smallskip

\begin{theorem}  \label{H'=delta_t(H)}
 With assumptions and notation as above, we have
  $$  H_h^{\,\prime}  \; = \;  \big\{\, u \in H_h \,\big|\, \delta^n_t(u) \in h^n \J_h^{\otimes n} \;\; \forall \; n \in \N_+ \big\}  \, =: \, \delta_t(H_h)  $$
\end{theorem}

\smallskip

\begin{remark}
The study of  $ {}^{\prime\!}H_h $  and  $ H_h^{\,\prime} $  we have done for LQUEAd holds for RQUEAd as well.  One can check it directly (via the same arguments) or, besides, deducing the results for RQUEAd's from those for LQUEAd's in force of the general identities  $ \, {\big( H_h^{\,\prime} \big)}^{\text{\it op}}_{\text{\it coop}} = {}^\prime \big( {(H_h)}_{\text{\it coop}}^{\text{\it op}} \big) \, $.
\end{remark}

\smallskip

   Thanks to the characterizations in  Theorem \ref{'H=delta_s(H)}  and  Theorem \ref{H'=delta_t(H)}  we can eventually prove the following\, remarkable result:

\smallskip

\begin{theorem}  \label{'H=H'}
 Let  $ H_h $  be a LQUEAd or a RQUEAd.  Then  $ \; H_h^{\,\prime} \, = \, {}^{\prime\!}H_h \; $.
\end{theorem}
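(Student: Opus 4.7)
The plan is to leverage the characterizations established in Theorems \ref{'H=delta_s(H)} and \ref{H'=delta_t(H)}, reducing the claim to the identity $\delta_s(H_h) = \delta_t(H_h)$. The whole argument hinges on a basic observation: in any LQUEAd (resp.~RQUEAd), the source and target maps coincide modulo $h$, since the semiclassical limit $V^\ell(L)$ has $s^\ell = t^\ell = \iota_A$. Hence for every $a \in A_h$ we have $t^\ell(a) - s^\ell(a) \in h \, H_h$, and this difference automatically lies in $h \, \J_h$ because its counit vanishes. Consequently the two projectors $\pi_s$ and $\pi_t$ satisfy
\[
\pi_s(u) - \pi_t(u) \; = \; t^\ell(\epsilon(u)) - s^\ell(\epsilon(u)) \;\in\; h \, \J_h
\qquad \forall\, u \in H_h \,.
\]

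Next I would verify the base case $n = 1$: since $\delta_s^1(u) - \delta_t^1(u) \in h \, \J_h$, the condition $\delta_s^1(u) \in h \, \J_h$ is equivalent to $\delta_t^1(u) \in h \, \J_h$. I would then proceed by induction on $n$, showing that if $u \in \delta_t(H_h)$ then $\delta_s^n(u) \in h^n \, \J_h^{\otimes n}$ for all $n$, and symmetrically. The crucial tool for the induction is the mirror analogue of Proposition \ref{prop-delta_s} for $\delta_t$: if $u \in \delta_t(H_h)$, then $\Delta(u)$ can be written as $1 \otimes u + u''_{(1)} \otimes u''_{(2)}$ with $u''_{(2)} \in \delta_t(H_h)$ and $u''_{(1)} \in h \, \J_h$. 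The proof of this mirror statement runs parallel to the one already given for $\delta_s$, using the counit axiom $m_{H_h} \circ (\text{\sl id} \otimes (t^\ell \circ \epsilon)) \circ \Delta = \text{\sl id}$ in place of its companion, together with Lemma \ref{delta_s and action} rewritten for $\delta_t$.

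The inductive step then runs as follows. Given $u \in \delta_t(H_h)$, I would write $\Delta^n(u)$ by iterating the mirror decomposition: the leftmost factor $u''_{(1)}$ contributes $\pi_s(u''_{(1)}) \in h \, \J_h$ by the base case applied to $u''_{(1)}$, and the remaining tensor lies in $\delta_t(H_h)$, so the induction hypothesis yields $\delta_s^{n-1} \in h^{n-1} \, \J_h^{\otimes(n-1)}$ on that factor. Combining the two contributions via the associativity of $\Delta$ gives $\delta_s^n(u) \in h^n \, \J_h^{\otimes n}$. The reverse inclusion is entirely symmetric, exchanging the roles of $s$ and $t$.

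The main obstacle I anticipate is Takeuchi-product bookkeeping: the projectors $\pi_s$ and $\pi_t$ do not commute with the $A_h$-bimodule relations defining $\otimes_{A_h}$, so the expressions $\pi_s^{\otimes n}(\Delta^n u)$ and $\pi_t^{\otimes n}(\Delta^n u)$ are only well-defined modulo $h^n \, \J_h^{\otimes n}$ (Lemmas \ref{comp-s_Delta(u)} and \ref{comp-t_Delta(u)}). One must carefully ensure that every time a representative is chosen to apply a projector, the resulting ambiguity can be absorbed into the target space $h^n \, \J_h^{\otimes n}$. The cleanest way to achieve this is to first establish the mirror Proposition for $\delta_t$ and then to perform all the induction manipulations at the level of $\Delta(u) = u_{(1)} \otimes u_{(2)}$, rather than on higher iterated coproducts directly, pushing the projector ambiguity to the final step where it is harmless. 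In the RQUEAd case, the argument is the same verbatim after swapping black and white triangles, or alternatively one invokes the identity ${\big(H_h^{\,\prime}\big)}^{\text{\it op}}_{\text{\it coop}} = {}^{\prime}\!\big({(H_h)}^{\text{\it op}}_{\text{\it coop}}\big)$ to reduce to the LQUEAd case already handled.
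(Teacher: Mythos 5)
Your proposal is correct and follows essentially the same route as the paper: reduce via Theorems \ref{'H=delta_s(H)} and \ref{H'=delta_t(H)} to the single equality $\delta_s(H_h)=\delta_t(H_h)$, settle the case $n=1$ by noting that $t^\ell(\epsilon(u))-s^\ell(\epsilon(u))\in \J_h\cap h\,H_h = h\,\J_h$, run an induction on $n$ powered by a one-step decomposition of $\Delta(u)$, and handle the RQUEAd case through $(\ )^{\text{\it op}}_{\text{\it coop}}$. The only (immaterial) difference is the order of the two inclusions: the paper first proves $\delta_s(H_h)\subseteq\delta_t(H_h)$ using Proposition \ref{prop-delta_s} exactly as stated and leaves the converse ``to the same argument'', whereas you begin with the converse inclusion and must therefore first formulate and prove the mirror of Proposition \ref{prop-delta_s} for $\delta_t$ --- precisely the ingredient the paper's ``same way'' tacitly presupposes.
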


\begin{proof}
 We begin with  $ H_h $  being an LQUEAd.  We show, that for any  $ \, u \in \delta_s(H_h) \, $  we have  $ \; \delta_t^n(u) \in h^n \J^{\otimes n} \; $  for all  $ \, n \in \N \, $,  by induction on  $ n $.
                                                               \par
   For  $ \, n = 1 \, $,  one has
  $$  \delta_{t}^1(u)  \,\; = \;\,  u - t_l\big(\epsilon(u)\big)  \,\; = \;\,  u - s_\ell\big(\epsilon(u)\big) \, + \, s_\ell\big(\epsilon(u)\big) \, - \, t_\ell\big(\epsilon(u)\big)  $$
As  $ \, s^\ell - t^\ell = 0 \mod h \, $,  one has  $ \, s^\ell\big(\epsilon(u)\big) - t^\ell\big(\epsilon(u)\big) \in h \, A_h \, $.  Moreover, we have also  $ \, \epsilon\big( s^\ell\big(\epsilon(u)\big) - t^\ell\big(\epsilon(u)\big) \big) = 0 \, $,  so that  $ \, s^\ell\big(\epsilon(u)\big) - t^\ell\big(\epsilon(u)\big) \, $  belongs to  $ \, \J_h \cap h \, A_h = h \, \J_h \, $.  Thus  $ \, \delta_t^1(u) \in h \, \J_h \, $,  q.e.d.
                                                               \par
   For  $ \, n > 1 \, $,  let us write  $ \; \Delta(u) = u \otimes 1 + u'_{(1)} \otimes u'_{(2)} \, $  with  $ \, u'_{(1)} \in \delta_s(H_h) \, $  and  $ \, u'_{(2)} \in h \, \J_h \, $  as in  Proposition \ref{prop-delta_s}.  Then one has  $ \; \delta^n_t(u) = \delta^{n-1}_t\big(u'_{(1)}\big) \otimes \delta^1_t\big(u'_{(2)}\big) \; $,  which is an element of  $ \, h^n \J^{\otimes n} \, $  thanks to the induction hypothesis.
                                                               \par
   By the above we have proved the inclusion  $ \, \delta_s(H_h) \subseteq \delta_t(H_h) \, $;  the reverse inclusion can be shown in the same way, so to give  $ \, \delta_s(H_h) = \delta_t(H_h) \, $.  By  Theorem \ref{'H=delta_s(H)}   --- giving  $ \, {}^{\prime\!}H_h \, = \, \delta_t(H_h) \, $  ---   and  Theorem \ref{H'=delta_t(H)}   --- giving  $ \, H_h^{\,\prime} = \delta_s(H_h) \, $  ---   this eventually implies  $ \; H_h^{\,\prime} \, = \, {}^{\prime\!}H_h \; $.
 \vskip4pt
   For  $ H_h $  a RQUEAd, we can provide a direct proof by the same arguments used for a LQUEAd; otherwise, we can deduce the result for RQUEAd's from that for LQUEAd's, as follows.
                                                               \par
   If  $ H_h $  is a RQUEAd, then  $ \, {(H_h)}_{\text{\it coop}}^{\text{\it op}} $  is a LQUEAd; then we have the chain of identities  $ \; {(H_h^{\,\prime})}^{\text{\it op}}_{\text{\it coop}} = {}^{\prime\!}\big({(H_h)}^{\text{\it op}}_{\text{\it coop}}\big) = {\big( {(H_h)}^{\text{\it op}}_{\text{\it coop}} \big)}' = {\big( {}^{\prime\!}H_h \big)}^{\text{\it op}}_{\text{\it coop}} \; $,  \, whence  $ \; {}^{\prime\!}H_h = H_h^{\,\prime} \; $  follows too.
\end{proof}

\smallskip

   We are now ready for the main result of this subsection.  In short, it claims that the construction  $ \, H_h \mapsto {}^{\prime\!}H_h = H_h^{\,\prime} \, $,  starting from a quantization of  $ L $   --- of type  $ V^{\ell/r}(L) $  ---  provides a quantization of the  {\sl dual\/}  Lie-Rinehart bialgebra  $ L^* $   --- of type  $ J^{\ell/r}(L^*) \; $;  moreover, this construction is functorial.

\medskip

\begin{theorem}  \label{compute_'H=H'}
 \vskip3pt
   (a)\,  Let  $ \, {V^\ell(L)}_h \in \text{\rm (LQUEAd)}_{A_h} \, $,  where  $ L $  is a Lie-Rinehart algebra which, as an  $ A $--module, is projective of finite type.  Then:
 \vskip3pt
   --- (a.1)\,  $ {}^{\prime}{V^\ell(L)}_h = {V^\ell(L)}_h^{\,\prime} \in \text{\rm (LQFSAd)}_{A_h} \, $,  with semiclassical limit  $ \; {V^\ell(L)}_h^{\,\prime} \Big/ h \, {V^\ell(L)}_h^{\,\prime} \cong J^\ell(L^*) \; $.  Moreover, the structure of Lie-Rinehart bialgebra induced on  $ L^* $  by the quantization  $ {V^\ell(L)}_h^{\,\prime} $  of  $ \, J^\ell(L^*) $  is dual to that on  $ L $  by the quantization  $ {V^\ell(L)}_h $  of  $ \, V^\ell(L) \, $;
 \vskip3pt
   --- (a.2)\,  the definition of  $ \, {V^\ell(L)}_h \mapsto {}^{\prime}{V^\ell(L)}_h = {V^\ell(L)}_h^{\,\prime} \, $  extends to morphisms in  $ \text{\rm (LQUEAd)} \, $,  so that we have a well defined (covariant) functor  $ \; {}'{(\ )} = {(\ )}' : \text{\rm (LQUEAd)} \longrightarrow \text{\rm (LQFSAd)} \; $.
 \vskip5pt
   (b)\,  Let  $ \, {V^r(L)}_h \in \text{\rm (RQUEAd)}_{A_h} \, $,  where  $ L $  is a Lie-Rinehart algebra which, as an  $ A $--module, is projective of finite type.  Then:
 \vskip3pt
   --- (b.1)\,  $ {}^{\prime}{V^r(L)}_h = {V^r(L)}_h^{\,\prime} \in \text{\rm (RQFSAd)}_{A_h} \, $,  with semiclassical limit  $ \; {V^r(L)}_h^{\,\prime} \Big/ h \, {V^r(L)}_h^{\,\prime} \cong J^r(L^*) \; $.  Moreover, the structure of Lie-Rinehart bialgebra induced on  $ L^* $  by the quantization  $ {V^r(L)}_h^{\,\prime} $  of  $ \, J^r(L^*) $  is dual to that on  $ L $  by the quantization  $ {V^r(L)}_h $  of  $ \, V^r(L) \, $;
 \vskip3pt
   --- (b.2)\,  the definition of  $ \, {V^r(L)}_h \mapsto {}^{\prime}{V^r(L)}_h = {V^r(L)}_h^{\,\prime} \, $  extends to morphisms in  $ \text{\rm (RQUEAd)} \, $,  so that we have a well defined (covariant) functor  $ \; {}'{(\ )} = {(\ )}' : \text{\rm (RQUEAd)} \longrightarrow \text{\rm (RQFSAd)} \; $.
\end{theorem}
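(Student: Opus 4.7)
The strategy is to realise Drinfeld's functor $(\ )'$ as a composition of the already-established linear-duality and $\vee$-functors, so that the present theorem reduces to a bookkeeping across Section \ref{lin-dual_q-grpds} and Subsection \ref{Dr-functors_vee}. By Proposition \ref{props_H'_&_'H}, for any $H_h \in \text{(LQUEAd)}_{A_h}$ one has the canonical identification $H_h^{\,\prime} = {}_*\!\big((H_h^{\,*})^{\vee}\big)$, obtained through the evaluation pairing inherited from the canonical pairing between $H_h$ and $H_h^{\,*}$. For $H_h = {V^\ell(L)}_h$, Theorem \ref{dual_QUEAd's=QFSAd's}(a) produces $H_h^{\,*} \in \text{(RQFSAd)}_{A_h}$, a quantization of $J^r(L)$; Theorem \ref{compute vee x QFSAd}(a) then produces $(H_h^{\,*})^{\vee} \in \text{(RQUEAd)}_{A_h}$, a quantization of $V^r(L^*)$; finally, Theorem \ref{dual_QUEAd's=QFSAd's}(b) applied to the Lie--Rinehart algebra $L^*$ yields ${}_*\!\big((H_h^{\,*})^{\vee}\big) \in \text{(LQFSAd)}_{A_h}$, a quantization of $J^\ell(L^*)$. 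Combined with the identification above, this gives $H_h^{\,\prime} \in \text{(LQFSAd)}_{A_h}$ with the claimed semiclassical limit.

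The induced Lie--Rinehart bialgebra structure is tracked through the same chain. Denote by $\mathcal{L}$ the structure on $L$ coming from $H_h$ as in Theorem \ref{semiclassical_limit-V^ell(L)}. By Theorem \ref{dual_QUEAd's=QFSAd's}(a) the quantization $H_h^{\,*}$ of $J^r(L)$ induces the same $\mathcal{L}$ on $L$; by Theorem \ref{compute vee x QFSAd}(a) the quantization $(H_h^{\,*})^{\vee}$ of $V^r(L^*)$ induces on $L^*$ the dual structure $\mathcal{L}^*$; by Theorem \ref{dual_QUEAd's=QFSAd's}(b) the quantization ${}_*\!\big((H_h^{\,*})^{\vee}\big)$ of $J^\ell(L^*)$ again induces $\mathcal{L}^*$ on $L^*$. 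This is precisely the dual Lie--Rinehart bialgebra structure, completing (a.1).

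Functoriality (a.2) follows from functoriality of the three constituents: $(\ )^{*}$ and ${}_*(\ )$ are contravariant by Theorem \ref{linear-duality}, while $(\ )^{\vee}$ is covariant by Theorem \ref{compute vee x QFSAd}(a.2), so their composition is a well-defined covariant functor $\text{(LQUEAd)} \to \text{(LQFSAd)}$. A more direct check is also available via the characterization $H_h^{\,\prime} = \delta_t(H_h)$ of Theorem \ref{H'=delta_t(H)}: any morphism $f$ of left bialgebroids commutes with $\Delta$, $\epsilon$ and $t^\ell$, hence with each $\delta_t^n$, so $f(H_h^{\,\prime}) \subseteq K_h^{\,\prime}$. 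Part (b) for RQUEAds can be proved verbatim by exchanging left and right throughout, or else deduced from (a) via the involution $(\ )^{\text{op}}_{\text{coop}}$, using the identity ${(H_h^{\,\prime})}^{\text{op}}_{\text{coop}} = {\big({(H_h)}^{\text{op}}_{\text{coop}}\big)}^{\prime}$ recorded before Theorem \ref{H'=delta_t(H)}. The one point we import wholesale is Proposition \ref{props_H'_&_'H}, whose proof the authors refer to \cite{Gavarini}; this is in fact the main potential obstacle, since we use the identification therein as an isomorphism of LQFSAds rather than merely of $k[[h]]$-modules, and verifying that the evaluation pairing transports the bialgebroid structure correctly is the only nontrivial step. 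Once that is granted, the rest is the diagrammatic bookkeeping sketched above.
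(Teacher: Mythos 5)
Your proposal follows essentially the same route as the paper: it realises $H_h^{\,\prime}$ as $\,{}_*\big((H_h^{\,*})^{\vee}\big)$ via Proposition \ref{props_H'_&_'H} and chains Theorem \ref{dual_QUEAd's=QFSAd's}(a), Theorem \ref{compute vee x QFSAd} and Theorem \ref{dual_QUEAd's=QFSAd's}(b) (applied to $L^*$) to get membership in $\text{(LQFSAd)}_{A_h}$, the semiclassical limit $J^\ell(L^*)$ and the dual bialgebra structure, exactly as in the paper's proof of (a.1). Your treatment of functoriality (either as a composite of the duality and $\vee$ functors, or via the $\delta_t$/$\delta_s$ characterization respected by bialgebroid morphisms) and your reduction of (b) to (a) through $(\ )^{\text{op}}_{\text{coop}}$ likewise match the paper's argument, so the proposal is correct and not substantially different.
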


\begin{proof}
 {\it (a)}\,  Given  $ \, {V^\ell(L)}_h \in \text{\rm (LQUEAd)}_{A_h} \, $,  we know that  $ \, {J^r(L)}_h := {V^\ell(L)}_h^{\,*} \in \text{\rm (RQFSAd)}_{A_h} \, $,  by  Theorem \ref{dual_QUEAd's=QFSAd's}{\it (a)\/};  then  $ \, {V^r(L^*)}_h := {J^r(L)}_h^{\,\vee} \in \text{\rm (RQUEAd)}_{A_h} \, $  is a quantization of  $ V^r(L^*) \, $,  by  Theorem \ref{compute vee x QFSAd}.
 By  Proposition \ref{props_H'_&_'H},  $ \, {\big(V^\ell(L)_h\big)}' = {}_*{\big( {J^r(L)}_h^{\,\vee\,} \big)} \, $  is a quantization of  $ J^\ell(L^*) \, $,  by  Theorem \ref{dual_QUEAd's=QFSAd's}.  In all this,  $ L^* $  stands for the  $ A $--module  dual to  $ L $  endowed with the Lie-Rinehart bialgebra structure dual to that defined on  $ L $  by the quantization  $ {V^\ell(L)}_h $   --- according to  Theorem \ref{semiclassical_limit-V^ell(L)}.  This completes the proof of  {\it (a.1)}.
%
%
 \vskip3pt
   As to  {\it (a.2)},  let  $ \, H_h = {V^\ell\big(L_{\scriptscriptstyle A}\big)}_h \, $  be a LQUEAd over  $ A_h $  and  $ \, \varGamma_h = {V^\ell\big(L_{\scriptscriptstyle B}\big)}_h \, $  a LQUEAd over  $ B_h \, $,  and let  $ \, \phi := (f,F) \, $  be a morphism of left bialgebroids among them.  Set  $ \, \J_{\scriptscriptstyle H_h} := \text{\sl Ker}\,(\epsilon_{\scriptscriptstyle H_h}) \, $  and  $ \, \J_{\scriptscriptstyle \varGamma_h} := \text{\sl Ker}\,(\epsilon_{\scriptscriptstyle \varGamma_h}) \, $.  Then  $ \, F\big( \J_{\scriptscriptstyle H_h} \big) \subseteq \J_{\scriptscriptstyle \varGamma_h} \, $  by the property  $ \, \epsilon_{\scriptscriptstyle \varGamma_h} \circ F = f \circ \epsilon_{\scriptscriptstyle H_h} \, $  of a morphism of bialgebroids.  Similarly, one has  $ \; F^{\otimes n} \circ \Delta_{\scriptscriptstyle H_h}^n = \Delta_{\scriptscriptstyle \varGamma_h}^{\otimes n} \circ F \; $  and  $ \; F \circ s_{\scriptscriptstyle H_h}^\ell = s_{\scriptscriptstyle \varGamma_h}^\ell \, $;  from this, one easily sees that $ \; \delta^n_s\big(F(u)\big) = F^{\otimes n} \big( \delta^n_s(u)\big) \; $.  From all this we get  $ \; F(H_h^{\,\prime}) \subseteq \varGamma_h^{\,\prime} \; $,  \, so the restriction of the morphism  $ (f,F) $  between  $ H_h $  and  $ \varGamma_h $  provides a morphism in  $ \text{\rm (LQFSAd)} $  between  $ H_h^{\,\prime} $  and  $ \varGamma_h^{\,\prime} \; $.
 \vskip7pt
   {\it (b)}\,  A direct proof for claim  {\it (b)\/}  can be given by the same arguments used for  {\it (a)}.  Otherwise, we can deduce  {\it (b)\/}  from  {\it (a)\/}  as follows.
                                                                                      \par
   If  $ \, H_h \in \text{\rm (RQUEAd)} \, $,  then  $ \, {(H_h)}^{\text{\it op}}_{\text{\it coop}} \in \text{\rm (LQUEAd)} \, $  and  $ \, {\big( {(H_h)}^{\text{\it op}}_{\text{\it coop}} \big)}' = {\big( {}^{\prime\!}H_h \big)}^{\text{\it op}}_{\text{\it coop}} \, $,  so that  $ \, H_h^{\,\prime} = {}^{\prime\!}H_h = {\big( {\big( {(H_h)}^{\text{\it op}}_{\text{\it coop}} \big)}' \big)}^{\text{\it op}}_{\text{\it coop}} \, $.  From this we can easily deduce claim  {\it (b)\/}  from claim  {\it (a)}.
\end{proof}

\smallskip

\begin{free text}
 {\bf Description of  $ {V^\ell(L)}_h^{\,\prime} $  when  $ L $  is a (finite type) free  $ A $--module.}  Let  $ L $  be a Lie-Rinehart algebra which, as an  $ A $--module,  is free of finite type.  Let  $ {V^\ell(L)}_h \in \text{\rm (LQUEAd)}_{A_h} \, $  be a quantization of  $ V^\ell(L) \, $;  by the freeness of  $ L \, $,  we can provide an explicit description of  $ \, {V^\ell(L)}_h^{\,\prime} \, $,  much like that given in  \cite{Gavarini}  for the similar case of quantum universal enveloping algebras.
 \vskip4pt
   First of all, consider  $ \, K_h := {V^\ell(L)}_h^{\,*} \equiv {J^r(L)}_h \in \text{\rm (RQFSAd)}_{A_h} \, $,  which (cf.~Theorem \ref{dual_QUEAd's=QFSAd's})  is a quantization of  $ J^r(L) \, $.  From  Proposition \ref{props_H'_&_'H}  we have  $ \, {V^\ell(L)}_h^{\;\prime} \cong {}_*\big(K_h^\vee\big) \; $.
                                                                                       \par
   Let  $ \, {\{\overline{e}_i\}}_{i \in \{1,\dots,n\}} \, $  be a basis of the free  $ A $--module  $ L \, $.  Then (by the Poincar\'e-Birkhoff-Witt theorem) the set of ordered monomials  $ \, {\big\{ \overline{e}^{\,\underline{\alpha}} \big\}}_{\underline{\alpha} \in \N^n} \, $  is an  $ A $--basis  of  $ V^\ell(L) $,  where  $ \; \overline{e}^{\,\underline{\alpha}} := \overline{e}_1^{\,\alpha_1} \cdots \overline{e}_n^{\,\alpha_n} \; $.
                                                                                       \par
   Let  $ \; \overline{\xi}_i \in \text{\sl Hom}\,\big( V^\ell(L) , A \big) \, \cong \, \widehat{S_A(L^*)} \; $  be defined by  $ \; \big\langle\, \overline{\xi}_i \, , \overline{e}_1^{\,\underline{\alpha}} \,\big\rangle \, = \, \delta_{\alpha_1 , \, 0} \cdots \delta_{\alpha_i , 1} \cdots \delta_{\alpha_n , \, 0} \; $.
 Then the ordered monomials  $ \; {1 \over {\,\underline{\alpha}!\,}} \, \overline{\xi}^{\,\underline{\alpha}} \; $  (with  $ \, \underline{\alpha}! := \alpha_1! \cdots \alpha_n! \, $)  is a pseudobasis   --- i.e., a basis in topological sense ---   of the  $ A $--module  $ J^r(L) $  dual to the PBW basis  $ \, {\big\{ \overline{e}^{\,\underline{\alpha}} \big\}}_{\underline{\alpha} \in \N^n} \, $.
                                                                                       \par
   Lift  $ \, {\{\, \overline{\xi}_i\}}_{i \in \{1,\dots,n\}} \, $  to a subset  $ \, {\{\xi_i\}}_{i \in \{1,\dots,n\}} \, $  in  $ \, {J^r(L)}_h = K_h \, $  such that  $ \, \partial_h(\xi_i) = 0 \, $;  then  $ \, {\big\{ {1 \over {\,\alpha!\,}} \, \xi^{\underline{\alpha}} \big\}}_{\underline{\alpha} \in \N^n} \, $  is a topological pseudo-basis of  $ \, {J^r(L)}_h = K_h \, $.  Let  $ \, {\big\{ \theta^{\underline{\alpha}} \big\}}_{\underline{\alpha} \in \N^n} \, $  be the topological basis of  $ {V^\ell(L)}_h $  dual to  $ \, {\big\{ {1 \over {\,\underline{\alpha}!\,}} \, \xi^{\,\underline{\alpha}} \big\}}_{\underline{\alpha} \in \N^n} \, $;  then  $ \, {\big\{ \theta^{\underline{\alpha}} \big\}}_{\underline{\alpha} \in \N^n} \, $  is a lift of the PBW basis  $ \, {\big\{ \overline{e}^{\,\underline{\alpha}} \big\}}_{\underline{\alpha} \in \N^n} \, $  of  $ V^\ell(L) \; $.  Using these tools, a straightforward analysis shows that
  $$  {V^\ell(L)}_h^{\,\prime}  \,\; = \;\,  \big\{\, {\textstyle \sum_{\underline{\alpha}}} \, t^\ell(a_{\underline{\alpha}\,}) \, h^{|\underline{\alpha}|} \, \theta_{\underline{\alpha}} \;\big|\; a_{\underline{\alpha}} \in A_h \,\big\}  $$
where the summation symbol denotes  $ h $--adically  convergent series.
\end{free text}

\smallskip

 \subsection{Quantum duality for quantum groupoids}  \label{q-duality_q-groupds}

\smallskip

   {\ } \quad   We consider now the composition of two Drinfeld's functors.  We shall prove that the functors  $ {(\ )}^\vee $  and  $ \, (\ )' = {}^{\hskip1pt\prime}\!{(\ )} \, $  are actually inverse to each other, so that they establish equivalences of categories  $ \; \text{\rm (RQFSAd)} \, \cong \text{\rm (RQUEAd)} \; $  and  $ \; \text{\rm (LQFSAd)} \, \cong \text{\rm (LQUEAd)} \; $.  Our result reads as follows:

\smallskip

\begin{theorem}  \label{quantum_duality}
 {\ }
 \vskip4pt
   (a) \,  If  $ \, K_h \in \text{\rm (RQFSAd)} \, $,  then  $ \; {\big(K_h^{\,\vee}\big)}' = K_h = \!\phantom{\big|}'\big(K_h^{\,\vee}\big) \; $.
 \vskip4pt
   (b) \,  If  $ \, K_h \in \text{\rm (LQFSAd)} \, $,  then  $ \; {\big(K_h^{\,\vee}\big)}' = K_h = \!\phantom{\big|}'\big(K_h^{\,\vee}\big) \; $.
 \vskip4pt
   (c) \,  If  $ \, H_h \in \text{\rm (LQUEAd)} \, $,  then  $ \; {\big( H_h^{\,\prime} \big)}^{\!\vee} = H_h = {\big( {}'{\!}H_h \big)}^{\!\vee} \; $.
 \vskip4pt
   (d) \,  If  $ \, H_h \in \text{\rm (RQUEAd)} \, $,  then  $ \; {\big( H_h^{\,\prime} \big)}^{\!\vee} = H_h = {\big( {}'{\!}H_h \big)}^{\!\vee} \; $.
 \vskip4pt
   (e) \,  The functors  $ \; {(\ )}^\vee : \text{\rm (RQFSAd)} \rightarrow \text{\rm (RQUEAd)} \, $  and  $ \; (\ )' = {}'{(\ )} : \text{\rm (RQUEAd)} \rightarrow \text{\rm (RQFSAd)} \, $  are inverse to each other, hence they are equivalences of categories.  Similarly for the functors  $ \; {(\ )}^\vee : \text{\rm (LQFSAd)} \longrightarrow \text{\rm (LQUEAd)} \; $  and  $ \; (\ )' = {}'{(\ )} : \text{\rm (LQUEAd)} \longrightarrow \text{\rm (LQFSAd)} \; $.
\end{theorem}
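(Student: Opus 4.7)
The plan is to establish (a) directly, then deduce (b), (c), (d) by the by-now-familiar ``op-coop'' and linear-duality transpositions, and finally infer (e) as a formal consequence. Throughout, Theorem \ref{'H=H'} lets us identify $H_h^{\,\prime}$ and ${}^{\prime\!}H_h$ (and similarly for their counterparts), so in each of (a)--(d) we only need to treat one of the two equalities; the other then follows automatically.

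\textbf{Step 1 (Reduction to the finite free case).} Thanks to the construction $(-)_Y$ of Subsection \ref{quant_proj-free} and the compatibilities with duals and with $(-)^\vee$ gathered in \S\ref{ext-QFSAd's} (``extension commutes with dualization''), plus the epimorphisms $\pi_Y$, $\pi^Y$, it suffices to prove (a) under the assumption that $L$ is finite free over $A$. Indeed, both $K_h$ and $(K_h^\vee)^{\prime}$ are quantizations over $A_h$ of bialgebroids attached to $L$, and the identification between them, once established on $K_{h,Y}$, descends to $K_h$ through $\pi^Y$ and its lift to $\vee$ and $\prime$ (and ascends, since $K_h$ is recovered as the ``$Y$-free part'' of $K_{h,Y}$).

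\textbf{Step 2 (Finite free case; inclusion $K_h\subseteq (K_h^\vee)^{\prime}$).} Fix a basis $\{\bar e_i\}$ of $L$, lift duals to $\xi_i\in K_h$ with $\partial_h(\xi_i)=0$, and set $\check\xi_i:=h^{-1}\xi_i$. By \S\ref{descr-K_h^vee}(a), every element of $K_h$ is an $h$-adically convergent series $\sum \xi^{\underline d}\,a_{\underline d}$ with $a_{\underline d}\in A_h^{\text{\it op}}$, while
$K_h^\vee\cong A[\check X_1,\dots,\check X_n][[h]]$. Using Proposition \ref{props_H'_&_'H}, an element $u\in K_F$ lies in $(K_h^\vee)^{\prime}$ iff it defines, by evaluation, an $A_h$-linear map $K_h^\vee\to A_h$ which is continuous for the $I_{K_h^\vee}$-adic topology. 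Since each $\xi_i$ acts on $\check\xi_j$ via the pairing in a way that raises the $h$-adic order (because $\xi_i=h\,\check\xi_i$ and the pairing is $A_h$-sesquilinear in the bialgebroid sense), a direct computation shows that every $\xi^{\underline d}\,a_{\underline d}$ defines such a continuous functional; passing to limits gives $K_h\subseteq (K_h^\vee)^{\prime}$.

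\textbf{Step 3 (Finite free case; reverse inclusion).} Conversely, take $u\in (K_h^\vee)^{\prime}$. Write $u$ in the topological pseudo-basis dual to $\{\tfrac{1}{\underline d!}\,\check\xi^{\,\underline d}\}$, i.e.\ $u=\sum_{\underline d} \xi^{\underline d}\,a_{\underline d}$ with $a_{\underline d}\in k((h))\otimes_{k[[h]]} A_h^{\text{\it op}}$ a priori. The characterizing condition from Proposition \ref{props_H'_&_'H} reads $\bigl\langle u,I_{K_h^\vee}^{\,n}\bigr\rangle\subseteq h^n A_h$; pairing with the monomials $h^{|\underline d|}\check\xi^{\,\underline d}$ which generate the ideal $I_{K_h^\vee}$ forces $a_{\underline d}\in A_h$ for every $\underline d$. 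This shows $(K_h^\vee)^{\prime}\subseteq K_h$, completing (a) in the free case, and hence in general after Step 1. An entirely symmetric computation (using the description of $V^\ell(L)_h^{\,\prime}$ at the end of \S\ref{Dr-functors_rprime-lprime}, in which $V^\ell(L)_h^{\,\prime}=\{\sum t^\ell(a_{\underline\alpha})h^{|\underline\alpha|}\theta^{\underline\alpha}\}$) proves (c) in the free case: one shows $(V^\ell(L)_h^{\,\prime})^\vee$ is, after inverting powers of $h$, precisely the $A_h$-module spanned by the $\theta^{\underline\alpha}$ themselves, i.e.\ $V^\ell(L)_h$.

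\textbf{Step 4 (Deducing (b), (d), and (e)).} Claims (b) and (d) follow from (a) and (c) respectively by applying the ``op-coop'' transposition, using the identities $(H_h^{\,\vee})^{\text{\it op}}_{\text{\it coop}}=\bigl(H_h^{\text{\it op}}_{\text{\it coop}}\bigr)^{\!\vee}$ and ${(H_h^{\,\prime})}^{\text{\it op}}_{\text{\it coop}}=\bigl(H_h^{\text{\it op}}_{\text{\it coop}}\bigr)^{\prime}$ recorded in \S\ref{Dr-functors_vee} and \S\ref{Dr-functors_rprime-lprime}, together with Remarks \ref{remarks-QUEAd's}(a) and \ref{remarks-QFSAd's}(b). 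Finally (e) is formal: once $(-)^\vee$ and $(-)'$ are well-defined covariant functors (Theorems \ref{compute vee x QFSAd} and \ref{compute_'H=H'}) and are mutually inverse on objects by (a)--(d), their functoriality ensures that the canonical identifications $K_h\cong (K_h^\vee)^{\prime}$ and $H_h\cong (H_h^{\,\prime})^{\!\vee}$ are natural in $K_h$, $H_h$, giving the desired equivalences of categories.

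\textbf{Main obstacle.} The delicate point is Step 3, especially the passage from finite-free to finite-projective $L$ in Step 1: one must check that the auxiliary module $Q$ and the infinite-dimensional extension $L_Q=L\oplus R$ are compatible with both the $\vee$-construction and the $\prime$-construction, i.e.\ that the natural maps commute with the idempotents $\pi_Y$, $\pi^Y$ and with the topologies defining ``continuous dual''. This is where the careful setup of \S\ref{ext-QUEAd's}--\S\ref{ext-QFSAd's}, plus the fact that $I_h^\vee=h^{-1}\J_h$ and the analogous fact for $\delta_s^n$ survive completion, do the real work; one must also verify that the induced Lie--Rinehart bialgebra structure on $L$ is unchanged under the double composition, which follows from the semiclassical limit computations already carried out in Theorems \ref{compute vee x QFSAd} and \ref{compute_'H=H'} (applying each of them again gives back the original structure, since bi-duality of Lie--Rinehart bialgebras is the identity).
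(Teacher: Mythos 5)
There is a genuine gap, and it sits exactly where your computation lives. In Steps 2--3 you test membership of $u$ in ${\big(K_h^{\,\vee}\big)}'$ by ``evaluation, an $A_h$-linear map $K_h^{\,\vee}\to A_h$'', i.e.\ by pairing elements of $(K_h)_F$ against elements of $K_h^{\,\vee}$. But $K_h$ and $K_h^{\,\vee}$ lie on the \emph{same} side of linear duality --- both are submodules of $(K_h)_F$ --- and there is no canonical pairing between them; in particular ``$\xi_i$ acts on $\check\xi_j$ via the pairing'' and ``pairing with the monomials $h^{|\underline d|}\check\xi^{\,\underline d}$ which generate $I_{K_h^{\vee}}$'' (note $h^{|\underline d|}\check\xi^{\,\underline d}=\xi^{\,\underline d}\in K_h$) are not defined operations. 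By Definition \ref{def_H'-'H} and Proposition \ref{props_H'_&_'H}, membership in ${\big(K_h^{\,\vee}\big)}'$ is tested against the \emph{linear dual} of the RQUEAd $K_h^{\,\vee}$, e.g.\ against ${\big(\,{}^*\big(K_h^{\,\vee}\big)\big)}^{\times}$, equivalently by realizing the element as a functional on a dual object subject to $\lambda\big(I^{\,n}\big)\subseteq h^{n}A_h$. This is precisely how the paper proves the inclusion $K_h\subseteq{\big(K_h^{\,\vee}\big)}'$: for $\lambda\in K_h$ one expands the iterated coproduct $\Delta^{n}(\lambda)$, notes that any $\alpha\in I_{{}_*(K_h^{\vee})}$ restricts on $\J_h\subseteq h\,K_h^{\,\vee}$ to a map into $h\,A_h$, deduces $\big\langle\,\alpha_1\cdots\alpha_n\,,\,\lambda\,\big\rangle\in h^{n}A_h$, and concludes that $\lambda$ yields an element of ${\Big({\big(\,{}_*\big(K_h^{\,\vee}\big)\big)}^{\vee}\Big)}^{*}={\big(K_h^{\,\vee}\big)}'$; the reverse inclusion is then carried out as in Gavarini's quantum-group setting. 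As written, your Steps 2--3 establish neither inclusion, so the core of (a) is missing.

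Two further points. First, your reduction to the finite free case (Step 1) is only asserted: compatibility of \emph{both} $(\ )^{\vee}$ and $(\ )'$ with the extension $K_h\mapsto K_{h,Y}$ and with $\pi^{Y}$, and the fact that the identity for $K_{h,Y}$ descends to $K_h$, would each require proof; the paper's argument for $K_h\subseteq{\big(K_h^{\,\vee}\big)}'$ is basis-free and needs no such reduction. Second, the paper obtains (c) and (d) with \emph{no} new computation: writing $K_h:=H_h^{\,*}$ and $\varGamma_h:=K_h^{\,\vee}$, Proposition \ref{props_H'_&_'H} gives $H_h^{\,\prime}={}_*\big(K_h^{\,\vee}\big)$ and ${}_\star\big({}'\varGamma_h\big)={\big({}_*\varGamma_h\big)}^{\vee}$, so ${\big(H_h^{\,\prime}\big)}^{\vee}={}_\star\big(\varGamma_h^{\,\prime}\big)={}_\star\Big({\big(K_h^{\,\vee}\big)}'\Big)={}_\star K_h=H_h$ by claim (a) and the biduality of Theorem \ref{linear-duality}. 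Your separate free-case computation for (c) is therefore unnecessary as well as under-justified; once (a) is proved correctly, (c) follows formally, and your Step 4 for (b), (d), (e) is fine.
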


\begin{proof}
 Clearly, claim  {\it (e)\/}  is just a consequence of the previous items in the statement.  We begin by focusing on claim  {\it (a)\/}:  we assume that  $ \, K_h \in \text{\rm (RQFSAd)}_{A_h} \, $  and we shall prove that  $ \; {\big(K_h^{\,\vee}\big)}' = K_h \; $.
 \vskip5pt

   Let us show that  $ \, K_h \subseteq {\big(K_h^{\,\vee}\big)}' \, $.
 \vskip3pt
   Given  $ \lambda $  is in  $ K_h \, $,  consider its  $ n $--th  iterated coproduct  $ \, \Delta^n(\lambda) = \lambda_{(1)} \otimes \cdots \otimes \lambda_{(n)} \, $;  if we write every  $ \lambda_{(i)} $  as  $ \, \lambda_{(i)} = \lambda'_{(i)} + \lambda''_{(i)} \, $  with  $ \, \lambda'_{(i)} := \lambda_{(i)} - s^r_h\big(\partial(\lambda_{(i)})\big) \in \J_h := \text{\sl Ker}\,\big( \partial_{K_h} \big) \, $  and  $ \, \lambda''_{(i)} := s^r_h\big( \partial(\lambda_{(i)}) \big) \in s^r_h(A_h) \, $,  then expanding again  $ \Delta^n(\lambda) $  we can write it as a sum  $ \, \Delta^n(\lambda) = \sum \lambda^\circ_{(1)} \otimes \cdots \otimes \lambda^\circ_{(n)} \, $  in which  $ \, \lambda^\circ_i \in \J_h \, $  or  $ \, \lambda^\circ_i \in s^r_h(A_h) \, $  for every  $ i = 1, \dots, n \, $.
                                                             \par
   Now let  $ \, \alpha_1 \, , \dots , \alpha_n \in I_{\!{}_{{}_*\!}(K_h^{\vee})} := \epsilon_{{}_{{}_*\!}(K_h^{\vee})}^{\,-1}\big(h\,A_h\big) \, $.  As every  $ \alpha_j $  belongs to $ {}_{{}_{\scriptstyle *}}\!\big( K_h^\vee \big) \, $,  it defines a map from  $ \J_h $  to  $ \, h \, A_h \, $.  Hence  $ \; \big\langle \alpha_i \, , \lambda_j \big\rangle \in h \, A_h \; $  and one has  $ \; \big\langle \alpha_1 \cdots \alpha_n \, , \, \lambda \big\rangle \in h^n A_h \; $.  Thus, for any  $ \, n \in \N \, $,  we have that  $ \lambda $  defines a map  $ \; \Lambda_n : h^{-n} \, I_{\!{}_{{}_*\!}(K_h^{\vee})}^{\,n} \longrightarrow A_h \; $.  Clearly all these  $ \Lambda_n $'s  match together to define an element  $ \; \Lambda \in {\Big(\! \big( {}_{{}_{\scriptstyle *}}\!\big(K_h^{\,\vee}\big) \big)^{\!\vee} \Big)}^* \! = {\big( K_h^\vee \big)}' \; $;  thus we end up with a natural map  $ \; K_h \longrightarrow {\big( K_h^\vee \big)}' \; \big( \lambda \mapsto \Lambda \big) \, $,  which is clearly injective.  This yields the inclusion  $ \; K_h \subseteq {\big( K_h^\vee \big)}' \; $.
 \vskip4pt
   To prove the converse inclusion  $ \; K_h \supseteq {\big( K_h^\vee \big)}' \; $,  one  proceeds exactly like in  \cite{Gavarini}   --- we leave the details to the reader.  Similarly, we leave to the reader the proof of  {\it (b)},  analogous to that of  {\it (a)}.
 \vskip9pt
  To prove claim  {\it (c)},  consider  $ \, H_h \in \text{\rm (LQUEAd)} \, $.  We have  $ \, K_h := H_h^{\,\ast} \in \text{\rm (RQFSAd)} \, $,  and  $ \; H_h^{\,\prime} = \! {}_{{}_{\scriptstyle \ast}\!}{\Big(\! {\big( H_h^{\,\ast} \big)}^{\!\vee} \Big)} = \! {}_{{}_{\scriptstyle \ast}\!}{\big( K_h^{\,\vee} \big)} \; $  by  Proposition \ref{props_H'_&_'H}{\it (a)}.  Now  $ \, \varGamma_h := K_h^{\,\vee} \in \text{\rm (RQUEAd)} \, $  by  Theorem \ref{compute vee x QFSAd}, and then  $ \, {}'\!\varGamma_h = {\big( {({}_*{}\varGamma_h)}^\vee \,\big)}^* \, $  by
 Proposition \ref{props_H'_&_'H}{\it (b)},  which implies  $ \, {}_\star{\big({}'\!\varGamma_h\big)} = {}_{{}_{\scriptstyle \star}}\!\Big(\! {\big( {({}_*{}\varGamma_h)}^\vee \,\big)}^* \Big) = {({}_*{}\varGamma_h)}^\vee \, $.
Altogether   --- also exploiting claim  {\it (a)}  ---   this gives
  $$  {\big( H_h^{\,\prime} \big)}^{\!\vee}  \, = \,  {\big( {}_{{}_{\scriptstyle \ast}\!}{\big( K_h^{\,\vee} \big)} \big)}^{\!\vee} = {({}_*{}\varGamma_h)}^\vee  \, = \,  {}_\star{\big({}'\!\varGamma_h\big)}  \, = \,  {}_\star{\big(\varGamma_h^{\,\prime}\big)}  \, = \,  {}_\star{\Big(\! {\big(K_h^{\,\vee}\big)}' \Big)}  \, = \,  {}_\star{}K_h  \, = \,  {}_\star{\big( H_h^{\,\ast} \big)}  \, = \,  H_h  $$
   \indent   This proves  {\it (c)},  and the proof of  {\it (d)\/}  is entirely similar again.
\end{proof}

\bigskip

\section{An example}  \label{example_twistor}

\smallskip

   {\ } \quad   In this last section we apply the main construction of the paper   --- duality functors and Drinfeld's functors ---   to a toy model, namely a simple (yet non trivial!) quantum groupoid.
                                                                          \par
   We consider the two dimensional Lie  $ k $--algebra  $ \; \mathfrak{g} = k \, e_1 \oplus k \, e_2 \; $  with Lie bracket  $ \, [e_1,e_2] = e_1 \, $.  It is known that  $ \mathfrak{g}^* $  is a Poisson manifold: we consider  $ e_1 $  and  $ e_2 $  as coordinates on  $ \mathfrak{g}^* \, $,  denoting them by  $ x_1 $  and  $ x_2 $  respectively.  The Poisson structure on  $ \mathfrak{g}^* $  is determined by  $ \, \{x_1,x_2\} = [e_1,e_2] = e_1 \, $.
                                                                          \par
   Let us introduce the Lie  $ k[[h]] $--algebra  $ \; \mathfrak{g}_h := \, k[[h]] \, e_1 \oplus k[[h]] \, e_2 \; $  with non-zero Lie bracket  $ \, {[e_1,e_2]}_h := h \, e_1 \, $.  The  $ h $--adic  completion of the enveloping algebra  of  $ {\mathfrak g}_h \, $,  namely  $ \, A_h := \widehat{U(\mathfrak{g}_h}) \, $,   is a quantization of the Poisson algebra of polynomial functions on  $ \mathfrak{g}^* \, $,  namely  $ \, A = S(\mathfrak{g}) \, $.
                                                                          \par
   We write  $ {\mathcal D} $  for the ring of polynomial differential operators on  $ \mathfrak{g}^* \, $,  with  $ \, \partial_i := \dfrac {\partial } {\,\partial x_i\,} \, $,  $ \, i=1, 2 \, $.  It is the enveloping algebra of the Lie-Rinehart algebra  $ \, \big( S({\mathfrak g}) , \text{\sl Der}\big(S({\mathfrak g})\big) , \text{\sl id\/} \big) \, $.  We endow it with the standard left algebroid stucture and denote by  $ {\mathcal D}[[h]] $  the trivial deformation of this structure.

\smallskip

\begin{proposition}  \label{Prop-twistor}
 Fix notation  $ \, \theta_1 \! := \! x_1 \partial_1 \, $.  Then  $ \, {\mathcal F} \! := \!\! \sum\limits_{n=0}^\infty {\displaystyle {{\,h^n\,} \over {n!\,2^n}} \, {\Big( \theta_1 \! \otimes \partial_2 - \partial_2 \! \otimes \theta_1 \!\Big)}^{\!n}} $  is a twistor   --- cf.\ Definition \ref{def-twistor}  ---   for  $ \, {\mathcal D}[[h]] \, $.
\end{proposition}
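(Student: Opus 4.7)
The plan is to recognise $\mathcal{F}$ as a formal exponential and exploit a single underlying commutation relation that makes every term behave commutatively in all the computations required.

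The decisive observation is that $\theta_1=x_1\partial_1$ and $\partial_2$ act on disjoint sets of variables, so $[\theta_1,\partial_2]=0$ inside $\mathcal{D}$. Setting $T:=\theta_1\otimes\partial_2-\partial_2\otimes\theta_1\in\mathcal{D}\otimes_A\mathcal{D}$, this yields at once $(\theta_1\otimes\partial_2)(\partial_2\otimes\theta_1)=(\partial_2\otimes\theta_1)(\theta_1\otimes\partial_2)$, so the two summands making up $T$ commute and $\mathcal{F}$ may be written as the $h$-adic exponential $\mathcal{F}=\exp\bigl(\tfrac{h}{2}T\bigr)$. In particular $\mathcal{F}=1\otimes 1+O(h)$, so $\mathcal{F}$ is invertible with inverse $\exp\bigl(-\tfrac{h}{2}T\bigr)$, hence $\mathcal{F}^{\#}$ is a $k[[h]]$-linear isomorphism on every pair of $\mathcal{D}[[h]]$-modules; this disposes of the invertibility part of Definition \ref{def-twistor}.

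Next I would verify the cocycle condition (i) of Proposition \ref{twistor_properties}. Since $\theta_1$ and $\partial_2$ are derivations on $A=S(\mathfrak{g})$, they lie in $L=\text{\sl Der}(A)$ and are primitive in the left bialgebroid $\mathcal{D}$ (see \S \ref{V^ell(L)_left-bialgd}): $\Delta(\theta_1)=\theta_1\otimes 1+1\otimes\theta_1$ and likewise for $\partial_2$. Applying $\Delta\otimes\text{\sl id}$ and $\text{\sl id}\otimes\Delta$ term by term gives
\[
(\Delta\otimes\text{\sl id})(T)=T_{1,3}+T_{2,3},\qquad (\text{\sl id}\otimes\Delta)(T)=T_{1,2}+T_{1,3}.
\]
A direct, short check, again using only $[\theta_1,\partial_2]=0$, shows that the three elements $T_{1,2},T_{1,3},T_{2,3}\in\mathcal{D}\otimes_A\mathcal{D}\otimes_A\mathcal{D}$ pairwise commute. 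Since $\Delta\otimes\text{\sl id}$ and $\text{\sl id}\otimes\Delta$ are algebra homomorphisms into the (Takeuchi) tensor product, and since exponentials of commuting quantities factorise, both sides of (i) collapse to
\[
\exp\!\Bigl(\tfrac{h}{2}\bigl(T_{1,2}+T_{1,3}+T_{2,3}\bigr)\Bigr),
\]
so (i) holds.

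It remains to check the counit condition (ii). From $\epsilon(u\cdot s^\ell(\epsilon(u')))=\epsilon(u\,u')$ in any left bialgebroid and $\epsilon(X)=0$ for $X\in L$, we get $\epsilon(u\,X)=0$ whenever $X\in L$ and $u\in V^\ell(L)$. Expanding $T^n$ using that its two summands commute gives
\[
T^n=\sum_{k=0}^{n}\binom{n}{k}(-1)^{n-k}\,\theta_1^{k}\partial_2^{n-k}\otimes\partial_2^{k}\theta_1^{n-k},
\]
and for $n\ge 1$ the first tensor factor lies in $V^\ell(L)\cdot L$, so $(\epsilon\otimes\text{\sl id})(T^n)=0$. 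Consequently $(\epsilon\otimes\text{\sl id})(\mathcal{F})=1\otimes 1$, whence $m\bigl((\epsilon\otimes\text{\sl id})(\mathcal{F})\bigr)=s^\ell(1)=1_{\mathcal{D}}$; the symmetric argument handles $m\bigl((\text{\sl id}\otimes\epsilon)(\mathcal{F})\bigr)$. The main (mild) obstacle is the verification that $T_{i,j}$ and $T_{k,l}$ commute in $\mathcal{D}^{\widehat{\otimes}_A 3}$, which is a mechanical but essential calculation; everything else is a direct consequence of the factorisation $\mathcal{F}=\exp\bigl(\tfrac{h}{2}T\bigr)$ together with the primitivity of $\theta_1$ and $\partial_2$.
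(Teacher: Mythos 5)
Your argument is correct and amounts to exactly the computation the paper leaves implicit: its own proof is only the remark that this is ``a straightforward computation,'' and the discussion immediately following the proposition works with the same exponential form $\widetilde{\mathcal F}=\exp\bigl(\tfrac h2(\theta_1\otimes\partial_2-\partial_2\otimes\theta_1)\bigr)$ and its inverse, resting on the commutation $[\theta_1,\partial_2]=0$ just as you do. Your treatment of invertibility (inverse $\exp\bigl(-\tfrac h2 T\bigr)$, hence $\mathcal F^{\#}$ bijective) is at the same level of detail as the paper, which simply exhibits the inverse map via $\mathcal G=\widetilde{\mathcal F}^{-1}$ without further comment.
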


\begin{proof}
 It is a straightforward computation.
\end{proof}

\smallskip

   We will now denote by  $ {\mathcal D}_h $  the twist of  $ {\mathcal D}[[h]] $  by  $ {\mathcal F} \, $.  As an algebra,  $ {\mathcal D}_h $  is isomorphic to  $ \, \big(S({\mathfrak g}) \otimes S({\mathfrak g}^*)\big)[[h]] \, $.  The deformation of  $ \, A = S({\mathfrak g}) \, $  defined by  $ {\mathcal F} $  is  $ \, A_h = \widehat{U(\mathfrak{g}_h)} \, $,  the  $ h $--adic  completion of the universal enveloping algebra  $ U(\mathfrak{g}_h) $  of  $ \mathfrak{g}_h \, $.  The source map  $ s^\ell_{\mathcal F} $  (an algebra morphism) is determined by
  $$  s^\ell_{\mathcal F}(x_1)  \; = \;  {\textstyle \sum\limits_{n=0}^{\infty}} \, \dfrac{1}{\,n!\,} \, \dfrac{h^n}{\,2^n\,} \, x_1 \, \partial_2^{\,n}  \quad ,  \qquad
      s^\ell_{\mathcal F}(x_2)  \; = \;  x_2 \, - \, h \, x_1 \, \partial_1  $$
The target  $ t^\ell_{\mathcal F} $  (an algebra antimorphism) the coproduct  $ \Delta_{\mathcal F} $  and the counit  $ \epsilon $  are determined by
  $$  \begin{array}l
    \qquad   t^\ell_{\mathcal F}(x_1)  \; = \;  \sum\limits_{n=0}^{\infty} \dfrac{(-1)^n}{\,n!\,} \, \dfrac{h^n}{\,2^n\,} \, x_1 \, \partial_2^{\,n}  \quad ,  \qquad
   t^\ell_{\mathcal F}(x_2)  \; = \;  x_2 \, + \, h \, x_1 \, \partial_1  \\
   \Delta_{\mathcal F}(X)  \; = \;  {\mathcal F}^{\# -1} \big( \Delta(X) \cdot {\mathcal F} \big) \quad ,  \quad \qquad
 \epsilon \Big( x_1^{\alpha_1} \, x_2^{\alpha_2} \, \partial_1^{\,\beta_1} \partial_2^{\,\beta_2} \Big)  \; = \;  x_1^{\alpha_1} \, x_2^{\alpha_2} \, \partial_1^{\,\beta_1} \partial_2^{\,\beta_2}(1)
      \end{array}  $$
(cf.\ Theorem \ref{twisted_left-bialgd}).  Explicitly,  $ \mathcal{F} $  can be lifted to an element  $ \, \widetilde{\mathcal F} \in \big( {\mathcal D} \otimes_k {\mathcal D} \big)[[h]] \, $  defined by
  $$  \widetilde{\mathcal F}  \; = \;  \exp \bigg( \dfrac{h}{\,2\,} \, \bigg( \theta_1 \otimes \dfrac{\partial}{\,\partial x_2\,} \, - \, \dfrac{\partial}{\,\partial x_2\,} \otimes \theta_1 \bigg) \bigg)  \, = \,  {\textstyle \sum\limits_{n=0}^{\infty}} \, \dfrac{1}{\,n!\,} \, \dfrac{h^n}{\,2^n\,} \, {\bigg( \theta_1 \otimes \dfrac{\partial}{\,\partial x_2\,} \, - \,
\dfrac{\partial}{\,\partial x_2\,} \otimes \theta_1 \bigg)}^{\!n}  \; \in \;  {\mathcal D} \otimes_k {\mathcal D}[[h]]  $$
this element  $ \widetilde{\mathcal F} $  is invertible in  $ \, \big( {\mathcal D} \otimes_k {\mathcal D} \big)[[h]] \, $  and one has
  $$  \displaylines{
   \widetilde{\mathcal F}^{-1}  \; = \;\,  \exp \bigg( - \dfrac{h}{\,2\,} \, \bigg( \theta_1 \otimes \dfrac{\partial}{\,\partial x_2\,} \, - \, \dfrac{\partial}{\,\partial x_2\,} \otimes \theta_1 \bigg) \bigg) \in \;\;  \big( {\mathcal D} \otimes_k {\mathcal D} \big)[[h]]  }  $$
In turn, the element  $ \widetilde{\mathcal F}^{-1} $  defines an element  $ \; {\mathcal G} \in \, {\mathcal D}[[h]] \,{\widehat{\otimes}_{A_{\mathcal F}}\, {\mathcal D}[[h]]} \, $,  \, namely
  $$  {\mathcal G}  \;\; = \;\;  {\textstyle \sum\limits_{n=0}^{\infty}} \, (-1)^n \, \dfrac{h^n}{\,2^n\,n!\,} \, {\bigg( \theta_1 \otimes \dfrac{\partial}{\,\partial x_2\,} \, - \, \dfrac{\partial}{\,\partial x_2\,} \otimes \theta_1 \bigg)}^{\!n} \, \in \, {\mathcal D}[[h]] \,{\widehat{\otimes}_{A_{\mathcal F}}\, {\mathcal D}[[h]]}  $$
Now the map
  $$  {\mathcal F}^{\#} \, : \, {\mathcal D}[[h]] \,\widehat{\otimes}_{A_{\mathcal F}}\, {\mathcal D}[[h]] \longrightarrow {\mathcal D}[[h]] \,\widehat{\otimes}_A{}\, {\mathcal D}[[h]] \;\; ,  \qquad
   h_1 \otimes h_2 \, \mapsto \, {\mathcal F} \cdot (h_1 \otimes h_2)  $$
is indeed invertible, its inverse being
  $$  {\mathcal F}^{\#-1} \, : \, {\mathcal D}[[h]] \,\widehat{\otimes}_A{}\, {\mathcal D}[[h]] \longrightarrow {\mathcal D}[[h]] \,\widehat{\otimes}_{A_{\mathcal F}}\, {\mathcal D}[[h]] \;\; ,  \qquad
   h_1 \otimes h_2 \, \mapsto \, {\mathcal G} \cdot (h_1 \otimes h_2)  $$

   We will compute now the dual bialgebroids  $ {\big( {\mathcal D}_h \big)}_* $  and  $ {\big( {\mathcal D}_h \big)}^* \, $.

\bigskip

   {\bf Computation of  $ {\big( {\mathcal D}_h \big)}_* \, $:} \,  We shall use the isomorphism
  $$  ({\mathcal D}_h )_* \longrightarrow \text{\sl Hom}({\mathcal D},A)[[h]] \;\; ,  \qquad
   \lambda \, \mapsto \, \Big(\, \partial_1^{\,a} \, \partial_2^{\,b} \, \mapsto \, \big\langle \lambda \, , \, \partial_1^{\,a} \, \partial_2^{\,b} \,\big\rangle \,\Big)  $$

   \indent   Let  $ \, de_1 , de_2 \in {\big( {\mathcal D}_h \big)}_* \, $  be such that
$ \; \big\langle\, de_1 \, , \, \partial_1^{\,a} \, \partial_2^{\,b} \,\big\rangle \, = \, \delta_{1,a} \, \delta_{0,b} \; $,  $ \; \big\langle\, de_2 \, , \, \partial_1^{\,a} \, \partial_2^{\,b} \,\big\rangle \, = \, \delta_{0,a} \, \delta_{1,b} \; $.  Similarly, let  $ \, e_1 , e_2 \in {\big( {\mathcal D}_h \big)}_* \, $  be such that  $ \; \big\langle\, e_1 \, , \, \partial_1^{\,a} \, \partial_2^{\,b} \,\big\rangle \, = \, x_1 \, \delta_{0,a} \, \delta_{0,b} \; $,  $ \; \big\langle\, e_2 \, , \, \partial_1^{\,a} \, \partial_2^{\,b} \,\big\rangle \, = \, x_2 \, \delta_{0,a} \, \delta_{0,b} \; $.
                                                   \par
   A direct computation shows that
  $$  \big\langle\, de_1 \cdot_h de_2 \, , \, \partial_1^{\,a} \, \partial_2^{\,b} \,\big\rangle  \;\; = \;\;
   \left\{
      \begin{array}l
  0  \quad \text{\rm if} \quad  a \geq 2  \quad \text{\rm or} \quad  b \geq 2  \\
  1  \quad \text{\rm if} \quad  a = 1  \quad \text{\rm and} \quad  b = 1  \\
  - \dfrac{h}{\,2\,}  \quad \text{\rm if} \quad  a = 1  \quad \text{\rm and} \quad  b = 0  \\
  0  \quad \text{\rm if} \quad  a = 0  \quad \text{\rm and} \quad  b = 1
      \end{array}
   \right.  $$
Similarly
  $$  \big\langle\, de_{2}\cdot_h de_1 \, , \, \partial_1^{\,a} \, \partial_2^{\,b} \,\big\rangle  \;\; = \;\;
   \left\{
      \begin{array}l
  0  \quad \text{\rm if} \quad  a \geq 2  \quad \text{\rm or} \quad  b \geq 2  \\
  1  \quad \text{\rm if} \quad  a = 1  \quad \text{\rm and} \quad  b = 1  \\
  \dfrac{h}{\,2\,}  \quad \text{\rm if} \quad  a = 1  \quad \text{\rm and} \quad  b = 0  \\
  0  \quad \text{\rm if} \quad  a = 0  \quad \text{\rm and} \quad  b = 1
      \end{array}
   \right.  $$
Hence  $ \; de_1 \cdot_h de_2 \, - \, de_2 \cdot_h de_1 \, = \, -h \, de_1 \; $.  Set  $ \; \check{de_i} := h^{-1} \, de_i \; $.  This equality can be written as
  $$  \check{de}_1 \cdot_h \check{de}_2 \, - \, \check{de}_2 \cdot_h \check{de}_1  \; = \;  -\check{de}_1  $$
                                                        \par
   Similarly, the following equalities can be established:
  $$ \displaylines{
  \check{de_1} \cdot_h e_2 - e_2 \cdot \check{de_1} \, = \, -e_1 \quad,
   \quad e_1 \cdot e_2 - e_2 \cdot_h e_1  \; = \;  h \, e_1  \quad ,
 \qquad  \check{de_1} \cdot_h e_1  \; = \;  e_1 \cdot_h \check{de_1}  \cr
   \check{de_2} \cdot_h e_2  \; = \;  e_2 \cdot_h \check{de_2}  \!\quad ,
 \!\!\!\qquad  \check{de_2} \cdot_h e_1 - e_1 \cdot_h \check{de_2}  \; = \;  e_1  \!\quad ,
 \!\!\!\qquad  s^r_*(e_i)  \, = \,  e_i  \!\quad ,
 \!\!\!\qquad  t^r_*(e_i)  \, = \,  e_i + h\,\check{de_i}  }  $$
From the properties of the coproduct, one gets  $ \; \Delta(e_i) \, = \, 1 \otimes e_i \; $,  $ \; \Delta(e_i + h \, \check{de_i}) \, = \, (e_i + h \, \check{de_i}) \otimes 1 \; $,  from which we deduce  $ \; \Delta(\check{de_i}) \, = \, \check{de_i} \otimes 1 + 1 \otimes \check{de_i} \; $.  The coproduct on  $ \, {\big( {({\mathcal D_h})}_* \big)}^\vee \, $  is now determined.

   Let us also point out the counit of  $ \, {\big( {({\mathcal D_h})}_* \big)}^\vee \, $:  it is given by  $ \; \partial({\check{de_i}}) \, = \, 0 \; $  and  $ \; \partial(e_i) \, = \, e_i \;\, $.

\smallskip

\begin{remark}
 \, Let us introduce the Lie algebra  $ \, {\mathfrak g}_1 \, $  such that  $ \; {\mathfrak g}_1 \cong {\mathfrak g } \, = \, k \, \check{de_1} \oplus k \, \check{de_2} \; $  (as a  $ k $--vector  space) and  $ \; {[\,\ ,\ ]}_1 := -{[\,\ ,\ ]}_{\mathfrak g} \; $.  Then  $ {\mathfrak g}_1 $  acts on  $ \, {\mathfrak g}_h = k[[h]] e_1 \oplus k[[h]]e_2 \, $  by derivations, via
  $$  {\mathfrak g} \longrightarrow \text{\sl Der}\,({\mathfrak g}_h) \; ,  \qquad
   \check{de}_1 \mapsto \left\{
      \begin{array}{l}
         e_1  \mapsto  0  \\
         e_2  \mapsto  -e_1
      \end{array}  \right.
 \; ,  \qquad
   \check{de}_2 \mapsto \left\{
      \begin{array}{l}
         e_2  \mapsto  0\\
         e_1  \mapsto  e_1
      \end{array}  \right.  $$
 We may perform the semi direct product  $ \, {\mathfrak g}_1 \ltimes {\mathfrak g}_h \, $  and  $ \, {\big( {({\mathcal D}_h)}_* \big)}^\vee \, $  is isomorphic to  $ \, U\big( {\mathfrak g}_1 \ltimes {\mathfrak g}_h \big) \, $  as an algebra but not as a bialgebroid.
\end{remark}
   \indent   Let us now compute  $ \, {}'{\mathcal D}_h \, $.  We proceed in several steps.

\smallskip

\noindent
 $ \bullet $  \  {\sl Let us show that  $ \, h \, \partial_2 \in {}'{\mathcal D}_h \, $}.
                                                                                 \par
   We shall show that  $ \; \big\langle\, h \, \partial_2 \, , \, de_1^{a_1} \, de_2^{a_2} \big\rangle = 0 \; $  if  $ \, (a_1, a_2) \neq (0,1) \; $.  We have three cases:
                                                                                 \par
   {\it First case:}  $ \; a_2 = 0 \; $.  \quad  In this case it is obvious that  $ \; \big\langle\, \partial_2 \, , \, de_1^{a_1} \big\rangle = 0 \; $.
                                                                                 \par
   {\it Second case:}  $ \; a_2 = 1 \; $.  \quad  In this case we have
 $ \; \big\langle\, \partial_2 \, , \, de_1^{a_1} \, de_2 \big\rangle \; = \;   \left\{
  \begin{array}{l}
     0  \quad \text{if} \quad  a_1 \neq 0  \\
     1  \quad \text{if} \quad  a_1 = 0
  \end{array}
        \right. \; $
                                                                                 \par
   {\it Third case:}  $ \; a_2 > 1 \; $.  \quad  In this case the summands in  $ \, \Delta_{\mathcal F}\big(\partial_2\big) \, $  that might bring a non zero contribution to  $ \; \big\langle\, \partial_2 \, , \, de_1^{a_1} \, de_2^{a_2} \big\rangle \; $  are those of the form
 $ \; \partial_2^{\,a_2} \, \otimes \, \theta_1^{a_2 ' + a''_2} \,
\dfrac{(-1)^{a''_2}}{\,a'_2! \, a''_2!\,} \, \dfrac{h^{a_2 -1}}{\,2^{a_2 - 1}\,} \; $  with  $ \; a'_2 + a''_2 = a_2 - 1 \; $;
 \, but
 $ \sum\limits_{a'_2 + a''_2 = a_2 - 1} \hskip-13pt \partial_2^{a_2} \otimes \, \theta_1^{a_2 ' + a''_2} \, \dfrac{(-1)^{a''_2}}{\,a'_2! \, a''_2!\,} \, \dfrac{h^{a_2 -1}}{\,2^{a_2 - 1}\,}  \, = \, 0 \; $,
 \, so we find again  $ \; \big\langle\, h \, \partial_2 \, , \, de_1^{a_1} \, de_2^{a_2} \,\big\rangle = 0 \; $.

\smallskip

\noindent
 $ \bullet $  \  {\sl Let us show that  $ \, h \, \partial_1 \in {}'{\mathcal D}_h \, $}.  We will show that  $ \; \big\langle\, h \, \partial_1 \, , \, de_1^{a_1} \, de_2^{a_2} \,\big\rangle \, \in \, h^{a_1 +a_2 } \, A_h \;\, $.
                                                                                 \par
   We start by computing  $ \; \big\langle\, \theta_1 \, , \, de_1^{a_1} \, de_2^{a_2} \,\big\rangle \; $.

\vskip9pt

   {\it First case:}  $ \; a_2 = 0 \; $.  \quad  It is easy to check that
 $ \; \big\langle\, \theta_1 \, , \, (de_1)^{a_1} \big\rangle \, = \,
  \left\{
   \begin{array}{l}
      x_1  \quad \text{if} \quad  a_1 = 1  \\
      0    \quad \text{otherwise} \quad
   \end{array}
  \right. \;$
                                                                                 \par
   {\it Second case case:}  $ \; a_2 \geq 1 \; $.  \quad  The summands in  $ \, \Delta_{\mathcal F}\big( \partial_2 \big) \, $  that might bring a non zero contri\-bution
$ \; \big\langle\, \theta_1 \, , \, de_1^{a_1} \, de_2^{a_2} \,\big\rangle \; $
  are those of the form
  $ \; \partial_2^{a_2} \otimes\, \theta_1^{a_2 '+ a_2'' + 1} \, \dfrac{(-1)^{a''_2}}{\,a'_2! \, a''_2!\,} \, \dfrac{h^{a_2 }}{\,2^{a_2}\,} \; $  with  $ \, a'_2 + a''_2 = a_2 \; $;
 \, but
 $ \; \sum\limits_{a'_2 +a''_2=a_2} \hskip-7pt
\partial_2^{a_2} \otimes\, \theta_1^{a_2 '+ a_2'' + 1} \,
\dfrac{(-1)^{a''_2}}{\,a'_2! \, a''_2!\,} \, \dfrac{h^{a_2 }}{\,2^{a_2}\,} \, = \, 0 \; $,  \,
hence in the end  $ \; \big\langle\, \theta_1 \, , (de_1)^{a_1} \, (de_2)^{a_2} \, \big\rangle \, = \, 0 \; $.

\smallskip

   In conclusion, we find
 $ \;\; \big\langle\, \theta_1 \, , \, (de_1)^{a_1} \, (de_2)^{a_2} \,\big\rangle \, = \,
     \left\{
   \begin{array}{l}
        x_1  \quad \text{if} \quad  (a_1,a_2) = (1,0)  \\
        0  \quad \text{otherwise}
   \end{array}
     \right. \; $

\smallskip

   Let us now compute  $ \; \big\langle\, \partial_1 \, , \, de_1^{a_1} \, de_2^{a_2} \,\big\rangle \; $.  Again we have several cases to consider.

\vskip9pt

   {\it First case:}  $ \; a_2 = 0 \; $.  \quad  It is easy to check that
 $ \; \big\langle\, \partial_1 \, , \, (de_1)^{a_1} \,\big\rangle \, = \,
     \left\{
   \begin{array}{l}
      1  \quad \text{if} \quad  a_1 = 1  \\
      0  \quad \text{otherwise}
   \end{array}
     \right. \; $

\smallskip

   {\it Second case:}  $ \; a_2 \geq 1 \; $.  \quad   In this case one has
  $$  \displaylines{
   0  \,\; = \;\,  \big\langle\, \theta_1 \, , \, (de_1)^{a_1} \, (de_2)^{a_2} \,\big\rangle  \,\; = \;\,  \bigg\langle s^\ell_{\mathcal F}(x_1) \, \partial_1 \, - \, {\textstyle \sum\limits_{n=1}^{+\infty}}\, \theta_1 \, \dfrac{h^n}{\,2^n\,n!} \, \partial_2^{\,n} \; , \, (de_1)^{a_1} \, (de_2)^{a_2} \bigg\rangle  \,\;    \hfill  \cr
   \hfill   = \;\,  x_1 \, \Big\langle\, \partial_1 \; , \, (de_1)^{a_1} \, (de_2)^{a_2} \Big\rangle \, - \, {\textstyle \sum\limits_{n=1}^{a_2-1}} \bigg\langle\, \theta_1 \, \dfrac{h^n}{\,2^n\, n!} \, \partial_2^{\,n} \; , \, (de_1)^{a_1} \, (de_2)^{a_2} \bigg\rangle
    \hfill  \cr
    + \; \bigg\langle \theta_1 \, \dfrac{h^{a_2}}{\,2^{a_2}\, a_2!} \; \partial_2^{a_2} \; , \, (de_1)^{a_1} \, (de_2)^{a_2} \bigg\rangle     }  $$
For  $ \, 1 \leq n \leq a_2 - 1 \, $,  the unique summands in  $ \, \Delta_{\mathcal F}\big( \theta_1 \, \partial_2^{\,n} \big) \, $  that may bring a non zero contribution to  $ \; \big\langle\, \theta_1 \, \partial_2^{\,n} \; , \, (de_1)^{a_1} \, (de_2)^{a_2} \,\big\rangle \; $  are those of the form
 $ \; \partial_2^{a_2} \otimes \theta_1^{a_2 - n + 1} \, \dfrac{h^{a_2 - n}}{\,2^{a_2 - n}\,} \, \dfrac{(-1)^{c_2}}{c_1! \, c_2!} \; $  with  $ \, c_1 + c_2 = a_2 - n \; $;
but
 $ \; \sum\limits_{c_1 + c_2 = a_2 - n} \hskip-7pt \partial_2^{a_2} \otimes\, \theta_1^{a_2 - n + 1} \, \dfrac{h^{a_2 - n}}{\,2^{a_2 - n}\,} \, \dfrac{(-1)^{c_2}}{c_1! \, c_2!}  \, = \,  0 \; $,
hence
 $ \; \sum\limits_{n=1}^{a_2-1} \! \bigg\langle \theta_1 \dfrac{h^n}{\,2^n\, n!} \, \partial_2^{\,n} \; , \, (de_1)^{a_1} \, (de_2)^{a_2} \bigg\rangle \, = \, 0 \; $.
                                                                   \par
   Finally, we remark that  $ \; \big\langle\, \theta_1 \, \partial_2^{a_2} \, , \, (de_1)^{a_1} \, (de_2)^{a_2} \,\big\rangle \; $  is zero if  $ \, a_1 \neq 1 \, $.  Hence, in any case, we have  $ \,\; \bigg\langle \theta_1 \, \partial_2^{a_2} \, \dfrac{h^{a_2}}{\,2^{a_2}\, a_2!} \; , \, (de_1)^{a_1} \, (de_2)^{a_2} \bigg\rangle \, \in \, h^{a_1+a_2-1} A_h \;\, $.

\smallskip

   In conclusion, we find  that in all cases one has  $ \,\; \big\langle\, \partial_1 \, , \, (de_1)^{a_1} \, (de_2)^{a_2} \,\big\rangle \, \in \, h^{a_1 + a_2 - 1} \, A_h \;\, $.

\medskip

   Now denote by  $ \, {\big\{ \eta_{a,b} \big\}}_{(a,b) \in \N^2} \, $  the topological basis of  $ {\mathcal D}_h $  dual to the basis  $ \, {\left\{\! \dfrac{\,de_1^a\,}{a!} \, \dfrac{\,de_2^b\,}{b!} \right\}}_{\!(a,b) \in \N^2} \, $.  We know that
 $ \; {}'{\mathcal D}_h \, = \, \Big\{\, {\textstyle \sum_{(a,b)\in \N^2}} \, s^\ell_{\mathfrak F}(\alpha_{a,b}) \, h^{a+b} \eta_{a,b} \;\Big|\; \alpha_{a,b} \in A_h \,\Big\} \; $.
As  $ \,\; h^{a+b} \, \eta_{a,b} = \, h^{a+b} \, \partial_1^{\,a} \, \partial_2^{\,b} \; $  modulo  $ \; h' \, {\mathcal D}_h \;\, $,  \, we have
 $ \; {}'{\mathcal D}_h \, = \, \Big\{\, {\textstyle \sum_{(a,b)\in \N^2}} \, s^\ell_{\mathfrak F}(\alpha_{a,b}) \, h^{a+b} \, \partial_1^{\,a} \, \partial_2^{\,b} \;\Big|\; \alpha_{a,b} \in A_h \,\Big\} \; $.

\bigskip

   {\bf Computation of  $ \, {({\mathcal D}_h)}^* \, $:} \,  We shall compute  $ {({\mathcal D}_h)}^* \, $,  using the isomorphism
  $$  {({\mathcal D}_h)}^* \longrightarrow \text{\sl Hom}({\mathcal D},A)[[h]] \;\; ,  \qquad
   \lambda \, \mapsto \, \Big(\, \partial_1^{\,a} \, \partial_2^{\,b} \, \mapsto \big\langle \lambda \, , \, \partial_1^{\,a} \, \partial_2^{\,b} \,\big\rangle \,\Big)  $$
   \indent   Let  $ \, de_1 , de_2 \in {({\mathcal D}_h)}^* \, $  be such that
 $ \; \big\langle\, de_1 \, , \, \partial_1^{\,a} \, \partial_2^{\,b} \,\big\rangle \, = \, \delta_{1,a} \, \delta_{0,b} \, $,
 $ \; \big\langle\, de_2 \, , \, \partial_1^{\,a} \, \partial_2^{\,b} \,\big\rangle  \; = \;  \delta_{0,a} \, \delta_{1,b} \; $.
Similarly, let  $ \, e_1 , e_2 \in {({\mathcal D}_h)}^* \, $  be such that
 $ \; \big\langle\, e_1 \, , \, \partial_1^{\,a} \, \partial_2^{\,b} \,\big\rangle \, = \, x_1 \, \delta_{0,a} \, \delta_{0,b}  \, $,
 $ \; \big\langle\, e_2 \, , \, \partial_1^{\,a} \, \partial_2^{\,b} \,\big\rangle \, = \, x_2 \, \delta_{0,a}\delta_{0,b} \; $.
Now set  $ \, \check{de_i} := h^{-1} de_i \, $  for  $ \, i = 1, 2 \, $.  Then the following equalities can be established:
  $$  \displaylines{
   e_1 \cdot e_2 - e_2 \cdot_h e_1  \; = \;  -h \, e_1  \quad ,
 \qquad  \check{de}_1 \cdot_h \check{de}_2 - \check{de}_{2} \cdot_h \check{de}_1  \; = \;  \check{de}_1  \cr
   \check{de_1} \cdot_h e_2 - e_2 \cdot \check{de_1}  \; = \;  e_1  \quad ,
 \qquad  \check{de_1} \cdot_h e_1  \; = \;  e_1 \cdot_h \check{de_1}  \cr
   \check{de_2} \cdot_h e_2  \; = \;  e_2 \cdot_h \check{de_2}  \quad ,
 \qquad  \check{de_2} \cdot_h e_1 - e_1 \cdot_h \check{de_2}  \; = \;  -e_1  }  $$
Moreover, source and target are
 $ \; s_r^*(x_i) \, = \, e_i + h \, \check{de_i} \, $,  $ \; t_r^*(x_i) \, = \, e_i \; $.
From the properties of the coproduct, one has also
 $ \; \Delta(e_i) \, = \, e_i \otimes 1 \, $,  $ \; \Delta(\check{de_i}) \, = \, \check{de_i} \otimes 1 + 1 \otimes \check{de_i} \; $.
Finally, the counit of  $ {\left( ({\mathcal D}_h) ^*\right)}^\vee \, $  is given by the formulas
 $ \; \partial\big({\check{de_i}}\big) \, = \, 0 \, $,  $ \; \partial\big(e_i\big) \, = \, e_i \; $.

\bigskip

   {\bf A right bialgebroid isomorphism  $ \; {\big( {({\mathcal D}_h)}_* \big)}^\vee \cong {\big( {({\mathcal D}_h)}^* \big)}^\vee \; $.}  From the above analysis, one sees that there exists a unique isomorphism of right bialgebroids  $ \; \phi : {\big( {({\mathcal D}_h)}_* \big)}^{\!\vee} \! \relbar\joinrel\longrightarrow {\big( {({\mathcal D}_h)}^* \big)}^\vee \; $  determined by  $ \,\; \phi (e_i) \, = \, e_i \, + \, h \, \check{de_i} \;\, $  and  $ \,\; \phi(\check{de_i})  \, = \, -\check{de_i} \;\, $.

\bigskip
 \bigskip


\begin{thebibliography}{foo}

\bibitem{Almeida and Kumpera}
 R.~Almeida, A.~Kumpera,  {\it Structure produit dans la cat\'egorie des alg\'ebroides de Lie},  Ann.~Acad.~Brasil Cienc.~{\bf 53}  (1981), 247--250.

\bibitem{Anderson and Fuller}
 F.~W.~Anderson, K.~R.~Fuller,  {\it Rings and categories of modules},  Second edition, Graduate Texts in Mathematics  {\bf 13},  Springer-Verlag, New York, 1992.

\bibitem{Bohm1}
 G.~B{\"o}hm,  {\it Integral theory for Hopf algebroids},  Algebra Represent.~Theory  {\bf 8}  (2005),  563--599.

\bibitem{Bohm2}
 G.~B{\"o}hm,  {\it Hopf algebroids},  Handbook of Algebra  {\bf 6},  173-–235, Elsevier/North-Holland, Amsterdam, 2009.

\bibitem{Bohm-S}
 G.~B{\"o}hm, K.~Szlach{\'a}nyi,  {\it Hopf algebroids with bijective antipodes: axioms, integrals, and duals},  Journal of Algebra  {\bf 274}  (2004), 708--750.

 \bibitem{Calaque and VandenBergh}
 D.~Calaque, M.~Van den Bergh,  {\it Hochschild cohomology and Atiyah classes},  Advances in Mathematics  {\bf 224}  (2010), 1839--1889.


\bibitem{Dazord and Sondaz}
 P.~Dazord, D.~Sondaz,  {\it Vari\'et\'es de Poisson, alg\'ebroides de Lie},  Publications D\'ept.~Math.~88-1/B, Univ.~Lyon I, 1988, 1--68.

\bibitem{Drinfeld}
 V.~G.~Drinfeld,  {\it Quantum groups},  Proc.~Intern.~Congress of Math.~(Berkeley, 1986), 1987, pp.~798--820.

\bibitem{Etingof and Schiffmann}
  P.~Etingof, E.~Schiffmann,  {\it Lectures on quantum groups},  Second edition, Lectures in Mathematical Physics, International Press, Somerville, MA, 2002.

\bibitem{Evens and Lu and Weinstein}
 S.~Evens, J-H.~Lu, A.~Weinstein,  {\it Transverse measures, the modular class, and a cohomology pairing for Lie algebroids},  Quart.~J.~Math.~Oxford Ser.~(2)  {\bf 50}  (1999), 417--436.

\bibitem{Fel'dman}
 G.~L.~Fel'dman,  {\it Global dimension of rings of differential operators},  Trans.~of Moscow Mathematical Society  {\bf 1}  (1982), 123--147.

\bibitem {Gavarini}
  F.~Gavarini, {\it The quantum duality principle},  Annales de l'Institut Fourier  {\bf 53}  (2002), 809--834.

\bibitem{Higgins and Mackenzie}
 P.~J.~Higgins, K.~C.~Mackenzie,  {\it Duality for base-changing morphisms of vector bundles, modules, Lie algebroids and Poisson structures},  Math.~Proc.~Camb.~Phil.~Soc.~{\bf 114}  (1993), 471--488.

\bibitem{Hilton and Stammbach}
 P.~J.~Hilton, U.~Stammbach,  {\it A course in homological algebra},  Second edition,  Graduate Texts in Mathematics  {\bf 4},  Springer-Verlag, New York, 1997.

\bibitem{Huebschmann1}
 J.~Huebschmann,  {\it Poisson cohomology and quantization}, J.~Reine Angew.~Math.~{\bf 408} (1990),  57--113.

\bibitem{Huebschmann2}
 J.~Huebschmann,  {\it Differential Batalin-Vilkovisky algebras arising from twilled Lie-Rinehart algebras},  Poisson geometry (Warsaw, 1998), 87-–102, Banach Center Publ.~{\bf 51},  Polish Acad. Sci., Warsaw, 2000.

\bibitem{Kadison and Szlachanyi}
 L.~Kadison, K.~Szlach\'anyi,  {\it Bialgebroid actions on depth two extensions and duality},  Advances in Mathematics  {\bf 179}  (2003), 75--121.

\bibitem{Kassel and Turaev}
 C.~Kassel, V.~Turaev,   {\it Biquantization of Lie algebras},  Pacific Journal of Mathematics  {\bf 195}  (2000),  297--369.

\bibitem{Khalkhali and Rangipour}
 M.~Khalkhali, B.~Rangipour,  {\it A new cyclic module Hopf algebras},  K-theory  {\bf 27}  (2002), 111--131.

\bibitem{Kosmann}
 Y.~Kosmann-Schwarzbach,  {\it Exact Gerstenhaber algebras and Lie bialgebroids},  Acta Applicandae Mathematicae  {\bf 41}  (1995), 1243--1274.

\bibitem{Kosmann and Magri}
 Y.~Kosmann-Schwarzbach, F.~Magri,  {\it Poisson-Nijenhuis structures},  Ann.~Inst.~H.~Poincar\'e Phys.~Th{\'e}or.~{\bf 53}  (1990), 35--81.

\bibitem{Kowalzig}
 N.~Kowalzig,  {\it Hopf algebroids and their cyclic theory},  Ph.~D.~Thesis.~in Mathematics, Universiteit Utrecht, 2009.

 \bibitem{Kowalzig and Krahmer} N. Kowalzig, U. Kr{\"a}hmer,  {\it Duality and products in algebraic (co)homology theories},  J.~Algebra  {\bf 323}  (2010), no.~1, 297--318.

\bibitem{Kowalzig and Posthuma}
 N.~Kowalzig, H.~Posthuma,  {\it The cyclic theory of Hopf algebrois},  Journal of Noncommutative Geometry  {\bf 5} (2011), no.~3, 423--476.

\bibitem{Lu}
 J.-H. Lu,  {\it Hopf algebroids and quantum groupoids},  International J.~Math.~{\b 7}  (1996), 47--70.

\bibitem{Mackenzie and Xu}
 K.~Mackenzie, P.~Xu,  {\it Lie bialgebroids and Poisson groupoids},  Duke Mathematical Journal  {\bf 73}  (1994), 415--452.

\bibitem{Montgomery}
 S.~Montgomery,  {\it Hopf algebra and their actions on rings},  CBMS Regional Conf.~Ser.~in Math.~{\bf 82},  American mathematical Society, Providence, RI, 1993.

\bibitem{Moerdijk and Mrcun}
 I.~Moerdijk, J.~Mr{\v{c}}un,  {\it On the universal enveloping algebra of a Lie-Rinehart algebra}, Proc.~Amer.~Math.~Soc.~{\bf 138}  (2010),  3135--3145.

\bibitem{Nest and Tsygan}
 R.~Nest, B.~Tsygan,  {\it Deformations of symplectic Lie algebroids, deformations of holomorphic symplectic structures, and index theorem},  Asian J.~Mathematics  {\bf 5}  (2001),  599--635.

\bibitem{Rinehart}
 G.~Rinehart,  {\it Differential forms for general commutative algebras},  Trans.~American Mathematical Society  {\bf 108}  (1963), 195--222.

\bibitem {Schauenburg}
  P.~Schauenburg,  {\it Duals and doubles of quantum groupoids  ($ ?_R $--Hopf algebras)},  New trends in Hopf algebra theory (La Falda, 1999),  Contemp.~Math.~{\bf 267},  Amer.~Math.~Soc., Providence, RI, 2000, pp. 273--299.

\bibitem{Takeuchi}
  M.~Takeuchi,  {\it Groups of algebras over  $ A \otimes \overline{A} $},  Math.~Soc.~Japan  {\bf 29} (1977), no.~3, 459--492.

\bibitem{Xu1}
  P.~Xu,  {\it Gerstenhaber algebras and BV-algebras in Poisson geometry},  Comm.~Math.~Phys.~{\bf 200}  (1999), 545--560.

\bibitem{Xu2}
  P.~Xu,  {\it Quantum groupoids},  Comm.~Math.~Phys.~{\bf 216}  (2001), 539--581.

\end{thebibliography}
\end{document}